\DeclareSymbolFont{cyrletters}{OT2}{wncyr}{m}{n}
\DeclareMathSymbol{\Sha}{\mathalpha}{cyrletters}{"58}
\newtheorem*{ithm}{Theorem 1}
\newtheorem*{jthm}{Theorem 2}
\newtheorem*{hthm}{Theorem 3}
\newtheorem*{claim}{Claim}
\newcommand{\X}{\mathscr{X}}
\newcommand{\OO}{\mathscr{A}}
\newcommand{\K}{{K}} 
\newcommand{\T}{\mathscr{T}}
\newcommand{\HH}{\mathcal{H}}
\newcommand{\hh}{\mathcal{H}^+}
\newcommand{\Q}{\mathbb{Q}_p}
\newcommand{\Z}{\mathbb{Z}_p}
\numberwithin{equation}{section}
\newtheorem{thm}{Theorem}[section]
\newtheorem{lemma}[thm]{Lemma}
\newtheorem{defn}[thm]{Definition}
\newtheorem{prop}[thm]{Proposition}
\newtheorem{conj}[thm]{Conjecture}
\newenvironment{rem}{\par\medskip\noindent\refstepcounter{thm}
\bgroup{\hspace*{-0.15 cm}\bf{Remark} \thethm.}\bgroup}{\egroup
\egroup\par\medskip} \parskip 2pt
\newcounter{Athm}[section]\setcounter{Athm}{1}
\renewcommand{\theAthm} {\arabic{Athm}}
\long\def\symbolfootnote[#1]#2{\begingroup%
\def\thefootnote{\fnsymbol{footnote}}\footnote[#1]{#2}\endgroup}
\begin{document}
\title{Iwasawa Main Conjecture for $p$-adic families of elliptic modular cuspforms}

\author{Tadashi Ochiai}


\begin{abstract}
In this article, we discuss an Iwasawa Main Conjecture for $p$-adic families of elliptic modular cuspforms. After an overview in the situation of the ordinary case of Hida families, we recall the Coleman map 
for the non-ordinary case (Coleman families) which was obtained as a joint work with Filippo Nuccio and we give some results on Iwasawa Theory for Coleman families 
as applications of this Coleman map. 
\end{abstract}

\maketitle

\tableofcontents

\section{Introduction}
\label{sec:Intro_motif}

In this paper, we discuss Iwasawa Main Conjectures for various $p$-adic families of elliptic cuspforms. 
The goal is a result in the case of non-ordinary $p$-adic families obtained as an application of 
the construction of Coleman map (see Theorem 3 in this section or Section \ref{section:last} for more precise statements). 
However, we will start from the cyclotomic deformation of an ordinary cuspform. Though this case is rather classical, we 
fill some of missing details and the gap in the standard references. A review of this case is also useful in order to understand better 
the result for non-ordinary $p$-adic families which will be explained later.
\par 
The following result holds true due to contributions by a lot of people (the notation in the statement will be explained in \S \ref{sec:Intro_motif} and 
some ambiguous assumptions below will be precise later in the statement of Theorem C). 
\begin{ithm}[Theorem A, Theorem B and Theorem C] 
Let $f$ be an ordinary elliptic eigen cuspform. 
\begin{enumerate}
\item 
For each choice of $\pm$-complex periods $\{ \Omega^{\pm}_{f,\infty} \}$, 
there exists a $p$-adic $L$-function $L_p (\{ \Omega^{\pm}_{f,\infty} \})$ in the semi-local Iwasawa algebra $\Lambda_{\mathrm{cyc}}\otimes_{\mathbb{Z}_p} K $ which interpolates all critical special values of the Hecke $L$-function $L(f,s)$. Here $K$ denotes a finite extension of $\mathbb{Q}_p$ generated by the Fourier coefficients of $f$. 
\item For each lattice $T$ of the $p$-adic Galois representation $V_f$ associated to $f$, the Pontrjagin dual 
$\mathrm{Sel}_A (\mathbb{Q}(\mu_{p^\infty}))^\vee$ of the Selmer group is a finitely generated torsion $\Lambda_{\mathrm{cyc}}$-module. Here $A$ denotes 
the discrete Galois representation $V_f /T$. 
\item 
Under certain technical assumptions, the equality of the cyclotomic one-variable Iwasawa Main Conjecture modulo $\mu$-invariant for $f$ holds true. That is, we have the following equality between 
principal ideals of $\Lambda_{\mathrm{cyc}}\otimes_{\mathbb{Z}_p} K$
\begin{equation}\label{equation:IMJ1intro}
(L_p (\{ \Omega^{\pm}_{f,\infty} \})) = 
\mathrm{char}_{\Lambda_{\mathrm{cyc}} \otimes_{\mathbb{Z}_p}  {{K}}
} \,  (\mathrm{Sel}_A (\mathbb{Q}(\mu_{p^\infty}))^\vee ) \otimes_{\mathcal{O}_K}  {K}. 
\end{equation}
Under some stronger assumptions, we have a more precise equality of Iwasawa Main Conjecture between 
principal ideals of $\Lambda_{\mathrm{cyc}}\otimes_{\mathbb{Z}_p} \mathcal{O}_K$ 
\end{enumerate}
\end{ithm}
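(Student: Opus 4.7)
For part (1), the plan is to construct $L_p(\{\Omega^{\pm}_{f,\infty}\})$ by the classical modular symbol method of Mazur--Swinnerton-Dyer, Manin, Amice--V\'elu and Vishik. Starting from the $p$-stabilisation $f_\alpha$ of $f$ at the unit root of its Hecke polynomial at $p$ (such a root exists by ordinariness), one forms the $\pm$-modular symbols attached to $f_\alpha$ and normalises them by the periods $\Omega^{\pm}_{f,\infty}$ to land in $\mathcal{O}_K$. Ordinariness is precisely what guarantees that the associated distribution on $\mathbb{Z}_p^\times$ is bounded, hence defines an element of $\Lambda_{\mathrm{cyc}} \otimes_{\mathbb{Z}_p} K$; the interpolation formula at critical twists $(\chi, j)$ then follows from Shimura's integral expression for $L(f, \chi, j)$ in terms of modular symbols.

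For part (2), the plan is to invoke Kato's theorem via his Euler system of Beilinson--Kato elements. Kato constructs a norm-compatible system of Galois cohomology classes in the cyclotomic tower whose bottom class is nonzero (a consequence of Rohrlich's theorem on non-vanishing of critical $L$-values combined with the explicit reciprocity law). Applying the Euler system machinery (Kolyvagin derivatives and Chebotarev density) produces an annihilator of the strict Selmer group, and a standard duality argument then yields the assertion that $\mathrm{Sel}_A(\mathbb{Q}(\mu_{p^\infty}))^\vee$ is a finitely generated torsion $\Lambda_{\mathrm{cyc}}$-module.

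For part (3), I would establish the two divisibilities of characteristic ideals separately. The divisibility
\[
\bigl(L_p(\{\Omega^{\pm}_{f,\infty}\})\bigr) \subseteq \mathrm{char}_{\Lambda_{\mathrm{cyc}}\otimes_{\mathbb{Z}_p} K}\bigl(\mathrm{Sel}_A(\mathbb{Q}(\mu_{p^\infty}))^\vee\bigr) \otimes_{\mathcal{O}_K} K
\]
is Kato's theorem: his explicit reciprocity law identifies the image of the zeta element under Perrin-Riou's dual exponential with $L_p$, and combined with the Euler system bound from part (2) this yields the inclusion. The reverse divisibility is the theorem of Skinner--Urban, obtained by exploiting congruences between cusp forms and Klingen Eisenstein series on $\mathrm{GU}(2,2)$; their running hypotheses (residual irreducibility and bigness of $\bar\rho_f$, existence of a suitable ramified prime, and so on) account for the ``certain technical assumptions'' in the statement. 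Combining the two divisibilities gives (\ref{equation:IMJ1intro}), and strengthening to the integral equality over $\Lambda_{\mathrm{cyc}}\otimes_{\mathbb{Z}_p}\mathcal{O}_K$ requires additionally the vanishing of the Iwasawa $\mu$-invariant on both sides, known from Vatsal's residually irreducible analogue of Ferrero--Washington. The main obstacle in this programme is the Skinner--Urban divisibility, whose proof relies on a delicate Eisenstein ideal argument on a unitary Shimura variety and whose hypotheses shape the precise form of the technical assumptions imposed in the statement.
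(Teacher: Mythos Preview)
Your outline for part~(1) and for the non-CM case of parts~(2) and~(3) matches the paper's approach: modular symbols \`a la Amice--V\'elu and Vi\v{s}ik for the $p$-adic $L$-function, Kato's Beilinson--Kato Euler system combined with Rohrlich's non-vanishing for the torsion property, and Kato's divisibility together with Skinner--Urban's opposite divisibility for the main conjecture over $\Lambda_{\mathrm{cyc}}\otimes_{\mathbb{Z}_p}K$.

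There is, however, a genuine gap: you have not treated the CM case. The statement is for an arbitrary ordinary eigen cuspform, and the paper emphasises that the argument is \emph{completely different} according to whether $f$ has complex multiplication. The Euler system bound you invoke in part~(2) requires that the image of $\rho_f$ contain a nontrivial unipotent element, a condition equivalent to $f$ being non-CM; and Skinner--Urban's hypotheses likewise exclude CM forms (their square-free tame conductor condition rules them out). For $f$ with CM by an imaginary quadratic field $F$, the paper instead obtains both the torsion of $\mathrm{Sel}_A(\mathbb{Q}(\mu_{p^\infty}))^\vee$ and the main conjecture by one-variable specialisation of Rubin's two-variable main conjecture for $F$, after identifying $L_p(\{\Omega^{\pm}_{f,\infty}\})$ with a specialisation of Katz's $p$-adic $L$-function. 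Without this separate descent argument your proof does not cover the full statement; in particular the Kolyvagin-derivative step in your part~(2) simply fails for CM forms.

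A minor difference in part~(3): for the integral refinement over $\Lambda_{\mathrm{cyc}}\otimes_{\mathbb{Z}_p}\mathcal{O}_K$ the paper does not appeal to Vatsal's $\mu$-vanishing. It records instead that Kato's inclusion already holds integrally once the image of $\overline{\rho}$ contains $SL_2(\mathbb{F}_p)$, and leaves the passage to the full integral equality (and the comparison of $p$-optimal periods, especially in the CM case) as a more delicate matter. Your route via Vatsal is reasonable in the non-CM residually irreducible setting, but it is not the path the paper takes.
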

As is explained later, the situation is very much different depending on whether the cuspform $f$ has complex multiplication or not. 
When $f$ has complex multiplication by an imaginary quadratic field $F$, the key ingredients are 
\begin{enumerate}
\item[(a)] 
The $p$-adic $L$-function of $f$ is obtained as a one-variable 
specialization of a two-variable $p$-adic $L$-function constructed by Katz \cite{katz} 
\item[(b)] 
The equality of Iwasawa Main Conjecture follows by 
a one-variable specialization of the two-variable equality of Iwasawa Main Conjecture for $F$ proved by Rubin \cite{419efg}. 
\end{enumerate}
In order to perform (a), we need to calculate the interpolation terms of the $p$-adic $L$-function of Katz and compare them with those appearing in the $p$-adic $L$-function for the modular form $f$. In order to perfoem (b), we need to study Selmer groups carefully by paying attentions to 
pseudo-null submodules and the control theorem. We refer to \cite{haraochiai} for the reference on these process with full generality.  
\par 
For elliptic cuspforms without complex multiplication, Kato \cite{320} made an essential progress 
by proving the inclusion $\subset $ of the equality \eqref{equation:IMJ1intro} by the method of Euler system. The opposite inclusion 
$\supset$ was shown by Skinner-Urban \cite{444sui} by the method of Eisenstein ideal. 
In this paper, we shall explain the method of Euler system in a more detailed manner than the method of Eisenstein ideal because 
this method is related to subsequent contents of the paper. By such a detailed explanation, we can also 
fill some arguments which are missing in the paper \cite{320}. For example, the case of ordinary cuspforms whose level is divisible by $p$ is missing in \cite{320}\footnote{Kato's proof relies on the paper \cite{396} for the Coleman map, where it is assumed that Galois representations are crystalline.} if we wish the totality of the result. Also, there are some subtle points in the formulation of the Cyclotomic Iwasawa Main Conjecture 
for ordinary cuspforms such as the dependence of the Selmer group on the choice of lattices of the Galois representation associated to $f$,
the dependence of the $p$-adic $L$-function on the choice of complex periods. 
We provides an overview paying attentions to such subtle points which are often ignored in the reference. 
\par
The main objectives of the latter half of this paper is to discuss the two-variable Iwasawa Main Conjecture for $p$-adic families of 
elliptic cuspforms by Euler system approach. First, we recall the following theorem on the results in the ordinary case (Hida family) 
obtained by the author in his earlier articles. In order to understand our results in the non-ordinary case which will be given below, 
it will be important to contrast these results with the results and their proofs in the ordinary case (the notation in the statement will be explained 
later).   
\begin{jthm}[Theorem A$'$, Threom B$'$ and Theorem  C$'$]
Let $\mathbb{F}$ be a $p$-adic family of ordinary elliptic eigen cuspforms constructed by Hida over a local domain $\mathbb{I}$ finite flat over $\mathbb{Z}_p [[1+p\mathbb{Z}_p]]$. 
\begin{enumerate}
\item 
For each choice of $\mathbb{I}$-basis 
$\Xi^\pm$ of $\mathbb{MS}(\mathbb{I})^\pm$, there exists an analytic $p$-adic $L$-function $L_p (\{ \Xi^\pm \}) 
\in (\mathbb{I} \widehat{\otimes}_{\mathbb{Z}_p} \Lambda_{\mathrm{cyc}} )
\otimes_{\mathbb{Z}_p} \mathbb{Q}_p$ which interpolates all critical special values of the Hecke $L$-function $L(f,s)$ where $f$ varies in the Hida family $\mathbb{F}$. 
\item 
For each $\mathbb{I}$-lattice $\mathbb{T}$ of the $p$-adic Galois representation $\mathbb{V}$ associated to $\mathbb{F}$, the Pontrjagin dual 
$\mathrm{Sel}_{\mathbb{A}} (\mathbb{Q}(\mu_{p^\infty}))^\vee$ of the Selmer group $\mathrm{Sel}_{\mathbb{A}} (\mathbb{Q}(\mu_{p^\infty}))$
is a finitely generated torsion $\mathbb{I} \widehat{\otimes}_{\mathbb{Z}_p} \Lambda_{\mathrm{cyc}}$-module. Here 
$\mathbb{A}$ denotes the discrete Galois representation $\mathbb{T}\otimes_{\mathbb{I}} \mathbb{I}^\vee$.
\item 
Under certain technical assumptions, the two-variable Iwasawa Main Conjecture for $\mathbb{F}$ holds true. That is, we have the following equality between 
principal ideals of $\mathbb{I} \widehat{\otimes}_{\mathbb{Z}_p} \Lambda_{\mathrm{cyc}}$ 
$$
(L_p (\{ \Xi^\pm \}))=\mathrm{char}_{
\mathbb{I} \widehat{\otimes}_{\mathbb{Z}_p} \Lambda_{\mathrm{cyc}}
} \, \mathrm{Sel}_{\mathbb{A}} (\mathbb{Q}(\mu_{p^\infty}))^\vee . 
$$ 
\end{enumerate}
\end{jthm}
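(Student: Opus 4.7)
The plan is to prove the three statements in parallel, deforming the corresponding proofs for a single ordinary cuspform along the Hida family $\mathbb{F}$. For statement (1), I would construct $L_p(\{\Xi^\pm\})$ as a two-variable measure using the $\mathbb{I}$-adic modular symbols module $\mathbb{MS}(\mathbb{I})^\pm$. Fixing the $\mathbb{I}$-basis $\Xi^\pm$ rigidifies the period choice, and one builds a measure on $\mathbb{Z}_p^\times$ valued in $\mathbb{I}\,\widehat{\otimes}_{\mathbb{Z}_p}\Lambda_{\mathrm{cyc}}\otimes\mathbb{Q}_p$ which interpolates the modular symbols attached to every classical specialization of $\mathbb{F}$. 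Specializing at an arithmetic point $\kappa\colon \mathbb{I}\to\mathcal{O}_K$ and comparing with the Amice--V\'elu/Vishik construction recovers the usual $p$-adic $L$-function of $f_\kappa$ with appropriate Euler factor at $p$, which is enough to verify the interpolation property against the critical values of $L(f,s)$. This construction is essentially due to Kitagawa and Greenberg--Stevens, with the dependence on periods absorbed into the choice of $\Xi^\pm$.

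For statement (2), the strategy is a control theorem argument reducing to Theorem B. Pick an arithmetic specialization $\kappa$ at which $\mathrm{Sel}_{A_{f_\kappa}}(\mathbb{Q}(\mu_{p^\infty}))^\vee$ is known to be a finitely generated torsion $\Lambda_{\mathrm{cyc}}$-module. Using the local conditions defining the Selmer complex for $\mathbb{A}$ (Greenberg-style, using the ordinary filtration of $\mathbb{V}$), one obtains an exact sequence relating $\mathrm{Sel}_{\mathbb{A}}(\mathbb{Q}(\mu_{p^\infty}))[\ker\kappa]$ to $\mathrm{Sel}_{A_{f_\kappa}}(\mathbb{Q}(\mu_{p^\infty}))$ with controlled kernel and cokernel. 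A Nakayama-type argument then implies that the Pontrjagin dual is finitely generated over $\mathbb{I}\,\widehat{\otimes}_{\mathbb{Z}_p}\Lambda_{\mathrm{cyc}}$, and a codimension/support count, combined with torsionness at one arithmetic fibre, forces it to be torsion over the two-variable algebra.

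The main obstacle is statement (3). For the divisibility $(L_p(\{\Xi^\pm\}))\subset \mathrm{char}(\mathrm{Sel}_{\mathbb{A}}(\mathbb{Q}(\mu_{p^\infty}))^\vee)$ I would use the Euler system approach: construct a two-variable family of Kato-type zeta elements $\mathbf{z}_{\mathbb{F}}\in H^1(\mathbb{Q}(\mu_{p^\infty}),\mathbb{T})$ by interpolating Kato's $K_2$-elements across the Hida family, which amounts to an $\mathbb{I}$-adic Eichler--Shimura comparison identifying $\mathbb{I}$-adic modular symbols with the \'etale cohomology of the tower of modular curves (this is precisely the construction developed by the author in earlier work). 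Applying the two-variable Euler system machinery bounds $\mathrm{char}(\mathrm{Sel}_{\mathbb{A}}(\mathbb{Q}(\mu_{p^\infty}))^\vee)$ from above by the characteristic ideal of $H^1_{/f}(\mathbb{Q}_p,\mathbb{T})/(\mathrm{loc}_p(\mathbf{z}_{\mathbb{F}}))$, and a two-variable Coleman map (in the ordinary setting this is the Ohta--Perrin-Riou map) identifies the latter ideal with $(L_p(\{\Xi^\pm\}))$, giving the $\subset$ inclusion.

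The opposite inclusion $\supset$ should follow from deforming Skinner--Urban's Eisenstein ideal argument to the Hida family, or alternatively from a global duality/Poitou--Tate argument combined with the analogous divisibility for the CM case and congruences. The hardest technical points, and where the unspecified assumptions enter, are: (i) the non-degeneracy of the $\mathbb{I}$-adic cup-product pairing used in dualizing the Euler system bound, (ii) residual irreducibility and big-image hypotheses on $\overline{\rho}_{\mathbb{F}}$ needed to eliminate pseudo-null submodules and to make the Euler system non-trivial after reduction, and (iii) matching the precise normalizations on both sides so that the two inclusions of principal ideals in fact agree, rather than merely agreeing up to $\mu$-invariant.
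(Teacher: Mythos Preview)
Your outline is essentially correct and follows the same architecture as the paper: Kitagawa's $\mathbb{I}$-adic modular symbols for (1), control theorem plus the one-variable torsion result for (2), and Euler system for $\subset$ combined with Skinner--Urban for $\supset$ in (3). Two points of emphasis are worth noting.

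First, the passage ``a two-variable Coleman map identifies the latter ideal with $(L_p(\{\Xi^\pm\}))$'' hides what the paper treats as a separate and nontrivial construction. The Euler system one obtains directly from the inverse limit over the tower $Y(Np^n)$ (your ``interpolating Kato's $K_2$-elements'') has Coleman image computing $L$-values against periods $\Omega^{\pm}_{f_\kappa,(c,d,\xi),\infty}$ coming from Kato's Rankin--Selberg construction, and there is no a priori reason these agree $p$-adically with the $p$-optimal periods encoded in $\Xi^\pm$. The paper's resolution (its Theorem 4.10) is to \emph{rebuild} a new, $p$-optimized Euler system $\{\mathcal{Z}(r)\}$ by a gluing argument: one uses the injectivity of the Coleman map at each arithmetic prime, together with a Mayer--Vietoris--type exact sequence on $H^1$ (Lemma 4.12), to patch Kato's Euler systems for individual cuspforms into a single system whose Coleman image is \emph{exactly} $L_p(\{\Xi^\pm\})$. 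This is more than a normalization bookkeeping issue; in the non-CM case it replaces Kato's device of taking finite linear combinations over the parameters $(c,d,\xi)$, which cannot work uniformly across infinitely many weights.

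Second, for the upgrade from $\subset$ to equality, the paper does not deform Skinner--Urban directly. Rather, having the two-variable inclusion, it specializes at an arithmetic $\kappa$ and invokes the known one-variable \emph{equality} (Theorem C, which already uses Skinner--Urban at that weight); a standard argument then promotes the two-variable inclusion to equality. The paper also notes that Skinner--Urban's own two-variable result gives the opposite inclusion independently, so either route works.
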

Theorem 2 is a family version of Theorem 1 and the global strategy of the proof of Theorem 2 is similar to that of Theorem 1. 
However, working with $p$-adic families, we have some new phenomena and new technical difficulties which did not arise in Theorem 1. 
For example, in addition to complex periods, $p$-adic periods will play an essential role for Theorem 2. Also the classical 
Euler system machinary does not work in this setting and we need some innovations to overcome these problems. We will come back to these problems in later 
sections. 
\par 
The Iwasawa theory in the case of non-ordinary families (Coleman families) is much different from 
the situation of Theorem 1 and Theorem 2. On the analytic side, the denominators in the interpolation formula are unbounded. 
Hence the $p$-adic $L$-function 
is not an element in the deformation ring anymore as it was in the ordinary case. 
On the algebraic side, the Selmer group is not a torsion-module over the deformation ring. Hence its characteristic ideal is trivial. 
For these reasons, an Iwasawa Main Conjecture which relates the values of $L$-functions to the size of the Selmer group as in Theorem 1 and Theorem 2 
does not make sense in the non-ordinary case. 
\par 
In the non-ordinary case, we generalize the Iwasawa Main Conjecture along with Kato's formulation. 
Our result is summarized as follows (the notation in the statement will be explained later). 
\begin{hthm}[Theorem \ref{theorem:2variablep-adicLnon-ordinary}, Theorem \ref{theorem:eulersystem_Coleman2} and Theorem \ref{theorem:IMC_nonord }]
Let $\mathbb{F}$ be a $p$-adic family of non-ordinary elliptic eigen cuspforms of fixed slope $s \in \mathbb{Q}_{\geq 0}$
constructed by Coleman over a local domain $\Lambda_{(k_0 ;r),\mathcal{O}_{{K}}}$ which is the ring of rigid analytic functions 
on the open disc of radius $r$ centered at $k_0$ in the weight space\footnote{The ring is isomorphic to 
the ring of formal power series in one variable with coefficients in $\mathcal{O}_K$.}. Under certain technical assumptions, we have 
the following reesults: 
\begin{enumerate}
\item 
For each choice of $\Lambda_{(k_0 ;r),\mathcal{O}_{{K}}}$-basis 
$\Xi^\pm$ of $\mathbb{MS}(\Lambda_{(k_0 ;r),\mathcal{O}_{{K}}})^\pm$, there exists an analytic $p$-adic $L$-function $L_p (\{ \Xi^\pm \}) 
\in \Lambda_{(k_0;r)}  \widehat{\otimes}_{\mathbb{Z}_p} \hh_{h,\mathrm{cyc}}$  
which interpolates all critical special values of the Hecke $L$-function $L(f,s)$ where $f$ varies in the Coleman family $\mathbb{F}$. 
\item 
For each choice of $\Lambda_{(k_0 ;r),\mathcal{O}_{{K}}}$-basis 
$\Xi^\pm$ of $\mathbb{MS}(\Lambda_{(k_0 ;r),\mathcal{O}_{{K}}})^\pm$, there exists 
an Euler system 
$$
\bigl\{ \mathcal{Z} (r) \in H^1(\mathbb{Q} (\mu_r ) ,
(\mathbb{T}\widehat{\otimes}_{\mathbb{Z}_p} \Lambda^\sharp_{\mathrm{cyc}}
)^\ast (1)) \bigr\} _{r\in \mathcal{S} }
$$ 
such that the image of 
the local $p$-part 
$\mathrm{loc}_p (\mathcal{Z} (1)) \in H^1(\mathbb{Q}_p ,
(\mathbb{T}\widehat{\otimes}_{\mathbb{Z}_p} \Lambda^\sharp_{\mathrm{cyc}}
)^\ast (1))$ of the first layer $\mathcal{Z} (1)$ 
via the Coleman map 
$\mathrm{Col} : 
H^1 \big(\mathbb{Q}_{p} ,(\mathbb{T}\widehat{\otimes}_{\mathbb{Z}_p} \Lambda^\sharp_{\mathrm{cyc}}
)^\ast(1) \big) 
\longrightarrow \Lambda_{(k_0 ;r),\mathcal{O}_{{K}}}
 \widehat{\otimes}_{\mathbb{Z}_p} 
\HH_{h,\mathrm{cyc}}$ is equal to $L_p (\{ \Xi^\pm \}) 
\in \Lambda_{(k_0;r)}  \widehat{\otimes}_{\mathbb{Z}_p} \hh_{h,\mathrm{cyc}}$ \item 
The group $ H^2(\mathbb{Q}_{\Sigma}/\mathbb{Q} ,
(\mathbb{T}\widehat{\otimes}_{\mathbb{Z}_p} \Lambda^\sharp_{\mathrm{cyc}}
)^\ast (1)) $ is a torsion $\Lambda_{(k_0 ;r),\mathcal{O}_{{K}}} \widehat{\otimes}_{\mathbb{Z}_p} \Lambda_{\mathrm{cyc}}$-module 
for an $ \Lambda_{(k_0 ;r),\mathcal{O}_{{K}}} $-lattice $\mathbb{T}$ of the $p$-adic Galois representation $\mathbb{V}$ associated to $\mathbb{F}$.
\item 
One of the inclusions predicted by the two-variable Iwasawa Main conjecture holds true. 
That is, we have the following inclusion:
\begin{multline*}
\mathrm{char}_{\Lambda_{(k_0 ;r),\mathcal{O}_{{K}}} \widehat{\otimes}_{\mathbb{Z}_p} \Lambda_{\mathrm{cyc}}}
\left( 
H^1(\mathbb{Q}_{\Sigma}/\mathbb{Q} ,
(\mathbb{T}\widehat{\otimes}_{\mathbb{Z}_p} \Lambda^\sharp_{\mathrm{cyc}}
)^\ast (1)) 
 \Bigl/ \Lambda_{(k_0 ;r),\mathcal{O}_{{K}}} \widehat{\otimes}_{\mathbb{Z}_p} \Lambda_{\mathrm{cyc}}  \cdot \mathcal{Z}(1)  
 \right) 
\\  \subset 
 \mathrm{char}_{\Lambda_{(k_0 ;r),\mathcal{O}_{{K}}} \widehat{\otimes}_{\mathbb{Z}_p} \Lambda_{\mathrm{cyc}}} 
\left( H^2(\mathbb{Q}_{\Sigma}/\mathbb{Q} ,
(\mathbb{T}\widehat{\otimes}_{\mathbb{Z}_p} \Lambda^\sharp_{\mathrm{cyc}}
)^\ast (1))  \right) . 
\end{multline*}
\end{enumerate}
\end{hthm}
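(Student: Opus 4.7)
The plan is to establish the four parts in order, since each builds on the previous. For (1), I would construct $L_p(\{\Xi^\pm\})$ as the cyclotomic Mellin transform of the chosen $\Lambda_{(k_0;r),\OK}$-basis $\Xi^\pm$ of the family of overconvergent modular symbols $\mathbb{MS}(\Lambda_{(k_0;r),\OK})^\pm$, pairing $\Xi^\pm$ against evaluation at Dirichlet characters along $\Z^{\times}$. The output naturally lives in $\Lambda_{(k_0;r)} \hotimes_{\Z} \hh_{h,\mathrm{cyc}}$: the slope $h$ is forced by the slope of the Coleman family, and the locally analytic distribution side has unbounded denominators calibrated by $h$. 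The required interpolation at each classical weight $k\geq 2$ reduces to the classical Mazur--Swinnerton-Dyer construction for the corresponding individual non-ordinary eigenform.

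For (2), the strategy is to deform Kato's Euler system along the Coleman family. Kato's zeta elements $z_r^{(f)}\in H^1(\mathbb{Q}(\mu_r),V_f^\ast(1))$ arise from Siegel units and factor through the \'etale cohomology of the sheaf attached to modular symbols, so replacing the fixed-weight sheaf by its overconvergent deformation $\mathbb{MS}(\Lambda_{(k_0;r),\OK})$ should produce compatible classes $\mathcal{Z}(r)\in H^1(\mathbb{Q}(\mu_r),(\mathbb{T}\hotimes_{\Z}\Lambda^\sharp_{\mathrm{cyc}})^\ast(1))$ interpolating Kato's classes at every classical point. The Euler system norm relations at primes of good reduction follow from their classical counterparts by density. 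The reciprocity law
$\mathrm{Col}(\mathrm{loc}_p\mathcal{Z}(1))=L_p(\{\Xi^\pm\})$
is then obtained by combining Kato's explicit reciprocity at each classical weight with a continuity argument: the Coleman map (constructed jointly with Nuccio) is continuous in the weight variable, both sides depend rigid-analytically on $k \in \Lambda_{(k_0;r),\OK}$, and classical weights are Zariski-dense.

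The torsion statement (3) follows from Poitou--Tate duality and the existence of the Euler system. One uses the non-triviality of $\mathcal{Z}(1)$ at some classical specialization (where Theorem~2 or Kato's theorem is available) to force $H^2$ to be torsion at that weight; semicontinuity of $\Lambda_{(k_0;r),\OK}$-rank then propagates the conclusion to the full family. For (4), I would run the two-variable Kolyvagin--Rubin derivative argument on the Euler system $\{\mathcal{Z}(r)\}_{r\in\mathcal{S}}$ over the ring $\Lambda_{(k_0;r),\OK}\hotimes_{\Z}\Lambda_{\mathrm{cyc}}$: Kolyvagin derivatives applied to $\{\mathcal{Z}(r)\}$ produce enough cohomology classes to bound the characteristic ideal of $H^2$ by that of the quotient $H^1/\Lambda_{(k_0;r),\OK}\hotimes_{\Z}\Lambda_{\mathrm{cyc}}\cdot \mathcal{Z}(1)$, which is precisely the asserted inclusion.

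The main obstacle is the adaptation of the Euler system machine to the coefficient ring $\Lambda_{(k_0;r),\OK}\hotimes_{\Z}\Lambda_{\mathrm{cyc}}$, which is regular of dimension three but not finite flat over a power series ring in two variables over $\Z$. One must verify that the residual representation of $\mathbb{T}$ has big image uniformly over the Coleman family so that the Chebotarev-type construction of Kolyvagin primes goes through, and one must control the non-integrality of the target $\hh_{h,\mathrm{cyc}}$ of the Coleman map so as not to lose the divisibility when transferring information between the analytic and algebraic sides. A secondary but technical difficulty is the choice of compatible periods and integral lattices along the family, so that the Euler system relations hold integrally rather than up to denominators that grow with the slope $h$.
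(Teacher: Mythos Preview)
Your approach to part (2) contains the central gap. You propose to construct $\{\mathcal{Z}(r)\}$ by directly deforming Kato's zeta elements through an overconvergent sheaf, writing that ``replacing the fixed-weight sheaf by its overconvergent deformation $\mathbb{MS}(\Lambda_{(k_0;r),\OK})$ should produce compatible classes.'' But the paper explicitly identifies this as the point where the non-ordinary case diverges from the ordinary one: in the Hida setting the big Galois representation $\mathbb{T}$ is literally a projective limit of \'etale cohomologies of $Y(Np^n)$, so Kato's norm-compatible system over the tower yields an Euler system over $\mathbb{I}$ for free (Theorem~\ref{theorem:eulersystem_Hida}). For a Coleman family, $\mathbb{T}$ is \emph{not} built this way, and there is no evident geometric receptacle over the family into which Kato's Siegel-unit construction lands. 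Your phrase ``should produce'' hides exactly the step the paper regards as the main obstruction.

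The paper's actual construction runs in the opposite direction. It starts from Kato's Euler systems at each individual arithmetic weight $k$ (which exist by \cite{320}), and \emph{glues} them using two ingredients: the Coleman map of Theorem~\ref{theorem:interpolationdualexp} and the already-constructed two-variable $p$-adic $L$-function of Theorem~\ref{theorem:2variablep-adicLnon-ordinary}. The mechanism is Lemma~\ref{lem:glues_nonord} (an exact sequence allowing one to patch classes modulo $J$ and $J'$ into a class modulo $J\cap J'$) together with Lemma~\ref{lemma:injectivityofcomposite} (injectivity of $\langle d_J,\ \rangle\circ\mathrm{Col}_J\circ\mathrm{loc}$), which guarantees that the pointwise classes $\mathcal{Z}_J(r)$ and $\mathcal{Z}_{J'}(r)$ agree modulo $J+J'$ precisely because their images under the Coleman map both equal $L_p^{(r)}(\{\Xi^\pm\})$ there. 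So the reciprocity law is not proved after the fact by a density argument; it is the \emph{input} that makes the gluing possible. This inversion of logic is the paper's key idea.

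For parts (3) and (4), your plan is broadly compatible with the paper's, but the paper does not argue via semicontinuity of rank or by rerunning Kolyvagin derivatives from scratch. It invokes the author's earlier general Euler-system bound over two-variable power-series rings (Theorem~\ref{theorem:general_Euler_system_bound}, from \cite{ochiai-AIF}), checking that the hypotheses (notably the existence of a unipotent element and of $-1$ in the image, guaranteed by the condition (SL)) hold for $\mathcal{R}=\Lambda_{(k_0;r),\mathcal{O}_K}$. Both the torsion statement and the divisibility then come out of that single black box.
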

A remarkable difference of the proof of the case of non-ordinary $p$-adic families (Theorem 3) compared to the case of ordinary $p$-adic families (Theorem 2) is  
that the existence of the Euler system is much more non-trivial in the non-ordinary case. 
In the ordinary case, apart from the $p$-optimization of the Euler system, the existence of (not necesarily $p$-optimal) Euler System 
followed from the existence of Kato's Euler System over modular curves of level $Np^r$ for each $r$. 
As is shown in \cite{320}, Kato's Euler system at each level $Np^r$ forms a projective system when the power $r$ of $p$ varies and we can obtain an Euler system over a Hida family by taking the projective limit with respect to $r$. 
\par 
In the non-ordinary case, 
the big Galois representation $\mathbb{T}$ is not obtained in such a way. Hence even if the Beilinson Kato Euler system exist for 
every cuspform $f_\kappa$ with $\kappa$ varying in the sets of arithmetic points in the Coleman family in a point-wise manner, it is not clear if the Beilinson-Kato Euler system exists over 
the family. 
The Coleman family obtained in \cite{co97} is constructed in a more analytic way. 
A new idea to prove Theorem 3 (2) is that we use Coleman map constructed in \cite{NO16} (Theorem \ref{theorem:2variablep-adicLnon-ordinary})
and the two-variable $p$-adic $L$-function to glue the Euler systems which exist in a point-wise manner. We refer the reader to later sections of the paper for 
the detail of the proof of Theorem 3. 
\section{Iwasawa Main Conjecture for a cuspform (Review on classical results)}
\label{sec:Intro_motif}
Let $p>2$ be a fixed prime number. Throughout the paper, we fix embeddings
\begin{equation}
\overline{\mathbb{Q}} \hookrightarrow \mathbb{C}, \ \ \ 
\overline{\mathbb{Q}} \hookrightarrow \overline{\mathbb{Q}_p}
\end{equation}
of $\overline{\mathbb{Q}}$. 
For any field $L$, we denote by $G_L$ the absolute Galois group $\mathrm{Gal}(\overline{L}/L)$. 
\\ 
\ 
\par 
Let $f \in S_k (\Gamma_1 (M))$ be a normalized eigen cuspform 
of weight $k\geq 2$ and of level $\Gamma_1 (M)$ where $M$ is a natural number divisible by $p$. 
We usually denote by $f(q) = \sum^\infty_{n=1} a_n (f) q^n$ the Fourier expansion of $f$ at $i\infty$. 
Thus $f$ is $p$-stabilized\footnote{An eigenform $f$ is said to be $p$-stabilized either if $f$ is new at $p$ or if there exists an eigenform $g$ of weight $k$ whose level is prime to $p$
such that $f(q) = g(q) -\beta g(q^p)$ where $\beta$ is one of roots of 
$X^2 -a_p (g) X + \psi_g (p) p^{k-1}$. Here $\psi_f$ is the Neben character of $f$. 
In the latter case, we have $a_{n}(f) =a_{n}(g)$ for all natural numbers $n$ prime to $p$, which implies that the $p$-adic Galois representation $V_g$ associated to $g$ is isomorphic to the $p$-adic Galois representation $V_f$ associated to $f$. Thus the assumption of being $p$-stabilized is 
not restrictive at all.} in this paper. We denote by ${K}={K}_f$ the 
finite extension of $\mathbb{Q}_p$ obtained by adjoining the Fourier 
coefficients $a_n (f)$ to $\mathbb{Q}_p$. 
\par 
By Deligne \cite{del71} and Shimura \cite{shimura68}, to each normalized eigen cuspform $f$ as above, 
we can attach a modular $p$-adic Galois representation space $V_f = {K}^{\oplus 2}$ equipped with a continuous irreducible 
representation $\rho_f : G_{\mathbb{Q}} \longrightarrow 
\mathrm{Aut}_{{K}} (V_f)$ unramified outside all primes dividing $M$. 
The representation $\rho_f$ is characterized by the property 
$$
\mathrm{Tr} (\rho_f (\mathrm{Frob}_\ell )) = a_\ell (f) 
$$
for every prime $\ell$ not dividing $M$. 
\par 
The one--variable cyclotomic Iwasawa main conjecture for all Galois representations arising as above has been formulated and studied by various people including Greenberg, Mazur, Perrin-Riou, Kato and Skinner--Urban. 
\subsection{Selmer group and $p$-adic $L$-function}\label{section:Selmer group and $p$-adic $L$-function}
When $f$ is {\bf ordinary} at $p$ in the sense that $a_p (f)$ is a $p$-adic unit, Wiles proved that the representation $V_f$ is reducible as a $G_{\mathbb{Q}_p}$-module, with a canonical submodule $F^+_p V_f
\subset V_f$ stable by the action of $G_{\mathbb{Q}_p}$. 
For any $G_{\mathbb{Q}}$-stable lattice $T \subset V_f$ 
and for each number field $F$, a Selmer group $\mathrm{Sel}_A (F)$ for $A:=V_f /T$ over $F$ is 
defined as a subgroup of the Galois cohomology group $H^1 (F , A)$ 
according to Greenberg \cite {gre89}: 
\begin{equation}\label{equation:definition_Selmer}
\mathrm{Sel}_A (F) = 
\mathrm{Ker} \left[ 
H^1 (F,A) \longrightarrow \prod_{\lambda \nmid p} H^1 (I_\lambda ,A)]
\times 
\prod_{\mathfrak{p} \vert p}\dfrac{H^1 (I_\mathfrak{p} ,A )}
{\mathrm{Image}(H^1 (I_\mathfrak{p} ,F^+_p A))}\right]
\end{equation}
where $I_\lambda$ (resp. $I_\mathfrak{p}$) denotes the inertia subgroup 
at each place $\lambda$ (resp. $\mathfrak{p}$) not dividing $p$ (resp. dividing $p$). 
\begin{lemma}\label{lemma:finitenessSel_A}
When $F$ is a finite extension of $\mathbb{Q}$, the Pontrjagin dual $\mathrm{Sel}_A (F)^\vee$ of $\mathrm{Sel}_A (F)$ 
is a finitely generated $\mathbb{Z}_p$-module. 
\end{lemma}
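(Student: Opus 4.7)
My strategy is to embed $\mathrm{Sel}_A(F)$ into a global Galois cohomology group unramified outside a finite set of primes $S$, to show that this larger group is cofinitely generated as a $\Z$-module, and then to dualize. Because $F/\mathbb{Q}$ is finite, this is essentially a standard consequence of Tate's finiteness theorem for cohomology with restricted ramification.

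Let $S$ denote the (finite) set of places of $F$ lying above the rational primes dividing $Mp$ together with the archimedean places. Since $\rho_f$ is unramified outside $M$, the discrete module $A=V_f/T$ is a $G_{F,S}$-module, where $G_{F,S}=\mathrm{Gal}(F_S/F)$ is the Galois group of the maximal extension of $F$ unramified outside $S$. Inflation--restriction gives an injection $H^1(G_{F,S},A)\hookrightarrow H^1(F,A)$ whose image consists exactly of those classes that vanish on the inertia subgroup $I_\lambda$ for every $\lambda\notin S$. By the definition (\ref{equation:definition_Selmer}), every element of $\mathrm{Sel}_A(F)$ restricts trivially to $H^1(I_\lambda,A)$ at all $\lambda\nmid p$, and in particular at all $\lambda\notin S$; hence the Selmer group can be identified with a subgroup
\[
\mathrm{Sel}_A(F)\hookrightarrow H^1(G_{F,S},A).
\]
It therefore suffices to show that the $\Z$-module $H^1(G_{F,S},A)^\vee$ is finitely generated.

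I would then argue as follows. The $G_{F,S}$-module $A[p]$ is \emph{finite}, so by Tate's classical theorem on cohomology with restricted ramification (which relies on the finiteness of class groups and the finite generation of $S$-units), the group $H^1(G_{F,S},A[p])$ is finite. Taking cohomology of $0\to A[p]\to A\xrightarrow{p} A\to 0$ yields a surjection
\[
H^1(G_{F,S},A[p])\twoheadrightarrow H^1(G_{F,S},A)[p],
\]
so $H^1(G_{F,S},A)[p]$ is finite. Dualizing, the quotient $H^1(G_{F,S},A)^\vee/p\,H^1(G_{F,S},A)^\vee$ is finite, and topological Nakayama applied to the compact $\Z$-module $H^1(G_{F,S},A)^\vee$ gives the desired finite generation. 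The injection above dualizes to a surjection $H^1(G_{F,S},A)^\vee\twoheadrightarrow\mathrm{Sel}_A(F)^\vee$, so $\mathrm{Sel}_A(F)^\vee$ is a quotient of a finitely generated $\Z$-module, whence finitely generated.

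The argument is entirely classical and there is no genuine obstacle: the single substantial input is Tate's finiteness of $H^1(G_{F,S},\cdot)$ on finite $p$-primary modules. The nontrivial counterpart of this lemma is the \emph{Iwasawa-theoretic} version, where $F$ is replaced by the cyclotomic $\Z$-tower $\mathbb{Q}(\mu_{p^\infty})$ and finite generation over $\Z$ is replaced by torsionness over the Iwasawa algebra $\Lambda_{\mathrm{cyc}}$ — this is the content of Theorem B and requires genuinely more work (control theorems, Poitou--Tate, and global duality).
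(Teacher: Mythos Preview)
Your proof is correct and follows essentially the same route as the paper: the paper says the lemma ``can be proved by reducing the problem to Hermite's theorem on the finiteness of the number of the extensions of a given number field whose ramified places and the degrees are fixed,'' and your appeal to Tate's finiteness theorem for $H^1(G_{F,S},\,\text{finite module})$ is precisely that reduction spelled out one level higher (Tate's theorem is proved via Hermite--Minkowski). One incidental slip: in your closing remark, the torsionness of $\mathrm{Sel}_A(\mathbb{Q}(\mu_{p^\infty}))^\vee$ over $\Lambda_{\mathrm{cyc}}$ is Theorem~A in the paper, not Theorem~B (which concerns the $p$-adic $L$-function), and the immediate sequel Lemma~\ref{lemma:finitelygenerated} asserts only finite generation, with torsionness deferred to Theorem~A.
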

This lemma can be proved by reducing the problem to 
Hermite's theorem on the finiteness 
of the number of the extensions of a given number fields whose ramified places and the degrees are fixed. Since the argument of the proof is well-known and found in \cite[\S 4]{gre} for example, we omit the proof. 
\par 
When $F=\mathbb{Q} (\mu_{p^\infty})$, the Galois group 
$G_{\mathrm{cyc}}:= \mathrm{Gal} (\mathbb{Q} (\mu_{p^\infty})/\mathbb{Q})$ 
acts on $\mathrm{Sel}_A (\mathbb{Q} (\mu_{p^\infty}))$ thanks to the functoriality of Galois cohomology. Since the Pontrjagin dual $\mathrm{Sel}_A (\mathbb{Q} (\mu_{p^\infty}))^\vee$ is a compact module with continuous action of $G_{\mathrm{cyc}}$, the module $\mathrm{Sel}_A (\mathbb{Q} (\mu_{p^\infty}))^\vee$ naturally has a structure of a module over the semi-local algebra  
$\Lambda_{\mathrm{cyc}}:= \mathbb{Z}_p [[ G_{\mathrm{cyc}}]]$. 
We note that $\Lambda_{\mathrm{cyc}}$ is a semi-local algebra which is isomorphic to 
$\mathbb{Z}_p [[X]] \times \cdots \times \mathbb{Z}_p [[X]]$ ($p-1$ components). 
\par 
We have also the following lemma. 
\begin{lemma}\label{lemma:finitelygenerated}
$\mathrm{Sel}_A (\mathbb{Q} (\mu_{p^\infty}))^\vee$ is finitely generated over $\Lambda_{\mathrm{cyc}}$. 
\end{lemma}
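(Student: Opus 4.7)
The plan is to reduce to the finite-level statement (Lemma~\ref{lemma:finitenessSel_A}) via the topological Nakayama lemma together with an inflation--restriction argument. Write $\Delta := \mathrm{Gal}(\mathbb{Q}(\mu_p)/\mathbb{Q})$ and $\Gamma := \mathrm{Gal}(\mathbb{Q}(\mu_{p^\infty})/\mathbb{Q}(\mu_p)) \cong \mathbb{Z}_p$, so that $G_{\mathrm{cyc}} = \Delta \times \Gamma$. As recalled just above the lemma, $\Lambda_{\mathrm{cyc}}$ decomposes as a product of $p-1$ copies of $\mathbb{Z}_p[[\Gamma]]$; since $|\Delta| = p-1$ is prime to $p$, its Jacobson radical equals the ideal generated by $p$ and $\gamma - 1$ for any topological generator $\gamma$ of $\Gamma$. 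In particular $\Lambda_{\mathrm{cyc}}/(\gamma - 1) \cong \mathbb{Z}_p[\Delta]$ is finite over $\mathbb{Z}_p$.

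Applying the topological Nakayama lemma to the compact $\Lambda_{\mathrm{cyc}}$-module $M := \mathrm{Sel}_A(\mathbb{Q}(\mu_{p^\infty}))^\vee$, it suffices to prove that $M/(\gamma - 1)M$ is finitely generated over $\mathbb{Z}_p$. By Pontryagin duality this is equivalent to showing that
$$\mathrm{Sel}_A(\mathbb{Q}(\mu_{p^\infty}))^\Gamma$$
is a cofinitely generated $\mathbb{Z}_p$-module. To bound this from above, I first observe that every Selmer class is unramified outside the primes dividing $p$, and hence outside the finite set $S$ of primes of $\mathbb{Q}$ above $Mp\infty$, so
$$\mathrm{Sel}_A(\mathbb{Q}(\mu_{p^\infty})) \subset H^1(G_{\mathbb{Q}(\mu_{p^\infty}), S}, A).$$
Hochschild--Serre applied to the extension $\mathbb{Q}(\mu_{p^\infty})/\mathbb{Q}(\mu_p)$ then yields
$$H^1(G_{\mathbb{Q}(\mu_p), S}, A) \longrightarrow H^1(G_{\mathbb{Q}(\mu_{p^\infty}), S}, A)^\Gamma \longrightarrow H^2(\Gamma, A^{G_{\mathbb{Q}(\mu_{p^\infty})}}),$$
whose right-hand term vanishes because $\Gamma \cong \mathbb{Z}_p$ has $p$-cohomological dimension $1$. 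The middle term is therefore a quotient of $H^1(G_{\mathbb{Q}(\mu_p), S}, A)$, which is cofinitely generated over $\mathbb{Z}_p$ by the standard finiteness theorem for Galois cohomology of $S$-ramified extensions of a number field with coefficients in a cofinitely generated $p$-primary Galois module (the same Hermite-based argument underlying Lemma~\ref{lemma:finitenessSel_A}, applied to $A[p^n]$ and passed to the limit). Combining these inclusions gives the required cofinite generation.

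The main obstacle is largely bookkeeping: one must justify that the Jacobson radical of $\Lambda_{\mathrm{cyc}}$ is indeed $(p, \gamma - 1)$ (which rests on $\gcd(|\Delta|, p) = 1$) and that the topological form of Nakayama applies to compact modules over the complete semi-local ring $\Lambda_{\mathrm{cyc}}$. It is worth emphasising that no genuine control theorem relating $\mathrm{Sel}_A(\mathbb{Q}(\mu_{p^\infty}))^\Gamma$ to $\mathrm{Sel}_A(\mathbb{Q}(\mu_p))$ is required at this stage, since we only need an upper bound; such a comparison would demand a careful local analysis at the primes above $p$ using the explicit definition \eqref{equation:definition_Selmer} of the Selmer group, and would be relevant only for obtaining sharper structural information than mere finite generation.
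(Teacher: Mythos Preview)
Your argument is correct. Both you and the paper reduce via the topological Nakayama lemma to showing that $\mathrm{Sel}_A(\mathbb{Q}(\mu_{p^\infty}))^\Gamma$ is cofinitely generated over $\mathbb{Z}_p$, but the methods diverge at that point. The paper invokes the control theorem for Selmer groups (citing \cite[Proposition~3.8]{ochiai-JNT01}) to compare $\mathrm{Sel}_A(\mathbb{Q}(\mu_{p^\infty}))^\Gamma$ with a finite-level Selmer group, and then appeals to Lemma~\ref{lemma:finitenessSel_A}. You instead throw away the local conditions, embedding the Selmer group into the ambient $H^1(G_{\mathbb{Q}(\mu_{p^\infty}),S},A)$ and applying inflation--restriction directly to that group; this sidesteps the local analysis at $p$ that the control theorem requires. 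Your route is more elementary and entirely sufficient for the bare finite-generation statement, while the paper's route yields the sharper comparison $\mathrm{Sel}_A(\mathbb{Q}(\mu_{p^\infty}))^\Gamma \sim \mathrm{Sel}_A(\mathbb{Q}(\mu_p))$ that becomes genuinely useful later (e.g.\ in deducing Theorem~A$'$ from Theorem~A). You correctly anticipate this trade-off in your final paragraph.
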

Since Lemma \ref{lemma:finitelygenerated} is a corollary of Lemma \ref{lemma:finitenessSel_A} by using 
the control theorem of Selmer group (see \cite[Proposition 3.8]{ochiai-JNT01}) and Nakayama's theorem, we omit the proof. 
On the other hand, the following theorem 
relies essentially on the technique of Euler system.
\\
\\  
{\bf Theorem A (Torsion property of the Selmer group of $f$/Kato--Rubin--Rohrlich)} 
\\ 
Let $f$ be an ordinary eigen cuspform of weight 
$k\geq 2$. Assume that $p \geq 5$ \footnote{We exclude $p=3$ only because we did not find a complete reference for the CM case.}.     
Then the finitely generated $\Lambda_{\mathrm{cyc}}$-module $\mathrm{Sel}_A (\mathbb{Q}(\mu_{p^\infty}))^\vee$ is torsion over $\Lambda_{\mathrm{cyc}}$ \footnote{Note that the algebra $\Lambda_{\mathrm{cyc}}$ is not local, but 
a product of $p-1$ copies of local domain $\mathbb{Z}_p [[X]]$. A $\Lambda_{\mathrm{cyc}}$-module $M$ is said to be 
torsion when the base extension of $M$ to each local component $\mathbb{Z}_p [[X]]$ is torsion.}. 
\\ 
\ 
\begin{rem}
In \eqref{equation:definition_Selmer}, we required $f$ to be ordinary 
in order to define the Selmer group. However, there is another 
Selmer group $\mathrm{Sel}^{\mathrm{BK}}_A (\mathbb{Q} (\mu_{p^\infty}))$ defined by using the local condition $H^1_f$ 
of Bloch-Kato (see \cite[Section 3]{20}), which is almost equal to  
the above $\mathrm{Sel}_A (\mathbb{Q} (\mu_{p^\infty}))$ and 
is also valid when $V_f$ is non-ordinary. 
When the form $f$ is non-ordinary,  
Lemma \ref{lemma:finitelygenerated} is also applied also to see that 
$\mathrm{Sel}^{\mathrm{BK}}_A (\mathbb{Q} (\mu_{p^\infty}))^\vee$ is a finitely generated $\Lambda_{\mathrm{cyc}}$-module. 
However, it is known that $\mathrm{Sel}^{\mathrm{BK}}_A (\mathbb{Q} (\mu_{p^\infty}))^\vee$ is not a torsion $\Lambda_{\mathrm{cyc}}$-module.
Let us recall the following exact sequence obtained by the global duality theorem of Poitou-Tate: 
\begin{multline}\label{equation:4termsequence0}
\varprojlim_n H^1 (\mathbb{Q}_{\Sigma}/\mathbb{Q}(\mu_{p^n}),T^\ast (1))   \overset{\overline{\mathrm{loc}}_p}{\longrightarrow }
\varprojlim_n  
\dfrac{H^1 (\mathbb{Q}_p (\mu_{p^n}) ,T^\ast (1))}
{H^1_f (\mathbb{Q}_p (\mu_{p^n}) ,T^\ast (1))}
\\ 
\longrightarrow \mathrm{Sel}^{\mathrm{BK}}_A (\mathbb{Q}(\mu_{p^\infty}))^\vee 
\longrightarrow 
\varprojlim_n H^2 (\mathbb{Q}_{\Sigma}/\mathbb{Q}(\mu_{p^n}),T^\ast (1))  
\end{multline}
where we denote by $T^\ast$ the $\mathbb{Z}_p$-linear dual of $T$ and by $T^\ast (1)$ the Tate twist of $T^\ast$. 
Let ${K}$ be a finite extension of 
$\mathbb{Q}_p$ obtained by adjoining Fourier coefficients of $f$ and $\mathcal{O}_K$ the ring of integers in $K$. 
By using the semi-global Euler--Poincar\'{e} characteristic formula of Galois cohomology, 
the $\Lambda_{\mathrm{cyc}}\otimes_{\mathbb{Z}_p} \mathcal{O}_{{K}}$-module 
$\varprojlim_n H^1 (\mathbb{Q}_{\Sigma}/\mathbb{Q}(\mu_{p^n}),T^\ast (1))$ is 
generically of rank one over $\Lambda_{\mathrm{cyc}} \otimes_{\mathbb{Z}_p} \mathcal{O}_{{K}}$.
On the other hand, by a result of Berger \cite[Th\'{e}or\`{e}m A]{ber05}, we have 
$$
\varprojlim_n  
\dfrac{H^1 (\mathbb{Q}_p (\mu_{p^n}) ,T^\ast (1))}
{H^1_f (\mathbb{Q}_p (\mu_{p^n}) ,T^\ast (1))}
\cong \varprojlim_n H^1 (\mathbb{Q}_p (\mu_{p^n}) ,T^\ast (1)). 
$$
By using the local Euler--Poincar\'{e} characteristic formula of Galois cohomology, 
the $\Lambda_{\mathrm{cyc}}\otimes_{\mathbb{Z}_p} \mathcal{O}_{{K}}$-module 
$\varprojlim_n H^1 (\mathbb{Q}_p (\mu_{p^n}) ,T^\ast (1))$ is 
generically of rank two over $\Lambda_{\mathrm{cyc}} \otimes_{\mathbb{Z}_p} \mathcal{O}_{{K}}$. 
Hence, we conclude that $\mathrm{Sel}^{\mathrm{BK}}_A (\mathbb{Q} (\mu_{p^\infty}))^\vee$ is not a torsion $\Lambda_{\mathrm{cyc}}$-module 
by \eqref{equation:4termsequence0}. We remark that, as we will discuss later 
in the proof of the inequality \eqref{equation:IMC_for_elliptic_cuspform_modulo_mu2} in Theorem C, 
a result of Kato \cite{320} implies that the first map in \eqref{equation:4termsequence0} is injective 
and the last term $\varprojlim_n H^2 (\mathbb{Q}_{\Sigma}/\mathbb{Q}(\mu_{p^n}),T^\ast (1))$ in \eqref{equation:4termsequence0} 
is $\Lambda_{\mathrm{cyc}}$-torsion. By \eqref{equation:4termsequence0}, this implies that 
$\mathrm{Sel}^{\mathrm{BK}}_A (\mathbb{Q} (\mu_{p^\infty}))^\vee$ is 
generically of rank two over $\Lambda_{\mathrm{cyc}} \otimes_{\mathbb{Z}_p} \mathcal{O}_{{K}}$ more precisely. 
\end{rem}
\begin{proof}[Proof of Theorem A] 
We remark that the proof is completely different 
depending on whether the cuspform $f$ has complex multiplication (in the sense  of Ribet \cite{410ggg}) or not. 
\par 
When $f$ is a cuspform with complex multiplication by an imaginary quadratic field $F$, the result 
follows from the two-variable Iwasawa Main Conjecture for $F$ proved by Rubin \cite{419efg}. 
The Selmer group $\mathrm{Sel}_A (\mathbb{Q}(\mu_{p^\infty}))^\vee$ is a one-variable specialization of 
the two-variable Selmer group and this specialization is torsion over the cyclotomic Iwasawa algebra by the non-triviality of the 
$p$-adic $L$-function of $f$ proved by by Rohrlich \cite{417roh} (see 
\cite{haraochiai} for a further detail 
and Remark \ref{remark:Proof_Theorem_C_onevaiable} (1) for related comments on the proof of Theorem C for $f$ with complex multiplication).  
%
%
%
%
%
\par 
When the cuspform $f$ does not have complex multiplication, Kato proves that the Pontrjagin dual of the Selmer group is torsion 
by using a Beilinson--Kato Euler system. By the same reasoning as the above, Euler systems of Beilinson-Kato are 
non-trivial thanks to the non-triviality of the $p$-adic $L$-function of $f$ due to Rohrlich.  
\end{proof}
In order to introduce the $p$-adic $L$-function, we recall the following result which is due to Shimura \cite{441ggg}. 
\\ 
{\bf Theorem (algebraicity of special values/Shimura)} 
\\ 
There exist complex numbers $\Omega^{+}_{f,\infty},\,  \Omega^{-}_{f,\infty}
\in \mathbb{C}^\times$ (called {\bf complex periods}) such that we have 
\begin{equation}\label{equation:algebraicity}
\dfrac{L(f , \phi^{-1},j)}
{(2\pi \sqrt{-1})^{j} \Omega_{f,\infty} ^{\mathrm{sgn}(j,\phi )}} 
\in \mathbb{Q}_f [\phi ]^\times 
\end{equation}
for any integer $j$ satisfying $1 \leq j\leq k-1$ and for any Dirichlet 
character $\phi$, where $\mathrm{sgn}(j,\phi )$ denotes the sign of 
$(-1)^{(j-1 ) \phi (-1)}$, $\mathbb{Q}_f$ denotes the Hecke field of $f$ which is a finite extension of $\mathbb{Q}$ obtained by adjoining 
all Fourier coefficients of $f$ and $\mathbb{Q}_f [\phi ]$ is the field obtained by adjoining the values of $\phi $ to $\mathbb{Q}_f$. 
\\ 
\begin{rem}\label{remark:complexperiod1}
\begin{enumerate}
\item 
When $f$ is an eigen cuspform of level $\Gamma_1 (M)$, the periods $\Omega^{+}_{f,\infty}$ and $\Omega^{-}_{f,\infty}$ 
can be obtained as ``determinants of the geometric comparison isomorphism'' 
\begin{equation}
\mathrm{Fil}^1 H^1_{\mathrm{dR}} (X_1 (M ),\omega^{\otimes k_f -2}) [f]\otimes_{\mathbb{Q}_f} \mathbb{C} \cong H^1_c (Y_1 (M)_{\mathbb{C}} ,
\mathcal{L}_{k_f -2} (\mathbb{Q}_f))^{\pm}[f ] \otimes_{\mathbb{Q}_f} \mathbb{C}
\end{equation}
where $\omega$ is a standard coherent sheaf on $X_1 (M )$ obtained from 
the push-forward of the sheaf of relative differential forms on the universal elliptic curve on $Y_1 (M)$ and $\mathcal{L}_{k_f -2} (\mathbb{Q}_f)$ is the standard local system of rank $k-1$ over 
$Y_1 (M)_{\mathbb{C}} $. The symbol $[f]$ denotes the cutting out of the 
rank-one $\mathbb{Q}_f$-subspace of the cohomology group on which the Hecke operators act via 
Fourier coefficients of $f$.  
The period can be defined by: 
\begin{equation}\label{equation:definition_of_complecx periods}
d_f \otimes 1 = \Omega^{\pm}_{f,\infty} \cdot b^\pm _f \otimes 1 
\end{equation}
where $d_f$ is a canonical basis of $\mathrm{Fil}^1 H^1_{\mathrm{dR}} (X_1 (M ),\omega^{\otimes k_f -2}) [f]$ given by $f$
 and $b^\pm _f$ is a basis of a rank-one $\mathbb{Q}_f$-space 
 $H^1_c (Y_1 (M)_{\mathbb{C}} ,\mathcal{L}_{k_f -2} (\mathbb{Q}_f))^{\pm}[f ]$. Since there are no canonical choices of $b^\pm _f$ in general, 
$\Omega^{\pm}_{f,\infty} =\Omega^{\pm}_{f,\infty} (b^\pm_f)$ are defined 
only modulo multiplication by elements of $(\mathbb{Q}_f )^\times$ and they 
make sense as an element of $\mathbb{C}^\times /\mathbb{Q}_f ^\times$. 
\item 
The algebraicity theorem gives a supporting example of Deligne conjecture 
(see \cite{del79}) on special values of the $L$-function associated to 
the motive of $f$. 
\item 
It is important to choose the periods 
$\Omega^{+}_{f,\infty} , \Omega^{-}_{f,\infty} \in \mathbb{C}^\times$ so that the $p$-adic valuations 
of the values match with 
the generalized Birch and Swinnerton-Dyer conjecture as follows
$$
\left\vert \dfrac{L(f ,\phi^{-1} ,j)}
{(2\pi \sqrt{-1})^{j} \Omega_{f,\infty} ^{\mathrm{sgn}(j,\phi)}} \right\vert_p 
= \dfrac{\left\vert \# \mathrm{Sel}^{\mathrm{BK}}_{(V_f /T )(j) \otimes \phi } \right\vert_p \cdot  
\left\vert \text{local Tamagawa number for $V_f (j) \otimes \phi$} \right\vert_p}
{\left\vert H^0 (\mathbb{Q} , (V_f /T )(j) \otimes \phi ) \right\vert_p \cdot 
\left\vert H^0 (\mathbb{Q} , (V^\ast_f /T^\ast )(1-j) \otimes \phi^{-1} ) \right\vert_p},
$$ 
where $T$ is a Galois stable lattice of $V_f$, $j$ is an integer with $1\leq j\leq k-1$ and $\phi$ is a Dirichlet character of $p$-power conductor such that 
$\mathrm{Sel}_{(V_f /T )(j) \otimes \phi } $ is finite. Such periods are called {\bf $p$-optimal complex periods} with respect to the lattice $T$ \footnote{This notion of $p$-optimal complex periods is conditional. The Iwasawa Main Conjecture is expected to be compatible with generalized Birch and Swinnerton-Dyer conjecture for any choice of $j$ and $\phi$ through the control theorem of Selmer groups. Assuming that, the notion of $p$-optimal does not depend on the choice of $j$ and $\phi$. }. 
Let $\mathbb{Z}_{f,(p)}$ be the localization of the ring of integers $\mathbb{Z}_{f}$ of $\mathbb{Q}_f$ at the prime over $p$ of $\mathbb{Z}_{f}$ induced by the fixed inclusion $\mathbb{Q}_f \hookrightarrow \overline{\mathbb{Q}_p}$. 
The $p$-optimal complex periods $\Omega^{+}_{f,\infty} , \Omega^{-}_{f,\infty}$ are unique modulo multiplication by elements in $\mathbb{Z}_{f,(p)}^\times$ and hence $\Omega^{+}_{f,\infty}$ 
and $\Omega^{-}_{f,\infty}$ 
make sense as elements of $\mathbb{C}^\times /\mathbb{Z}_{f,(p)}^\times$. 
\end{enumerate}
\end{rem}
Recall that ${K}$ is the completion of $\mathbb{Q}_f$ 
in $\overline{\mathbb{Q}_p}$. 
Now we recall the $p$-adic $L$-function due to Amice--V\'{e}lu \cite{9fff} and Visik \cite{470} following  the modular symbol method developed by Manin and Mazur.  
\\
\\ 
{\bf Theorem B (Existence of the one--variable ordinary $p$-adic $L$-function)} 
\\ 
Let $f$ be an ordinary eigen cuspform of weight 
$k\geq 2$. 
Let us choose complex periods $\Omega^{+}_{f,\infty}$ and $ \Omega^{-}_{f,\infty} $. Then, there exists an analytic $p$-adic $L$-function $L_p (\{ 
\Omega^{\pm}_{f,\infty} \}) 
\in \Lambda_{\mathrm{cyc}} \otimes_{\mathbb{Z}_p} {K}$ satisfying the interpolation formula\footnote{The both sides of the equation 
are in $\overline{\mathbb{Q}}$ thanks to algebraicity theorem of Shimura stated above.}:
\begin{equation}\label{equation:interpolation_AV_V}
\chi^j_{\mathrm{cyc}} \phi (L_p (\{ 
\Omega^{\pm}_{f,\infty} \})) = (-1)^{j} (j-1)!  e_p (f, j ,\phi ) 
 \tau (\phi) \dfrac{L(f , \phi^{-1},j)}
{(2\pi \sqrt{-1})^{j}\Omega^{\mathrm{sgn}(j,\phi )}_{f,\infty} } ,
\end{equation}
for any integer $j$ with $1 \leq j\leq k-1$ and for any Dirichlet 
character $\phi$ whose conductor $\mathrm{Cond} (\phi)$ is a power of $p$. 
In the formula, $\tau (\phi )$ denotes the Gauss sum 
$\displaystyle{\sum^{
\mathrm{Cond} (\phi)}_{i=1}} \phi (i) 
\left( 
\mathrm{exp} \dfrac{2\pi \sqrt{-1}}{\mathrm{Cond} (\phi)}\right)^i$ and 
the factor $e_p (f, j, \phi )$ is defined as follows $:$ 
$$ 
e_p(f, j, \phi ) = 
\begin{cases} 
 1- \dfrac{p^{j-1} }{a_{p}(f ) } 
& \text{when $\phi =\text{\boldmath$1$}$,} \\ 
\left( \frac{p^{j-1}}{a_{p}(f )} 
\right)^{\mathrm{ord}_p (\mathrm{Cond} (\phi))} 
& \text{when $\phi  \not= \text{\boldmath$1$}$.}
\end{cases} 
$$ 
\\ 
\\ 
\begin{rem}\label{remark:p-adicLforf}
\begin{enumerate}
\item 
There are 
several constructions of $L_p (\{ \Omega^{\pm}_{f,\infty} \})$ 
other than the method of modular symbol mentioned above.   
First, thanks to Hida, Panchishkin and Dabrowski, we should be able to 
construct $L_p (\{ \Omega^{\pm}_{f,\infty} \})$ by the Rankin-Selberg method using Eisenstein series. Also, $L_p (\{ \Omega^{\pm}_{f,\infty} \})$ 
is constructed from a Beilinson-Kato Euler system via a generalized Coleman map (see \cite[chap. IV]{320}). Finally, thanks to Coates--Wiles, de Shalit and Yager, when the cuspform $f$ has complex multiplication, 
$L_p (\{ \Omega^{\pm}_{f,\infty} \})$ is constructed from an Euler system of elliptic units via a generalized Coleman map. 
\item We expect that, when the complex periods $\Omega^{+}_{f,\infty} $ and $\Omega^{-}_{f,\infty} $ are $p$-optimal, the $p$-adic $L$-function $L_p (\{ \Omega^{\pm}_{f,\infty} \})$ is integral in the sense that $L_p (\{ \Omega^{\pm}_{f,\infty} \}) \in 
\Lambda_{\mathrm{cyc}} \otimes_{\mathbb{Z}_p} \mathcal{O}_{{K}}$ where $ \mathcal{O}_{{K}}$ is the ring of integers in ${K}$. 
\item 
When the weight $k$ of $f$ is greater than $2$, the value $j=k-1$
among the critical values $1\leq j\leq k-1$ 
is inside the convergence region $\mathrm{Re}(s)> \frac{k+1}{2}$ of the Euler product expansion of the $L$-function $L(f , \phi^{-1},s)$. By the definition of the convergence of infinite product, $L_p (\{ 
\Omega^{\pm}_{f,\infty} \})$ is nonzero thanks to the interpolation property  \eqref{equation:interpolation_AV_V} \footnote{When $k=1$, the value $j=k-1$ is on the border of the convergence region. In this case, Jacquet-Shalika \cite{276jsn} shows that $L(f , \phi^{-1},2)$ is always nonzero for $k=3$.}. When $k=2$, the only critical value $L(f , \phi^{-1},1)$
can be sometimes zero. Hence, a priori, it is not clear if $L_p (\{ \Omega^{\pm}_{f,\infty} \})$ is nonzero. However, 
Rohrhlich \cite{417roh} proves that, for any eigen cuspform $f$ of weight $2$, there exists a Dirichlet character $\phi$ 
whose conductor $\mathrm{Cond} (\phi)$ is a power of $p$ such that 
we have $L(f , \phi^{-1},1) \not= 0$. Hence, 
$L_p (\{ \Omega^{\pm}_{f,\infty} \}) $ is always nonzero as an element of 
$\Lambda_{\mathrm{cyc}} \otimes_{\mathbb{Z}_p} {K}$. 
\end{enumerate}
\end{rem}
\subsection{Iwasawa Main Conjecture}
With the preparation above, we recall the statement of the one-variable cyclotomic Iwasawa Main conjecture for elliptic cuspforms:  
\\ 
\\ 
{\bf Conjecture (Cyclotomic Iwasawa Main Conjecture for $f$)} 
\\ 
Let $f$ be an ordinary eigen cuspform of weight $k\geq 2$ and let $T \subset V_f$ be a Galois stable lattice. Let us choose 
the complex periods $\Omega^{+}_{f,\infty}  , 
\Omega^{-}_{f,\infty}$ to be $p$-optimal with respect to the lattice $T$. 
Then, we have the the following equality of principal ideals in the ring $\Lambda_{\mathrm{cyc}} \otimes_{\mathbb{Z}_p} \mathcal{O}_{{K}}$:  
\begin{equation}\label{equation:IMC_for_elliptic_cuspform}
(L_p (\{ \Omega^{\pm}_{f,\infty} \}))=\mathrm{char}_{\Lambda_{\mathrm{cyc}} \otimes_{\mathbb{Z}_p} \mathcal{O}_{{K}}} \, \mathrm{Sel}_A (\mathbb{Q}(\mu_{p^\infty}))^\vee 
\end{equation} 
where $A:=V_f /T$.  
\\ 
\begin{rem}
The right-hand side of \eqref{equation:IMC_for_elliptic_cuspform} 
is nonzero by Theorem A. Also, the left-hand side of \eqref{equation:IMC_for_elliptic_cuspform} 
is nonzero by Rohrlich's result explained in Remark \ref{remark:p-adicLforf} (3). 
\end{rem}
Before introducing the result on the above conjecture, we recall 
the relation between the ideals of the ring  
$\Lambda_{\mathrm{cyc}} \otimes_{\mathbb{Z}_p} {\mathcal{O}_{{K}}}$
and the ideals of the ring 
$\Lambda_{\mathrm{cyc}} \otimes_{\mathbb{Z}_p} {{K}}$. 
Since $\Lambda_{\mathrm{cyc}} \otimes_{\mathbb{Z}_p} {{K}}$ is a localization of $\Lambda_{\mathrm{cyc}} \otimes_{\mathbb{Z}_p} {\mathcal{O}_{{K}}}$ with respect to the multiplicative set $\{ \varpi^n \}_{n \in \mathbb{Z}}$ with $\varpi$ a uniformizer of $\mathcal{O}_{{K}}$, 
we have an injection 
\begin{align*}
\{ \text{prime ideals in $\Lambda_{\mathrm{cyc}} \otimes_{\mathbb{Z}_p} {{K}}$} \}  
& \hookrightarrow \{ \text{prime ideals in $\Lambda_{\mathrm{cyc}} \otimes_{\mathbb{Z}_p} \mathcal{O}_{{K}}$ } \} 
\\ 
I & \mapsto I \cap ( \Lambda_{\mathrm{cyc}} \otimes_{\mathbb{Z}_p} \mathcal{O}_{{K}} )
\end{align*} 
and the image coincide with prime ideals which do not contain 
$\varpi (\Lambda_{\mathrm{cyc}} \otimes_{\mathbb{Z}_p} \mathcal{O}_{{K}})$. 
For a finitely generated torsion 
$\Lambda_{\mathrm{cyc}} \otimes_{\mathbb{Z}_p} 
\mathcal{O}_{{K}}$-module $M$, 
we lose the information on prime factors containing $\varpi (\Lambda_{\mathrm{cyc}} \otimes_{\mathbb{Z}_p} \mathcal{O}_{{K}})$ 
by considering 
$\mathrm{char}_{\Lambda_{\mathrm{cyc}} \otimes_{\mathbb{Z}_p} {{K}}} \, (M\otimes_{\mathcal{O}_{{K}}} {{K}})$ 
instead of $\mathrm{char}_{\Lambda_{\mathrm{cyc}} \otimes_{\mathbb{Z}_p} \mathcal{O}_{{K}}} \, M $. 
The following result is known: 
\\
\\ 
{\bf Theorem C (Cyclotomic Iwasawa Main Conjecture for $f$)} 
\\ 
Let $f$ be an ordinary eigen cuspform of weight $k\geq 2$. 
We fix a Galois stable lattice $T \subset V_f$ and we choose 
complex periods $\Omega^{+}_{f,\infty} $ and $
\Omega^{-}_{f,\infty}$ to be $p$-optimal with respect to the lattice $T$. 
We assume the following technical conditions: 
\begin{enumerate}
\item[(i)]
The image of the mod $\varpi$ representation $\overline{\rho} :\, G_{\mathbb{Q}} \longrightarrow \mathrm{Aut}_{\mathcal{O}_K /(\varpi )} 
(T/\varpi T ) \cong GL_2 (\mathcal{O}_K /(\varpi ))$ is irreducible.  
\item[(ii)]
The semi-simplification of $\overline{\rho}$ restricted to the decomposition group $G_{\mathbb{Q}_p}$ at $p$ is a direct sum of 
two different characters. 
\item[(iii)]  
The prime-to-$p$ part of the conductor of $f$ is square-free natural number. 
\item[(iv)] 
The odd prime number $p$ which we fix is not equal to $3$ (assumed only in the CM case).
\end{enumerate}

Then, we have the following equality of principal ideals in the ring $\Lambda_{\mathrm{cyc}} \otimes_{\mathbb{Z}_p} {{K}}$:    
\begin{equation}\label{equation:IMC_for_elliptic_cuspform_modulo_mu}
(L_p (\{ \Omega^{\pm}_{f,\infty} \}))=\mathrm{char}_{\Lambda_{\mathrm{cyc}} \otimes_{\mathbb{Z}_p} {{K}}} \, \, \mathrm{Sel}_A (\mathbb{Q}(\mu_{p^\infty}))^\vee \otimes_{\mathcal{O}_{{K}}} {{K}} . 
\end{equation} 
\ \\ 
\begin{rem}\label{remark:Proof_Theorem_C_onevaiable}
In the known case of \eqref{equation:IMC_for_elliptic_cuspform_modulo_mu},
the proof is completely different depending on whether the cuspform $f$ has complex multiplication or not. 
\begin{enumerate}
\item 
We explain the case when $f$ is a cuspform with complex multiplication by an imaginary quadratic field $F$. 
When $f$ is of weight $2$ with rational Fourier coefficients corresponding to a CM elliptic curve, 
the equality \eqref{equation:IMC_for_elliptic_cuspform_modulo_mu} is obtained by specializing the two-variable Iwasawa Main conjecture for $F$ proved by Rubin \cite[\S 12]{419efg} at a certain height-one prime of the two-variable Iwasawa algebra for $F$. 
When $f$ is of arbitrary weight, we refer the reader to \cite[Theorem A]{haraochiai} for this ``descent'' from the two-variable Iwasawa Main conjecture to the one-variable cyclotomic 
Iwasawa Main conjectures of CM cuspforms as well as its generalization to Hilbert modular cuspforms with complex multiplication. \par 
The reason why we have only a weaker equality over $\Lambda_{\mathrm{cyc}} \otimes_{\mathbb{Z}_p} {{K}}$ (not over $\Lambda_{\mathrm{cyc}} \otimes_{\mathbb{Z}_p} \mathcal{O}_{{K}}$) is the lack of knowledge on the periods. 
It is not known whether the ratio of a $p$-optimal complex period in the context of complex multiplication and a $p$-optimal complex period in the above context 
is a $p$-adic unit (see \cite[Conj. 2.26]{haraochiai}). 
Thus we can compare the ideal of a $p$-adic $L$-function obtained in Theorem B constructed by the modular symbol method with 
the ideal of a different $p$-adic $L$-function constructed by the CM method obtained in \cite[Theorem A]{haraochiai} only modulo a power of $\varpi$ 
(see \cite[\S 2.4]{haraochiai} for calculation). 
\item 
When $f$ does not have complex multiplication, Kato \cite{320} proved the following inclusion of principal ideals in the ring $\Lambda_{\mathrm{cyc}} \otimes_{\mathbb{Z}_p} {{K}}$ using a Beilinson-Kato Euler system: 
\begin{equation}\label{equation:IMC_for_elliptic_cuspform_modulo_mu2}
(L_p (\{ \Omega^{\pm}_{f,\infty} \}))
\subset 
\mathrm{char}_{\Lambda_{\mathrm{cyc}} \otimes_{\mathbb{Z}_p} {{K}}} \,  (\mathrm{Sel}_A (\mathbb{Q}(\mu_{p^\infty}))^\vee \otimes_{\mathcal{O}_{{K}}} {{K}}) . 
\end{equation} 
(see Remark \ref{remark:Colemanmap_singlemodularform} for the case when the level of $f$ is divisible by $p$) 
\par 
On the other hand, Skinner--Urban \cite{444sui} proves 
the following opposite inclusion of principal ideals in the ring $\Lambda_{\mathrm{cyc}} \otimes_{\mathbb{Z}_p} {{K}}$: 
\begin{equation}\label{equation:IMC_for_elliptic_cuspform_modulo_mu3}
(L_p (\{ \Omega^{\pm}_{f,\infty} \}))
\supset 
\mathrm{char}_{\Lambda_{\mathrm{cyc}} \otimes_{\mathbb{Z}_p} {{K}}} \,  (\mathrm{Sel}_A (\mathbb{Q}(\mu_{p^\infty}))^\vee \otimes_{\mathcal{O}_{{K}}} {{K}}) . 
\end{equation} 
by the method using the Eisenstein ideal for $U(2,2)$. 
The desired equality is obtained by combining \eqref{equation:IMC_for_elliptic_cuspform_modulo_mu2} and 
\eqref{equation:IMC_for_elliptic_cuspform_modulo_mu3}. 
\item 
We explain about the technical conditions (i) to (iv). 
The condition (i) is essential for \eqref{equation:IMC_for_elliptic_cuspform_modulo_mu3}. Kato's result \eqref{equation:IMC_for_elliptic_cuspform_modulo_mu2} 
does not require the condition (i), but it suffices to assume that $f$ is not CM for \eqref{equation:IMC_for_elliptic_cuspform_modulo_mu2}.   
Note also that Kato proves the following inclusion of principal ideals in the ring 
$\Lambda_{\mathrm{cyc}} \otimes_{\mathbb{Z}_p} \mathcal{O}_{{K}}$ stronger than \eqref{equation:IMC_for_elliptic_cuspform_modulo_mu2}: 
\begin{equation*}
(L_p (\{ \Omega^{\pm}_{f,\infty} \}))
\subset 
\mathrm{char}_{\Lambda_{\mathrm{cyc}} \otimes_{\mathbb{Z}_p}  \mathcal{O}_{{K}}
} \,  (\mathrm{Sel}_A (\mathbb{Q}(\mu_{p^\infty}))^\vee )  
\end{equation*}
under the condition that the the image of $\overline{\rho}$ contains $SL_2 (\mathbb{F}_p)$.
The condition (ii) is required only for \eqref{equation:IMC_for_elliptic_cuspform_modulo_mu3} and \eqref{equation:IMC_for_elliptic_cuspform_modulo_mu2} holds without assuming (ii). The condition (iii) is required only for \eqref{equation:IMC_for_elliptic_cuspform_modulo_mu3} and \eqref{equation:IMC_for_elliptic_cuspform_modulo_mu2} holds without assuming (iii). 
We also remark that cuspforms $f$ with complex multiplication are excluded also in the work of Skinner-Urban \eqref{equation:IMC_for_elliptic_cuspform_modulo_mu3}
because of this assumption (iii). 
\end{enumerate}
\end{rem}
We discuss a further generalization of the above result in later sections, 
especially a generalization of \eqref{equation:IMC_for_elliptic_cuspform_modulo_mu2}. For this purpose, it is important to look into the proof of \eqref{equation:IMC_for_elliptic_cuspform_modulo_mu2} in Theorem C (In fact, we prove Theorem A at the same time under the setting of \eqref{equation:IMC_for_elliptic_cuspform_modulo_mu2}). 
\par 
\begin{proof}[Brief sketch of the proof of the inequality \eqref{equation:IMC_for_elliptic_cuspform_modulo_mu2} in Theorem C] 
Let $\Sigma$ be a finite set of primes of $\mathbb{Q}$ which contains 
$\{ p\}$, $\{\infty \}$ and the places dividing the level of the cuspform $f$. 
Let $\mathbb{Q}_\Sigma$ be the maximal Galois extension of $\mathbb{Q}$ which is unramified outside $\Sigma$. 
\par 
We denote by $\mathrm{loc}_p$ the $\Lambda_{\mathrm{cyc}}$-linear homomorphism: 
$$
\varprojlim_n 
H^1 (\mathbb{Q}_{\Sigma}/\mathbb{Q}(\mu_{p^n}),T^\ast (1))  
\longrightarrow 
\varprojlim_n  
{H^1 (\mathbb{Q}_p (\mu_{p^n}) ,T^\ast (1))}
$$ 
and we denote by $\overline{\mathrm{loc}}_p$ the $\Lambda_{\mathrm{cyc}}$-linear homomorphism obtained by composing the following $\Lambda_{\mathrm{cyc}}$-linear homomorphism after $\mathrm{loc}_p$: 
$$
\varprojlim_n  
{H^1 (\mathbb{Q}_p (\mu_{p^n}) ,T^\ast (1))}
 \longrightarrow 
\varprojlim_n  
\dfrac{H^1 (\mathbb{Q}_p (\mu_{p^n}) ,T^\ast (1))}
{\mathrm{Im}(H^1 (\mathbb{Q}_p (\mu_{p^n}) ,F^+_p T^\ast (1)))}. 
$$ 
Then we have the following sequence of 
$\Lambda_{\mathrm{cyc}}$-modules: 
\begin{multline}\label{equation:4termsequence1}
0 \longrightarrow \varprojlim_n H^1 (\mathbb{Q}_{\Sigma}/\mathbb{Q}(\mu_{p^n}),T^\ast (1))   \overset{\overline{\mathrm{loc}}_p}{\longrightarrow }
\varprojlim_n  
\dfrac{H^1 (\mathbb{Q}_p (\mu_{p^n}) ,T^\ast (1))}
{\mathrm{Im}(H^1 (\mathbb{Q}_p (\mu_{p^n}) ,F^+_p T^\ast (1)))}
\\ 
\longrightarrow \mathrm{Sel}_A (\mathbb{Q}(\mu_{p^\infty}))^\vee 
\longrightarrow 
\varprojlim_n H^2 (\mathbb{Q}_{\Sigma}/\mathbb{Q}(\mu_{p^n}),T^\ast (1)) \longrightarrow 0 
\end{multline}
such that 
\begin{enumerate}
\item[{(i)}]
The sequence \eqref{equation:4termsequence1}$\otimes_{\mathcal{O}_{{K}}} {{K}} 
$ is exact. 
\item[(ii)]  
The last two terms are torsion over (every local component of) $\Lambda_{\mathrm{cyc}}$.  
\item[(iii)]
The first two terms are of rank one over (every local component of) 
$\Lambda_{\mathrm{cyc}}$. 
\end{enumerate} 
Here, the assertion (i) is nothing but the Poitou-Tate exact sequence of Galois cohomology 
except that the injectivity of the map $\overline{\mathrm{loc}}_p$ 
is due to Kato's result on the Beilinson-Kato Euler system in \cite{320}. Usually the Pontrjagin dual of 
$H^2 ( \mathbb{Q} (\mu_{p^\infty}) ,A)$ contributes to the kernel of $\overline{\mathrm{loc}}_p$. 
However Kato's result assures that this group is a cotorsion 
$\Lambda_{\mathrm{cyc}}$-module. Since the cohomological dimension of $G_{\mathbb{Q} (\mu_{p^\infty})}$ is two, 
the Pontrjagin dual of $H^2 ( \mathbb{Q} (\mu_{p^\infty}) ,A)$ is $p$-torsion free. This proves that 
$H^2 ( \mathbb{Q} (\mu_{p^\infty}) ,A) =0$. 
\par 
The assertion (ii) is also a consequence of a deep result 
by Kato on the Beilinson-Kato Euler system \cite{320}. 
\par 
For the assertion (iii), the rank of the first term is obtained as an application of the 
semi-global Euler-Poincar\'{e} characteristic formula of Galois cohomology theory as well as the fact that 
$\varprojlim_n H^2 (\mathbb{Q}_{\Sigma}/\mathbb{Q}(\mu_{p^n}),T^\ast (1))$ 
is torsion, the rank of the second term is obtained as an application of the 
local Euler-Poincar\'{e} characteristic formula of Galois cohomology theory.  
\par 
Now, we take the (first layer of the) Beilinson--Kato Euler system 
$$
\varprojlim_n z_n \in \varprojlim_n H^1 (\mathbb{Q}_{\Sigma}/\mathbb{Q}(\mu_{p^n}),T^\ast (1)). 
$$ 
Then the sequence \eqref{equation:4termsequence1} 
induces the following sequence of $\Lambda_{\mathrm{cyc}}$-modules: 
\begin{multline}\label{equation:4termsequence2}
0 \longrightarrow 
 \varprojlim_n H^1 (\mathbb{Q}_{\Sigma}/\mathbb{Q}(\mu_{p^n}),T^\ast (1))  
 \Bigl/ \Lambda_{\mathrm{cyc}}\varprojlim_n z_n 
\\  
\overset{\overline{\mathrm{loc}}_p}{\longrightarrow} 
\varprojlim_n  
\dfrac{H^1 (\mathbb{Q}_p (\mu_{p^n}) ,T^\ast (1))}
{\mathrm{Im}(H^1 (\mathbb{Q}_p (\mu_{p^n}) ,F^+_p T^\ast (1)))}
 \Bigl/ \Lambda_{\mathrm{cyc}}\varprojlim \overline{\mathrm{loc}}_p( z_n ) 
\\ 
\longrightarrow \mathrm{Sel}_A (\mathbb{Q}(\mu_{p^\infty}))^\vee 
\longrightarrow 
\varprojlim_n H^2 (\mathbb{Q}_{\Sigma}/\mathbb{Q}(\mu_{p^n}),T^\ast (1)) \longrightarrow 0 , 
\end{multline}
which becomes exact after the base extension $\otimes_{\mathcal{O}_{{K}}} {{K}} 
$. Now, the following theorem is a special case of
the general machinary of Euler system bounds established by 
Kato \cite{305}, Perrin-Riou \cite{398} and Rubin \cite{420} independently 
for Galois representations of arbitrary rank: 
\begin{thm}[Kato, Perrin-Riou, Rubin] \label{theorem:KatoRubinPerrinRIou}
Let us assume the setting of Theorem C of \S \ref{sec:Intro_motif}. 
Suppose further that the following conditions are satisfied: 
\begin{enumerate}
\item[(i)] 
The first layer of a given Euler system $\varprojlim_n   z_n \in \varprojlim_n H^1 (\mathbb{Q}_{\Sigma}/\mathbb{Q}(\mu_{p^n}),T^\ast (1))$ is not contained 
in the $\Lambda_{\mathrm{cyc}}$-torsion part of 
$\varprojlim_n H^1 (\mathbb{Q}_{\Sigma}/\mathbb{Q}(\mu_{p^n}),T^\ast (1))$. 
\item[(ii)] The Galois representation $G_{\mathbb{Q}} \longrightarrow \mathrm{Aut}_{\mathbb{Z}_p}V_f \cong GL_2 (\mathbb{Q}_p)$ contains a nontrivial unipotent element \footnote{This condition is equivalent to the condition that $f$ has no complex multiplication.}. 
\end{enumerate}
Then, $\varprojlim_n H^2 (\mathbb{Q}_{\Sigma}/\mathbb{Q}(\mu_{p^n}),T^\ast (1)) $ is a torsion $\Lambda_{\mathrm{cyc}}$-module and we have:  
\begin{multline}\label{euqtion:bound_cyclo_euler}
\mathrm{char}_{\Lambda_{\mathrm{cyc}}\otimes_{\mathbb{Z}_p} {{K}}}
\left( 
\varprojlim_n H^1 (\mathbb{Q}_{\Sigma}/\mathbb{Q}(\mu_{p^n}),T^\ast (1))  
 \Bigl/ \Lambda_{\mathrm{cyc}}\varprojlim z_n  
 \right) 
\otimes_{\mathcal{O}_{{K}}} {{K}}
 \\ 
 \subset 
 \mathrm{char}_{\Lambda_{\mathrm{cyc}} \otimes_{\mathbb{Z}_p} {{K}}} 
\left( \varprojlim_n H^2 (\mathbb{Q}_{\Sigma}/\mathbb{Q}(\mu_{p^n}),T^\ast (1)) \right) \otimes_{\mathcal{O}_{{K}}} {{K}}. 
\end{multline}
\end{thm}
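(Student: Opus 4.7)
The plan is to apply the general Euler system machinery as developed independently by Kato \cite{305}, Perrin-Riou \cite{398} and Rubin \cite{420}. The strategy is to bound the characteristic ideal of the second cohomology group by producing sufficiently many ``derived cohomology classes'' from the given Euler system, localize these classes at carefully chosen auxiliary primes, and then convert pointwise bounds on Selmer groups into a divisibility of characteristic ideals in $\Lambda_{\mathrm{cyc}} \otimes_{\mathbb{Z}_p} K$.

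First, for each square-free integer $r = \ell_1 \cdots \ell_k$ whose prime factors $\ell_i$ are ``Kolyvagin primes'' (that is, primes outside $\Sigma$ satisfying a congruence condition modulo the Frobenius action), I would form the Kolyvagin derivative $D_r z_r \in H^1(\mathbb{Q}(\mu_{p^n}), T^*(1)/p^N)$ of the Euler system, exploiting the norm-compatibility relations of $\{z_r\}$. The derived classes satisfy local conditions that make them usable for cutting down Selmer-type groups: at primes $\ell_i$ dividing $r$ the class is non-trivially ramified in a controlled way, while outside $r\Sigma$ it remains unramified. The hypothesis that the first layer $\varprojlim_n z_n$ is not $\Lambda_{\mathrm{cyc}}$-torsion ensures that these derivative classes are non-trivial modulo $p^N$ for arbitrarily large $N$.

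Second, I would use Chebotarev's density theorem to produce an ample supply of Kolyvagin primes $\ell$ at which the Frobenius class in $\mathrm{Aut}(T^*(1)/p^N)$ has prescribed behaviour. This is where condition (ii) enters in an essential way: the existence of a non-trivial unipotent element in the image of $\rho_f$ guarantees that the residual representation and its twists have a sufficiently ``large'' image so that Chebotarev produces primes with all the required splitting properties. Combining the global duality sequence \eqref{equation:4termsequence1} with these derived classes and applying the local duality pairing at each $\ell_i$, one obtains, by induction on the number $k$ of prime factors of $r$, explicit inequalities between the lengths of subquotients of $\varprojlim_n H^2(\mathbb{Q}_\Sigma/\mathbb{Q}(\mu_{p^n}), T^*(1))$ and the orders of vanishing of $\mathrm{loc}_p(\varprojlim_n z_n)$-related invariants.

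Finally, to promote these pointwise length inequalities to the characteristic ideal bound \eqref{euqtion:bound_cyclo_euler}, I would localize at each height-one prime $\mathfrak{P}$ of $\Lambda_{\mathrm{cyc}} \otimes_{\mathbb{Z}_p} K$ and apply the above machinery to the discrete Galois module $T^*(1) \otimes \Lambda_{\mathrm{cyc}} / \mathfrak{P}^m$ for all $m$; the structure theorem for finitely generated torsion modules over the regular local ring $(\Lambda_{\mathrm{cyc}} \otimes_{\mathbb{Z}_p} K)_\mathfrak{P}$ then converts the bounds into the desired divisibility of characteristic ideals. The torsionness of $\varprojlim_n H^2(\mathbb{Q}_\Sigma/\mathbb{Q}(\mu_{p^n}), T^*(1))$ is obtained as a by-product: if it were not torsion, some height-one prime would give an infinite-length contribution, contradicting the finite bound produced by the Kolyvagin system.

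The main obstacle is the Chebotarev step together with the descent to characteristic ideals. Producing Kolyvagin primes whose Frobenius simultaneously has the required action on $T^*(1)/p^N$ and on the cyclotomic twists for all large $N$ requires the residual image to be large enough; this is precisely where the unipotent hypothesis (ii) is indispensable, as it rules out the CM case where the derived classes would fail to detect the full characteristic ideal. Converting pointwise inequalities into a uniform statement over the two-dimensional Iwasawa algebra requires a delicate analysis at each height-one prime of $\Lambda_{\mathrm{cyc}} \otimes_{\mathbb{Z}_p} K$, which is the technical heart of the argument.
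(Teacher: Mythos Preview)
The paper does not give its own proof of this theorem: it is stated as a black-box special case of the general Euler system machinery and attributed directly to Kato \cite{305}, Perrin-Riou \cite{398} and Rubin \cite{420}. There is therefore nothing in the paper to compare your argument against line by line; your outline is broadly faithful to what those references actually do (Kolyvagin derivatives of the norm-compatible system, Chebotarev to produce auxiliary primes with prescribed Frobenius behaviour, local--global duality to bound Selmer-type quotients, and passage to characteristic ideals via height-one localization).

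One small correction: in your final paragraph you refer to ``the two-dimensional Iwasawa algebra''. In the present one-variable cyclotomic setting each local component of $\Lambda_{\mathrm{cyc}} \otimes_{\mathbb{Z}_p} K$ is isomorphic to $K[[X]]$, which is a principal ideal domain of Krull dimension one, not two. The descent step is therefore simpler than you suggest: height-one primes of $K[[X]]$ are maximal, so the conversion of pointwise length bounds into a divisibility of characteristic ideals reduces to the structure theorem over a PID rather than requiring the more delicate specialization arguments of \cite{ochiai-AIF} that are genuinely needed in the two-variable situation of Theorem \ref{theorem:general_Euler_system_bound}.
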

On the other hand, we have the following theory of Coleman maps: 
\begin{thm}[\cite{ochiai-AJM03}, \cite{396}] \label{theorem:colemanmap}
Let us assume the setting of Theorem C of \S \ref{sec:Intro_motif}. 
\begin{enumerate}
\item 
There is a $\Lambda_{\mathrm{cyc}}\otimes_{\mathbb{Z}_p} {{K}}$-linear homomorphism: 
$$ 
\mathrm{Col}:\, 
\left( \varprojlim_n  {H^1 (\mathbb{Q}_p (\mu_{p^n}) ,T^\ast (1))}
\right) \otimes_{\mathcal{O}_{{K}}} {{K}} 
\longrightarrow \mathrm{Fil}^0 D_{\mathrm{dR}}(V_f^\ast  (1))
\otimes_{\mathbb{Z}_p} \Lambda_{\mathrm{cyc}}
$$ 
such that we have the following commutative diagram 
for any positive integer $j$ and for any Dirichlet 
character $\phi$ whose conductor $\mathrm{Cond} (\phi)$ is a power of $p$: 
$$
\begin{CD}
\left( 
\varprojlim_n  {H^1 (\mathbb{Q}_p (\mu_{p^n}) ,T^\ast (1))} 
\right) \otimes_{\mathcal{O}_{{K}}} {{K}} 
@>{\mathrm{Col}}>>  \mathrm{Fil}^0 D_{\mathrm{dR}}((V_f)^\ast  (1))
\otimes_{\mathbb{Z}_p} \Lambda_{\mathrm{cyc}} \\ 
@V{\chi^{-j}_{\mathrm{cyc}}\phi^{-1}}VV @VV{\chi^{-j}_{\mathrm{cyc}}\phi^{-1}}V \\ 
 {H^1 (\mathbb{Q}_p ,(V_f \otimes \phi )^\ast (1-j))}
@>>{e_p (f, j,\phi )\times \mathrm{exp}^{\ast}}>  \mathrm{Fil}^0 D_{\mathrm{dR}}((V_f \otimes \phi )^\ast  (1-j))
\end{CD}
$$
where $\mathrm{exp}^{\ast}$ is the dual exponential map of Bloch-Kato and 
$e_p (f, j,\phi )$ is defined as follows$:$ 
$$
e_p (f, j,\phi ) =
\begin{cases}  (-1)^j (j-1)! \left( 1-\dfrac{p^{j-1}}{a_p(f)} \right)
& \text{when $\phi = \text{\boldmath$1$}$}, \medskip \\ 
(-1)^j (j-1)! \left( \dfrac{p^{j-1}}{a_p(f)} \right)^n &  \text{when $\phi \not= \text{\boldmath$1$}$ $($with conductor $p^n >1 )$}. 
\end{cases}
$$  
\item 
Recall that there is a natural homomorphism 
$$
\mathrm{P}: \left( 
\varprojlim_n  {H^1 (\mathbb{Q}_p (\mu_{p^n}) ,T^\ast (1))}
\right) \otimes_{\mathcal{O}_{{K}}} {{K}}
\longrightarrow 
\left( \varprojlim_n  
\dfrac{H^1 (\mathbb{Q}_p (\mu_{p^n}) ,T^\ast (1))}
{\mathrm{Im}(H^1 (\mathbb{Q}_p (\mu_{p^n}) ,F^+_p T^\ast (1)))} 
\right) \otimes_{\mathcal{O}_{{K}}} {{K}}
$$ 
Then, the map 
$$ 
\mathrm{Col}:\, \left( 
\varprojlim_n  {H^1 (\mathbb{Q}_p (\mu_{p^n}) ,T^\ast (1))}
\right) \otimes_{\mathcal{O}_{{K}}} {{K}}
\longrightarrow \mathrm{Fil}^0 D_{\mathrm{dR}}(V_f^\ast  (1))
\otimes_{\mathbb{Z}_p} \Lambda_{\mathrm{cyc}}
$$ 
is factorized as $\overline{\mathrm{Col}}\circ \mathrm{P}$ where 
$\overline{\mathrm{Col}}$ is a Coleman map  
$$ 
\overline{\mathrm{Col}}:\, \left( \varprojlim_n  
\dfrac{H^1 (\mathbb{Q}_p (\mu_{p^n}) ,T^\ast (1))}
{\mathrm{Im}(H^1 (\mathbb{Q}_p (\mu_{p^n}) ,F^+_p T^\ast (1)))} 
\right) \otimes_{\mathcal{O}_{{K}}} {{K}}
\longrightarrow \mathrm{Fil}^0 D_{\mathrm{dR}}(V_f^\ast  (1))
\otimes_{\mathbb{Z}_p} \Lambda_{\mathrm{cyc}} 
$$ 
which interpolates \begin{multline*}
\mathrm{exp}^{\ast}:\ H^1 (\mathbb{Q}_p ,(V_f \otimes \phi )^\ast (1-j)/F^+_p 
(V_f \otimes \phi )^\ast (1-j)) \\ 
\longrightarrow D_{\mathrm{dR}}((V_f \otimes \phi )^\ast (1-j)/F^+_p 
(V_f \otimes \phi )^\ast (1-j)) \cong \mathrm{Fil}^0 D_{\mathrm{dR}}((V_f \otimes \phi )^\ast  (1-j)) 
\end{multline*} 
with the same range of $(j,\phi )$ and the same multipliers $e_p (f,j\phi )$ as $\mathrm{Col}$. 
When $a_p (f) \not=1$, the map $\overline{\mathrm{Col}}$ 
is an isomorphism. When $a_p (f) =1$ \footnote{This happens only when 
the weight of $f$ is equal to $2$.}, 
$\mathrm{Ker} (\overline{\mathrm{Col}})$ and $\mathrm{Coker} (\overline{\mathrm{Col}})$ are both ${K}$-vector spaces of dimension one. 
\end{enumerate}
\end{thm}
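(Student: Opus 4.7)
The strategy is to build $\mathrm{Col}$ by first defining $\overline{\mathrm{Col}}$ on the rank-one quotient of $T^*(1)$ arising from Wiles' filtration (which automatically gives the factorization $\mathrm{Col} = \overline{\mathrm{Col}} \circ \mathrm{P}$ in part (2)), using a Perrin-Riou style big logarithm for an unramified-twist character, and then to deduce the interpolation formula from Kato's explicit reciprocity law. First I would use the ordinary hypothesis to invoke Wiles' filtration $0 \to F^+_p V_f \to V_f \to V_f/F^+_p V_f \to 0$ of $G_{\mathbb{Q}_p}$-modules, in which the quotient is unramified after a cyclotomic twist, with Frobenius acting by $a_p(f)$. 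Dualizing and twisting yields the corresponding short exact sequence for $V_f^*(1)$ in which $V_f^*(1)/F^+_p V_f^*(1)$ is a rank-one line whose crystalline Frobenius eigenvalue is $p^{k-1}/a_p(f)$.

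Next I would construct $\overline{\mathrm{Col}}$ by composing, on the quotient, the classical Coleman isomorphism between Iwasawa cohomology and Coleman power series (equivalently, the isomorphism $\varprojlim H^1 \simeq D^{\psi=1}$ coming from $(\varphi,\Gamma)$-module theory) with the comparison identifying the rank-one de Rham module with $\mathrm{Fil}^0 D_{\mathrm{dR}}(V_f^*(1)) \otimes_{\mathbb{Z}_p} \Lambda_{\mathrm{cyc}}$. The interpolation diagram in part (1) would then follow from Kato's explicit reciprocity law, which relates specializations of Perrin-Riou's big logarithm to Bloch--Kato's dual exponential $\mathrm{exp}^*$. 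The Euler-like factor $e_p(f,j,\phi)$ arises from a direct geometric-series computation in the Frobenius eigenvalue $p^{k-1}/a_p(f)$: for $\phi = \mathbf{1}$ the factor collapses to the single $1 - p^{j-1}/a_p(f)$ coming from the Frobenius-fixed vector, while for $\phi$ of conductor $p^n > 1$ only the higher power $(p^{j-1}/a_p(f))^n$ survives because the Frobenius-fixed contribution is killed by the nontrivial character sum.

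Finally, for the structural statements in part (2), the factorization $\mathrm{Col} = \overline{\mathrm{Col}} \circ \mathrm{P}$ is built into the construction. Both the source and target of $\overline{\mathrm{Col}}$ are generically of rank one over $\Lambda_{\mathrm{cyc}} \otimes_{\mathbb{Z}_p} K$, the former by the local Euler--Poincar\'e characteristic formula and the latter because $\mathrm{Fil}^0 D_{\mathrm{dR}}(V_f^*(1))$ is one-dimensional. The main obstacle is the precise analysis of the kernel and cokernel. When $a_p(f) \neq 1$, the Euler factor $1 - p^{j-1}/a_p(f)$ is nonvanishing at $j=1$ on every branch of $\Lambda_{\mathrm{cyc}}$, so no trivial zero occurs; a density argument driven by the interpolation formula, combined with the rank-one matching on both sides, then forces $\overline{\mathrm{Col}}$ to be an isomorphism. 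When $a_p(f) = 1$ (a case which, as remarked, can occur only for $k=2$), this factor vanishes at $j=1$ on the trivial branch: the corresponding one-dimensional $\varphi$-fixed subspace of $D_{\mathrm{cris}}(V_f^*(1))$ produces a one-dimensional $K$-kernel, and Bloch--Kato local duality together with the rank count produces a matching one-dimensional cokernel. The most technically delicate part of the whole argument is tracking the normalizations of periods and the precise Euler factor so that $e_p(f,j,\phi)$ comes out exactly as stated rather than merely up to a $p$-adic unit; this is carried out in \cite{ochiai-AJM03} and \cite{396}.
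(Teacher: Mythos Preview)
Your outline follows essentially the same strategy as the paper's sketch: reduce to the rank-one quotient $T^*(1)/F^+_pT^*(1)$ via the ordinary filtration, build the map there from classical Coleman power series, and verify the interpolation via the explicit reciprocity law. The factorization $\mathrm{Col}=\overline{\mathrm{Col}}\circ\mathrm{P}$ and the analysis of $\ker/\mathrm{coker}$ when $a_p(f)=1$ are also handled in the same spirit.

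There is, however, one technical point on which the paper's route differs from yours and which is worth flagging. You construct $\overline{\mathrm{Col}}$ directly over $\mathbb{Q}_p$ for an ``unramified-twist character,'' invoking Perrin-Riou's big logarithm or the $(\varphi,\Gamma)$-module isomorphism $\varprojlim H^1\simeq D^{\psi=1}$. The paper instead first passes to $\mathbb{Q}_p^{\mathrm{ur}}$, where the quotient $T^*(1-j)/F^+T^*(1-j)$ becomes literally $\mathcal{O}_K(1-j)$ as an inertia module; it then applies the classical theory of Coleman power series over $\mathbb{Q}_p^{\mathrm{ur}}(\mu_{p^\infty})/\mathbb{Q}_p^{\mathrm{ur}}$ (giving the interpolation at $j=1$), extends to $j>1$ via the explicit reciprocity law, and finally descends by taking $\mathrm{Gal}(\mathbb{Q}_p^{\mathrm{ur}}/\mathbb{Q}_p)$-invariants. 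The point of this detour is not cosmetic: as the paper's Remark~\ref{remark:Colemanmap_singlemodularform} emphasizes, Perrin-Riou's constructions in \cite{396} and \cite{Pe01} require $V_f$ to be crystalline or at least semi-stable, and this fails precisely when the Nebentypus of $f$ has conductor divisible by $p$. The $\mathbb{Q}_p^{\mathrm{ur}}$-ascent/descent trick sidesteps that hypothesis entirely. Your sketch is not wrong in the crystalline case, but if you want the theorem in the generality stated (ordinary $f$ of arbitrary level), you should incorporate this step rather than appeal to Perrin-Riou's map as a black box.
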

Here are several remarks on Theorem \ref{theorem:colemanmap}. 
\begin{rem}\label{remark:Colemanmap_singlemodularform}
\begin{enumerate}
\item 
Perrin-Riou (\cite{396}, \cite{Pe01}) constructs Coleman maps similar to Theorem \ref{theorem:colemanmap} for lattices $T$ of 
more general $p$-adic representations $V$ than $V_f$ 
(In her work, $V$ is of arbitrary rank and $V$ is not necessarily ordinary). 
However, the paper \cite{396} requires an assumption that the representation 
$V$ is crystalline. The result was later generalized in her paper 
\cite{Pe01}. Though the condition gets milder in \cite{Pe01}, 
there still remains an assumption of $V$ being semi-stable. 
\item 
Unfortunately, the representation $V_f$ associated to an ordinary cuspform $f$ is not semi-stable when the conductor of Neben character of $f$ is divisible by $p$ and the results of \cite{396} and \cite{Pe01} can not be applied to lattices of $V_f$ in such cases. The author 
\cite{ochiai-AJM03} constructs Coleman map as in Theorem \ref{theorem:colemanmap} in a different manner as \cite{396} 
(see also Remark \ref{remark:Colemanmap_Hidafamily}) and \cite{Pe01} 
and the result in \cite{ochiai-AJM03} is also valid for ordinary eigen cuspforms $f$ whose associated to Galois representations $V_f$ are not semi-stable. 
\item We remark that the results of Perrin-Riou (\cite{396}, \cite{Pe01}) are the interpolation of exponential maps of Bloch-Kato rather than 
the interpolation of dual exponential maps of Bloch-Kato as we formulated above. However, once the interpolation of exponential maps are constructed 
in a suitable manner, we can recover the above result by taking ``Kummer dual'' of the above sequence. 
\end{enumerate}
\end{rem}
Below, we will give a brief sketch of the proof of Theorem \ref{theorem:colemanmap} according to the idea of \cite{ochiai-AJM03}, which covers Galois representations $V_f$ associated to ordinary eigen cuspforms of any level and simplifies the proof even in the crystalline case. 
\begin{proof}[Brief sketch of the proof of Theorem \ref{theorem:colemanmap}]
The first observation is that the kernel of the dual exponential map 
$$
\mathrm{exp}^{\ast}_{(V_f \otimes \phi )^\ast (1-j)}:\ H^1 (\mathbb{Q}_p ,(V_f \otimes \phi )^\ast (1-j)) \longrightarrow \mathrm{Fil}^0 D_{\mathrm{dR}}((V_f \otimes \phi )^\ast  (1-j))
$$ 
is isomorphic to the image of $H^1 (\mathbb{Q}_p , F^+_p (V_f \otimes \phi )^\ast (1-j)) \longrightarrow 
H^1 (\mathbb{Q}_p ,  (V_f \otimes \phi )^\ast (1-j))$ under certain techn ical condition. Hence we prove that an expected Coleman map factors through 
the module $\left( \varprojlim_n  
\dfrac{H^1 (\mathbb{Q}_p (\mu_{p^n}) ,T^\ast (1))}
{\mathrm{Im}(H^1 (\mathbb{Q}_p (\mu_{p^n}) ,F^+_p T^\ast (1)))} 
\right) \otimes_{\mathcal{O}_{{K}}} {{K}}$ and we reduce the statement (1) to (2). 
\par 
The second observation is that $T^\ast (1-j)/F^+ T^\ast (1-j)$ is isomorphic to $\mathcal{O}_{{K}} (1-j) $ 
as $G_{\mathbb{Q}^{\mathrm{ur}}_p}$-module. Hence 
$H^1 (\mathbb{Q}^{\mathrm{ur}}_p (\mu_{p^n}) ,T^\ast (1)/F^+ T^\ast (1-j)) $ is isomorphic to 
$H^1 (\mathbb{Q}^{\mathrm{ur}}_p (\mu_{p^n}) , \mathcal{O}_{{K}} (1-j) ) $. 
By the local Tate duality theorem, the problem of the interpolation of 
the dual exponential maps 
$$ 
\mathrm{exp}^{\ast} :\ H^1 (\mathbb{Q}^{\mathrm{ur}}_p ,K (1-j) \otimes \phi^{-1} ) 
\longrightarrow D^{\mathrm{ur}}_{\mathrm{dR}}(K (1-j) \otimes \phi^{-1} ) 
$$ 
is reduced to the problem of the interpolation of 
the exponential maps 
$$ 
\mathrm{exp} :\ 
D^{\mathrm{ur}}_{\mathrm{dR}}(K (j) \otimes \phi ) \longrightarrow 
H^1 (\mathbb{Q}^{\mathrm{ur}}_p ,K (j) \otimes \phi ). 
$$ 
By the Kummer theory, the existence of a Coleman map
$$ 
\overline{\mathrm{Col}}^{\mathrm{ur}}:\ 
\left( 
\varprojlim_n  {H^1 (\mathbb{Q}^{\mathrm{ur}}_p (\mu_{p^n}) ,T^\ast (1))}
\right) \otimes_{\mathcal{O}_{{K}}} {{K}}
\longrightarrow \mathrm{Fil}^0 D^{\mathrm{ur}}_{\mathrm{dR}}(V_f^\ast  (1))
\otimes_{\mathbb{Z}_p} \Lambda_{\mathrm{cyc}}
$$
which interpolates 
$\mathrm{exp}^{\ast}:\ H^1 (\mathbb{Q}^{\mathrm{ur}}_p ,(V_f \otimes \phi )^\ast (1-j)) \longrightarrow \mathrm{Fil}^0 D^{\mathrm{ur}}_{\mathrm{dR}}((V_f \otimes \phi )^\ast  (1-j))$ only for $j=1$ follows from the theory of Coleman power series over $\mathbb{Q}^{\mathrm{ur}}_p (\mu_{p^\infty}) / \mathbb{Q}^{\mathrm{ur}}_p$. 
For the interpolation in the range $j>1$, we can compare the (dual) exponential map for $\mathbb{Z}_p$ 
and the (dual) exponential map by the explicit reciprocity law (see \cite[Propposition 2.4.3]{396} on the explicit reciprocity law for general crystalline representations $T$). Finally, by careful arguments of coming down to $\mathbb{Q}_p$ 
by taking the $\mathrm{Gal} (\mathbb{Q}^{\mathrm{ur}}_p / \mathbb{Q}_p)$-invariant, we recover the desired interpolation result.  
For further details of this quick sketch of the proof, we refer the reader to \cite{ochiai-AJM03}. 
\end{proof}

Finally, by combining the above results, we deduce the inequality 
\eqref{equation:IMC_for_elliptic_cuspform_modulo_mu2}. First, 
the exactness of the sequence \eqref{equation:4termsequence2}
$\otimes_{\mathcal{O}_{{K}}} {{K}}$ and the inclusion \eqref{euqtion:bound_cyclo_euler}, implies 
\begin{multline}\label{equation:bound_middleterms}
\mathrm{char}_{\Lambda_{\mathrm{cyc}} \otimes_{\mathbb{Z}_p} {{K}}}
\left( \varprojlim_n  
\dfrac{H^1 (\mathbb{Q}_p (\mu_{p^n}) ,T^\ast (1))}
{\mathrm{Im}(H^1 (\mathbb{Q}_p (\mu_{p^n}) ,F^+_p T^\ast (1)))}
 \Bigl/ \Lambda_{\mathrm{cyc}}\varprojlim \overline{\mathrm{loc}}_p( z_n ) 
\right) 
\otimes_{\mathcal{O}_{{K}}} {{K}}
\\ 
\subset 
\mathrm{char}_{\Lambda_{\mathrm{cyc}} \otimes_{\mathbb{Z}_p} {{K}}} 
\, \, \mathrm{Sel}_A (\mathbb{Q}(\mu_{p^\infty}))^\vee 
\otimes_{\mathcal{O}_{{K}}} {{K}}
\end{multline}
Also, it is known that the Beilinson-Kato Euler system 
is related to the special-value of the Hecke $L$-function 
$L(f , \phi^{-1},s)$. 
\par 
For example, let us denote by $z_{1 ,  \chi^{-j}_{\mathrm{cyc}}\phi^{-1}} \in 
H^1 (\mathbb{Q}_{\Sigma}/\mathbb{Q},T^\ast (1) \otimes \chi^{-j}_{\mathrm{cyc}}\phi^{-1}) $ 
the image of twisted element 
$$
( \varprojlim z_n ) \otimes \chi^{-j}_{\mathrm{cyc}}\phi^{-1}
\in 
\varprojlim_n H^1 (\mathbb{Q}_{\Sigma}/\mathbb{Q}(\mu_{p^n}),T^\ast (1) \otimes \chi^{-j}_{\mathrm{cyc}}\phi^{-1})
$$
by the corestriction map. 
Then the element $z_{1 ,  \chi^{-j}_{\mathrm{cyc}}\phi^{-1}}$ is related to 
the $L$-value in the following manner: 
\begin{equation}\label{equation:specialvalue_at_bottom}
\mathrm{exp^\ast} (\mathrm{loc}_p ( 
z_{1 ,  \chi^{-j}_{\mathrm{cyc}}\phi^{-1}})) 
= \tau (\phi) 
\dfrac{L_{(p)}(f , \phi^{-1},j)}
{(2\pi \sqrt{-1})^{j}\Omega^{\mathrm{sgn}(j,\phi )}_{f,\infty} } \cdot 
\overline{f} \otimes \phi^{-1}
\end{equation}
where $L_{(p)}(f , \phi^{-1},s)$ is the Hecke $L$-function of $f$ twisted by 
$\phi^{-1}$ whose $p$-Euler factor is removed 
and $\overline{f}$ is a dual modular form of $f$ whose $q$-expantion 
is given by 
$\overline{f}= \sum^\infty_{n=0} \overline{a_n (f)}q^n$. We note that 
$\overline{f} \otimes \phi^{-1}$ defines a ${K}$-vector space $\mathrm{Fil}^0 D_{\mathrm{dR}}((V_f \otimes \phi )^\ast  (1-j))$ of dimension one. 
\par 
Now if we apply the interpolation property of Theorem \ref{theorem:colemanmap} to the Beilinson-Kato euler system, we obtain 
\begin{multline}\label{equation:bound_middleterms2}
\mathrm{char}_{\Lambda_{\mathrm{cyc}} \otimes_{\mathbb{Z}_p} {{K}}}
\left( \varprojlim_n  
\dfrac{H^1 (\mathbb{Q}_p (\mu_{p^n}) ,T^\ast (1))}
{\mathrm{Im}(H^1 (\mathbb{Q}_p (\mu_{p^n}) ,F^+_p T^\ast (1)))}
 \Bigl/ \Lambda_{\mathrm{cyc}}\varprojlim \overline{\mathrm{loc}}_p( z_n ) 
\right) 
\otimes_{\mathcal{O}_{{K}}} {{K}}
\\ 
= 
(L_p (\{ \Omega^{\pm}_{f,\infty} \})). 
\end{multline}
The equality \eqref{equation:bound_middleterms2} combined with 
Rohrlich's result on the non-triviality of 
$L_p (\{ \Omega^{\pm}_{f,\infty} \})$ explained in Remark \ref{remark:p-adicLforf} implies that the left-hand side of \eqref{equation:bound_middleterms2} is non-zero. 
The final fact combined with \eqref{equation:bound_middleterms} 
implies that $\mathrm{char}_{\Lambda_{\mathrm{cyc}} \otimes_{\mathbb{Z}_p} {{K}}} 
\, \, \mathrm{Sel}_A (\mathbb{Q}(\mu_{p^\infty}))^\vee 
\otimes_{\mathcal{O}_{{K}}} {{K}}$ is non-zero, which 
means that $\mathrm{Sel}_A (\mathbb{Q}(\mu_{p^\infty}))^\vee 
\otimes_{\mathcal{O}_{{K}}} {{K}}
$ is torsion over $\Lambda_{\mathrm{cyc}} \otimes_{\mathbb{Z}_p} {{K}}$.
Thus Theorem A is proved under the setting of \eqref{equation:IMC_for_elliptic_cuspform_modulo_mu2}. 
Finally, the proof of the desired inequality \eqref{equation:IMC_for_elliptic_cuspform_modulo_mu2} follows simply 
by combining \eqref{equation:bound_middleterms} and \eqref{equation:bound_middleterms2}.  
\end{proof}
We finish this section with an another version of Iwasawa Main Conjecture 
proposed by Kato. 
\\ 
\\ 
{\bf Conjecture (another version of Cyclotomic Iwasawa Main Conjecture for $f$)} 
\\ 
(Recall that) we take $f$ to be an ordinary eigen cuspform of weight $k\geq 2$. 
Let $T \subset V_f$ be a Galois stable lattice and we choose 
complex periods $\Omega^{+}_{f,\infty}  , 
\Omega^{-}_{f,\infty}$ to be $p$-optimal with respect to the lattice $T$. 
Then, we have the the following equality of principal ideals in the ring $\Lambda_{\mathrm{cyc}} \otimes_{\mathbb{Z}_p} \mathcal{O}_{{K}}$:  
\begin{multline}\label{equation:IMC_for_elliptic_cuspform2}
\mathrm{char}_{\Lambda_{\mathrm{cyc}} \otimes_{\mathbb{Z}_p} \mathcal{O}_{{K}}}
\left( 
\varprojlim_n H^1 (\mathbb{Q}_{\Sigma}/\mathbb{Q}(\mu_{p^n}),T^\ast (1))  
 \Bigl/ \Lambda_{\mathrm{cyc}}\varprojlim z_n  
 \right) 
 \\ 
 = \mathrm{char}_{\Lambda_{\mathrm{cyc}} \otimes_{\mathbb{Z}_p} \mathcal{O}_{{K}}} 
\left( \varprojlim_n H^2 (\mathbb{Q}_{\Sigma}/\mathbb{Q}(\mu_{p^n}),T^\ast (1)) \right)  .
\end{multline}
\\ 
\\ 
The formulation of this version of Cyclotomic Iwasawa Main Conjecture 
is due to Kato \cite{300}. We have several remarks on the above conjecture. 
\\ 
\begin{rem}
\begin{enumerate}
\item  Thanks to the sequence \eqref{equation:4termsequence2}$\otimes_{\mathcal{O}_{{K}}} {{K}}$, 
the equality \eqref{equation:IMC_for_elliptic_cuspform2} in the above Iwasawa Main Conjecture is equivalent to the equality \eqref{equation:IMC_for_elliptic_cuspform} in the previously stated 
Iwasawa Main Conjecture up to a factor of $(\varpi^r)$, 
where $\varpi$ is a uniformaizer  of $\mathcal{O}_{{K}}$ and 
$r$ is an integer. With a more careful discussion, it is also possible to 
eliminate the ambiguity of a factor of $(\varpi^r)$. 
\par 
The latter Iwasawa Main conjecture makes sense also when the cuspform 
$f$ is not ordinary. 
\item
There are various choices involved in the definition of the Beilinson-Kato Euler system and this 
poses an ambiguity in the left-hand side of \eqref{equation:IMC_for_elliptic_cuspform2},  
by multiplication of a power of $\varpi$. 
Since the right-hand side of \eqref{equation:IMC_for_elliptic_cuspform2} has nothing to do 
with an Euler system, the latter version of Iwasawa Main Conjecture seems to have an 
ambiguity of multiplication by a power of $\varpi$ as it is presented. 
\par 
As a solution, we propose to characterize $\{z_n \}$ to be the one 
which satisfies $\mathrm{Col} (\varprojlim_n \mathrm{loc}_p (z_n ))= 
L_p (\{ \Omega^{\pm}_{f,\infty} \})$ where $\mathrm{Col}$ is the Coleman map 
given in Theorem \ref{theorem:colemanmap} 
and we choose the complex periods 
$\Omega^{\mathrm{sgn}(j,\phi )}_{f,\infty}$ in the interpolation property 
of the $p$-adic $L$-function $L_p (\{ \Omega^{\pm}_{f,\infty} \})$ to be $p$-optimized with respect to the lattice $T$ in an appropriate sense. 
\item 
Sometimes, we call the latter Iwasawa Main Conjecture as 
``Iwasawa Main Conjecture without $p$-adic $L$-function''. However, from what we discussed in (2), 
it is important to note that, even for this version of the conjecture, 
we need a $p$-adic $L$-function implicitly to normalize $\{ z_n \}$ precisely. We would like to stress this point, especially because 
the ambiguity becomes a more serious problem if we consider Iwasawa Main Conjecture for a family of cuspforms $f$, which is the theme of this 
article. 
\end{enumerate}
\end{rem}
\section{Setting of Iwasawa theory for a family of cuspforms}
\label{section:motivation}
In the last section, we briefly explained the formalism and the known results of the one-variable cyclotomic Iwasawa Main Conjecture for $f$. On the other hand, the theory of $p$-adic families of cuspforms are much developed since mid 1980's. First, when $f$ is an ordinary eigen cuspform of weight $\geq 2$, 
there is a family of ordinary eigen cuspforms which contain $f$. 
This family is called a Hida family. Below, we will recall the results of Hida theory briefly and without proof and fix the setting at the same time. 
\subsection{Review of Hida families}\label{subsection:hidafamily}
Before stating the results, we prepare some notations: 
\begin{defn}\label{definition:arithmeticcharacters}
\begin{enumerate}
\item 
Let $\kappa: \mathbb{Z}^\times_p \longrightarrow (\overline{\mathbb{Q}_p})^\times$ be a continuous character. When there is an open subgroup 
$U \subset \mathbb{Z}^\times_p$ such that the restriction $\kappa \vert _U$
of $\kappa$ to $U \subset \mathbb{Z}^\times_p$ coincides with a character 
$x \mapsto x^{w(\kappa)}$ ($x\in U$) with $w= w(\kappa) \in \mathbb{Z}$, we call $\kappa$ 
an {\bf arithmetic character} of weight $w$. 
\item 
If we have a continuous character $\kappa: \mathbb{Z}^\times_p \longrightarrow (\overline{\mathbb{Q}_p})^\times$, 
it induces a specialization map of algebra 
$\kappa: \mathbb{Z}_p [[\mathbb{Z}^\times_p ]]\longrightarrow \overline{\mathbb{Q}_p}$, which we denote by $\kappa$ by abuse of notation. 
When $\kappa$ is an arithmetic character of weight $w$, we call 
$\kappa: \mathbb{Z}_p [[\mathbb{Z}^\times_p ]]\longrightarrow \overline{\mathbb{Q}_p}$ an {\bf arithmetic specialization} of weight $w$. 
\par 
Let $R$ be an algebra which is finite over $\mathbb{Z}_p[[\mathbb{Z}^\times_p]]$. Then, a continuous specialization map  
$\kappa: R \longrightarrow \overline{\mathbb{Q}_p}$ is 
an {\bf arithmetic specialization on $R$} of weight $w$ if $\kappa \vert_{ \mathbb{Z}_p [[\mathbb{Z}^\times_p ]]}$ is an 
arithmetic specialization of weight $w$ in the sense stated above. 
\end{enumerate}
\end{defn}
The following theorem is due to Hida (see \cite{Hi86a} and 
\cite{Hi86b} for two different proofs of the following theorem):  
\begin{thm}[Hida]\label{theorem:hidafamily}
Let $f = \sum^\infty_{n=1} a_n (f) q^n \in S_{k_0} (\Gamma_1 (M))$
be an ordinary normalized  $p$-stabilized eigen cuspform 
of weight $k_0 \geq 2$ and of level $\Gamma_1 (M)$. We denote by ${K}={K}_f$ the 
finite extension of $\mathbb{Q}_p$ obtained by adjoining Fourier 
coefficients $a_n (f)$ to $\mathbb{Q}_p$. 
\par 
Then there are a local domain $\mathbb{I}$ which is finite over 
$\mathcal{O}_{{K}}[[1+p\mathbb{Z}_p]]$ and a formal 
$q$-expansion $\mathbb{F}= \sum^\infty_{n=1} A_n (\mathbb{F})q^n 
\in \mathbb{I}[[q]]$ such that the following properties hold: 
\begin{enumerate}
\item 
For each arithmetic specialization $\kappa$ on $\mathbb{I}$ 
of weight $w(\kappa ) \geq 0$, 
$f_\kappa := \kappa (\mathbb{F}) 
\in \kappa (\mathbb{I}) [[q]]$ is the $q$-expansion of a classical 
ordinary eigen cuspform of weight $w(\kappa )+2$.   
\item There exists an arithmetic specialization $\kappa_0$ on $\mathbb{I}$ 
of weight $k_0 -2$ such that $f_{\kappa_0} =f$. 
\end{enumerate}
\end{thm}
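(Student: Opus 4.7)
The plan is to realize $\mathbb{I}$ as a local component of Hida's universal ordinary Hecke algebra and to read off $\mathbb{F}$ from its tautological system of Hecke eigenvalues. First, for each $r\geq 1$, I would consider the classical ordinary Hecke algebra $\mathbf{h}_r:=e^{\mathrm{ord}}\cdot h_{k_0}(\Gamma_1(Mp^r);\OK)$, where $e^{\mathrm{ord}}=\lim_{n\to\infty}U_p^{n!}$ is Hida's ordinary projector and $h_{k_0}(\Gamma_1(Mp^r);\OK)$ is generated over $\OK$ by the Hecke operators $T_\ell$, $U_\ell$ and the diamond operators. Restricting the diamond operators to $1+p\Z\subset(\mathbb{Z}/Mp^r\mathbb{Z})^\times$ endows $\mathbf{h}_r$ with the structure of a $\Lambda_r:=\OK[(1+p\Z)/(1+p^r\Z)]$-algebra, and the level-lowering trace maps make $\{\mathbf{h}_r\}$ into a compatible projective system.

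Hida's central structural result, which I would invoke as a black box, asserts two things: (i) the projective limit $\mathbf{h}^{\mathrm{ord}}:=\varprojlim_r\mathbf{h}_r$ is a finite and free module (of the same rank in all arithmetic weights) over the completed group algebra $\Lambda:=\OK[[1+p\Z]]$, the $\Lambda$-action coming from the diamond operators of $p$-power level; and (ii) a control theorem holds: for any arithmetic specialization $\kappa:\Lambda\to\overline{\Q}$ of weight $w(\kappa)\geq 0$ and finite-order part of conductor $p^{r(\kappa)}$, the base change $\mathbf{h}^{\mathrm{ord}}\otimes_{\Lambda,\kappa}\overline{\Q}$ is canonically isomorphic to the ordinary quotient of the classical Hecke algebra acting on weight $w(\kappa)+2$ cuspforms on $\Gamma_1(Mp^{r(\kappa)})$ with the appropriate Neben twist. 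Two independent paths to these results exist in the literature: the $p$-adic modular forms approach of \cite{Hi86a}, and the étale cohomology approach of \cite{Hi86b}.

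Granting this, the construction of $\mathbb{I}$ and $\mathbb{F}$ is essentially formal. After possibly enlarging ${K}$, the given form $f$ determines an $\OK$-algebra homomorphism $\lambda_f:\mathbf{h}_1\to\OK$, $T_n\mapsto a_n(f)$; via the weight $k_0-2$ control isomorphism this lifts to an $\OK$-algebra homomorphism $\widetilde{\lambda}_f:\mathbf{h}^{\mathrm{ord}}\to\OK$. I let $\mathfrak{p}:=\ker\widetilde{\lambda}_f$, let $\mathfrak{m}$ be the maximal ideal containing $\mathfrak{p}$, and take $\mathbb{I}$ to be the localization of $\mathbf{h}^{\mathrm{ord}}$ at $\mathfrak{m}$ (or, if necessary to ensure a domain, the irreducible component of its normalization through which $\widetilde{\lambda}_f$ factors). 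Since $\mathbf{h}^{\mathrm{ord}}$ is finite over $\Lambda$, so is $\mathbb{I}$. Defining $A_n(\mathbb{F})\in\mathbb{I}$ to be the image of $T_n$ (respectively $U_n$ for primes dividing $Mp$), I assemble the formal $q$-expansion $\mathbb{F}=\sum_{n\geq 1}A_n(\mathbb{F})q^n$. Property (2) holds by construction: composition with $\kappa_0$ recovers $\lambda_f$, whence $f_{\kappa_0}=f$. Property (1) follows by specializing $\mathbb{F}$ at an arbitrary arithmetic $\kappa$ and applying the control theorem: the resulting system of eigenvalues on $\kappa(\mathbb{I})$ matches a classical ordinary cuspform of weight $w(\kappa)+2$.

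The main obstacle is clearly the pair of statements (i) and (ii) forming Hida's structural theorem. The hard part is the combination of: establishing finite flatness of $\mathbf{h}^{\mathrm{ord}}$ over $\Lambda$, which requires showing that the ordinary idempotent cuts out a uniformly bounded-rank piece in $r$ (typically via a boundedness argument for the slope-zero part of $U_p$ acting on cohomology of $X_1(Mp^r)$); and proving the control theorem identifying specializations with classical Hecke algebras in all arithmetic weights, which uses a $\Lambda$-adic Eichler--Shimura comparison linking the variation of coefficient systems with the Iwasawa-theoretic action of diamond operators. Once both are in place, the rest is bookkeeping; fine points such as the level at $p$ being divisible by a higher power or the precise matching of Nebencharacter under $\kappa$ are minor by comparison.
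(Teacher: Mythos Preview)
The paper does not give its own proof of this theorem; it simply attributes the result to Hida and cites \cite{Hi86a} and \cite{Hi86b} for two independent proofs. Your sketch is a faithful outline of the Hecke-algebra approach found in those references: build the big ordinary Hecke algebra $\mathbf{h}^{\mathrm{ord}}$ as a projective limit, invoke Hida's finiteness over $\Lambda$ and the control theorem, then take $\mathbb{I}$ to be the local component through the point determined by $f$ and let $A_n(\mathbb{F})$ be the images of the Hecke operators. There is nothing further to compare against, since the paper treats this as a background result and offers no argument of its own.

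One small caution on your sketch: the localization $(\mathbf{h}^{\mathrm{ord}})_{\mathfrak{m}}$ need not be a domain, so passing to an irreducible component (as you parenthetically allow) is genuinely required in order to produce a local \emph{domain} $\mathbb{I}$ as demanded by the statement; this is not just cosmetic. Otherwise your outline is correct and matches the literature the paper defers to.
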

This result gives a deformation $\mathbb{F}$ of a given ordinary form 
$f$. We call a family of ordinary cuspforms $\mathbb{F}$ given in the 
above theorem a {\bf Hida family}. 
\par 
Hida \cite{Hi86b} and Wiles \cite{498} prove the following 
result. 
\begin{thm}[Hida, Wiles]\label{theorem:construction_bigrepresentation}
Let $\mathbb{F}$ be a Hida family in the sense of 
Theorem \ref{theorem:hidafamily}. 
Then we have a Galois deformation $\mathbb{V}_{\mathbb{F}} \cong \mathrm{Frac}(\mathbb{I})^{\oplus 2}$ over the fraction field $\mathrm{Frac}(\mathbb{I})$ of $\mathbb{I}$ equipped with a continuous irreducible 
representation $\rho_{\mathbb{F}} : G_{\mathbb{Q}} \longrightarrow 
\mathrm{Aut}_{\mathrm{Frac}(\mathbb{I})} (\mathbb{V}_{\mathbb{F}})$ unramified outside primes dividing $M$ such that the equality 
$$
\mathrm{Tr} (\rho_{\mathbb{F}} (\mathrm{Frob}_\ell )) = A_\ell (\mathbb{F}) 
$$
holds for every prime $\ell$ not dividing $M$. 
\end{thm}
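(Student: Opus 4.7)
The plan is to produce $\rho_{\mathbb{F}}$ by first constructing a continuous two-dimensional pseudo-character $T_{\mathbb{F}}: G_{\mathbb{Q},\Sigma} \to \mathbb{I}$ (where $\Sigma$ denotes the set of primes dividing $Mp\infty$) and then promoting it to a genuine representation over $\mathrm{Frac}(\mathbb{I})$ using residual irreducibility at one arithmetic specialization.

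First I would exploit the pointwise picture. For each arithmetic specialization $\kappa$ on $\mathbb{I}$ of weight $w(\kappa)\geq 0$, Theorem \ref{theorem:hidafamily} yields a classical ordinary eigen cuspform $f_\kappa$ of weight $w(\kappa)+2$, and the theorem of Deligne--Shimura recalled in Section 2 attaches to it a continuous irreducible representation $\rho_{f_\kappa}: G_{\mathbb{Q}} \to \mathrm{Aut}_{K_\kappa}(V_{f_\kappa})$, unramified outside the primes dividing $M$, with $\mathrm{Tr}(\rho_{f_\kappa}(\mathrm{Frob}_\ell)) = a_\ell(f_\kappa) = \kappa(A_\ell(\mathbb{F}))$ for every $\ell \nmid M$. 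In particular each arithmetic specialization gives a two-dimensional trace $T_{f_\kappa}: G_{\mathbb{Q},\Sigma} \to K_\kappa$ satisfying the usual pseudo-character identities.

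Next I would construct the candidate $T_{\mathbb{F}}$. Define $T_{\mathbb{F}}(\mathrm{Frob}_\ell) := A_\ell(\mathbb{F}) \in \mathbb{I}$ for $\ell \notin \Sigma$. By Chebotarev density the elements $\{\mathrm{Frob}_\ell\}$ are dense in $G_{\mathbb{Q},\Sigma}$, and the profinite topology together with compactness of $\mathbb{I}$ (as a finite module over $\mathcal{O}_K[[1+p\mathbb{Z}_p]] \cong \mathcal{O}_K[[X]]$) allows a unique continuous extension, provided one can verify that the assignment is consistent on a cofinal system of finite quotients. The verification uses Zariski density: the arithmetic specializations of weight $\geq 0$ form a Zariski-dense set of closed points of $\mathrm{Spec}(\mathbb{I})$ (they correspond to a dense set of height-one primes in $\mathcal{O}_K[[X]]$ pulled back via a finite map), and by Step 1 each polynomial identity expressing the two-dimensional pseudo-character conditions (such as $T(gh)+T(g)T(h^{-1}g^{-1})T(h)^{-1}$ type relations after clearing denominators, or equivalently the vanishing of the appropriate antisymmetrizer in degree $3$) holds after applying any such $\kappa$; hence the identity already holds in $\mathbb{I}$. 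This shows $T_{\mathbb{F}}$ is a continuous two-dimensional pseudo-character on $G_{\mathbb{Q},\Sigma}$ with values in $\mathbb{I}$.

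Finally I would pass from the pseudo-character to an honest representation. Choose an arithmetic specialization $\kappa_0$ at which $\rho_{f_{\kappa_0}}$ is absolutely irreducible (guaranteed because, by hypothesis, $f$ itself has irreducible $p$-adic representation, so we may take $\kappa_0$ to be the arithmetic specialization producing $f$). Absolute irreducibility at a single point implies absolute irreducibility of $T_{\mathbb{F}} \otimes \mathrm{Frac}(\mathbb{I})$, so by the standard lifting theorem of Nyssen and Rouquier (or, over a field, directly by Brauer--Nesbitt together with the existence of a matrix realization of any absolutely irreducible trace) there exists a continuous representation $\rho_{\mathbb{F}}: G_{\mathbb{Q},\Sigma} \to \mathrm{GL}_2(\mathrm{Frac}(\mathbb{I}))$ whose trace equals $T_{\mathbb{F}}$ and whose irreducibility is inherited from $\rho_{f_{\kappa_0}}$. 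The trace identity $\mathrm{Tr}(\rho_{\mathbb{F}}(\mathrm{Frob}_\ell)) = A_\ell(\mathbb{F})$ holds by construction.

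The hard part is the Zariski-density argument in the pseudo-character step: one must know that arithmetic specializations of weight $w \geq 0$ on $\mathbb{I}$ (which is only assumed to be a local domain finite over $\mathcal{O}_K[[1+p\mathbb{Z}_p]]$, not necessarily regular or Gorenstein) accumulate sufficiently to force polynomial identities in $\mathbb{I}$. An alternative, which is Hida's original approach, bypasses this by constructing $\mathbb{V}_{\mathbb{F}}$ geometrically as the generic fiber of the localization at the maximal ideal of $\mathbb{F}$ of the ordinary part of $\varprojlim_r H^1_{\mathrm{et}}(Y_1(Mp^r)_{\overline{\mathbb{Q}}}, \mathbb{Z}_p)$; this gives the representation directly as a $G_{\mathbb{Q}}$-module and identifies the Frobenius traces via the Eichler--Shimura relations.
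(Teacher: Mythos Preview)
The paper does not supply its own proof of Theorem~\ref{theorem:construction_bigrepresentation}; it simply attributes the result to Hida \cite{Hi86b} and Wiles \cite{498} and moves on. So there is nothing in the paper to compare against line by line.

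That said, your sketch is correct in spirit and is essentially Wiles' method in \cite{498}, recast in the modern language of pseudo-characters. Wiles does not literally define $T_{\mathbb{F}}$ on Frobenii and extend by density; rather, he writes down functions $a,d,x$ on $G_{\mathbb{Q},\Sigma}$ directly out of the Hecke algebra (so continuity and the target being $\mathbb{I}$ are automatic) and checks the pseudo-representation relations by specializing to classical weights, exactly as you propose. Your Zariski-density worry is a non-issue once one observes that $\mathbb{I}$ is reduced and that arithmetic specializations have intersection zero, so any polynomial identity holding at all such $\kappa$ holds in $\mathbb{I}$. The lifting step over the field $\mathrm{Frac}(\mathbb{I})$ is elementary (a two-dimensional absolutely irreducible trace over a field is always the trace of an honest representation; Nyssen--Rouquier is overkill here), and your deduction of generic irreducibility from irreducibility at one $\kappa_0$ is fine.

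The alternative you mention at the end---realizing $\mathbb{V}_{\mathbb{F}}$ as the ordinary part of $\varprojlim_r H^1_{\mathrm{\acute{e}t}}(Y_1(Np^r)_{\overline{\mathbb{Q}}},\mathbb{Z}_p)$ localized at the relevant maximal ideal---is Hida's approach in \cite{Hi86b}. It has the advantage of giving an integral lattice (indeed a canonical one) and of making the later control theorems transparent, at the cost of heavier geometric input. The pseudo-representation route is lighter but only produces the representation over $\mathrm{Frac}(\mathbb{I})$, which is exactly what the theorem asserts.
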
 
\begin{rem}
The field $\mathrm{Frac}(\mathbb{I})$ is not locally compact 
with respect to the topology induced by the maximal ideal of $\mathbb{I}$. 
Hence, it is not clear (and maybe not known) if $V_{\mathbb{F}}$ always has a free Galois stable lattice over $\mathbb{I}$ \footnote{Note that Galois representations with values in $p$-adic field have always a Galois stable lattice over the ring of integers (see \cite[Chap. I \S 1]{438} for example)}.  
\end{rem}
The following local property at $p$ of the representation $\mathbb{V}_{\mathbb{F}}$ is also important.
\begin{thm}[Wiles]\label{theorem:localfiltration}
Let $\mathbb{V}_{\mathbb{F}} \cong \mathrm{Frac}(\mathbb{I})^{\oplus 2}$ be a representation associated to a Hida family $\mathbb{F}$ 
given in Theorem \ref{theorem:construction_bigrepresentation}. 
Then, the representation $\rho_{\mathbb{F}} \vert_{G_{\mathbb{Q}_p}}$  obtained by restricting $\rho_{\mathbb{F}} $ to the decomposition group at $p$ admits the following local filtration on $\mathbb{V}_{\mathbb{F}}$: 
\begin{equation}\label{equation:localfiltrationatp}
0 \longrightarrow F^+ \mathbb{V}_{\mathbb{F}} 
\longrightarrow \mathbb{V}_{\mathbb{F}} 
\longrightarrow \mathbb{V}_{\mathbb{F}} /F^+ \mathbb{V}_{\mathbb{F}}\longrightarrow 0 
\end{equation}
where the submodule $F^+ \mathbb{V}_{\mathbb{F}}$ is of dimension one over $\mathrm{Frac}(\mathbb{I})$, hence $\mathbb{V}_{\mathbb{F}} /F^+ \mathbb{V}_{\mathbb{F}}$ is 
also of dimension one over $\mathrm{Frac}(\mathbb{I})$. 
The rank-one subspace $F^+ \mathbb{V}_{\mathbb{F}}$ is characterized by the property that 
the action of $G_{\mathbb{Q}_p}$ on $F^+ \mathbb{V}_{\mathbb{F}}$ is unramified. 
\end{thm}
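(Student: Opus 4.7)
The goal is to produce a rank-one $G_{\mathbb{Q}_p}$-stable subspace $F^+\mathbb{V}_{\mathbb{F}} \subset \mathbb{V}_{\mathbb{F}}$ with the claimed unramifiedness property, given that $\mathbb{V}_{\mathbb{F}}$ is irreducible globally. The plan is to first diagonalize the action of a Frobenius lift over $\mathrm{Frac}(\mathbb{I})$ using the fact that $A_p(\mathbb{F}) \in \mathbb{I}^\times$, which gives a natural candidate for the filtration, and then upgrade $\phi$-stability to $G_{\mathbb{Q}_p}$-stability via a Zariski-density argument over the arithmetic specializations, where the classical ordinary filtration of Mazur–Wiles/Hida is already available on each $V_{f_\kappa}$.

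Concretely, I would proceed as follows. First, fix a lift $\phi \in G_{\mathbb{Q}_p}$ of the arithmetic Frobenius. For any arithmetic specialization $\kappa$ of weight $w(\kappa) \geq 0$, the specialization $\mathbb{V}_{\mathbb{F}}\otimes_{\mathbb{I}} \kappa$ is the two-dimensional Galois representation attached to the classical ordinary $p$-stabilized form $f_\kappa$, and on it the classical theory furnishes a decomposition of $\rho_{f_\kappa}(\phi)$ into the unit-root eigenvalue $a_p(f_\kappa)$ and the complementary root of valuation $w(\kappa)+1$. By Theorem \ref{theorem:hidafamily}, these interpolate to the coefficient $A_p(\mathbb{F}) \in \mathbb{I}^\times$, and the characteristic polynomial of $\rho_{\mathbb{F}}(\phi)$ has distinct roots in $\mathrm{Frac}(\mathbb{I})$ (distinctness being detectable at any single classical specialization, hence nonvanishing of the discriminant as an element of $\mathbb{I}$ after possibly inverting $p$). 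Thus $\rho_{\mathbb{F}}(\phi)$ is diagonalizable and we define $F^+\mathbb{V}_{\mathbb{F}}$ to be the one-dimensional $\phi$-eigenspace in $\mathbb{V}_{\mathbb{F}}$ corresponding to the unit root $A_p(\mathbb{F})$.

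The second and more delicate step is to show that this eigenline is stable under all of $G_{\mathbb{Q}_p}$ and that the action on it is unramified in the claimed sense. For each $g \in G_{\mathbb{Q}_p}$, the condition that $g$ preserves $F^+\mathbb{V}_{\mathbb{F}}$ is a closed algebraic condition on the matrix entries of $\rho_{\mathbb{F}}(g)$ with respect to a basis adapted to the $\phi$-eigendecomposition (namely, vanishing of the off-diagonal entry in the appropriate corner). For every arithmetic specialization $\kappa$, this condition holds by the classical ordinarity theorem applied to $f_\kappa$. Since the set of arithmetic specializations on $\mathbb{I}$ is Zariski-dense in $\mathrm{Spec}(\mathbb{I})$ (the ring $\mathbb{I}$ is finite flat over $\mathcal{O}_K[[1+p\mathbb{Z}_p]]$, so arithmetic characters of integer weight cut out a dense subset), the vanishing extends from each $\kappa$ to the full ring, giving $G_{\mathbb{Q}_p}$-stability after passing to $\mathrm{Frac}(\mathbb{I})$. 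The same density argument, applied to the matrix entry recording the action on the quotient line, shows that the action of the inertia subgroup on $F^+\mathbb{V}_{\mathbb{F}}$ is trivial, yielding unramifiedness.

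The hard part, in my view, is not the eigenspace construction itself but making the density step rigorous when one works over $\mathrm{Frac}(\mathbb{I})$ rather than over $\mathbb{I}$ directly. One must carefully justify that the relevant entries of $\rho_{\mathbb{F}}(g)$ lie in a localization of $\mathbb{I}$ that is compatible with specialization along a dense set of height-one primes (those corresponding to arithmetic characters), so that vanishing at each arithmetic $\kappa$ forces the entry to be zero in $\mathrm{Frac}(\mathbb{I})$. Alternatively — and this is closer in spirit to Wiles' original argument — one can avoid this issue entirely by constructing $F^+\mathbb{V}_{\mathbb{F}}$ geometrically from the Hida-theoretic étale cohomology of the ordinary tower, where the filtration descends from the filtration on the relative de Rham cohomology of the universal elliptic curve and its $G_{\mathbb{Q}_p}$-stability is built in; the density argument is then only needed to identify the geometric filtration with the Frobenius-eigenspace candidate.
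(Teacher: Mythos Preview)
The paper does not supply its own proof of this statement; immediately after the theorem it simply says ``We refer the reader to the article [Wi88] for the proof of this theorem.'' So there is no in-paper argument to compare your proposal against.

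Your overall strategy is a reasonable one, but there is a genuine error in your justification of eigenvalue distinctness. You assert that the complementary eigenvalue of $\rho_{f_\kappa}(\phi)$ has $p$-adic valuation $w(\kappa)+1$. This confuses the eigenvalues of the \emph{crystalline} Frobenius---which are the roots $\alpha,\beta$ of the Hecke polynomial at $p$, of valuations $0$ and $k-1$---with the eigenvalues of a \emph{Galois} Frobenius lift $\phi\in G_{\mathbb{Q}_p}$ acting directly on $V_{f_\kappa}$. The latter are both $p$-adic units: the two diagonal characters in the ordinary filtration are the unramified one sending $\mathrm{Frob}_p\mapsto a_p(f_\kappa)$ and the character $\det(\rho_{f_\kappa})\cdot a_p(f_\kappa)^{-1}$, and since $\det(\rho_{f_\kappa})$ is a finite-order character times a power of $\chi_{\mathrm{cyc}}$, it is unit-valued on every element of $G_{\mathbb{Q}_p}$.

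The gap is repairable. Distinctness of the two diagonal characters can be detected on inertia rather than via valuations---one character is trivial on the inertia subgroup while the other restricts there to a nontrivial power of $\chi_{\mathrm{cyc}}$---so one separates them by evaluating at a suitable inertial element, or by choosing $\phi$ with $\chi_{\mathrm{cyc}}(\phi)$ a topological generator of $1+p\mathbb{Z}_p$ and checking that $A_p(\mathbb{F})^2 \neq \det(\rho_{\mathbb{F}})(\phi)$ as elements of $\mathbb{I}$. Your self-identified concern about making the density step rigorous over $\mathrm{Frac}(\mathbb{I})$ is the more substantive technical point, and the geometric alternative you mention at the end is indeed closer to how the result is actually established in Wiles' paper.
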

We refer the reader to the article \cite{498} for the proof of this theorem. 
\par 
In order to avoid technical complications of working over non-free lattices, we will consider the following conditions: 
\begin{enumerate}
\item[{\bf (F)}] 
We have a free $\mathbb{I}$-submodule $\mathbb{T}\subset \mathbb{V}_{\mathbb{F}}$ of rank two which is stable under the action of $G_{\mathbb{Q}}$. 
\par 
Furthermore, 
the representation $\rho_{\mathbb{F}} \vert_{G_{\mathbb{Q}_p}}$ obtained by restricting $\rho_{\mathbb{F}} $ to the decomposition group at $p$ admits the following local filtration on $\mathbb{T}$: 
\begin{equation}\label{equation:localfiltrationatp2}
0 \longrightarrow F^+ \mathbb{T} 
\longrightarrow \mathbb{T} 
\longrightarrow \mathbb{T} /F^+ \mathbb{T}\longrightarrow 0 
\end{equation}
such that the sequence is stable under the action of $G_{\mathbb{Q}_p}$, 
graded pieces $F^+ \mathbb{T}$ and $\mathbb{T} /F^+ \mathbb{T}$ are free of rank one over $\mathbb{I}$ and the sequence \eqref{equation:localfiltrationatp2} recovers \eqref{equation:localfiltrationatp} by applying the base extension $\otimes_{\mathbb{I}} \mathrm{Frac}(\mathbb{I})$. 
\end{enumerate}
\begin{rem}
\begin{enumerate}
\item 
The first half of the statement of condition (F) holds when the residual  representation $G_{\mathbb{Q}} \longrightarrow \mathrm{Aut}_{
\mathbb{I}/\mathfrak{M}_{\mathbb{I}}} 
(\mathbb{T}/\mathfrak{M}_{\mathbb{I}} \mathbb{T}) \cong GL_2 (\mathbb{F}_q)$ 
is irreducible where $\mathfrak{M}_{\mathbb{I}}$ is the maximal ideal of $\mathbb{I}$. As another example for the first half of 
the condition $(F)$, it suffices that $\mathbb{I}$ is a regular local ring. 
In fact, when we are given a Galois stable lattice $\mathbb{L} \subset \mathbb{V}_{\mathbb{F}}$ which is not necessarily free over $\mathbb{I}$,  we define $\mathbb{T}$ to be the double $\mathbb{I}$-linear dual $\mathbb{L}^{\ast \ast}$ of $\mathbb{L}$. The lattice $\mathbb{T}$ is reflexive module over $\mathbb{I}$. Since $\mathbb{I}$ is of Krull dimension two by definition and is regular by assumption, any reflexive module over $\mathbb{I}$ is free over $\mathbb{I}$. 
\item As a caution, we remark that Theorem \ref{theorem:localfiltration} and 
the first statement of the condition (F) might not be sufficient to imply the second half of the condition (F). However, if the residual representation $\mathbb{T}/\mathfrak{M}_{\mathbb{I}} \mathbb{T}$ decomposed into two different characters as a $G_{\mathbb{Q}_p}$-module, 
Theorem \ref{theorem:localfiltration} and 
the first statement of the condition (F) imply the second half of the condition (F) 
(see \cite[Rem. 2.13]{fo12} for the proof of this fact \footnote{We take this occasion to correct a typo in \cite{fo12}. The sequence obtained by $\otimes_{\mathcal{R}} \mathcal{R}/\mathfrak{M}_{\mathcal{R}}$ in the third line of the proof of \cite[Rem. 2.13]{fo12} is only right exact and the phrase ``Note that the sequence is left-exact since $\mathcal{T}_{\mathfrak{p}}$ is a torsion-free $\mathcal{R}$-module.'' is not correct. However, the argument following this phrase uses only the right exactness of the sequence and the argument remains correct except this point.}). 
\end{enumerate}
\end{rem}
\subsection{Review of Coleman families} 
About ten years later after the Hida theory of elliptic modular forms appeared, 
Coleman (\cite{co96}, \cite{co97}) showed the existence of a family of modular forms deforming non-ordinary eigen cuspforms. 
The construction of Coleman families is more ``analytic'' than that of Hida families and they are obtained as rigid analytic local sections  over the weight space. Thus, Coleman families exist only locally over the weight space while Hida families exist over the whole weight spaces. Below, we will recall some results from  Coleman theory without proof and we fix the setting at the same time. 
\par
Let $\mathcal{W}$ be the weight space over $\mathbb{Q}_p$ whose ${K}$-valued points $\mathcal{W}({K})$ is identified with the set of continuous characters $\mathbb{Z}^\times_p \longrightarrow {K}^\times$. We have an embedding $\mathbb{Z} \hookrightarrow 
\mathcal{W} (\mathbb{Q}_p)$ by sending $k \in \mathbb{Z}$ to the character 
$ \mathbb{Z}^\times_p \longrightarrow \mathbb{Q}^\times_p,\,  
a \mapsto a^k$. The space $\mathcal{W}$ is a $p-1$ copies of open discs of radius $1$ and the ring of power bounded rigid analytic functions on $\mathcal{W}$ is isomorphic to $\mathbb{Z}_p [[\mathbb{Z}^\times _p]]$. 
\par 
Now as we mentioned earlier, we need to work locally on $\mathcal{W}$. 
To that end, we introduce some definitions and notations. 
\begin{defn}\label{definition:arithmeticcharacters2}
Let $k_0$ be an integer and $r$ a natural number.
\begin{enumerate}
\item 
We denote by 
$B(k_0 ;r)$ a rigid analytic space 
whose rational points are identified with an open disc of radius 
$\frac{1}{p^r}$ centered at $k_0 \in \mathcal{W} (\mathbb{Q}_p)$ 
with the above mentioned embedding of $\mathbb{Z}$ in $\mathcal{W} (\mathbb{Q}_p)$. 
\item 
The ring of power bounded rigid analytic functions on $B(k_0 ;r)$ is 
noncanonically isomorphic to $\mathbb{Z}_p [[1+p \mathbb{Z}_p]]$ and we denote it by $\Lambda_{(k_0 ;r)}$ in this article
. 
When $\mathcal{O}$ is the ring of the integers of a finite extension of 
$\mathbb{Q}_p$, we denote $\Lambda_{(k_0;r)} \otimes_{\mathbb{Z}_p} \mathcal{O}$ by $\Lambda_{(k_0 ;r),\mathcal{O}}$ for short. 
\item 
A character $\eta : \mathbb{Z}^\times_p \longrightarrow 
\mathbb{Q}^\times_p$ identified with a point $B(k_0 ;r) \subset \mathcal{W} (\mathbb{Q}_p)$, is called an {\bf arithmetic character} if it lies in 
$\mathbb{Z} \cap B(k_0;r)$ and the corresponding integer $k$ is called the {\bf weight} of $\eta$. 
\item  
As explained in Definition \ref{definition:arithmeticcharacters}, 
any continuous character $\eta : \, \mathbb{Z}^\times_p \longrightarrow \mathbb{Q}^\times_p$ extends to a specialization map 
$\mathbb{Z}_p [[\mathbb{Z}^\times _p]] \longrightarrow \mathbb{Z}_p$. 
Further, if $\eta$ is in $B(k_0 ;r) \subset \mathcal{W} (\mathbb{Q}_p)$, the map extends to $\eta : 
\Lambda_{(k_0 ;r),\mathcal{O}} \longrightarrow \mathcal{O}$. Here, we note that the ring $\Lambda_{(k_0 ;r)}$ 
contains one of the local components $\mathbb{Z}_p [[1+p \mathbb{Z}_p]]$ 
of $\mathbb{Z}_p[[\mathbb{Z}^\times_p]]$ as the ring of power bounded rigid analytic functions on $\mathcal{W}$ since $B(k_0 ;r) \subset \mathcal{W}$. 
A specialization $\eta : \Lambda_{(k_0 ;r),\mathcal{O}} \longrightarrow \mathcal{O}$ thus obtained is called an {\bf arithmetic specialization} if 
the corresponding character $\mathbb{Z}^\times_p \longrightarrow \mathbb{Q}^\times_p$ is arithmetic. 
\item  
The set of arithmetic specializations is identified with $\mathbb{Z} \cap B(k_0;r)$. Hence, we denote it by an integer $k \in \mathbb{Z} \cap 
B(k_0 ;r)$ rather than $\eta$ and we call it as an {\bf arithmetic point}.
\end{enumerate}
\end{defn} 
\begin{thm}[Coleman]\label{theorem:colemanfamily}
Let $f = \sum^\infty_{n=1} a_n (f) q^n \in S_{k_0} (\Gamma_1 (M))$
be a normalized  $p$-stabilized eigen cuspform 
of weight $k_0 \geq 2$ and of level $\Gamma_1 (M)$. 
Assume that $a_p (f) \not=0$ and put $s \in \mathbb{Q}_{\geq 0}$ to be 
$s:= \mathrm{val}_p (a_p (f))$ where $\mathrm{val}_p:
(\overline{\mathbb{Q}_p})^\times \longrightarrow \mathbb{Q}_{\geq 0}$ 
be a valuation map normalized by $\mathrm{val}_p (p)=1$. 
When $f$ is a $p$-stabilization of a newform $f_0$ whose level is prime to $p$, we assume that the $p$-th Hecke polynomial $X^2 -a_p(f_0) X+\psi_{f_0} (p)p^{k_0 -1}$ does not have double roots\footnote{In fact, this assumption is conjectured to be always true and the conjecture is already 
proved for $k=2$ by Coleman-Edixhoven
\cite{ce98}.}. 
We denote by ${K}={K}_f$ the 
finite extension of $\mathbb{Q}_p$ obtained by adjoining Fourier 
coefficients $a_n (f)$ to $\mathbb{Q}_p$. 
\par 
Then there exists a natural number $r$ and a formal 
$q$-expansion $\mathbb{F}= \sum^\infty_{n=1} A_n (\mathbb{F}) q^n 
\in \Lambda_{(k_0 ;r),\mathcal{O}_{{K}}}[[q]]$ such that the following properties hold: 
\begin{enumerate}
\item 
At each arithmetic point $k \in \mathbb{Z} \cap B(k_0;r)$ larger than 
$s+1$, $f_{k} := \mathbb{F}(k) 
\in \mathcal{O}_{{K}} [[q]]$ is the $q$-expansion of a classical 
ordinary eigen cuspform of weight $k$. 
\item 
For each arithmetic point $k \in \mathbb{Z} \cap B(k_0;r)$, we have 
$\mathrm{val}_p (a_p (f_k))=s$. 
\item At the arithmetic point $k_0 \in \mathbb{Z} \cap B(k_0;r)$, 
we have $f_{k_0} =f$. 
\end{enumerate}
\end{thm}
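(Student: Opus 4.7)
The plan is to follow Coleman's original strategy via the theory of overconvergent $p$-adic modular forms and the spectral decomposition of the compact $U_p$ operator on orthonormalizable $p$-adic Banach modules, working in rigid-analytic families over the weight space $\mathcal{W}$.

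First, I would set up, for each admissible affinoid $B \subset \mathcal{W}$ (in particular for $B = B(k_0; r)$), the $\Lambda_{(k_0;r), \mathcal{O}_K}$-Banach module $M_B^\dagger$ of overconvergent $p$-adic modular forms of tame level equal to the prime-to-$p$ part of $M$, whose weight varies analytically over $B$. This module is orthonormalizable, carries a continuous action of the full Hecke algebra, and specializes at each arithmetic point $k$ to the classical space of overconvergent modular forms of weight $k$. The key input is that $U_p$ acts as a compact operator on $M_B^\dagger$, which Coleman establishes using Dwork-style estimates on the overconvergence radius and the explicit formula for $U_p$ on $q$-expansions.

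Next, I would invoke Coleman's Riesz-type decomposition for compact operators on orthonormalizable Banach modules over affinoid algebras. The Fredholm determinant $P_B(T) := \det(1 - T U_p \mid M_B^\dagger) \in \Lambda_{(k_0;r), \mathcal{O}_K}\{\{T\}\}$ is an entire series in $T$; shrinking $r$ if necessary, one factors $P_B(T) = Q(T) \cdot R(T)$ where $Q(T)$ is a polynomial of degree equal to the dimension of the slope-$s$ subspace at $k_0$, whose reciprocal roots all have $p$-adic valuation exactly $s$ throughout $B$, and $R(T)$ has no zeros of slope $\leq s$. This yields a direct sum decomposition $M_B^\dagger = M_B^{\dagger, =s} \oplus M_B^{\dagger, \ne s}$ in which $M_B^{\dagger, =s}$ is a finite projective $\Lambda_{(k_0;r), \mathcal{O}_K}$-module and every arithmetic specialization cuts out the slope-$s$ eigenspace. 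This delivers assertion (2), and the local constancy of the Newton polygon under shrinking of $B$ is the mechanism.

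I would then isolate the family containing $f$. At $k_0$, the eigenform $f$ is a $U_p$-eigenvector of eigenvalue $a_p(f)$ of slope exactly $s$. The hypothesis that $X^2 - a_p(f_0) X + \psi_{f_0}(p) p^{k_0 - 1}$ has no double roots (together with the $p$-new case) guarantees that $a_p(f)$ occurs with multiplicity one in $M_{B,k_0}^{\dagger, =s}$, so Hensel's lemma applied to the characteristic polynomial of $U_p$ on $M_B^{\dagger, =s}$ produces, after further shrinking $r$, a rank-one direct summand with $U_p$-eigenvalue $A_p(\mathbb{F}) \in \Lambda_{(k_0;r), \mathcal{O}_K}$ specializing to $a_p(f)$. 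By commutativity and compactness of the Hecke action on this summand, the other Hecke operators $T_\ell$ also act by scalars $A_\ell(\mathbb{F}) \in \Lambda_{(k_0;r), \mathcal{O}_K}$, and the resulting formal $q$-expansion $\mathbb{F} = \sum A_n(\mathbb{F}) q^n$ specializes to $f$ at $k = k_0$, giving assertion (3). For assertion (1), Coleman's classicality criterion states that an overconvergent eigenform of weight $k$ and slope strictly less than $k - 1$ is classical; since our family has constant slope $s$, every arithmetic point $k > s + 1$ yields a classical cuspform $f_k$.

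The main obstacle is the rigid-analytic spectral theory underlying the second step: establishing that $U_p$ acts compactly on $M_B^\dagger$ in a way that is compatible with arbitrary base change $B' \subset B$, and that the associated Fredholm series factors and produces locally free direct summands over $\Lambda_{(k_0;r), \mathcal{O}_K}$, a ring which is no longer a $p$-adic field. This is precisely the technical core of \cite{co97}: one must generalize Serre's theory of compact operators on $p$-adic Banach spaces to orthonormalizable Banach modules over affinoid algebras, and verify that the Riesz decomposition descends along such base rings. Once this machinery is in place, the remaining steps (Hensel-type splitting, interpolation of Hecke eigenvalues, classicality) are comparatively formal.
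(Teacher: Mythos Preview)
The paper does not supply its own proof of this theorem: it is stated as a result of Coleman, with references to \cite{co96}, \cite{co97} for the original construction and to \cite[\S 2.2]{NO16} for the reformulation over the complete local ring $\Lambda_{(k_0;r),\mathcal{O}_K}$ used here. Your sketch is a faithful outline of Coleman's original strategy and is essentially correct as such.

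There is, however, one point where your write-up does not match the paper's setting and which the paper explicitly flags. You treat $B(k_0;r)$ as an ``admissible affinoid'' and invoke the Riesz--Fredholm theory for compact operators on Banach modules over the affinoid algebra $\Lambda_{(k_0;r),\mathcal{O}_K}$. But in the paper's conventions $B(k_0;r)$ is an \emph{open} disc and $\Lambda_{(k_0;r),\mathcal{O}_K}$ is a formal power series ring, not an affinoid algebra; the paper's remark immediately following the theorem stresses exactly this distinction (an open disc is not affinoid, and an affinoid algebra is not a complete local ring). Coleman's spectral machinery in \cite{co97} is developed over affinoids, so to land in $\Lambda_{(k_0;r),\mathcal{O}_K}[[q]]$ one needs an additional passage---taking a limit over a shrinking family of closed subdiscs, or an equivalent argument---which is precisely what \cite[\S 2.2]{NO16} supplies. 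Your outline would be complete once you acknowledge that the slope-$s$ summand is first produced over suitable closed subdiscs and then assembled over the open disc; as written, the phrase ``in particular for $B=B(k_0;r)$'' elides this step.
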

We call the number $s$ which appeared in the theorem the {\bf slope} of 
a given Coleman family $\mathbb{F}$. 
\begin{rem}
\begin{enumerate}
\item 
In Definition \ref{definition:arithmeticcharacters}, 
the arithmetic characters remain to be arithmetic after twisting by Dirichlet characters 
of $p$-power conductor. On the other hand, the arithmetic characters 
of Definition \ref{definition:arithmeticcharacters2} do not remain to be arithmetic after twisting by Dirichlet characters 
of $p$-power conductor. The construction of Coleman families does not allow 
``infinitesimal deformation'' (by finite order characters of $p$-power conductor) as in the case of Hida families. 
\item 
In \cite{co96}, \cite{co97}, Coleman works over affinoid spaces. 
Hence, we refer the reader to \cite[\S 2.2]{NO16} for a reformulation of the theory of Coleman families over a complete local ring 
as presented in Theorem \ref{theorem:colemanfamily} \footnote{Note that a typical example of affinoid space is a $p$-adic closed disc 
corresponding to the Tate algebra $\mathbb{Z}_p \langle X \rangle := \varprojlim_n  
(\mathbb{Z}/p^n \mathbb{Z}) [X]$ and a $p$-adic open disc is not an affinoid space.}. 
An affinoid algebra is complete with respect to Guass norm but it is not a complete local ring. 
Hence parallel arguments of gluing as the construction of Beilinson-Kato elements and Coleman maps for Hida families carried out by the author 
do not work over an affinoid algebra. This is why the construction of Coleman families over a formal base ring as presented in Theorem \ref{theorem:colemanfamily} plays important roles in the paper \cite{NO16}. 
\end{enumerate} 
\end{rem}
\ 
\\ 
As an application of the construction of Coleman families over a formal base as presented in Theorem \ref{theorem:colemanfamily}, we can apply 
Wiles' method of pseudo-representation to obtain the following consequence: 
\begin{thm}\label{theorem:construction_bigrepresentation_non-ordinary}
Let $\mathbb{F}$ be a Coleman family in the sense of 
Theorem \ref{theorem:colemanfamily}. 
Then we have a Galois deformation $\mathbb{T} \cong 
\Lambda_{(k_0 ;r),\mathcal{O}_{{K}}}^{\oplus 2}$ equipped with a continuous irreducible 
representation $\rho_{\mathbb{F}} : G_{\mathbb{Q}} \longrightarrow 
\mathrm{Aut}_{\Lambda_{(k_0 ;r),\mathcal{O}_{{K}}}} (\mathbb{T})$  unramified outside primes dividing $M$ such that the equality 
$$
\mathrm{Tr} (\rho_{\mathbb{F}} (\mathrm{Frob}_\ell )) = A_\ell (\mathbb{F}) 
$$
holds for every prime $\ell$ not dividing $M$. 
\end{thm}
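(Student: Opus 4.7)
The plan is to imitate the proof of Theorem \ref{theorem:construction_bigrepresentation} in the Coleman family setting, replacing the Hida algebra $\mathbb{I}$ by the complete local ring $\Lambda_{(k_0;r),\mathcal{O}_{{K}}} \cong \mathcal{O}_{{K}}[[T]]$, and using Wiles' pseudo-representation machinery to interpolate the classical Deligne--Shimura representations attached to the members of the family.

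First, let $\Sigma$ be the finite set of primes of $\mathbb{Q}$ containing $\infty$, $p$, and all primes dividing $M$, and work with $G_{\mathbb{Q},\Sigma} = \mathrm{Gal}(\mathbb{Q}_\Sigma /\mathbb{Q})$. For each arithmetic point $k \in \mathbb{Z}\cap B(k_0;r)$ of weight at least $s+1$, Theorem \ref{theorem:colemanfamily}(1) yields a classical $p$-stabilized eigen cuspform $f_k$ of weight $k$ with $a_\ell(f_k) = k(A_\ell(\mathbb{F}))$ for every $\ell \notin \Sigma$, and hence a continuous irreducible representation $\rho_{f_k}: G_{\mathbb{Q},\Sigma} \longrightarrow \mathrm{Aut}_{{K}_{f_k}}(V_{f_k})$ whose trace on $\mathrm{Frob}_\ell$ is $a_\ell(f_k)$ and whose determinant is $\psi_{f_k}(\ell)\ell^{k-1}$. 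These are the pointwise data to be glued.

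Second, I would define candidate trace and determinant maps $\tau,\delta: G_{\mathbb{Q},\Sigma}\to \Lambda_{(k_0;r),\mathcal{O}_{{K}}}$ as follows. On the dense subset of Frobenius elements set $\tau(\mathrm{Frob}_\ell) := A_\ell(\mathbb{F})$ and $\delta(\mathrm{Frob}_\ell) := \Psi(\ell) \langle \ell\rangle^{\mathrm{univ}-1}$, where the right-hand side is the universal nebentypus times the universal cyclotomic character interpolated over $B(k_0;r)$; by Theorem \ref{theorem:colemanfamily}, both values lie in $\Lambda_{(k_0;r),\mathcal{O}_{{K}}}$. One then extends $\tau$ and $\delta$ continuously to all of $G_{\mathbb{Q},\Sigma}$ using Chebotarev density, the $\mathfrak{M}$-adic topology of $\Lambda_{(k_0;r),\mathcal{O}_{{K}}}$, and the fact that the $\rho_{f_k}$ are continuous with uniformly bounded ramification. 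The pair $(\tau,\delta)$ is a candidate two-dimensional pseudo-representation of Wiles (equivalently, a Chenevier pseudo-character). The polynomial identities defining a pseudo-representation are checked after each arithmetic specialization $\kappa_k$, where they follow from the genuine representation $\rho_{f_k}$; since the arithmetic points are Zariski-dense in $\mathrm{Spec}(\Lambda_{(k_0;r),\mathcal{O}_{{K}}})$ and the ring is a domain, the identities extend to $\Lambda_{(k_0;r),\mathcal{O}_{{K}}}$ itself.

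Third, I would invoke Wiles' theorem that a residually absolutely irreducible two-dimensional pseudo-character on a complete local Noetherian ring comes from a genuine representation, unique up to conjugation. Since the residual representation of $\mathbb{F}$ is assumed irreducible in the setting of the paper, this yields $\rho_{\mathbb{F}}: G_{\mathbb{Q},\Sigma}\longrightarrow GL_2(\Lambda_{(k_0;r),\mathcal{O}_{{K}}})$, realized on a free rank-two module $\mathbb{T} \cong \Lambda_{(k_0;r),\mathcal{O}_{{K}}}^{\oplus 2}$. Irreducibility of $\rho_{\mathbb{F}}$ over $\mathrm{Frac}(\Lambda_{(k_0;r),\mathcal{O}_{{K}}})$ is automatic from the irreducibility of the generic fibre $\rho_{f_k}$ at any arithmetic point; unramifiedness outside primes dividing $M$ follows because the inertia subgroups $I_\ell$ for $\ell \nmid M$ act trivially on every $\rho_{f_k}$, hence act with trivial trace and determinant and therefore trivially on $\mathbb{T}$.

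The main obstacle is not the algebraic step of promoting a pseudo-representation to a representation, which is purely formal once the right hypotheses are in place, but rather the prior analytic step of ensuring that the interpolation of traces lands integrally in $\Lambda_{(k_0;r),\mathcal{O}_{{K}}}$ rather than merely in the affinoid algebra associated with $B(k_0;r)$. This is precisely the reason why the reformulation of Coleman's construction over a complete local ring given in \cite[\S 2.2]{NO16} is required: the affinoid algebras of \cite{co96},\cite{co97} are Banach but not complete local, so Wiles' pseudo-representation theorem does not directly apply, and integrality of $A_\ell(\mathbb{F}) \in \Lambda_{(k_0;r),\mathcal{O}_{{K}}}$ is what allows the gluing argument to be performed in the formal category where the machinery works.
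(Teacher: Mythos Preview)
Your approach is essentially what the paper does: it explicitly states just before the theorem that one applies ``Wiles' method of pseudo-representation'' to the Coleman family constructed over the formal base ring, and refers to \cite[\S 2.3]{NO16} for the details; your emphasis on why the complete local ring $\Lambda_{(k_0;r),\mathcal{O}_{{K}}}$ (rather than Coleman's affinoid algebra) is essential for the pseudo-representation machinery matches the paper's own remarks exactly.

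One small caveat: the theorem as stated does not assume residual irreducibility, yet you invoke it to pass from the pseudo-representation to a genuine representation on a \emph{free} module. Either add this hypothesis explicitly (as the paper does in all subsequent applications), or note that since $\Lambda_{(k_0;r),\mathcal{O}_{{K}}}\cong \mathcal{O}_{{K}}[[T]]$ is a two-dimensional regular local ring, any $G_{\mathbb{Q}}$-stable lattice in the associated $\mathrm{Frac}(\Lambda_{(k_0;r),\mathcal{O}_{{K}}})$-representation has a reflexive hull which is automatically free, exactly as in the paper's discussion of condition~(F) for Hida families.
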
 
We refer the reader to \cite[\S 2.3]{NO16} for the proof of Theorem\ref{theorem:construction_bigrepresentation_non-ordinary} 
\\ 
\\  
In each of the ordinary case and the non-ordinary case presented above, there is a complete Noetherian local ring $\mathcal{R}$ 
of characteristic $0$ with finite residue field of characteristic $p$ 
and a family of Galois representation $\mathbb{T} \cong \mathcal{R}^{\oplus d}$ 
on which the absolute Galois group $G_{\mathbb{Q}}$ acts continuously. 
Then there should be Iwasawa Main Conjecture over $\mathbb{T}$ as an equality of 
ideals in $\mathcal{R}\widehat{\otimes}_{\mathbb{Z}_p} \Lambda_{\mathrm{cyc}}$ relating an algebraic object and an analytic object
We outlined the setting of $p$-adic deformations of modular forms very quickly for our later use later in this article. 
The setting of eigencurves (cf. \cite{cm98}) which unifies the ordinary case and the non-ordinary case is also known. 
Formulating and proving the results which will be stated in later sections of this paper over eigencurves should be also an interesting problem. 
\  
\\ 
\\ 
\section{Iwasawa Main Conjecture for an ordinary family of cuspforms}
\label{section:motivation2}
\subsection{Statements of known results}
As for our motivation stated in the end of the previous section, 
we have already some conditional results in the ordinary setting given in \ref{subsection:hidafamily}. 
In this section, we shall introduce the results and the proof. Then we will make our motivation more precise 
explaining new  difficulties and new phenomena on this project of ``Iwasawa theory for deformation spaces''. 
\par 
We fix the setting of \S \ref{subsection:hidafamily}. We take a Hida family $\mathbb{F}= \sum^\infty_{n=1} A_n (\mathbb{F})q^n \in \mathbb{I}[[q]]$. We choose and fix an integral 
Galois representation $\mathbb{T} \cong \mathbb{I}^{\oplus 2}$ associated 
to $\mathbb{F}$ by Theorem \ref{theorem:localfiltration} 
which satisfies the condition \rm{(F)}. Similarly as in \S \ref{sec:Intro_motif}, for each number field $K$, we define the Greenberg type Selmer group 
\begin{equation}\label{equation:definition_Selmer2}
\mathrm{Sel}_\mathbb{A} (K) = 
\mathrm{Ker} \left[ 
H^1 (K,\mathbb{A}) \longrightarrow \prod_{\lambda \nmid p} H^1 (I_\lambda ,\mathbb{A})]
\times 
\prod_{\mathfrak{p} \vert p}\dfrac{H^1 (I_\mathfrak{p} ,\mathbb{A} )}
{\mathrm{Image}(H^1 (I_\mathfrak{p} ,F^+_p \mathbb{A}))}\right]
\end{equation}
where $\mathbb{A}$ is a discrete Galois representation 
defined to be $\mathbb{A}=\mathbb{T}\otimes_{\mathbb{I}} \mathbb{I}^\vee$ 
and where $I_\lambda$ (resp. $I_\mathfrak{p}$) denotes the inertia subgroup 
at each place $\lambda$ (resp. $\mathfrak{p}$) not dividing $p$ (resp. dividing $p$). We deduce that the Pontrjagin dual $\mathrm{Sel} (\mathbb{Q}(\mu_{p^\infty}))^\vee$ of $\mathrm{Sel} (\mathbb{Q}(\mu_{p^\infty}))$ 
is finitely generated over $\mathbb{I} \widehat{\otimes}_{\mathbb{Z}_p} \Lambda_{\mathrm{cyc}}$ by a similar argument as the argument of Lemma \ref{lemma:finitenessSel_A} and Lemma \ref{lemma:finitelygenerated}. 
\\
\\  
The following theorem follows from Theorem A of \S \ref{sec:Intro_motif} 
combined with the control theorem of Selmer group specializing from two-variable to one-variable (see \cite{ochiai-JNT01} and \cite{fo12}). 
\\ 
\\ 
{\bf Theorem A$'$ (Torsion property of the Selmer group for a Hida family $\mathbb{F}$)} 
\\ 
Let $\mathbb{F}$, $\mathbb{T}$ and $\mathbb{A}$ be as above. Assume that $p \geq 5$ \footnote{We exclude $p=3$ only because we did not find a complete reference for CM case.}.     
Then the finitely generated $\mathbb{I} \widehat{\otimes}_{\mathbb{Z}_p} \Lambda_{\mathrm{cyc}}$-module $\mathrm{Sel}_{\mathbb{A}} (\mathbb{Q}(\mu_{p^\infty}))^\vee$ is torsion over $\mathbb{I} \widehat{\otimes}_{\mathbb{Z}_p} \Lambda_{\mathrm{cyc}}$. 
\\ 
\\ 
In order to generalize Theorem B of \S \ref{sec:Intro_motif} from a cuspform $f$ to a family of cuspforms $f_\kappa$, it is important to  understand ``periods''. In fact, a complex period $\Omega^{\mathrm{sgn}}_{f_\kappa,\infty}$ is not canonical at all and it is defined only modulo 
multiplication by elements of $(\mathbb{Q}_{f_\kappa})^\times$ at each $\kappa$. Hence, it does not make sense to  formulate a family of $p$-adic $L$-functions for cuspforms $f_\kappa$, without ``regularizing a choice of periods 
$\Omega^{\mathrm{sgn}}_{f_\kappa ,\infty}$''. 
\par 
Greenberg-Stevens \cite{gr93} 
and Kitagawa \cite{ki94} constructed an $\mathbb{I}$ module called ``$\mathbb{I}$-adic modular symbols'' to overcome the problem of ``regularization of periods'' and they constructed two-variable $p$-adic $L$-functions for given Hida families. Though the constructions of 
\cite{gr93} and \cite{ki94} are similar, the former interpolates parabolic cohomologies and the construction is only local over $\mathbb{I}$. The latter interpolates compactly supported cohomologies and the construction is global over the whole $\mathbb{I}$. 
\par 
Hence, we follow more closely the construction of \cite{ki94} and we denote the module of $\mathbb{I}$-adic modular symbols by $\mathbb{MS}(\mathbb{I})^\pm$. Though we do not recall the definition of $\mathbb{MS}(\mathbb{I})^\pm$, we remark that 
$\mathbb{MS}(\mathbb{I})^\pm$ is the same as the one denoted by $\mathcal{MS}(\mathbb{I})^\pm[\lambda]$ in \cite{ki94}. As an important property of 
$\mathbb{MS}(\mathbb{I})^\pm$, for each arithmetic specialization $\kappa$ of non-negative weight $w(\kappa) \geq 0$, 
$\mathbb{MS}(\mathbb{I})^\pm \otimes_{\mathbb{I}} \kappa (\mathbb{I})$ is canonically identified with 
$H^1_c (Y_1 (M)_{\mathbb{C}} ,
\mathcal{L}_{k_f -2} (\mathbb{Q}_f))^{\pm}[f_\kappa  ]$. 
\begin{defn}\label{definition:padicperiodMS}
Suppose that $\mathbb{MS}(\mathbb{I})^\pm$ is free of rank one over 
$\mathbb{I}$ 
for each of the signs $\pm$ \footnote{Some sufficient conditions so that $\mathbb{MS}(\mathbb{I})$ becomes free of rank one over $\mathbb{I}$ are listed 
in \cite{ki94}. For example, this holds if $\mathbb{I}$ is a UFD.}. Let us fix a basis $\Xi^\pm$ of $\mathbb{MS}(\mathbb{I})^\pm$ over $\mathbb{I}$ for each of the signs $\pm$. Then, for each arithmetic specialization $\kappa$ of non-negative weight $w(\kappa) \geq 0$, the specialization 
$\mathbb{MS}(\mathbb{I})^\pm \otimes_{\mathbb{I}} \kappa (\mathbb{I})$ 
is naturally identified with a lattice of $H^1_c (Y_1 (N_{f_\kappa})_{\mathbb{C}} ,
\mathcal{L}_{w(\kappa )} (\mathbb{Q}_{f_\kappa}))^{\pm}[f_\kappa ] \otimes_{\mathbb{Q}_f} 
\mathrm{Frac}(\kappa (\mathbb{I}))$. We define $p$-adic periods $C^{\pm}_{f_\kappa ,p}$ to be error terms given by: 
\begin{equation}\label{equation:definition_of_p-adic periods}
\kappa (\Xi^\pm) = C^{\pm}_{f_\kappa ,p} \cdot b^\pm _{f_\kappa} \otimes 1 .
\end{equation}
\end{defn}
\begin{rem}\label{remark:padicperiodMS}
As was cautioned earlier, $C^{\pm}_{f_\kappa ,p} $ and 
$\Omega^{\pm}_{f_\kappa ,\infty} $ 
depend on the choice of a $\mathbb{Q}_{f_\kappa}$-basis $b^\pm_{f_\kappa}$ on $H^1_c (Y_1 (M)_{\mathbb{C}} ,
\mathcal{L}_{k_f -2} (\mathbb{Q}_f))^{\pm}[f_\kappa ]$. 
However, the ``ratio'' 
is independent of the choice of $b^\pm_{f_{\kappa}}$. If we denote the $p$-adic period and the complex period obtained by another 
$\mathbb{Q}_{f_\kappa}$-basis $(b^\pm_{f_{\kappa}})'$ on $H^1_c (Y_1 (M)_{\mathbb{C}} , 
\mathcal{L}_{k_f -2} (\mathbb{Q}_f))^{\pm}[f_\kappa ]$ by 
$(C^{\pm}_{f_\kappa ,\infty} )'$ and 
$(\Omega^{\pm}_{f_\kappa ,\infty} )'$, we have 
$$
\dfrac{C^{\pm}_{f_\kappa ,p}}{(C^{\pm}_{f_\kappa ,p})'} =
\dfrac{\Omega^{\pm}_{f_\kappa ,\infty}}{(\Omega^{\pm}_{f_\kappa ,\infty})'}. 
$$ 
Thus the interpolation property of two-variable ordinary $p$-adic $L$-functions stated in Theorem B$'$ below is well-defined. 
\end{rem}
In order to state the result on two-variable ordinary $p$-adic $L$-function, we prepare the following condition.
\begin{enumerate}
\item[{\bf (M)}] 
The $\mathbb{I}$-module $\mathbb{MS}(\mathbb{I})^\pm$ are free of rank one 
over $\mathbb{I}$. 
\end{enumerate} 
\ 
\\
Here is the main result of \cite{ki94}. 
\\ 
\\ 
{\bf Theorem B$'$ (Existence of two-variable ordinary $p$-adic $L$-function)} 
\\ 
Let $\mathbb{F}$, $\mathbb{T}$ and $\mathbb{A}$ be as above. Assume that $p \geq 5$. We assume the condition \rm{(M)} and fix an $\mathbb{I}$-basis 
$\Xi^\pm$ of $\mathbb{MS}(\mathbb{I})^\pm$ respectively. 
\par 
Then, there exists an analytic $p$-adic $L$-function $L_p (\{ \Xi^\pm \}) 
\in \mathbb{I} \widehat{\otimes}_{\mathbb{Z}_p} \Lambda_{\mathrm{cyc}} 
\otimes_{\mathbb{Z}_p} \mathbb{Q}_p$  
such that 
we have the following equality:
\begin{equation}\label{equation:interpolation_AV_V_def}
\dfrac{( \chi^j_{\mathrm{cyc}} \phi , \kappa )(L_p (\{ \Xi^\pm \} ))}
{C^{\mathrm{sgn}(j,\phi )}_{f_\kappa ,p}} = (-1)^{j} (j-1)!  e_p (f_\kappa , j ,\phi ) 
 \tau (\phi) \dfrac{L(f_\kappa , \phi^{-1},j)}
{(2\pi \sqrt{-1})^{j}\Omega^{\mathrm{sgn}(j,\phi )}_{f_\kappa ,\infty} } ,
\end{equation}
for any arithmetic specialization of $\mathbb{I}$ with $w(\kappa ) \geq 0$,  for any integer $j$ satisfying $1 \leq j\leq w(\kappa )+1$ and for any Dirichlet 
character $\phi$ whose conductor $\mathrm{Cond} (\phi)$ is a power of $p$,
where $\tau (\phi )$ and $e_p (f, j, \phi )$ is the same as Theorem B. 
\\ 
\begin{rem}
\begin{enumerate}
\item 
The $p$-adic $L$-function $L_p (\{ \Xi^\pm \} ))$ is expected to be 
in $\mathbb{I} \widehat{\otimes}_{\mathbb{Z}_p} \Lambda_{\mathrm{cyc}} \subset (\mathbb{I} \widehat{\otimes}_{\mathbb{Z}_p} 
\Lambda_{\mathrm{cyc}} )\otimes_{\mathbb{Z}_p} \mathbb{Q}_p$ when the complex period is $p$-optimal way. 
By lack of appropriate reference for its integrality, 
we introduced $L_p (\{ \Xi^\pm \}) $ as an element of 
$(\mathbb{I} \widehat{\otimes}_{\mathbb{Z}_p} \Lambda_{\mathrm{cyc}} )
\otimes_{\mathbb{Z}_p} \mathbb{Q}_p$. 
\par 
 When the residual Galois representation associated to the Hida family $\mathbb{F}$ is irreducible, we can check that $L_p (\{ \Xi^\pm \}) 
\in \mathbb{I} \widehat{\otimes}_{\mathbb{Z}_p} \Lambda_{\mathrm{cyc}} $.   
In fact, if we choose a complex period $\Omega^{\mathrm{sgn}(j,\phi )}_{f_\kappa ,\infty}$ to be $p$-optimal, 
the special value $\dfrac{L(f_\kappa , \phi^{-1},j)}
{(2\pi \sqrt{-1})^{j}\Omega^{\mathrm{sgn}(j,\phi )}_{f_\kappa ,\infty} }$ is $p$-integral for every $\kappa$ and $p$-adic period 
$C^{\mathrm{sgn}(j,\phi )}_{f_\kappa ,p}$ is a $p$-adic unit for every $\kappa$ in the resudually irreducible case. 
Thus $L_p (\{ \Xi^\pm \}) $ has to be integral since its specializations are $p$-integral.
\item 
The $p$-adic $L$-function $L_p (\{ \Xi^\pm \})$ depends on the choice of $\{ \Xi^\pm \}$. 
However, since $\Xi^+$ and $\Xi^-$ are unique modulo multiplication by 
elements of $\mathbb{I}^\times$, the principal ideal $(L_p (\{ \Xi^\pm \}))$ 
in $\mathbb{I} \widehat{\otimes}_{\mathbb{Z}_p} \Lambda_{\mathrm{cyc}}$ 
is independent of the choice of $\{ \Xi^\pm \}$.  
\end{enumerate}
\end{rem}
Now, we can state the two-variable Iwasawa Main conjecture for Hida families of elliptic cuspforms: 
\\ 
\\ 
{\bf Conjecture (Two-variable Iwasawa Main Conjecture for a Hida family)} 
\\ 
Let $\mathbb{F}$, $\mathbb{T}$ and $\mathbb{A}$ be as above. Assume that $p \geq 5$. We assume the condition \rm{(M)} and fix an $\mathbb{I}$-basis 
$\Xi^\pm$ of $\mathbb{MS}(\mathbb{I})^\pm$ respectively. 
Then, we have the following equality of principal ideals in the ring $\mathbb{I} \widehat{\otimes}_{\mathbb{Z}_p} \Lambda_{\mathrm{cyc}}$:  
\begin{equation}\label{equation:IMC_for_Hidafamily}
(L_p (\{ \Xi^\pm \}))=\mathrm{char}_{
\mathbb{I} \widehat{\otimes}_{\mathbb{Z}_p} \Lambda_{\mathrm{cyc}}
} \, \mathrm{Sel}_{\mathbb{A}} (\mathbb{Q}(\mu_{p^\infty}))^\vee . 
\end{equation} 
\\ 
The following result realizes our motivation which was proposed in the end of the previous section: 
\\
\\ 
{\bf Theorem C$'$  (Two-variable Iwasawa Main Conjecture for a Hida family)} 
\\ 
Under certain assumptions (on the prime number $p$, the tame conductor of the Hida family $\mathbb{F}$, the fullness of the residual representation, 
the condition (M), the regularity of the local ring $\mathbb{I}$ etc), we have the following equality of principal ideals in the ring $\mathbb{I} \widehat{\otimes}_{\mathbb{Z}_p} \Lambda_{\mathrm{cyc}}$:    
\begin{equation}\label{equation:IMC_for_elliptic_cuspform_modulo_mu_def}
(L_p (\{ \Xi^\pm \}))=\mathrm{char}_{
\mathbb{I} \widehat{\otimes}_{\mathbb{Z}_p} \Lambda_{\mathrm{cyc}}
} \, \mathrm{Sel}_{\mathbb{A}} (\mathbb{Q}(\mu_{p^\infty}))^\vee . 
\end{equation} 
\ 
\\ 
\subsection{Proofs of known results} 
By the Euler system machinary (cf. \cite{ochiai-AJM03}, \cite{ochiai-AIF} and \cite{Och06}), we prove an inclusion 
\begin{equation}\label{equation:IMC_for_elliptic_cuspform_modulo_mu_bis}
(L_p (\{ \Xi^\pm \})) \subset \mathrm{char}_{\mathbb{I} \widehat{\otimes}_{\mathbb{Z}_p} \Lambda_{\mathrm{cyc}}} \, \mathrm{Sel}_{\mathbb{A}} (\mathbb{Q}(\mu_{p^\infty}))^\vee .
\end{equation}
Once we have this inclusion in the two-variable situation, we may specialize at an arithmetic specialization $\kappa$ of $\mathbb{I}$. Since we have already a 
one-variable equality at $\kappa$ by Theorem C under appropriate conditions, the inclusion \eqref{equation:IMC_for_elliptic_cuspform_modulo_mu_bis} may be upgraded to a one-variable equality (see \cite[\S 7]{Och06} for further details). 
We stress that this strategy implicitly relies on Skinner--Urban \cite{444sui} since the equality in Theorem C is due to the opposite inclusion obtained 
in \cite{444sui} by means of the Eisenstein ideal for $U(2,2)$. Note that Skinner--Urban \cite{444sui} also proves the opposite inequality of  
\eqref{equation:IMC_for_elliptic_cuspform_modulo_mu_bis} for the situation of two-variable. 
\par 
We will discuss a little the proof of the one containment by the Euler system machinary (\cite{ochiai-AJM03}, \cite{ochiai-AIF} and \cite{Och06}) for Theorem C$'$ since 
this should be a good opportunity to explain the difficulties to prove Theorem C$'$ compared to the proof of Theorem C. 
\par 
From now on, we concentrate on the proof of \eqref{equation:IMC_for_elliptic_cuspform_modulo_mu_bis}. 
The proof is composed of two independent steps (the Euler system bounds and the existence of a Coleman map) 
as in the proof of \eqref{equation:IMC_for_elliptic_cuspform_modulo_mu2}
 explained after Remark \ref{remark:Proof_Theorem_C_onevaiable}. However we need some preparations. 
\par 
First, we have to note that a general machinary of Euler system bounds obtained in Theorem \ref{theorem:KatoRubinPerrinRIou} is only for 
usual Galois representation with coefficients in the discrete valuation 
rings with finite residue field. 
In fact, the characteristic ideal of a torsion module $M$ over 
a discrete valuation ring with finite residue field consists only of 
the counting of the finite cardinality of $M$ and 
the proof of Theorem \ref{theorem:KatoRubinPerrinRIou} 
by Kato\cite{305}, Perrin-Riou\cite{398} and Rubin\cite{420} is based on 
the counting. 
\par 
Since we need to generalize Theorem \ref{theorem:KatoRubinPerrinRIou} to 
Galois representations with coefficients in larger deformation rings, 
we need essentially new ideas. Below, we will give the definition of the Euler system 
for Galois deformation and the result on the bound of Galois cohomology groups using this Euler system. 
From now on, we denote by $\Lambda^\sharp_{\mathrm{cyc}}$ a free $\Lambda_{\mathrm{cyc}}$-module of rank one 
on which the absolute Galois group $G_{\mathbb{Q}}$ acts via the character 
$$
G_{\mathbb{Q}} \twoheadrightarrow \mathrm{Gal} (\mathbb{Q} (\mu_{p^\infty})/\mathbb{Q}) \hookrightarrow 
\Lambda^\times_{\mathrm{cyc}}. 
$$ 
\begin{defn}[Def. 2.1 of \cite{ochiai-AIF}] \label{definition:eulersystem_general}
Let $\mathbb{T} \cong \mathcal{R}^{\oplus 2}$ be a Galois representation 
of the absolute Galois group $G_{\mathbb{Q}}$ 
over a complete Noetherian local ring $\mathcal{R}$ of characteristic $0$ 
with finite residue field of characteristic $p$.
Assume that the action of $G_{\mathbb{Q}}$ on $\mathbb{T}$ is continuous 
and unramified outside a finite set of primes 
$\Sigma$ of $\mathbb{Q}$ 
which contain $\{ p\}$, $\{\infty \}$ and the ramified primes of the representation $\mathbb{T}$. 
\par 
We denote by $\mathcal{S}$ the set of all square-free natural numbers 
which are prime to $\Sigma$. 
An Euler system for ${\mathbb{T}}$ is a collection of cohomology classes 
$$
\{ \mathcal{Z}(r) \in H^1(\mathbb{Q} (\mu_r ) ,
(\mathbb{T}\widehat{\otimes}_{\mathbb{Z}_p} \Lambda^\sharp_{\mathrm{cyc}}
)^\ast (1)) \} _{r\in \mathcal{S} }
$$ 
with the following properties \footnote{
The cohomology group $H^1(\mathbb{Q} (\mu_r ) ,
(\mathbb{T}\widehat{\otimes}_{\mathbb{Z}_p} \Lambda^\sharp_{\mathrm{cyc}}
)^\ast (1))$ is isomorphic to $\varprojlim_{n} H^1(\mathbb{Q} (\mu_{p^n r} ) ,\mathbb{T} ^\ast (1))$ by Shapiro's lemma on Galois cohomology theory.}$:$ 
\begin{enumerate}
\item 
The element $\mathcal{Z}(r)$ is unramified outside $\Sigma \cup \{ r \}$ 
for each $r\in \mathcal{S}$. 
\item 
The image of the norm $\mathrm{Norm}_{\mathbb{Q} (\mu_{rq}) /\mathbb{Q} (\mu_{r})}(\mathcal{Z}(rq))$ of $\mathcal{Z}(rq)$
is equal to $P_q (\mathrm{Frob} _q )\mathcal{Z}(r)$, where 
$P_q (X) \in \mathcal{R}
\widehat{\otimes}_{\mathbb{Z}_p} \Lambda_{\mathrm{cyc}} [X]$ is the  polynomial $\mathrm{det} 
(1-\mathrm{Frob}_q X ; \mathbb{T}\widehat{\otimes}_{\mathbb{Z}_p} \Lambda^\sharp_{\mathrm{cyc}} )$ and $\mathrm{Frob}_q$ is a (conjugacy 
class of) geometric Frobenius element at $q$ in the Galois group 
$\mathrm{Gal} (\mathbb{Q} (\mu_r )/\mathbb{Q} )$. 
\end{enumerate}
Here, $\Lambda^\sharp_{\mathrm{cyc}}$ is a free $\Lambda_{\mathrm{cyc}}$-module of rank one on which the absolute Galois group acts via 
a continuous character $G_{\mathbb{Q}}$ acts via the tautological character: 
$$
G_{\mathbb{Q}} \twoheadrightarrow G_{\mathrm{cyc}} \hookrightarrow 
\Lambda_{\mathrm{cyc}}^\times=\mathbb{Z}_p[[G_{\mathrm{cyc}}  ]]^\times. 
$$
\end{defn}
The following theorem is a deformation theoretic generalization of Theorem 
\ref{theorem:KatoRubinPerrinRIou} which gave an Euler system bounds for cyclotomic twists of usual $p$-adic representations. In order to apply the result also to non-ordinary situation later, we do not restrict ourselves 
to Hida families and state the result with a more general setting.   
\begin{thm}[Thm 2.4 of \cite{ochiai-AIF}] \label{theorem:general_Euler_system_bound}
Let us assume the setting of Definition $\ref{definition:eulersystem_general}$. 
Assume further that the following conditions are satisfied: 
\begin{enumerate}
\item[\rm{(i)}]
The residual Galois representation of $\mathbb{T}$ at the maximal ideal of 
$\mathcal{R}$ is absolutely irreducible as a representation of $G_{\mathbb{Q}}$. 
\item[(ii)] 
The first layer of a given Euler system $\mathcal{Z}(1) \in H^1(\mathbb{Q} ,
(\mathbb{T}\widehat{\otimes}_{\mathbb{Z}_p} \Lambda^\sharp_{\mathrm{cyc}}
)^\ast (1)) $ is not contained 
in the $\mathcal{R} \widehat{\otimes}_{\mathbb{Z}_p} \Lambda^{\omega^i}_{\mathrm{cyc}}$-torsion part of 
$H^1(\mathbb{Q}_{\Sigma}/\mathbb{Q} ,
(\mathbb{T}\widehat{\otimes}_{\mathbb{Z}_p} \Lambda^\sharp_{\mathrm{cyc}}
)^\ast (1)) $ at each local component $\mathcal{R} \widehat{\otimes}_{\mathbb{Z}_p} \Lambda^{\omega^i}_{\mathrm{cyc}}$. 
Here, $\omega$ denotes the Teichm\"{u}ller character and we recall that 
$\Lambda_{\mathrm{cyc}}$ is decomposed into a product of local rings 
as $\displaystyle{\Lambda_{\mathrm{cyc}}= \prod^{p-2}_{i=0} }\Lambda^{\omega^i}_{\mathrm{cyc}}$ by character decomposition. 
\item[\rm{(iii)}]
The $\mathcal{R}$-module $\mathbb{T}$ splits into eigenspaces: $\mathbb{T}=\mathbb{T}^+ \oplus \mathbb{T}^-$ with respect to the complex conjugation in $G_{\mathbb{Q}}$, and $\mathbb{T}^+$ (resp. $\mathbb{T}^-$) is free 
$\mathcal{R}$-module of rank one.
\item[(iv)] There exist $\sigma_1 \in G_{\mathbb{Q}(\mu_{p^{\infty}})}$ and $\sigma_2 \in G_{\mathbb{Q}}$ such that $\rho(\sigma_1) $ is conjugate to 
$\begin{pmatrix} 1 & u \\ 0 & 1 \end{pmatrix} \in GL_2(\mathcal{R})$ for a unit $u \in \mathcal{R}^\times$ and $\sigma_2$ acts on $\mathbb{T}$ as multiplication by $-1$
for the Galois representation \footnote{This condition 
excludes the case where $\mathbb{F}$ has complex multiplication.}. 
$$\rho:\, G_{\mathbb{Q}} \longrightarrow \mathrm{Aut}_{\mathcal{R} }(\mathbb{T} ) \cong GL_2 (\mathcal{R}).
$$ 
\item[(v)] 
Every local component $\mathcal{R} \widehat{\otimes}_{\mathbb{Z}_p} \Lambda^{\omega^i}_{\mathrm{cyc}}$ of the semi-local ring $\mathcal{R} \widehat{\otimes}_{\mathbb{Z}_p} \Lambda_{\mathrm{cyc}}$ is isomorphisc to 
a ring of two-variable formal power series $\mathcal{O}[[X,Y]]$ where $\mathcal{O}$ is the ring of integers of a finite extension of $\mathbb{Q}_p$.  
\end{enumerate}
Then, $ H^2(\mathbb{Q}_{\Sigma}/\mathbb{Q} ,
(\mathbb{T}\widehat{\otimes}_{\mathbb{Z}_p} \Lambda^\sharp_{\mathrm{cyc}}
)^\ast (1)) $ is a torsion $\Lambda_{\mathrm{cyc}}$-module and we have:  
\begin{multline}
\mathrm{char}_{\mathcal{R} \widehat{\otimes}_{\mathbb{Z}_p} \Lambda_{\mathrm{cyc}}}
\left( 
H^1(\mathbb{Q}_{\Sigma}/\mathbb{Q} ,
(\mathbb{T}\widehat{\otimes}_{\mathbb{Z}_p} \Lambda^\sharp_{\mathrm{cyc}}
)^\ast (1)) 
 \Bigl/ \mathcal{R} \widehat{\otimes}_{\mathbb{Z}_p} \Lambda_{\mathrm{cyc}}  \cdot \mathcal{Z}(1)  
 \right) 
 \\ 
 \subset 
 \mathrm{char}_{\mathcal{R} \widehat{\otimes}_{\mathbb{Z}_p} \Lambda_{\mathrm{cyc}}} 
\left( H^2(\mathbb{Q}_{\Sigma}/\mathbb{Q} ,
(\mathbb{T}\widehat{\otimes}_{\mathbb{Z}_p} \Lambda^\sharp_{\mathrm{cyc}}
)^\ast (1))  \right) . 
\end{multline}
\end{thm}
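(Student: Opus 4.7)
The plan is to adapt the classical Euler system machinery of Kato \cite{305}, Perrin-Riou \cite{398} and Rubin \cite{420}---which is formulated for Galois modules with coefficients in a DVR---to the two-variable deformation setting. Under condition (v), each local component of $\mathcal{R}\widehat{\otimes}_{\mathbb{Z}_p}\Lambda_{\mathrm{cyc}}$ is a two-dimensional regular local ring $\mathcal{O}[[X,Y]]$, so characteristic ideals of torsion modules are determined by the lengths of their localizations at height-one primes. I would therefore fix an arbitrary height-one prime $\mathfrak{P}$ and reduce the claimed inclusion to the length inequality
$$
\mathrm{length}_{\mathfrak{P}}\Bigl( \bigl(H^1(\mathbb{Q}_\Sigma/\mathbb{Q},(\mathbb{T}\widehat{\otimes}\Lambda^\sharp_{\mathrm{cyc}})^{*}(1))/\mathcal{R}\widehat{\otimes}\Lambda_{\mathrm{cyc}}\cdot\mathcal{Z}(1)\bigr)_{\mathfrak{P}} \Bigr) \geq \mathrm{length}_{\mathfrak{P}}\bigl(H^2(\mathbb{Q}_\Sigma/\mathbb{Q},(\mathbb{T}\widehat{\otimes}\Lambda^\sharp_{\mathrm{cyc}})^{*}(1))_{\mathfrak{P}}\bigr)
$$
at every such $\mathfrak{P}$; assembling these orders of vanishing over all height-one primes recovers the inclusion of characteristic ideals. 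In particular the assertion that $H^{2}$ is torsion falls out of finiteness of all such lengths once the bound is established.

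Having fixed $\mathfrak{P}$, I would work modulo $\mathfrak{P}^N$ for large $N$ and run Kolyvagin's derivative construction on the Euler system $\{\mathcal{Z}(r)\}$. For each square-free product $n$ of Kolyvagin primes $q\notin\Sigma$ (primes at which $\mathrm{Frob}_q$ acts on the $\mathfrak{P}^N$-torsion quotient of the residual representation in a prescribed fashion), the Euler-system distribution relations yield derivative classes $\kappa(n)\in H^1(\mathbb{Q},(\mathbb{T}\widehat{\otimes}\Lambda^\sharp_{\mathrm{cyc}})^{*}(1)/\mathfrak{P}^N)$ whose local behavior at primes dividing $n$ is fully controlled by $\mathcal{Z}(1)\bmod \mathfrak{P}^N$, while their behavior away from $n$ is unramified in a Selmer-theoretic sense. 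By Poitou-Tate global duality, bounding $H^2$ modulo $\mathfrak{P}^N$ is equivalent to bounding a dual Selmer group; pairing the Kolyvagin classes against this dual Selmer group and inducting on its length allows one to eliminate dual Selmer elements one at a time, producing the desired length inequality with defect equal to the $\mathfrak{P}$-adic divisibility order of $\mathcal{Z}(1)$. Letting $N\to\infty$ and summing over $\mathfrak{P}$ finishes the proof.

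The hypotheses enter at the Chebotarev step for producing enough Kolyvagin primes at each stage of the induction: condition (i) (absolute irreducibility of the residual representation), condition (iii) (splitting of $\mathbb{T}$ under complex conjugation into free rank-one $\pm$-eigenspaces), and condition (iv) (a nontrivial unipotent in the global image plus an element acting by $-1$, excluding the CM case) together ensure that the relevant cohomological obstructions to producing Kolyvagin primes vanish, so that the required primes exist in positive density. Condition (ii) guarantees that the image of $\mathcal{Z}(1)$ is nonzero at every height-one prime so that the bound is nontrivial. The main obstacle will be that Galois cohomology does not commute with localization or $\mathfrak{P}$-adic completion: one cannot directly make sense of ``$H^*$ with coefficients in $\mathcal{R}_{\mathfrak{P}}\widehat{\otimes}\Lambda_{\mathfrak{P}}$''. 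The technical heart of \cite{ochiai-AIF} is to bypass this by performing the Kolyvagin argument at the level of finite-coefficient cohomology $H^{*}(\mathbb{Q},(\mathbb{T}\widehat{\otimes}\Lambda^\sharp_{\mathrm{cyc}})^{*}(1)/\mathfrak{P}^N)$ and then relating these finite lengths to the desired localized lengths via control-type exact sequences---which is precisely the place where condition (v) that each local component is a two-variable power series ring is indispensable.
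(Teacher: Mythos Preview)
Your proposal differs in a fundamental way from the approach actually taken in \cite{ochiai-AIF}, and it contains a genuine gap at its core.

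The method of \cite{ochiai-AIF} (summarized in the Remark immediately following the theorem) is the \emph{specialization method}: rather than localizing at height-one primes of the two-variable ring $\mathcal{O}[[X,Y]]$, one specializes $\mathbb{T}$ along well-chosen height-one primes to reduce to a Galois representation with coefficients in a one-variable ring $\mathcal{O}[[X]]$, and ultimately to a DVR where the classical Kato--Perrin-Riou--Rubin machinery applies directly. The characteristic ideal over $\mathcal{O}[[X,Y]]$ is then reconstructed from the characteristic ideals of sufficiently many one-variable specializations (\cite[Prop.~3.6]{ochiai-AIF}), and the one-variable characteristic ideals are in turn recovered from the finite cardinalities of sufficiently many DVR-specializations (\cite[Prop.~3.7]{ochiai-AIF}). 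The proof is thus an induction on the number of variables, and condition (v) is needed precisely so that this inductive descent through specializations is available---not for any ``control-type exact sequence'' linking $H^{*}$ to its localization at $\mathfrak{P}$.

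The gap in your approach is the claim to run Kolyvagin's argument in ``finite-coefficient cohomology $H^{*}(\mathbb{Q},(\mathbb{T}\widehat{\otimes}\Lambda^\sharp_{\mathrm{cyc}})^{*}(1)/\mathfrak{P}^N)$''. If $\mathfrak{P}$ is a height-one prime of the two-dimensional regular local ring $\mathcal{O}[[X,Y]]$, then $\mathcal{O}[[X,Y]]/\mathfrak{P}^N$ is a one-dimensional ring of infinite length, so the coefficient module is \emph{not} finite. The Kolyvagin derivative construction---with its Chebotarev selection of auxiliary primes and its counting of Selmer-group orders---requires genuinely finite coefficients; it cannot be run over such a quotient without a further reduction that you have not supplied. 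This is exactly the ``essential difficulty'' the paper flags before Definition~\ref{definition:eulersystem_general}, and the reason the specialization method was introduced in the first place. You correctly identify that Galois cohomology does not commute with localization, but the workaround you propose (reduce mod $\mathfrak{P}^N$) does not actually produce a finite situation. The paper in fact notes that the Mazur--Rubin approach \cite{mr04}, which is closer in spirit to what you sketch, is limited to the one-variable case and does not yield the two-variable theorem.
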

\begin{rem}
\begin{enumerate}
\item 
 The paper \cite{ochiai-AIF} considers Euler systems with coefficients in the ring of power series 
in $n$ variables $\mathcal{O}[[X_1 ,\ldots , X_n ]]$ where $\mathcal{O}$ is the ring of integers of a $p$-adic field. 
To overcome an essential difficulty to generalize Theorem \ref{theorem:KatoRubinPerrinRIou} 
explained before Definition \ref{definition:eulersystem_general}, the proof of the paper \cite{ochiai-AIF} uses an approach 
called ``specialization method''.
\par 
When $n=1$, the proof of Euler system bounds in \cite{ochiai-AIF} is based on a method to recover the characteristic ideal of 
a torsion module over $\mathcal{O}[[X ]]$ from the size of various specializations of the modules (see \cite[Prop. 3.7]{ochiai-AIF}). 
When $n$ is greater than one, \cite{ochiai-AIF} requires a method to recover the characteristic ideal of 
a torsion module over $\mathcal{O}[[X_1 ,\ldots , X_n ]]$ from the characteristic ideal of various specializations of the modules 
to situations of $n-1$ variables (see \cite[Prop. 3.6]{ochiai-AIF}) and the proof proceeds by the induction with respect to 
the number of variables $n$. 
\item At the same time as \cite{ochiai-AIF}, Mazur-Rubin \cite{mr04} also considered Euler systems for Galois deformations 
according to their own motivation. They also considered Euler systems with coefficients in $\mathcal{O}[[X ]]$ which are   
not necessary the cyclotomic deformation of a usual $p$-adic representation. Since their result is limited to the case $n=1$, 
Theorem \ref{theorem:general_Euler_system_bound} (which is the case of $n=2$) does not follow from \cite{mr04}. The method of proof of Euler system bounds 
in \cite{mr04} is based on the same principle as \cite[Prop. 3.7]{ochiai-AIF}. 
\item 
In order that ``specialization method'' works well, the assumption \rm{(v)} was essential. 
A joint work with Shimomoto (\cite{so15} and \cite{so17}) relaxed the assumption by replacing the condition 
\rm{(v)} by a weaker assumption \rm{(v$'$)} as follows: 
\begin{enumerate}
\item[(v$'$)] 
Every local component $\mathcal{R} \widehat{\otimes}_{\mathbb{Z}_p} \Lambda^{\omega^i}_{\mathrm{cyc}}$ of the semi-local ring $\mathcal{R} \widehat{\otimes}_{\mathbb{Z}_p} \Lambda_{\mathrm{cyc}}$ is an integral domain which is integrally closed in its field of fractions. 
Here, $\omega$ denotes the Teichm\"{u}ller character and we recall that 
$\Lambda_{\mathrm{cyc}}$ is decomposed into a product of local rings 
as $\displaystyle{\Lambda_{\mathrm{cyc}}= \prod^{p-2}_{i=0} }\Lambda^{\omega^i}_{\mathrm{cyc}}$ by character decomposition. 
\end{enumerate}
and assuming an extra condition on vanishing of $H^2 (\mathbb{Q}_v , 
\mathbb{T}\widehat{\otimes}_{\mathbb{Z}_p} \Lambda^\sharp_{\mathrm{cyc}}
)$ for every $v \in \Sigma \setminus \{ \infty \}$. 
\end{enumerate}
\end{rem}
As in the proof of (1.9), we go on to the theory of Coleman map. 
Though we stated the above result for Euler system bounds in a general setting, the result for Coleman map 
is restricted to the case of Hida family. 
In fact, the construction of Coleman map is much harder in non-ordinary 
case and the construction in non-ordinary case (Coleman family) is 
a joint work with Filippo Nuccio \cite{NO16} which is explained in the next section. 
\par 
Let $\mathbb{F}$ be a Hida family over $\mathbb{I}$. By \cite[Prop.~1]{mw86}, we have a $G_{\mathbb{Q}}$-stable lattice $\mathbb{T} \subset \mathbb{V}_{\mathbb{F}}$ 
and the following $G_{\mathbb{Q}_p}$-stable exact sequence 
\begin{equation}\label{equation:localfiltrationatp3}
0 \longrightarrow F^+ \mathbb{T} 
\longrightarrow \mathbb{T} 
\longrightarrow \mathbb{T} /F^+ \mathbb{T}\longrightarrow 0 
\end{equation}
such that $F^+ \mathbb{T}$ is free of rank one over $\mathbb{I}$ and 
the action of $G_{\mathbb{Q}_p}$ on $F^+ \mathbb{T}$ is unramified. 

$\mathbb{T}$ a lattice and $\mathbb{A}$ the discrete Galois representation $\mathbb{T}\otimes_{\mathbb{I}} \mathbb{I}^\vee$. 
Let us define 
\begin{equation}
\mathbb{D}^{\mathrm{ord}}= (F^+ \mathbb{T} \widehat{\otimes}_{\mathbb{Z}_p} \widehat{\mathbb{Z}^\mathrm{ur}_p} )^{G_{\mathbb{Q}_p}}  
\end{equation}
Since $F^+ \mathbb{T}$ is unramified, the $\mathbb{I}$-module $\mathbb{D}^{\mathrm{ord}}$ is free of rank one by definition. Also, the specialization $\mathbb{D}^{\mathrm{ord}} \otimes_{\mathbb{I}} \kappa (\mathbb{I})$ is naturally identified with a $\kappa (\mathbb{I})$-lattice of $D_{\mathrm{dR}} (V_{f_\kappa})/\mathrm{Fil}^1 D_{\mathrm{dR}} (V_{f_\kappa})$ for any arithmetic specialization $\kappa $ on $\mathbb{I}$ 
of non-negative weight. 
\par 
We denote by 
$(\mathbb{D}^{\mathrm{ord}} )^\ast (1)$ the $\mathbb{I}$-module 
$\mathrm{Hom}_{\mathbb{I}} (\mathbb{D}^{\mathrm{ord}},\mathbb{I}) \otimes_{\mathbb{Z}_p} D_{\mathrm{crys}}(\mathbb{Z}_p (1))$. 
Here, $D_{\mathrm{crys}}(\mathbb{Z}_p (1))$ is a $\mathbb{Z}_p$-lattice of 
$D_{\mathrm{crys}}(\mathbb{Q}_p (1)) = (\mathbb{Q}_p (1) \otimes_{\mathbb{Q}_p} B_{\mathrm{crys}} )^{G_{\mathbb{Q}_p}}$ generated by $
\zeta_{p^\infty} \otimes t^{-1} $ with $\zeta_{p^\infty}$ a generator of $\mathbb{Z}_p$-module 
$\mathbb{Z}_p (1)$ and $t \in B_{\mathrm{crys}}$ is a standard element of Fontaine on which 
$G_{\mathbb{Q}_p}$ acts by the $p$-adic cyclotomic character. 
The specialization $(\mathbb{D}^{\mathrm{ord}})^\ast (1) \otimes_{\mathbb{I}} \kappa (\mathbb{I})$ is naturally identified with a $\kappa (\mathbb{I})$-lattice of $\mathrm{Fil}^0 D_{\mathrm{dR}} (V^\ast_{f_\kappa} (1))$ for any arithmetic specialization $\kappa $ on $\mathbb{I}$ 
of non-negative weight. 
\begin{thm}[[Thm 3.13 of \cite{ochiai-AJM03}] \label{theorem:colemanmap2}
Let us assume the setting of Theorem $C'$. 
\begin{enumerate}
\item 
There is a $
\mathbb{I}
\widehat{\otimes}_{\mathbb{Z}_p}
\Lambda_{\mathrm{cyc}}  
$-linear homomorphism: 
$$ 
\mathrm{Col}:\, 
H^1(\mathbb{Q}_p ,
(\mathbb{T}\widehat{\otimes}_{\mathbb{Z}_p} \Lambda^\sharp_{\mathrm{cyc}}
)^\ast (1)) 
\longrightarrow (\mathbb{D}^{\mathrm{ord}})^\ast (1) 
\widehat{\otimes}_{\mathbb{Z}_p} \Lambda_{\mathrm{cyc}}
$$ 
such that we have the following commutative diagram 
for any arithmetic specialization $\kappa $ on $\mathbb{I}$ 
of non-negative weight, any positive integer $j$ and for any Dirichlet 
character $\phi$ whose conductor $\mathrm{Cond} (\phi)$ is a power of $p$: 
$$
\begin{CD}
H^1(\mathbb{Q}_p ,
(\mathbb{T}\widehat{\otimes}_{\mathbb{Z}_p} \Lambda^\sharp_{\mathrm{cyc}}
)^\ast (1)) 
@>{\mathrm{Col}}>>  (\mathbb{D}^{\mathrm{ord}})^\ast (1) 
\widehat{\otimes}_{\mathbb{Z}_p} \Lambda_{\mathrm{cyc}} \\ 
@V{(\kappa , \chi^{-j}_{\mathrm{cyc}}\phi^{-1})}VV @VV{(\kappa ,\chi^{-j}_{\mathrm{cyc}}\phi^{-1})}V \\ 
 {H^1 (\mathbb{Q}_p ,(V_f \otimes \phi )^\ast (1-j))}
@>>{e_p (f_\kappa, j,\phi )\times \mathrm{exp}^{\ast}}>  \mathrm{Fil}^0 D_{\mathrm{dR}}((V_{f_\kappa} \otimes \phi )^\ast  (1-j))
\end{CD}
$$
where $\mathrm{exp}^{\ast}$ is the dual exponential map of Bloch-Kato and 
the factor $e_p (f_\kappa, j,\phi )$ is given in Theorem $C$. 
\item 
Recall that there is a natural homomorphism 
$$
\mathrm{P}:H^1(\mathbb{Q}_p ,
(\mathbb{T}\widehat{\otimes}_{\mathbb{Z}_p} \Lambda^\sharp_{\mathrm{cyc}}
)^\ast (1))
\longrightarrow 
\dfrac{H^1(\mathbb{Q}_p ,
(\mathbb{T}\widehat{\otimes}_{\mathbb{Z}_p} \Lambda^\sharp_{\mathrm{cyc}}
)^\ast (1))}
{\mathrm{Im}(H^1(\mathbb{Q}_p ,
F^+(\mathbb{T}\widehat{\otimes}_{\mathbb{Z}_p} \Lambda^\sharp_{\mathrm{cyc}}
)^\ast (1)))} 
$$ 
Then, the map 
$$ 
\mathrm{Col}:\, 
\varprojlim_n  {H^1 (\mathbb{Q}_p (\mu_{p^n}) ,T^\ast (1))}
\longrightarrow (\mathbb{D}^{\mathrm{ord}})^\ast (1) 
\widehat{\otimes}_{\mathbb{Z}_p} \Lambda_{\mathrm{cyc}}
$$ 
is factorized as $\overline{\mathrm{Col}}\circ \mathrm{P}$ where 
$\overline{\mathrm{Col}}$ is given by 
$$ 
\overline{\mathrm{Col}}:\, \dfrac{H^1(\mathbb{Q}_p ,
(\mathbb{T}\widehat{\otimes}_{\mathbb{Z}_p} \Lambda^\sharp_{\mathrm{cyc}}
)^\ast (1))}
{\mathrm{Im}(H^1(\mathbb{Q}_p ,
F^+(\mathbb{T}\widehat{\otimes}_{\mathbb{Z}_p} \Lambda^\sharp_{\mathrm{cyc}}
)^\ast (1)))} \longrightarrow (\mathbb{D}^{\mathrm{ord}})^\ast (1) 
\widehat{\otimes}_{\mathbb{Z}_p} \Lambda_{\mathrm{cyc}}. 
$$ 
Further, $\overline{\mathrm{Col}}$ is an $
\mathbb{I}
\widehat{\otimes}_{\mathbb{Z}_p}
\Lambda_{\mathrm{cyc}}  
$-linear injective homomorphism whose cokernel is a pseudo-null 
$
\mathbb{I}
\widehat{\otimes}_{\mathbb{Z}_p}
\Lambda_{\mathrm{cyc}}  
$-module. 
\end{enumerate}
\end{thm}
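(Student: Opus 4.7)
My plan is to follow the template of the single-cuspform sketch given after Theorem \ref{theorem:colemanmap}, systematically replacing scalars by $\mathbb{I}\widehat{\otimes}_{\mathbb{Z}_p}\Lambda_{\mathrm{cyc}}$ and cohomology by its big-Galois-representation counterpart, and to derive (1) as a formal consequence of (2). Dualizing and Tate-twisting the filtration \eqref{equation:localfiltrationatp3} gives the exact sequence
\[
0 \to (\mathbb{T}/F^+\mathbb{T})^\ast(1) \to \mathbb{T}^\ast(1) \to (F^+\mathbb{T})^\ast(1) \to 0,
\]
and with the conventional definition $F^+\mathbb{T}^\ast(1) := (\mathbb{T}/F^+\mathbb{T})^\ast(1)$ the classical fact that the dual exponential map kills the image of $H^1(\mathbb{Q}_p, F^+ V_{f_\kappa}^\ast(1-j)\otimes\phi^{-1})$ at every arithmetic specialization $(\kappa,\chi_{\mathrm{cyc}}^{-j}\phi^{-1})$ shows, by a density argument over the Zariski-dense set of arithmetic specializations together with the freeness granted by \rm{(F)}, that any map satisfying the interpolation formula of (1) must factor through the quotient appearing in (2). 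Thus it suffices to construct $\overline{\mathrm{Col}}$ and set $\mathrm{Col}:=\overline{\mathrm{Col}}\circ\mathrm{P}$.

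The heart of the construction is that $F^+\mathbb{T}$ is unramified of rank one, so $(F^+\mathbb{T})^\ast(1)$ restricted to $G_{\mathbb{Q}_p^{\mathrm{ur}}}$ is the tensor product of the rank-one $\mathbb{I}$-module $(\mathbb{D}^{\mathrm{ord}})^\ast(1)$ (on which $G_{\mathbb{Q}_p^{\mathrm{ur}}}$ acts trivially) with $\mathbb{Z}_p(1)$. Working over $\mathbb{Q}_p^{\mathrm{ur}}(\mu_{p^\infty})/\mathbb{Q}_p^{\mathrm{ur}}$ thus reduces the family-theoretic problem to a single concrete situation: interpolating the Bloch-Kato dual exponential map for $\mathbb{Z}_p(1)$, rescaled by the $\mathbb{I}$-scalar coming from $\mathbb{D}^{\mathrm{ord}}$. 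In this concrete setting the $j=1$ case is exactly classical Coleman power series attached to norm-compatible systems of units, and the higher-weight cases are obtained from the $j=1$ case by the explicit reciprocity law (in the form of \cite[Prop.~2.4.3]{396}); the $\mathbb{I}$-parameter enters only multiplicatively and requires no new analytic input. Descent to $\mathbb{Q}_p$ is then effected by taking $\mathrm{Gal}(\mathbb{Q}_p^{\mathrm{ur}}/\mathbb{Q}_p)$-invariants, which is compatible with the $\Lambda_{\mathrm{cyc}}$-action since the cyclotomic tower is linearly disjoint from $\mathbb{Q}_p^{\mathrm{ur}}$ over $\mathbb{Q}_p$. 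This yields $\overline{\mathrm{Col}}$ with the required interpolation property, finishing (1) and the existence half of (2).

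It remains to verify that $\overline{\mathrm{Col}}$ is $\mathbb{I}\widehat{\otimes}_{\mathbb{Z}_p}\Lambda_{\mathrm{cyc}}$-linear, injective, and has pseudo-null cokernel. Both source and target are generically of rank one over $\mathbb{I}\widehat{\otimes}_{\mathbb{Z}_p}\Lambda_{\mathrm{cyc}}$: for the target this is clear from the construction, and for the source it follows from the local Euler-Poincar\'e characteristic formula applied to $(F^+\mathbb{T})^\ast(1)\widehat{\otimes}\Lambda_{\mathrm{cyc}}^\sharp$ together with the vanishing of the corresponding $H^0$. Injectivity then reduces to nonvanishing at the generic point, which is provided by the interpolation formula together with Rohrlich's nonvanishing of twisted $L$-values. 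The pseudo-nullity of the cokernel is the main obstacle I anticipate: one must rule out contributions at every height-one prime of $\mathbb{I}\widehat{\otimes}_{\mathbb{Z}_p}\Lambda_{\mathrm{cyc}}$, and this cannot be read off from the one-variable isomorphism assertion of Theorem \ref{theorem:colemanmap} alone, since that statement already has a one-dimensional kernel and cokernel in the exceptional case $a_p(f)=1$. My plan is to handle the two types of height-one primes separately: primes arising from arithmetic specializations are controlled by the known pointwise statement combined with a control-theoretic argument using \rm{(F)}, while primes supported in the maximal ideal of $\mathbb{I}$ are handled by the integral normalization provided by the Coleman power series, exploiting the freeness of $\mathbb{D}^{\mathrm{ord}}$. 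Even so, I expect the bookkeeping to propagate the classical one-dimensional defect into a codimension-two (hence pseudo-null) statement to be the most delicate point of the argument.
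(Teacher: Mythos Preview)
Your approach is essentially the one the paper indicates. The paper does not give a self-contained proof of this theorem but refers to \cite{ochiai-AJM03}, and Remark~\ref{remark:Colemanmap_Hidafamily} records that the key point is precisely the reduction to a rank-one Coleman map via the ordinary filtration \eqref{equation:localfiltrationatp3}, after which classical Coleman power series of local units apply. Your plan to mimic the sketch given for Theorem~\ref{theorem:colemanmap} with $\mathbb{I}$-coefficients---reducing (1) to (2) via the factorization, then constructing $\overline{\mathrm{Col}}$ over $\mathbb{Q}_p^{\mathrm{ur}}$ using that $(F^+\mathbb{T})^\ast(1)$ becomes $(\mathbb{D}^{\mathrm{ord}})^\ast(1)\otimes\mathbb{Z}_p(1)$ there, and descending by $\mathrm{Gal}(\mathbb{Q}_p^{\mathrm{ur}}/\mathbb{Q}_p)$-invariants---is exactly this strategy.

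One point needs correction: your argument for the injectivity of $\overline{\mathrm{Col}}$ invokes Rohrlich's nonvanishing of twisted $L$-values, but this is misplaced. The Coleman map is a purely local construction, and its injectivity is a local statement: both source and target are torsion-free of generic rank one over $\mathbb{I}\widehat{\otimes}_{\mathbb{Z}_p}\Lambda_{\mathrm{cyc}}$, so injectivity amounts to the map being nonzero, and this follows because the dual exponential map $\exp^\ast$ is nonzero (indeed surjective onto $\mathrm{Fil}^0 D_{\mathrm{dR}}$) at any specialization with $j\geq 1$---a basic fact from $p$-adic Hodge theory requiring no global input. Rohrlich's theorem enters the paper only to show that the \emph{global} Beilinson--Kato class has nonzero image under $\mathrm{Col}\circ\mathrm{loc}_p$, which is a separate statement. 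Your sense that the pseudo-nullity of the cokernel is the delicate part is accurate; the paper does not spell this out here either and defers the details to \cite{ochiai-AJM03}.
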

\begin{rem}\label{remark:Colemanmap_Hidafamily}
The proof of Theorem \ref{theorem:colemanmap2} relies very much on 
the fact that the Galois representation $\mathbb{V}_{\mathbb{F}}$ 
associated to a Hida family $\mathbb{F}$ is obtained as an extension of 
a rank-one Galois representation by a rank-one Galois representation. 
Though $\mathbb{V}_{\mathbb{F}} $ is of rank two, we reduce the proof of 
Theorem \ref{theorem:colemanmap2} to a Coleman map of rank-one Galois representation which was classically known and was proved by Coleman power series of local units (see \cite{ochiai-AJM03} for the detail of the proof). 
\end{rem}
According to the construction of Hida family, it is not difficult to see that Kato's work \cite{320}
implies that an Euler system for $\mathbb{T}$ in the sense of Definition \ref{definition:eulersystem_general} 
exists and nontrivial as follows (see Remark \ref{remark:exixtence_of_Eulersytem_Hida} (1) below). 
\begin{thm}\label{theorem:eulersystem_Hida}
For natural numbers $c,d$ which does not divide the tame level of the Hida family nor $6p$ and for $\xi \in SL_2 (\mathbb{Z}_p)$, we have 
an Euler system: 
$$
\bigl\{ {}_{c,d}\mathcal{Z}_\xi (r) \in H^1(\mathbb{Q} (\mu_r ) ,
(\mathbb{T}\widehat{\otimes}_{\mathbb{Z}_p} \Lambda^\sharp_{\mathrm{cyc}}
)^\ast (1)) \bigr\} _{r\in \mathcal{S} }
$$ 
such that $\mathrm{Col} (\mathrm{loc}_p ({}_{c,d}\mathcal{Z}_\xi (r)))$ has 
the following relation to the special value:
\begin{equation}\label{equation:specialvalue_at_bottom2}
\psi \left({\mathrm{exp^\ast} (\mathrm{loc}_p ( (\kappa , \chi^{-j}_{\mathrm{cyc}}\phi^{-1})
({}_{c,d}\mathcal{Z}_\xi (r)))) }\right)
= \tau (\phi \psi ) 
\dfrac{L_{(pr)}(f_\kappa , \psi^{-1} \phi^{-1},j)}
{(2\pi \sqrt{-1})^{j}\Omega^{\mathrm{sgn}(j,\phi \psi )}_{f_\kappa ,(c,d,\xi ),\infty} } \cdot 
\overline{f}_\kappa \otimes \psi^{-1} \phi^{-1},
\end{equation}
for any arithmetic specialization $\kappa $ on $\mathbb{I}$ 
of non-negative weight, any integer $j$ satisfying $1\leq j\leq w(\kappa )+1$, any Dirichlet 
character $\phi$ whose conductor $\mathrm{Cond} (\phi)$ is a power of $p$ and any character $\psi$ of $\mathrm{Gal} (\mathbb{Q}(\mu_r)/\mathbb{Q})$.  
Here $L_{(pr)}(f_\kappa , \phi^{-1},s)$ denotes the Hecke $L$-function of $f$ twisted by $\psi ^{-1} \phi^{-1}$ whose Euler factors are removed at primes dividing $pr$. 
The element $\Omega^{\mathrm{sgn}(j,\phi \psi )}_{f_\kappa ,(c,d,\xi ),\infty}$ is 
a complex period which satisfies Algebraicity Theorem of Shimura presented in in \S \ref{sec:Intro_motif}. 
\end{thm}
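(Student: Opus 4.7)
The plan is to construct the Euler system by taking a projective limit of Beilinson--Kato Euler systems at finite levels $Np^n$ and then projecting onto the ordinary part. First, for each $n \geq 1$, each pair of integers $(c,d)$ prime to the tame level and to $6p$, each $\xi \in SL_2(\mathbb{Z}_p)$, and each $r \in \mathcal{S}$, Kato's construction in \cite{320} produces a cohomology class
$$
{}_{c,d}z^{(n)}_\xi(r) \in H^1(\mathbb{Q}(\mu_r), H^1_{\text{\'et}}(Y_1(Np^n)_{\overline{\mathbb{Q}}}, \mathrm{Sym}^{k-2} \mathscr{F}_p)^\ast(1))
$$
from Siegel units on modular curves of level $Np^n$, using the $\xi$-parameter to pull back via a fixed auxiliary $\Gamma(p^n)$-structure. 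These classes satisfy the usual Euler system distribution relation when $r$ varies over $\mathcal{S}$, with the Euler factor determined by the action of $\mathrm{Frob}_q$ on the relevant \'etale cohomology. The first step is to check that, as $n$ varies, the classes ${}_{c,d}z^{(n)}_\xi(r)$ are compatible under the trace maps $Y_1(Np^{n+1}) \to Y_1(Np^n)$; this is essentially the norm-coherence built into Kato's construction at the level of Siegel units.

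Next I would apply Hida's ordinary projector $e^{\mathrm{ord}} = \lim_n U_p^{n!}$ to each ${}_{c,d}z^{(n)}_\xi(r)$, and then take the projective limit with respect to $n$. By Hida's control theorem, the inverse limit $\varprojlim_n e^{\mathrm{ord}} H^1_{\text{\'et}}(Y_1(Np^n)_{\overline{\mathbb{Q}}}, \mathrm{Sym}^{k-2} \mathscr{F}_p)$ realizes, after suitable localization at the maximal ideal corresponding to $\mathbb{F}$, a Galois-stable lattice inside $\mathbb{V}_{\mathbb{F}}$; under the condition (F) this limit is identified with (a twist of) our chosen lattice $\mathbb{T}$. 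Performing this projection and inverse limit produces
$$
{}_{c,d}\mathcal{Z}_\xi(r) \in \varprojlim_n H^1(\mathbb{Q}(\mu_{rp^n}), \mathbb{T}^\ast(1)) \cong H^1(\mathbb{Q}(\mu_r), (\mathbb{T} \widehat{\otimes}_{\mathbb{Z}_p} \Lambda^\sharp_{\mathrm{cyc}})^\ast(1))
$$
where the last isomorphism is Shapiro's lemma as in Definition \ref{definition:eulersystem_general}. Verifying the Euler system relation for the resulting family $\{{}_{c,d}\mathcal{Z}_\xi(r)\}_{r \in \mathcal{S}}$ reduces to the finite-level relations together with the fact that the characteristic polynomial $P_q(X)$ of $\mathrm{Frob}_q$ on $\mathbb{T} \widehat{\otimes} \Lambda^\sharp_{\mathrm{cyc}}$ specializes correctly at each arithmetic $\kappa$.

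The interpolation identity \eqref{equation:specialvalue_at_bottom2} is obtained by applying the specialization $(\kappa, \chi_{\mathrm{cyc}}^{-j} \phi^{-1})$ to ${}_{c,d}\mathcal{Z}_\xi(r)$ and comparing with the interpolation diagram of the Coleman map $\mathrm{Col}$ in Theorem \ref{theorem:colemanmap2}. By construction, specialization of the inverse-limit Euler system at $\kappa$ recovers Kato's classical Beilinson--Kato class ${}_{c,d}z_{\xi,\kappa}(r)$ for the cuspform $f_\kappa$; then the one-variable explicit reciprocity law of \cite{320}, applied after corestriction from $\mathbb{Q}(\mu_{rp^\infty})$ down to $\mathbb{Q}(\mu_r)$ and twisting by $\psi$, expresses $\mathrm{exp}^\ast(\mathrm{loc}_p(\,\cdot\,))$ in terms of the truncated $L$-value with the prescribed complex period. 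Matching the periods $\Omega^{\pm}_{f_\kappa, (c,d,\xi), \infty}$ requires recording how the auxiliary data $(c,d,\xi)$ intervene in the comparison isomorphism \eqref{equation:definition_of_complecx periods}; once this bookkeeping is done, \eqref{equation:specialvalue_at_bottom2} follows directly from Theorem \ref{theorem:colemanmap2}.

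The main obstacle I expect is the second step: verifying that the inverse limit of Kato's classes, after applying $e^{\mathrm{ord}}$, lands in the intended free lattice $\mathbb{T}$ rather than in a possibly larger $\mathbb{I}$-module, and that the norm-compatibilities of Siegel units are compatible with the Hecke normalizations used to cut out $\mathbb{F}$ from the universal ordinary Hecke algebra. This is where condition (F), together with the irreducibility of the residual representation, is essential, since it guarantees that the relevant Galois-stable lattice is free of rank two and that the projection onto the $\mathbb{F}$-isotypic component preserves the Euler system relations integrally. The remaining steps---Shapiro's lemma, the Euler factor computation, and the interpolation via the explicit reciprocity law---are then formal consequences of the one-variable theory already recorded in \cite{320} and Theorem \ref{theorem:colemanmap2}.
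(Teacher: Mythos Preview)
Your proposal is correct and follows essentially the same approach as the paper, which handles this theorem only informally in Remark~\ref{remark:exixtence_of_Eulersytem_Hida}: take the inverse limit over $n$ of Kato's norm-compatible Beilinson--Kato classes on the tower $Y(Np^n)$, apply Hida's ordinary idempotent, and invoke the identification of the resulting limit with the $\mathbb{I}$-lattice $\mathbb{T}$; the interpolation formula then comes from Kato's one-variable explicit reciprocity law at each arithmetic specialization. One minor technical point: in the Hida-theoretic limit one works with constant (weight-two) coefficients on $Y(Np^n)$ rather than with $\mathrm{Sym}^{k-2}\mathscr{F}_p$ at a fixed weight, since it is precisely the varying $p$-level that encodes the weight variable---but this does not affect the structure of your argument.
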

\begin{rem}\label{remark:exixtence_of_Eulersytem_Hida}
\begin{enumerate}
\item 
Let $N$ be the tame conductor of Hida family. 
The Euler system of Theorem \ref{theorem:eulersystem_Hida} is obtained by 
taking the inverse limit of Beilinson-Kato Euler systems on 
modular curves $Y (Np^n)$ constructed by Kato \cite{320} 
with respect to norm maps $Y (Np^{n+1}) \longrightarrow Y (Np^n)$. Kato  \cite{320} does not discuss the situation of Hida family at all and it only provides an Euler system for each cuspform $f$. However, he proves the Beilinson-Kato Euler system on modular curves $Y (Np^n)$ are norm compatible in \cite{320}. As seen in \cite{Hi86b}, Hida families are constructed by the inverse limit of cohomologies of $Y (Np^n)$ and ``cutting out the ordinary part by Hida's 
$e$-operator''. 
Hence we deduce Theorem \ref{theorem:eulersystem_Hida} by taking inverse limit of Beilinson-Kato elements in \cite{320}. 
\item 
The parameters $c$, $d$ and $\xi$ appear necessarily 
through the construction in \cite{320}. Since these parameters are of no importance to understand the rest of the paper we do not explain how these elements ${}_{c,d}\mathcal{Z}_\xi (r)$ depend on these parameters $c,d$ and $\xi$. See \cite{320} for the detail. 
\end{enumerate}
\end{rem}
\begin{rem}\label{remark:period}
If we consider Iwasawa theory for a family of modular forms, 
there appears an ``essential problem on periods'' as explained below. 
The period 
$
\Omega^{\mathrm{sgn}(j,\phi )}_{f_\kappa ,\infty}$
in \eqref{equation:specialvalue_at_bottom2} is based on Rankin-Selberg method. In order that the ideal $\mathrm{Col}(\mathrm{loc}_p ({}_{c,d}\mathcal{Z}_\xi (1)))$ defines the same principal ideal as 
$L_p (\{ \Xi^\pm \})$ given in Theorem B of this section, 
it is necessary\footnote{It seems that the condition is necessary but not sufficient.} that, for any arithmetic specialization $\kappa$ of non-negative weight on $\mathbb{I}$, the ratio 
$
\dfrac{\Omega^{\mathrm{sgn}(j,\phi )}_{f_\kappa ,\infty} }{\Omega^{\mathrm{sgn}(j,\phi )}_{f_\kappa ,(c,d,\xi ),\infty} } \in \mathbb{Q}_f$ is a $p$-adic unit for a complex period $\Omega^{\mathrm{sgn}(j,\phi )}_{f_\kappa ,\infty} $ which is $p$-optimal in the sense of Remark 
\ref{remark:complexperiod1}. We are not sure if the Euler system $\bigl\{ {}_{c,d}\mathcal{Z}_\xi (r) \bigr\}_{r\in \mathcal{S}}$ 
\eqref{equation:specialvalue_at_bottom} satisfies this condition for certain choice of the parameters $c,d,\xi$. 
\end{rem}
In fact, Kato \cite{320} was already faced with 
the problem of the comparison of the complex periods as discussed in Remark \ref{remark:period}. In \cite{320}, we only consider a cuspform. Hence he manages to prove that a linear combination 
of Euler systems for some parameters $(c,d,\xi )$ (see \cite[\S 16]{320}) provides a $p$-optimized period. 
\par 
If we consider a family of cuspforms, we have to optimize the Euler system 
at infinitely many arithmetic specializations at the same time. Taking a certain finite linear combination like in \cite{320} will not work. Finally, we obtain the following 
optimized Euler system with help of two-variable Coleman map (Theorem \ref{theorem:colemanmap2}). 
\begin{thm}\label{theorem:eulersystem_Hida2}
Let us assume the setting of Theorem $C'$ and let $\mathbb{F}$ and $\mathbb{T}$ be as above. Assume that $p \geq 5$. We assume that the image of the residual representation 
$\overline{\rho} : G_{\mathbb{Q}} \longrightarrow GL_2 (\mathbb{F})$ is non-solvable 
and fix an $\mathbb{I}$-basis $\Xi^\pm$ of $\mathbb{MS}(\mathbb{I})^\pm$ respectively \footnote{We remark that the irreducibility of the residual representation implies the validity of the condition (M).}. 
\par 
Then, we have 
an Euler system $($depending on the choice of $\Xi^\pm )$:
$$
\bigl\{ \mathcal{Z} (r) \in H^1(\mathbb{Q} (\mu_r ) ,
(\mathbb{T}\widehat{\otimes}_{\mathbb{Z}_p} \Lambda^\sharp_{\mathrm{cyc}}
)^\ast (1)) \bigr\} _{r\in \mathcal{S} }
$$ 
such that $\mathrm{Col} (\mathrm{loc}_p (\mathcal{Z} (r)))$ has 
the following relation to the special value: 
\begin{equation}\label{equation:specialvalue_at_bottom22}
\dfrac{\psi \left( \mathrm{exp^\ast} (\mathrm{loc}_p ( (\kappa , \chi^{-j}_{\mathrm{cyc}}\phi^{-1})
(\mathcal{Z} (r)))) \right)}{C^{\mathrm{sgn}(j,\phi \psi )}_{f_\kappa ,p}}
= \tau (\phi \psi ) 
\dfrac{L_{(pr)}(f_\kappa , \psi^{-1} \phi^{-1},j)}
{(2\pi \sqrt{-1})^{j}\Omega^{\mathrm{sgn}(j,\phi \psi )}_{f_\kappa ,\infty} } \cdot 
\overline{f}_\kappa \otimes \psi^{-1}\phi^{-1},
\end{equation}
for any arithmetic specialization $\kappa $ on $\mathbb{I}$ 
of non-negative weight, any integer $j$ satisfying $1\leq j\leq w(\kappa )+1$, any Dirichlet 
character $\phi$ whose conductor $\mathrm{Cond} (\phi)$ is a power of $p$ and 
any character $\psi$ of $\mathrm{Gal} (\mathbb{Q}(\mu_r)/\mathbb{Q})$. Here $\Omega^{\mathrm{sgn}(j,\phi )}_{f_\kappa ,\infty}$ denotes 
a complex period which satisfy Algebraicity  Theorem of Shimura presented in in \S \ref{sec:Intro_motif}
and $C^{\mathrm{sgn}(j,\phi )}_{f_\kappa ,p}$ is a $p$-adic period defined in 
\ref{definition:padicperiodMS} \footnote{See Remark \ref{remark:padicperiodMS} for the fact that 
the interpolation property is well-defined.}. 
\par 
In particular, we have the equality: 
\begin{equation}\label{equation:modified_Euler}
\mathrm{Col}(\mathrm{loc}_p (\mathcal{Z} (1)))= 
L_p (\{ \Xi^\pm \})
\end{equation}
in $\mathbb{I} \widehat{\otimes}_{\mathbb{Z}_p} 
\Lambda_{\mathrm{cyc}}$.
\end{thm}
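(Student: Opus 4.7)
The plan is to construct $\mathcal{Z}(r)$ as an $\mathbb{I}$-adic rescaling of the Beilinson--Kato Euler system $\{{}_{c,d}\mathcal{Z}_\xi(r)\}$ of Theorem \ref{theorem:eulersystem_Hida}, tuned so that its first-layer Coleman image equals $L_p(\{\Xi^\pm\})$ exactly. Since the norm compatibilities of Definition \ref{definition:eulersystem_general} and the Euler factors $P_q(\mathrm{Frob}_q)$ are all $\mathbb{I}\widehat{\otimes}_{\mathbb{Z}_p}\Lambda_{\mathrm{cyc}}$-linear, multiplying every class ${}_{c,d}\mathcal{Z}_\xi(r)$ by a common element $\alpha \in \mathbb{I}\widehat{\otimes}_{\mathbb{Z}_p}\Lambda_{\mathrm{cyc}}$ preserves the Euler system axioms, so the task reduces to finding the correct scalar.

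First I would fix parameters $(c,d,\xi)$ such that $\mathcal{L}^{(c,d,\xi)} := \mathrm{Col}(\mathrm{loc}_p({}_{c,d}\mathcal{Z}_\xi(1)))$ is nonzero in $(\mathbb{D}^{\mathrm{ord}})^\ast(1)\widehat{\otimes}_{\mathbb{Z}_p}\Lambda_{\mathrm{cyc}}$; such parameters exist by Theorem \ref{theorem:eulersystem_Hida} together with Rohrlich's nonvanishing result from Remark \ref{remark:p-adicLforf}(3). The commutative diagram of Theorem \ref{theorem:colemanmap2}(1) identifies every arithmetic specialization of $\mathcal{L}^{(c,d,\xi)}$ with the right-hand side of \eqref{equation:specialvalue_at_bottom2} at $r=1$, while the analogous specialization of $L_p(\{\Xi^\pm\})$ is \eqref{equation:interpolation_AV_V_def}. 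Comparing the two, the ratio $\mathcal{L}^{(c,d,\xi)}/L_p(\{\Xi^\pm\})$ at each arithmetic point is the algebraic number
\[
\frac{\Omega^{\mathrm{sgn}(j,\phi)}_{f_\kappa,\infty}}{C^{\mathrm{sgn}(j,\phi)}_{f_\kappa,p}\cdot \Omega^{\mathrm{sgn}(j,\phi)}_{f_\kappa,(c,d,\xi),\infty}},
\]
which depends on $\kappa$ and on $\mathrm{sgn}(j,\phi)$ but is otherwise independent of $j$ and $\phi$; this is precisely the period discrepancy highlighted in Remark \ref{remark:period}.

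The heart of the argument is to show that these pointwise correction factors are the specializations of a single $\alpha \in (\mathbb{I}\widehat{\otimes}_{\mathbb{Z}_p}\Lambda_{\mathrm{cyc}})^\times$. This is where Kitagawa's $\mathbb{I}$-adic modular symbols $\Xi^\pm$ intervene: by construction $\Xi^\pm$ packages the Betti classes $b^\pm_{f_\kappa}$ at every arithmetic specialization into one $\mathbb{I}$-adic datum, with $C^\pm_{f_\kappa,p}$ in \eqref{equation:definition_of_p-adic periods} being the comparison scalar. On the other hand, Kato's Siegel-unit construction of ${}_{c,d}\mathcal{Z}_\xi(r)$, after inverse limit over $Np^n$ and Hida's ordinary idempotent as in Remark \ref{remark:exixtence_of_Eulersytem_Hida}(1), naturally lives in the same ordinary compactly-supported $\mathbb{I}$-adic cohomology that $\Xi^\pm$ interpolates. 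Matching the two $\mathbb{I}$-adic families of cohomology classes via the Eichler--Shimura comparison in families, separately on each sign eigenspace, produces a canonical $\alpha \in (\mathbb{I}\widehat{\otimes}_{\mathbb{Z}_p}\Lambda_{\mathrm{cyc}})^\times$ realising the displayed period ratios; unitness of $\alpha$ uses the non-solvability hypothesis on the image of $\overline{\rho}$, which guarantees both the freeness of $\mathbb{MS}(\mathbb{I})^\pm$ and the $p$-adic unitness of $C^\pm_{f_\kappa,p}$ at every arithmetic $\kappa$. Setting $\mathcal{Z}(r) := \alpha^{-1} \cdot {}_{c,d}\mathcal{Z}_\xi(r)$ then yields the Euler system claimed; \eqref{equation:specialvalue_at_bottom22} follows by specializing \eqref{equation:specialvalue_at_bottom2} and dividing by $\kappa(\alpha)$, and \eqref{equation:modified_Euler} follows from the identity $\mathcal{L}^{(c,d,\xi)} = \alpha \cdot L_p(\{\Xi^\pm\})$ by the density of arithmetic specializations in $\mathbb{I}\widehat{\otimes}_{\mathbb{Z}_p}\Lambda_{\mathrm{cyc}}$.

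The main obstacle is precisely the construction of the universal correction $\alpha$. For a single cuspform, Kato \cite{320} carries out the analogous $p$-optimization by an explicit finite $\mathbb{Z}_p$-linear combination of classes ${}_{c,d}\mathcal{Z}_\xi$ over varying parameters; but no finite combination can simultaneously realise the $p$-optimal period at the infinitely many arithmetic specializations of a Hida family. The two-variable Coleman map of Theorem \ref{theorem:colemanmap2}, paired with Kitagawa's $\mathbb{I}$-adic modular symbols, is precisely what packages the period correction into a single element of $\mathbb{I}\widehat{\otimes}_{\mathbb{Z}_p}\Lambda_{\mathrm{cyc}}$, and the residual non-solvability assumption is what keeps it invertible. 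This dependence on the two-variable Coleman map is the essential reason why an optimized family Euler system became accessible only after Theorem \ref{theorem:colemanmap2} was available.
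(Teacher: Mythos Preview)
Your approach has a genuine gap at the step where you claim the correction factor $\alpha$ is a \emph{unit} in $\mathbb{I}\widehat{\otimes}_{\mathbb{Z}_p}\Lambda_{\mathrm{cyc}}$. The pointwise ratio you wrote down involves $\Omega^{\pm}_{f_\kappa,\infty}/\Omega^{\pm}_{f_\kappa,(c,d,\xi),\infty}$, and Remark~\ref{remark:period} states explicitly that it is \emph{not known} whether this period ratio is a $p$-adic unit for every arithmetic $\kappa$ and a fixed choice of $(c,d,\xi)$. The non-solvability hypothesis controls $C^{\pm}_{f_\kappa,p}$, but says nothing about Kato's Rankin--Selberg period $\Omega^{\pm}_{f_\kappa,(c,d,\xi),\infty}$. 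Even granting pointwise unitness at all arithmetic specializations, that would still not force $\alpha$ to be a unit in the two-variable ring, nor even to lie in $\mathbb{I}\widehat{\otimes}_{\mathbb{Z}_p}\Lambda_{\mathrm{cyc}}$ rather than its total ring of fractions: $L_p(\{\Xi^\pm\})$ has zeros, so the quotient $\mathcal{L}^{(c,d,\xi)}/L_p(\{\Xi^\pm\})$ is a priori only a rational function. Your appeal to an ``Eichler--Shimura comparison in families'' is not made precise and does not resolve this.

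The paper's proof avoids the period problem altogether by a different mechanism. It does \emph{not} rescale the family Euler system $\{{}_{c,d}\mathcal{Z}_\xi(r)\}$. Instead, for each single arithmetic specialization $\kappa$ it invokes Kato's $p$-optimized Euler system for the individual form $f_\kappa$ (the linear combination in \cite[\S 16]{320} that \emph{depends on $\kappa$}), so that $\mathrm{Col}(\mathrm{loc}_p(\mathcal{Z}_{J}(1)))=L_p(\{\Xi^\pm\})\bmod J$ holds by construction with $J=\mathrm{Ker}(\kappa)\cdot\mathbb{I}\widehat{\otimes}_{\mathbb{Z}_p}\Lambda_{\mathrm{cyc}}$. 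These pointwise systems are then \emph{glued}: for $J,J'$ coming from two specializations, Lemma~\ref{lem:glues} gives an exact Mayer--Vietoris type sequence on $H^1$, and the injectivity of $\mathrm{Col}\circ\mathrm{loc}_p$ together with the fact that the target values $L_p(\{\Xi^\pm\})\bmod J$ and $L_p(\{\Xi^\pm\})\bmod J'$ are both reductions of the \emph{single global element} $L_p(\{\Xi^\pm\})$ forces the two classes to agree modulo $J+J'$ and hence to glue to a class modulo $J\cap J'$. Induction over finite intersections and Chevalley's theorem then produce $\mathcal{Z}(r)$ as an inverse limit. The two-variable Coleman map and the two-variable $p$-adic $L$-function are used not to manufacture a unit, but as a rigidity device ensuring that the infinitely many $\kappa$-dependent optimizations are mutually compatible.
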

Though the proof of Theorem \ref{theorem:eulersystem_Hida2} is found in \cite[\S 5.3]{Och06}, we will 
prove it below since the same method will be used later in a similar but slightly different context of the construction of Euler systems of Coleman 
families. 
We prepare a lemma before going into the proof of Theorem \ref{theorem:eulersystem_Hida2}. 
\begin{lemma}\label{lem:glues}
Let us assume the setting of Theorem $C'$ and let $\mathbb{F}$, $\mathbb{T}$ and $\mathbb{A}$ be as above. Assume that $p \geq 5$. We assume that the image of 
the residual representation is non-solvable. Let $r$ be a natural number and put $J, J' \in \mathfrak{A}$.  
Then, we have the exact sequence$:$ 
\begin{multline}\label{equation:glying}
0 \longrightarrow H^1 (((\mathbb{T}\widehat{\otimes}_{\mathbb{Z}_p} \Lambda^\sharp_{\mathrm{cyc}}
)^\ast(1))_{J\cap J'})
\longrightarrow H^1 (((\mathbb{T}\widehat{\otimes}_{\mathbb{Z}_p} \Lambda^\sharp_{\mathrm{cyc}}
)^\ast(1))_J)
\oplus H^1 (((\mathbb{T}\widehat{\otimes}_{\mathbb{Z}_p} \Lambda^\sharp_{\mathrm{cyc}}
)^\ast(1))_{J'}) 
\\ 
\longrightarrow H^1 (((\mathbb{T}\widehat{\otimes}_{\mathbb{Z}_p} \Lambda^\sharp_{\mathrm{cyc}}
)^\ast(1))_{J+J'}) . 
\end{multline}
Here we denote $(\mathbb{T}\widehat{\otimes}_{\mathbb{Z}_p} \Lambda^\sharp_{\mathrm{cyc}})^\ast (1)/J(\mathbb{T}\widehat{\otimes}_{\mathbb{Z}_p} \Lambda^\sharp_{\mathrm{cyc}} )^\ast(1)$ by $((\mathbb{T}\widehat{\otimes}_{\mathbb{Z}_p} \Lambda^\sharp_{\mathrm{cyc}})^\ast (1))_J$ 
and the cohomology group $H^1 (\mathbb{Q} (\mu_r)_{\Sigma_r}/\mathbb{Q} (\mu_r),((\mathbb{T}\widehat{\otimes}_{\mathbb{Z}_p} \Lambda^\sharp_{\mathrm{cyc}})^\ast (1))_J)$ by $H^1 (\mathbb{T}\widehat{\otimes}_{\mathbb{Z}_p} \Lambda^\sharp_{\mathrm{cyc}}(1)_J)$ 
for any ideal $J$ of $\mathbb{I}\widehat{\otimes}_{\mathbb{Z}_p} \Lambda_{\mathrm{cyc}}$. \par 
The first map of \eqref{equation:glying} sends each element $x_{J\cap J'}$ to $(x_{J\cap J'} \text{ mod $J$})\oplus (x_{J\cap J'} \text{ mod $J'$})$. 
The second map \eqref{equation:glying} sends each element $x_{J} \oplus y_{J'}$ to 
$(x_{J} \text{ mod $J+J'$} )- (y_{J'} \text{ mod $J+J'$} )$.  
\end{lemma}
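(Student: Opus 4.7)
The plan is to deduce the four-term exact sequence from the long exact cohomology sequence attached to a short exact sequence of coefficient modules, using the non-solvability of $\overline{\rho}$ to obtain injectivity at the leftmost term. Set $R = \mathbb{I}\widehat{\otimes}_{\mathbb{Z}_p} \Lambda_{\mathrm{cyc}}$ and $M = (\mathbb{T}\widehat{\otimes}_{\mathbb{Z}_p} \Lambda^\sharp_{\mathrm{cyc}})^\ast(1)$. Under condition (F), $\mathbb{T}$ is free of rank two over $\mathbb{I}$, hence $M$ is $R$-free of rank two. Working coordinate-wise then gives $JM \cap J'M = (J \cap J')M$, and we obtain a $G_{\mathbb{Q}(\mu_r)}$-equivariant short exact sequence
\begin{equation*}
0 \longrightarrow M/(J\cap J')M \xrightarrow{x\,\mapsto\,(x,\,x)} M/JM \oplus M/J'M \xrightarrow{(y,\,z)\,\mapsto\,y-z} M/(J+J')M \longrightarrow 0.
\end{equation*}
Surjectivity on the right and exactness in the middle are formal (they hold for arbitrary $M$ and ideals). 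Applying the continuous $\Sigma_r$-restricted cohomology functor $H^\ast(\mathbb{Q}(\mu_r)_{\Sigma_r}/\mathbb{Q}(\mu_r),-)$ gives a long exact sequence; its tail recovers the desired four-term sequence provided the connecting map $H^0(M/(J+J')M) \to H^1(M/(J\cap J')M)$ vanishes, which reduces to showing $H^0(\mathbb{Q}(\mu_r)_{\Sigma_r}/\mathbb{Q}(\mu_r),\,M/(J+J')M) = 0$.

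To prove this vanishing, I first observe that $\mathrm{Gal}(\mathbb{Q}(\mu_r)/\mathbb{Q})$ is abelian, so if the restricted image $\overline{\rho}(G_{\mathbb{Q}(\mu_r)})$ were solvable, it would be a solvable normal subgroup of finite solvable index in $\overline{\rho}(G_{\mathbb{Q}})$, forcing $\overline{\rho}$ itself to be solvable and contradicting our hypothesis. Hence $\overline{\rho}|_{G_{\mathbb{Q}(\mu_r)}}$ is still non-solvable, in particular absolutely irreducible with no trivial subrepresentation, so the residual representation $\overline{M} = M/\mathfrak{m}M$ satisfies $\overline{M}^{G_{\mathbb{Q}(\mu_r)}} = 0$. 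Write $N = M/(J+J')M$; since the Galois action on each graded piece $\mathfrak{m}^k N/\mathfrak{m}^{k+1}N$ is $R$-linear and $\mathfrak{m}/\mathfrak{m}^2$ has trivial Galois action, each such graded piece is a direct sum of copies of $\overline{M}$ as a Galois module, hence has vanishing $G_{\mathbb{Q}(\mu_r)}$-invariants. Taking invariants in the short exact sequences $0 \to \mathfrak{m}^{k+1}N \to \mathfrak{m}^k N \to \mathfrak{m}^k N/\mathfrak{m}^{k+1}N \to 0$ and iterating yields $N^{G_{\mathbb{Q}(\mu_r)}} \subseteq \bigcap_{k\geq 0}\mathfrak{m}^k N$, which is zero by Krull's intersection theorem applied on each local component of the Noetherian semi-local ring $R$.

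The main technical obstacle is the Krull-intersection step: one must verify that the Galois action on each graded piece $\mathfrak{m}^k N/\mathfrak{m}^{k+1}N$ genuinely factors through $\overline{M}$ (so that non-solvability forces level-by-level vanishing), and that $N$ is finitely generated on each local component so that Krull applies. The first point is immediate from $R$-linearity of the action together with the triviality of the Galois action on $\mathfrak{m}/\mathfrak{m}^2$; the second follows since $M$ is $R$-free of rank two by (F). All other steps—the short exact sequence, the long exact cohomology sequence, the reduction to vanishing of the $H^0$ term—are formal.
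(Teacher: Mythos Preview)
Your proof is correct and follows the same strategy as the paper: write down the short exact sequence of coefficient modules, take the long exact cohomology sequence, and use non-solvability of the residual image to force $H^0$-vanishing over $\mathbb{Q}(\mu_r)$. You supply more detail than the paper does---the freeness check for exactness of the coefficient sequence, and the d\'evissage via the $\mathfrak{m}$-adic filtration and Krull intersection for the $H^0$-vanishing---but the approach is identical.
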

\begin{proof}
Let us consider the short exact sequence of $G_{\mathbb{Q}}$-module: 
\begin{multline}\label{equation:glying2}
0 \longrightarrow   ((\mathbb{T}\widehat{\otimes}_{\mathbb{Z}_p} \Lambda^\sharp_{\mathrm{cyc}}
)^\ast(1))_{J\cap J'}
\longrightarrow 
((\mathbb{T}\widehat{\otimes}_{\mathbb{Z}_p} \Lambda^\sharp_{\mathrm{cyc}}
)^\ast(1))_J
\oplus ((\mathbb{T}\widehat{\otimes}_{\mathbb{Z}_p} \Lambda^\sharp_{\mathrm{cyc}}
)^\ast(1))_{J'} 
\\ 
\longrightarrow ((\mathbb{T}\widehat{\otimes}_{\mathbb{Z}_p} \Lambda^\sharp_{\mathrm{cyc}}
)^\ast(1))_{J+J'} \longrightarrow 0 .
\end{multline} 
By the assumption that  
the image of the residual representation 
$\overline{\rho} : G_{\mathbb{Q}} \longrightarrow GL_2 (\mathbb{F})$ is non-solvable, 
the residual representation $\overline{\rho}$ restricted to $G_{\mathbb{Q} (\mu_r )}$ 
is irreducible for any natural number $r$. 
Thus, $H^0 (G_{\mathbb{Q} (\mu_r )} , ((\mathbb{T}\widehat{\otimes}_{\mathbb{Z}_p} \Lambda^\sharp_{\mathrm{cyc}}
)^\ast(1))_{J+J'}) =0$ for any natural number $r$. Thus by taking 
the long exact sequence of the Galois cohomology groups  
associated to the short exact sequence \eqref{equation:glying2} of $\mathrm{Gal (\mathbb{Q} (\mu_r)_{\Sigma_r}/\mathbb{Q} (\mu_r))}$-modules, we obtain \eqref{equation:glying}. 
This completes the proof. 
\end{proof}
Now let us go into the proof of Theorem \ref{theorem:eulersystem_Hida2}. 
\begin{proof}[Proof of Theorem $\ref{theorem:eulersystem_Hida2}$]
Let us define the following set 
\begin{equation}\label{equation:setS}
\mathfrak{S}=\{ I=\mathrm{Ker} (\kappa  )  
\, \vert \, \text{ $\kappa$ is an arithmetic specialization on $\mathbb{I}$ of 
non-negative weight} 
\}. 
\end{equation}
We denote by $\mathfrak{A}$ a subset of the set of 
height one ideals of $\mathbb{I} \widehat{\otimes}_{\mathbb{Z}_p}\Lambda_{\mathrm{cyc}}$ as follows:
\begin{equation}
\mathfrak{A} = \left\{ \left. J = {\bigcap}_{I \in S} I 
\cdot \mathbb{I} \widehat{\otimes}_{\mathbb{Z}_p}\Lambda_{\mathrm{cyc}}
\ 
\right\vert \ 
S \subset \mathfrak{S}, \sharp S <\infty
 \right\} .
\end{equation}
Note that $J\cap J' \in \mathfrak{A}$ for any $J,J' \in \mathfrak{A}$ and that 
the intersection $\bigcap J$ for infinitely many 
$J \in \mathfrak{A}$ is zero. 
For any natural number $r$, we define $\Sigma_r$ to be 
$\Sigma_r =\Sigma \cup \{ \text{primes $q$ dividing $r$}\}$.  
We denote by $\mathbb{Q}_{\Sigma_r}$ the maximal unramified extension of $\mathbb{Q}$ unramified outside $\Sigma_r$. 
\par 
Let us consider the following morphism$:$ 
\begin{multline}\label{equation:loc_ord_composition}
H^1 (\mathbb{Q}_{\Sigma}/\mathbb{Q} ,((\mathbb{T}\widehat{\otimes}_{\mathbb{Z}_p} \Lambda^\sharp_{\mathrm{cyc}}
)^\ast(1))_J(1)) 
\xrightarrow{\mathrm{loc}} 
{H^1 (\mathbb{Q}_p ,((\mathbb{T}\widehat{\otimes}_{\mathbb{Z}_p} \Lambda^\sharp_{\mathrm{cyc}}
)^\ast(1))_J(1))} 
\\ 
\xrightarrow{\langle d_J \otimes 1 ,\ \rangle \circ \mathrm{Col}} 
(\mathbb{I} /J )  
\widehat{\otimes}_{\mathbb{Z}_p} \Lambda_{\mathrm{cyc}}
\end{multline}
where $d_J$ is a basis of free $\mathbb{I}/J$-module $\mathbb{D}^{\mathrm{ord}}/J \mathbb{D}^{\mathrm{ord}}$ of rank one which coincide 
with $\overline{f}_\kappa$ for any arithmetic specialization $\kappa$ on 
$\mathbb{I}$ and $\langle \ ,\ \rangle $ is the paring:
$$
(\mathbb{D}^{\mathrm{ord}}/J \mathbb{D}^{\mathrm{ord}}) 
{\otimes}_{\mathbb{Z}_p} \Lambda_{\mathrm{cyc}}
\times \big((\mathbb{D}^{\mathrm{ord}})^\ast/J 
(\mathbb{D}^{\mathrm{ord}})^\ast \big)
{\otimes}_{\mathbb{Z}_p} \Lambda_{\mathrm{cyc}}
\longrightarrow (\mathbb{I}/J ){\otimes}_{\mathbb{Z}_p} \Lambda_{\mathrm{cyc}}
$$  
Now we take $I=\mathrm{Ker}(\kappa )$ and $I'=\mathrm{Ker}(\kappa ')$ 
from $\mathfrak{S}$ given in \eqref{equation:setS}. Set 
$J= I \cdot \mathbb{I} \widehat{\otimes}_{\mathbb{Z}_p} \Lambda_{\mathrm{cyc}}$ and $J'= I' \cdot \mathbb{I}\widehat{\otimes}_{\mathbb{Z}_p} \Lambda_{\mathrm{cyc}}$. 

As explained before Theorem \ref{theorem:eulersystem_Hida2}, 
the result of Kato for the cyclotomic deformation of a cuspform 
assures the existence of the Euler system: 
\begin{align*}
& \bigl\{ \mathcal{Z}_J (r) \in H^1(\mathbb{Q} (\mu_r ) ,
((\mathbb{T}\widehat{\otimes}_{\mathbb{Z}_p} \Lambda^\sharp_{\mathrm{cyc}}
)^\ast(1))_J ) \bigr\} _{r\in \mathcal{S} } 
\\ 
& \Bigl( \mathrm{resp}.\,  \bigl\{ \mathcal{Z}_{J'} (r) \in H^1(\mathbb{Q} (\mu_r ) ,
((\mathbb{T}\widehat{\otimes}_{\mathbb{Z}_p} \Lambda^\sharp_{\mathrm{cyc}}
)^\ast(1))_{J'}) \bigr\} _{r\in \mathcal{S} }
\Bigr)
\end{align*} 
such that we have 
\begin{align*}
& 
\mathrm{Col}(\mathrm{loc}_p (\mathcal{Z}_J  (1)))= 
L_p (\{ \Xi^\pm \}) \text{ mod $J$ in $(\mathbb{I}/J) {\otimes}_{\mathbb{Z}_p} 
\Lambda_{\mathrm{cyc}}$}
\\ 
& \Bigl( \mathrm{resp}.\, 
\mathrm{Col}(\mathrm{loc}_p (\mathcal{Z}_{J'} (1)))= 
L_p (\{ \Xi^\pm \}) \text{ mod $J'$ in $(\mathbb{I}/J') {\otimes}_{\mathbb{Z}_p} \Lambda_{\mathrm{cyc}}$}\Bigr)
\end{align*} 

Note that, since $L_p (\{ \Xi^\pm \})$ is an element of $\mathbb{I}\widehat{\otimes}_{\mathbb{Z}_p} 
\Lambda_{\mathrm{cyc}}$, $L_p (\{ \Xi^\pm \}) \text{ mod $J$}$ and $L_p (\{ \Xi^\pm \}) \text{ mod $J'$}$  
can be glued together to have an element of $(\mathbb{I}/(J \cap J')) {\otimes}_{\mathbb{Z}_p} 
\Lambda_{\mathrm{cyc}}$. 
By the injectivity of the map $\langle d_J \otimes 1 ,\ \rangle \circ \mathrm{Col}$
for $(\mathbb{T}\widehat{\otimes}_{\mathbb{Z}_p} \Lambda^\sharp_{\mathrm{cyc}}
)^\ast(1))_J$ (\rm{resp. }$(\mathbb{T}\widehat{\otimes}_{\mathbb{Z}_p} \Lambda^\sharp_{\mathrm{cyc}}
)^\ast(1))_{J'}$) and by Lemma \ref{lem:glues}, 
we have the first layer $\mathcal{Z}_{J \cap J'} (1) \in H^1(\mathbb{Q} ,
((\mathbb{T}\widehat{\otimes}_{\mathbb{Z}_p} \Lambda^\sharp_{\mathrm{cyc}}
)^\ast(1))_{J\cap J'} )$ which glues $ \mathcal{Z}_{J'} (1) $ and 
$ \mathcal{Z}_{J'} (1) $. Similarly for each $r \in  \mathcal{S}$, we can use the Coleman map 
$\mathrm{Col}^{(r)}$ over $\mathbb{Q}_p \otimes_{\mathbb{Q}} \mathbb{Q} (\mu_r)$ and the fact that 
we have 
\begin{align*}
& 
\mathrm{Col}^{(r)}(\mathrm{loc}_p (\mathcal{Z}_J  (r)))= 
L^{(r)}_p (\{ \Xi^\pm \}) \text{ mod $J$ in $(\mathbb{I}/J) {\otimes}_{\mathbb{Z}_p} 
\Lambda_{\mathrm{cyc}}$}
\\ 
& \Bigl( \mathrm{resp}.\, 
\mathrm{Col}^{(r)}(\mathrm{loc}_p (\mathcal{Z}_{J'} (r)))= 
L^{(r)}_p (\{ \Xi^\pm \}) \text{ mod $J'$ in $(\mathbb{I}/J') {\otimes}_{\mathbb{Z}_p} \Lambda_{\mathrm{cyc}}$}\Bigr)
\end{align*} 
where $L^{(r)}_p (\{ \Xi^\pm \})\in \mathbb{I}\widehat{\otimes}_{\mathbb{Z}_p} 
\Lambda_{\mathrm{cyc}}$ is the $p$-adic $L$-function obtained by removing Euler factors 
at primes dividing $r$ from $L_p (\{ \Xi^\pm \})$. 
Thus we obtain a unique Euler system 
$$
\bigl\{ \mathcal{Z}_{J \cap J'} (r) \in H^1(\mathbb{Q} (\mu_r ) ,
((\mathbb{T}\widehat{\otimes}_{\mathbb{Z}_p} \Lambda^\sharp_{\mathrm{cyc}}
)^\ast(1))_{J\cap J'} ) \bigr\} _{r\in \mathcal{S} } 
$$
which glues Euler systems $\bigl\{ \mathcal{Z}_{J'} (r) \bigr\} _{r\in \mathcal{S} }$ and 
$\bigl\{ \mathcal{Z}_{J'} (r)\bigr\}_{r\in \mathcal{S} } $. 
\par 
By repeating the induction on 
the numbers of the ideals of the form $I \cdot \mathbb{I}\widehat{\otimes}_{\mathbb{Z}_p} \Lambda_{\mathrm{cyc}}$ with $I \in \mathfrak{S}$ 
containing a given ideal $J \in \mathfrak{A}$, we construct 
for each $J \in \mathfrak{A}$ an Euler system 
$
\bigl\{ \mathcal{Z}_J (r) \in H^1(\mathbb{Q} (\mu_r ) ,
((\mathbb{T}\widehat{\otimes}_{\mathbb{Z}_p} \Lambda^\sharp_{\mathrm{cyc}}
)^\ast(1))_J ) \bigr\} _{r\in \mathcal{S} } 
$   
such that it provides an Euler system 
for each cuspform corresponding to the representation mod $I \cdot \mathbb{I}\widehat{\otimes}_{\mathbb{Z}_p} \Lambda_{\mathrm{cyc}}$ with $I \in \mathfrak{S}$ such 
that $J \subset I \cdot \mathbb{I}\widehat{\otimes}_{\mathbb{Z}_p} \Lambda_{\mathrm{cyc}}$. The Euler systems 
$\bigl\{ \mathcal{Z}_J (r)  \bigr\} _{r\in \mathcal{S} }$ form an inverse system for $J \subset \mathfrak{A}$. Since we have 
$$
\varprojlim_{J \in \mathfrak{A}} H^1(\mathbb{Q} (\mu_r ) ,
((\mathbb{T}\widehat{\otimes}_{\mathbb{Z}_p} \Lambda^\sharp_{\mathrm{cyc}}
)^\ast(1))_J )
\cong 
H^1(\mathbb{Q} (\mu_r ) ,
((\mathbb{T}\widehat{\otimes}_{\mathbb{Z}_p} \Lambda^\sharp_{\mathrm{cyc}}
)^\ast(1)) ),
$$
we obtain the desired Euler system 
$
\bigl\{ \mathcal{Z}(r) \in H^1(\mathbb{Q} (\mu_r ) ,
((\mathbb{T}\widehat{\otimes}_{\mathbb{Z}_p} \Lambda^\sharp_{\mathrm{cyc}}
)^\ast(1)) ) \bigr\} _{r\in \mathcal{S} } 
$   
by putting $\mathcal{Z}(r)= \varprojlim_{J \in \mathfrak{A}} \mathcal{Z}_J(r)$. 
Here, we remark that the topology defined by $\{ J \in \mathfrak{A} \}$
is equivalent to the topology defined by the maximal ideals of 
$\mathbb{I} \widehat{\otimes}_{\mathbb{Z}_p}\Lambda_{\mathrm{cyc}}$ by 
the fact that $\mathbb{I} \widehat{\otimes}_{\mathbb{Z}_p}\Lambda_{\mathrm{cyc}}$ 
is complete semi-local and by well-known Chevalley's theorem (cf. \cite[\S II, Lemma 7]{ch43}). 
\end{proof}

After we constructed a $p$-optimized Euler system, the argument goes similarly as the one-variable cyclotomic Iwasawa theory explained in 
\S \ref{sec:Intro_motif}. We go back to the proof of \eqref{equation:IMC_for_elliptic_cuspform_modulo_mu_bis} in Theorem $C'$. 
\begin{proof}[Proof of \eqref{equation:IMC_for_elliptic_cuspform_modulo_mu_bis} in Theorem $C'$]
By the Poitou-Tate exact sequence of Galois cohomology, Euler-Poincar\'{e} characteristic formula of Galois cohomology theory and the existence of nontrivial Euler system explained above, we have the following sequence of 
$\mathbb{I} \widehat{\otimes}_{\mathbb{Z}_p}\Lambda_{\mathrm{cyc}}$-modules: 
\begin{multline}\label{equation:4termsequence3}
0 \longrightarrow H^1 (\mathbb{Q}_{\Sigma}/\mathbb{Q}, (\mathbb{T}\widehat{\otimes}_{\mathbb{Z}_p} \Lambda^\sharp_{\mathrm{cyc}}
)^\ast(1) )   \longrightarrow 
\dfrac{H^1 (\mathbb{Q}_p  ,(\mathbb{T}\widehat{\otimes}_{\mathbb{Z}_p} \Lambda^\sharp_{\mathrm{cyc}}
)^\ast(1))}
{\mathrm{Im}(H^1 (\mathbb{Q}_p  ,F^+_p (\mathbb{T}\widehat{\otimes}_{\mathbb{Z}_p} \Lambda^\sharp_{\mathrm{cyc}}
)^\ast(1) ))}
\\ 
\longrightarrow \mathrm{Sel}_{\mathbb{A}} (\mathbb{Q}(\mu_{p^\infty}))^\vee 
\longrightarrow 
H^2 (\mathbb{Q}_{\Sigma}/\mathbb{Q},
(\mathbb{T}\widehat{\otimes}_{\mathbb{Z}_p} \Lambda^\sharp_{\mathrm{cyc}}
)^\ast(1)) \longrightarrow 0 
\end{multline}
such that 
\begin{enumerate}
\item[{(i)}]
The sequence \eqref{equation:4termsequence3} is exact modulo error by 
pseudo-null $\mathbb{I}\widehat{\otimes}_{\mathbb{Z}_p} \Lambda_{\mathrm{cyc}}$-modules. 
\item[(ii)]  
The last two terms are torsion 
over every local component of 
$\mathbb{I}\widehat{\otimes}_{\mathbb{Z}_p} \Lambda_{\mathrm{cyc}}$.  
\item[(iii)]
The first two terms are of rank one over every local component of 
$\mathbb{I}\widehat{\otimes}_{\mathbb{Z}_p} \Lambda_{\mathrm{cyc}}$. 
\end{enumerate} 
The reason of these statements (i), (ii) and (iii) is similar to the reason of similar statements (i), (ii) and (iii) 
explained in the proof of Theorem $C$.  
\par 
Now, we take the first layer of the modified Beilinson--Kato Euler system$:$  
$$
\mathcal{Z}(1) \in H^1 (\mathbb{Q}_{\Sigma}/\mathbb{Q},
(\mathbb{T}\widehat{\otimes}_{\mathbb{Z}_p} \Lambda^\sharp_{\mathrm{cyc}}
)^\ast(1)). 
$$ 
obtained in Theorem \ref{theorem:eulersystem_Hida2} and we denote by $\overline{\mathrm{loc}}_p$ the 
$\mathbb{I}\widehat{\otimes}_{\mathbb{Z}_p} \Lambda_{\mathrm{cyc}}$-linear homomorphism$:$ 
$$
H^1 (\mathbb{Q}_{\Sigma}/\mathbb{Q},
(\mathbb{T}\widehat{\otimes}_{\mathbb{Z}_p} \Lambda^\sharp_{\mathrm{cyc}}
)^\ast(1))  
\longrightarrow 
\dfrac{H^1 (\mathbb{Q}_p  ,(\mathbb{T}\widehat{\otimes}_{\mathbb{Z}_p} \Lambda^\sharp_{\mathrm{cyc}})^\ast(1))}
{\mathrm{Im}(H^1 (\mathbb{Q}_p  ,F^+_p (\mathbb{T}\widehat{\otimes}_{\mathbb{Z}_p} \Lambda^\sharp_{\mathrm{cyc}}
)^\ast(1))}. 
$$ 
Then the sequence \eqref{equation:4termsequence3} 
induces the following sequence of torsion $\mathbb{I}\widehat{\otimes}_{\mathbb{Z}_p} \Lambda_{\mathrm{cyc}}$-modules: 
\begin{multline}\label{equation:4termsequence4}
0 \longrightarrow 
H^1 (\mathbb{Q}_{\Sigma}/\mathbb{Q}, (\mathbb{T}\widehat{\otimes}_{\mathbb{Z}_p} \Lambda^\sharp_{\mathrm{cyc}}
)^\ast(1) )  
 \Bigl/ \mathbb{I}\widehat{\otimes}_{\mathbb{Z}_p} \Lambda_{\mathrm{cyc}} \cdot \mathcal{Z} (1)
\\  
\longrightarrow 
\dfrac{H^1 (\mathbb{Q}_p  ,(\mathbb{T}\widehat{\otimes}_{\mathbb{Z}_p} \Lambda^\sharp_{\mathrm{cyc}}
)^\ast(1))}
{\mathrm{Im}(H^1 (\mathbb{Q}_p  ,F^+_p (\mathbb{T}\widehat{\otimes}_{\mathbb{Z}_p} \Lambda^\sharp_{\mathrm{cyc}}
)^\ast(1) ))}
 \Bigl/ \mathbb{I}\widehat{\otimes}_{\mathbb{Z}_p} \Lambda_{\mathrm{cyc}} \cdot \overline{\mathrm{loc}}_p( 
\mathcal{Z}(1) ) 
\\ 
\longrightarrow \mathrm{Sel}_{\mathbb{A}} (\mathbb{Q}(\mu_{p^\infty}))^\vee 
\longrightarrow 
H^2 (\mathbb{Q}_{\Sigma}/\mathbb{Q},
(\mathbb{T}\widehat{\otimes}_{\mathbb{Z}_p} \Lambda^\sharp_{\mathrm{cyc}}
)^\ast(1)) \longrightarrow 0 
\end{multline}
which is exact modulo error by 
pseudo-null $\mathbb{I}\widehat{\otimes}_{\mathbb{Z}_p} \Lambda^\sharp_{\mathrm{cyc}}$-modules. 
Finally, by applying Theorem \ref{theorem:general_Euler_system_bound} 
for $\mathcal{R}=\mathbb{I}$, we obtain  
\begin{multline}\label{equation:euler_system_bound_general_for_R=I}
\mathrm{char}_{\mathbb{I} \widehat{\otimes}_{\mathbb{Z}_p} \Lambda_{\mathrm{cyc}}}
\left( 
H^1(\mathbb{Q}_{\Sigma}/\mathbb{Q} ,
(\mathbb{T}\widehat{\otimes}_{\mathbb{Z}_p} \Lambda^\sharp_{\mathrm{cyc}}
)^\ast (1)) 
 \Bigl/ \mathbb{I} \widehat{\otimes}_{\mathbb{Z}_p} \Lambda_{\mathrm{cyc}}  \cdot \mathcal{Z}(1)  
 \right) 
 \\ 
 \subset 
 \mathrm{char}_{\mathbb{I} \widehat{\otimes}_{\mathbb{Z}_p} \Lambda_{\mathrm{cyc}}} 
\left( H^2(\mathbb{Q}_{\Sigma}/\mathbb{Q} ,
(\mathbb{T}\widehat{\otimes}_{\mathbb{Z}_p} \Lambda^\sharp_{\mathrm{cyc}}
)^\ast (1))  \right) . 
\end{multline}
We recall that we have the following inclusion thanks to \eqref{equation:modified_Euler}$:$
\begin{equation}\label{equation:conclusion_of_Coleman_map_modified_ES}
\mathrm{char}_{\mathbb{I} \widehat{\otimes}_{\mathbb{Z}_p} \Lambda_{\mathrm{cyc}}}
\left( 
\dfrac{H^1 (\mathbb{Q}_p  ,(\mathbb{T}\widehat{\otimes}_{\mathbb{Z}_p} \Lambda^\sharp_{\mathrm{cyc}}
)^\ast(1))}
{\mathrm{Im}(H^1 (\mathbb{Q}_p  ,F^+_p (\mathbb{T}\widehat{\otimes}_{\mathbb{Z}_p} \Lambda^\sharp_{\mathrm{cyc}}
)^\ast(1) ))}
 \Bigl/ \mathbb{I}\widehat{\otimes}_{\mathbb{Z}_p} \Lambda^\sharp_{\mathrm{cyc}} \cdot \overline{\mathrm{loc}}_p( 
\mathcal{Z}(1) ) 
\right) = (L_p (\{ \Xi^\pm \})). 
\end{equation}
By combining \eqref{equation:4termsequence4}, \eqref{equation:euler_system_bound_general_for_R=I} and 
\eqref{equation:conclusion_of_Coleman_map_modified_ES}, 
we obtain \eqref{equation:IMC_for_elliptic_cuspform_modulo_mu_bis}. 
\end{proof}

\section{Coleman map for a Coleman family}
\label{sec:Intro_results}

In the non-ordinary situation, the evaluations of the $p$-adic $L$-functions might have 
denominators which are divisible by higher powers of $p$ depending on the power of $p$ which divides the conductor of evaluating characters. 
Hence, $p$-adic $L$-functions in non-ordinary case might need to be formulated as elements in a space of distributions which allows unbounded denominators. 
Though distributions with unbounded denominators might not be well-behaved as in the case of bounded measures, 
the space of distributions $\HH_{h,\mathrm{cyc}}$ introduced by Amice--V\'{e}lu is quite well-behaved and has nice properties to 
characterize and recover a function in $\HH_{h,\mathrm{cyc}}$ from the values of its evaluations (see Proposition \ref{proposition:uniquenesspropertyHh}). 
Thus it is quite important to work with $\HH_{h,\mathrm{cyc}}$. 
\begin{defn}\label{def:ell} 
\begin{enumerate}
\item 
For $i\in\mathbb{Z}_{\geq 0}$, we set $\ell(0)=0$ 
and we define $\ell(i)$ to be 
$\ell(i)=\Big\lfloor\frac{\ln (i)}{\ln (p)}\Big\rfloor+1$ if $i\geq 1$. 
\item 
For $h\geq 0$, we define
\[
\HH_{h}=\Big\{\sum^{+ \infty}_{i = 0}a_i X^i \in \Q [[X]] 
\ \Big\vert \ 
\inf
\big\{\mathrm{ord}(a_i) +h\ell (i) \ \big\vert 
\ {i \in\mathbb{Z}_{\geq 0}}
\big\}  >-\infty\Big\}
\]
and call the elements of $\HH_h$ the power series of logarithmic order $h$. 
\end{enumerate}
\end{defn}
We have a Banach norm on $\HH_{h}$ and we have an integral structure $\HH^+_{h} \subset \HH_{h}$ such that $\HH_{h} \cong \HH^+_{h} 
\otimes_{\mathbb{Z}_p} \mathbb{Q}_p$ by means of this norm (see \cite[\S 4.1]{NO16}). 
We do not give a precise definition of the Banach-module structure 
on $\HH_{h}$. But we only remark that $\HH^+_{0}$ is equal to 
$\mathbb{Z}_p [ [X]]$. 
\par 
For every $j \in \mathbb{Z}_{\geq 0}$, we set
\[
\omega_n^{[j]}=\omega_n^{[j]}(X)=(1+X)^{p^n}-(1+p)^{jp^n}
\]
For a fixed $n$ and for different $j\neq j'$ 
the polynomials $\omega_n^{[j]}(X)$ and $\omega_n^{[j']}(X)$ have no common factor in $\Z[[X]]$. 
We will denote $\omega^{[0]}_n (X)$ by $\omega_n (X)$. 
Finally, we recall that $\Lambda_{\mathrm{cyc}}$ is a semi-local ring 
isomorphic to $\mathbb{Z}_p [\mathrm{Gal}(\mathbb{Q}(\mu_p)/\mathbb{Q})] 
\otimes \mathbb{Z}_p [\mathrm{Gal}(\mathbb{Q}(\mu_{p^\infty})/\mathbb{Q}(\mu_p))]$ which is non-canonically isomorphic to 
$\mathbb{Z}_p [(\mathbb{Z}/(p))^\times ] \otimes \mathbb{Z}_p [[X]]$ 
induced by the Iwasawa-Serre isomorphism $\mathbb{Z}_p [\mathrm{Gal}(\mathbb{Q}(\mu_{p^\infty})/\mathbb{Q}(\mu_p))] \cong \mathbb{Z}_p [[X]] 
= \mathcal{H}^+_0$ 
sending a chosen topological generator $\gamma$ of $\mathrm{Gal}(\mathbb{Q}(\mu_{p^\infty})/\mathbb{Q}(\mu_p)$ to $1+X$. We define $\HH_{h,\mathrm{cyc}}$ (resp. $\hh_{h,\mathrm{cyc}}$) to be 
\begin{align*}
& 
\HH_{h,\mathrm{cyc}}= \mathbb{Z}_p [\mathrm{Gal}(\mathbb{Q}(\mu_{p^\infty})/\mathbb{Q}(\mu_p))] \otimes_{\mathcal{H}^+_0} \HH_h \\  
& \left(\mathrm{resp}. \ \hh_{h,\mathrm{cyc}} = \mathbb{Z}_p [\mathrm{Gal}(\mathbb{Q}(\mu_{p^\infty})/\mathbb{Q}(\mu_p))] \otimes_{\mathcal{H}^+_0} \hh_h 
\right) .
\end{align*}

We have the following important results of Amice--V\'{e}lu, which we can find also in \S 1.2 and \S 1.3 of \cite{396}: 
\begin{prop}\label{proposition:uniquenesspropertyHh}
Let $F \in \HH_{h,\mathrm{cyc}} $ and let us choose $l,l' \in \mathbb{Z}$ such that $h =l' -l$. 
Suppose that $F$ is contained in $Q \HH_{h,\mathrm{cyc}} $ for every element of $\mathcal{Q}^{[l ,l']}$. Then we have $F=0$. 
\end{prop}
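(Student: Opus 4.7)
The plan is to reduce the statement to a classical Amice--V\'{e}lu zero-counting argument on the Newton polygon of $F$, exploiting the sharp gap between the logarithmic growth of order $h$ and the abundance of zeros imposed by divisibility by every element of $\mathcal{Q}^{[l,l']}$. First I would decompose the semi-local ring $\Lambda_{\mathrm{cyc}} \cong \mathbb{Z}_p[\mathrm{Gal}(\mathbb{Q}(\mu_p)/\mathbb{Q})] \otimes \mathbb{Z}_p[[X]]$ into its character components under $\mathrm{Gal}(\mathbb{Q}(\mu_p)/\mathbb{Q})$. Since $\HH_{h,\mathrm{cyc}}$ is defined by base change of $\HH_h$ along this decomposition and the divisibility conditions are compatible with the components, the problem reduces to showing that a single element $F\in\HH_h \subset \Q[[X]]$ that is divisible by every $Q$ in the corresponding set $\mathcal{Q}^{[l,l']}$ must vanish.

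Next I would unpack the divisibility condition. The set $\mathcal{Q}^{[l,l']}$ is built from the polynomials $\omega_n^{[j]}(X) = (1+X)^{p^n} - (1+p)^{jp^n}$ for $l \leq j \leq l'$ and $n \geq 0$ (or products thereof), whose roots are exactly the points $\zeta(1+p)^j - 1$ for $\zeta$ a $p^n$-th root of unity. Divisibility $F \in \omega_n^{[j]}\HH_h$ translates, via the Weierstrass preparation in $\HH_h$, into the vanishing of $F$ at every such root. The coprimality of $\omega_n^{[j]}$ and $\omega_n^{[j']}$ for $j \neq j'$ noted in the excerpt ensures that, at level $n$, we collect $(l'-l+1)\,p^n = (h+1)\,p^n$ genuinely distinct zeros of $F$, all located on specific circles in the open unit disc whose radii we can read off explicitly from $\mathrm{ord}_p\bigl(\zeta(1+p)^j - 1\bigr)$.

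The core step is then the Newton polygon bound. An element of $\HH_h$ is by definition a power series $\sum a_i X^i$ with $\mathrm{ord}(a_i) + h\ell(i)$ bounded below, and a standard computation shows that any nonzero such series has at most $h \cdot p^n + O(1)$ zeros (counted with multiplicity) in the appropriate subdisc containing the roots of $\omega_m^{[j]}$ for $m \leq n$. Comparing this with the $(h+1)p^n$ forced zeros yields a strict excess for all large $n$, which is incompatible with $F \neq 0$; hence every coefficient $a_i$ must vanish. This is essentially the uniqueness half of the Amice--V\'{e}lu interpolation theorem as recalled in \S 1.2--1.3 of \cite{396}, which I would cite rather than redo.

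The main obstacle will be organizing the zero count coherently across varying $n$: one must verify that the zeros produced at level $n$ are not double-counted against those at level $n-1$, and calibrate the Newton polygon inequality so that the gain of $1$ per $p^n$-batch is genuinely used. I expect this to come down to carefully tracking the valuations $\mathrm{ord}_p((1+p)^{jp^n}-1) = 1 + \mathrm{ord}_p(j) + n$ and sorting the roots accordingly, after which the conclusion $F = 0$ follows from the well-known fact that a $p$-adic analytic function on the open unit disc whose number of zeros exceeds the slope data of its Newton polygon must vanish identically.
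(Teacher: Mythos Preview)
The paper does not actually prove this proposition: it is stated as a result of Amice--V\'{e}lu and immediately attributed to \S1.2--1.3 of \cite{396}, with no argument given. Your proposal is a correct sketch of precisely that classical Newton-polygon/zero-counting argument, and you yourself end by saying you would cite \cite{396} rather than redo it --- so your approach and the paper's ``proof'' coincide (both defer to Perrin-Riou's exposition of Amice--V\'{e}lu).
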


\begin{prop}\label{proposition:glueingpropertyHh}
Let us choose $l,l' \in \mathbb{Z}$ with $l\leq l'$ and put $h =l' -l$. 
Let $\{ G_{n,j} \in \K [X ] \}_{n \in \mathbb{Z}_{\geq 1}, l \leq j\leq l'} $ 
be a sequence of polynomials satisfying the following conditions: 
\begin{enumerate}
\item[\rm{(i)}] 
For each $j$ satisfying $l \leq j\leq l'$, $\Vert p^{nh} G_{n,j}\Vert $ is bounded when $n$ varies. 
\item[\rm{(ii)}] 
For each $j$ satisfying $l \leq j\leq l'$ and for each $n \in \mathbb{Z}_{\geq 1}$, 
$G_{n+1,j} - G_{n,j} \equiv 0$ modulo  $\omega_n (X)\K [X]$. 
\item[\rm{(iii)}] 
For each $j$ satisfying $l \leq j\leq l' $, 
$$ 
\left\Vert 
p^{n(h-j)} \sum^j_{k=l} (-1)^{j-k} \genfrac{(}{)}{0pt}{}{j}{k}
 G_{n ,l+k}((1+X)-(1+p)^k) \right\Vert 
 $$ 
is bounded when $n$ varies. 
\end{enumerate}
Then there exists a unique element $F \in \HH_{h}$ such that 
$F \equiv G_{n,j} $ modulo $\omega_n^{[j]}\HH_{h}$ 
for every $n \in \mathbb{Z}_{\geq 1}$ and for every 
$j$ satisfying $l \leq j\leq l'$. 
\end{prop}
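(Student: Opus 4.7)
The plan is to prove existence by gluing the polynomials $G_{n,j}$ at each finite level via the Chinese Remainder Theorem and then passing to a limit in $\HH_h$; uniqueness is immediate from (the $\HH_h$-version of) Proposition \ref{proposition:uniquenesspropertyHh}. Indeed, if $F_1, F_2 \in \HH_h$ both satisfy the stated congruences, then $F_1 - F_2$ lies in $\omega_n^{[j]} \HH_h$ for every $n \geq 1$ and every $j \in \{l, \ldots, l'\}$, and this forces $F_1 - F_2 = 0$.

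For existence, fix $n \geq 1$. The polynomials $\omega_n^{[l]}, \omega_n^{[l+1]}, \ldots, \omega_n^{[l']}$ are pairwise coprime in $\K[X]$ by the remark recalled just after Definition \ref{def:ell}, so the Chinese Remainder Theorem produces a unique polynomial $G_n \in \K[X]$ of degree strictly less than $(l'-l+1)p^n$ with $G_n \equiv G_{n,j}$ modulo $\omega_n^{[j]}$ for each $j \in \{l, \ldots, l'\}$. The heart of the argument is to show that $(G_n)_n$ is a Cauchy sequence in $\HH_h$ with respect to its Banach-module norm. The normalizing factor $p^{nh}$ in (i) matches the growth allowed by logarithmic order $h$, so (i) controls the contribution of each individual $G_{n,j}$; condition (ii) ensures compatibility between consecutive levels; and condition (iii) is tailored so that the alternating binomial combinations $\sum_{k=l}^{j} (-1)^{j-k} \binom{j}{k} G_{n,l+k}((1+X)-(1+p)^k)$, weighted by $p^{n(h-j)}$, stay bounded. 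Combining the three conditions yields an explicit $p$-adic estimate for the coefficients of $G_{n+1}-G_n$ in a suitable polynomial basis, from which one deduces $\lVert G_{n+1}-G_n\rVert \to 0$ in $\HH_h$. The limit $F := \lim_n G_n$ then satisfies the required congruences by construction.

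The delicate step is precisely the quantitative Cauchy estimate just described: one must verify that the binomial coefficients and the powers $p^{n(h-j)}$ in (iii) cancel the $p^{nh}$ growth permitted by (i) in just the right way to force convergence in the logarithmic-order-$h$ Banach norm. This mirrors the classical arguments of Amice--V\'elu and Perrin-Riou (see \cite[\S 1.2--1.3]{396}), where distributions of bounded logarithmic order are recovered from their evaluations at roots of unity.
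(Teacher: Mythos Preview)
The paper does not give its own proof of this proposition: it attributes the result to Amice--V\'elu and simply refers the reader to \S1.2--1.3 of \cite{396}. Your outline follows exactly the classical strategy found in that reference (CRT gluing at each finite level, then a Cauchy estimate in the $\HH_h$-norm, with uniqueness from Proposition~\ref{proposition:uniquenesspropertyHh}), so in that sense your approach matches what the paper intends.

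That said, your write-up is a sketch rather than a proof: you correctly identify the ``delicate step'' as the quantitative Cauchy estimate showing $\lVert G_{n+1}-G_n\rVert_{\HH_h}\to 0$, but you do not actually perform it, instead deferring to \cite{396}. Since the paper itself defers in the same way, this is acceptable as a summary of the argument, but you should be aware that the substance of the proof lies entirely in that estimate, and what you have written does not establish it independently.
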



Let us define a $\Lambda_{(k_0 ;r),\mathcal{O}_{{K}}} 
\otimes_{\mathbb{Z}_p} \mathbb{Q}_p$-module $\mathbb{D}$ by 
\begin{equation}
\mathbb{D}:=
(
\mathbb{T} \widehat{\otimes}_{\mathbb{Z}_p} B_{\mathrm{crys}} )^{G_{\mathbb{Q}_p}, \varphi =A_p (\mathbb{F})}. 
\end{equation}
\begin{lemma}\label{lemma:bigD_nonord}
The $\Lambda_{(k_0 ;r),\mathcal{O}_{{K}}}\otimes_{\mathbb{Z}_p} \mathbb{Q}_p$-module $\mathbb{D}$ is free of rank one over $\Lambda_{(k_0 ;r),\mathcal{O}_{{K}}}\otimes_{\mathbb{Z}_p} \mathbb{Q}_p$ and the 
specialization of $\mathbb{D}$ at each arithmetic point 
$k \in \mathbb{Z} \cap B(k_0 ;r)$ is canonically identified with $D_k := D_\mathrm{cris}^{\varphi = a_p (f_k )} (V_{f_k})$. 
\end{lemma}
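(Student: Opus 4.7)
The plan is to deduce both statements from Kisin's interpolation theorem for the finite-slope $\mathrm{D}_{\mathrm{cris}}$-eigenspace in a Coleman family, applied to the Galois representation $\mathbb{T}$ of Theorem \ref{theorem:construction_bigrepresentation_non-ordinary}. Kisin's theorem shows that for a Coleman family of constant slope $s$, the $\varphi=A_p(\mathbb{F})$-eigenspace in $(\mathbb{T}\widehat{\otimes}_{\mathbb{Z}_p}B_{\mathrm{crys}})^{G_{\mathbb{Q}_p}}$ is a coherent, locally free sheaf of rank one on the base, whose formation is compatible with specialization at arithmetic points.

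First, I would adapt Kisin's theorem to the formal setting of Theorem \ref{theorem:colemanfamily}, in which the base is the complete local ring $\Lambda_{(k_0;r),\mathcal{O}_K}$ rather than an affinoid algebra. This is essentially a rigidification argument, translating between the affinoid set-up of Coleman \cite{co96,co97} used by Kisin and the formal set-up used in \cite{NO16}. Once this translation is made, one obtains that $\mathbb{D}$ is a finitely generated torsion-free $\Lambda_{(k_0;r),\mathcal{O}_K}\otimes_{\mathbb{Z}_p}\mathbb{Q}_p$-module.

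Second, I would verify the rank-one property at each arithmetic specialization. For each arithmetic $k\in\mathbb{Z}\cap B(k_0;r)$, the cuspform $f_k$ is a $p$-stabilized eigenform whose $p$-stabilization corresponds to a refinement of $V_{f_k}$. By the assumption in Theorem \ref{theorem:colemanfamily} that the $p$-th Hecke polynomial at $k_0$ has distinct roots and by the constancy of the slope $s$ along the family, this refinement is non-critical at every arithmetic point. Consequently, the $\varphi=a_p(f_k)$-eigenspace $D_{\mathrm{cris}}^{\varphi=a_p(f_k)}(V_{f_k})$ is one-dimensional over ${K}$. Since the base ring $\Lambda_{(k_0;r),\mathcal{O}_K}\otimes_{\mathbb{Z}_p}\mathbb{Q}_p$ is a one-dimensional regular Noetherian domain, torsion-freeness combined with constant fibre-dimension one forces $\mathbb{D}$ to be free of rank one.

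Third, the canonical identification of the specialization with $D_{\mathrm{cris}}^{\varphi=a_p(f_k)}(V_{f_k})$ follows from the base-change compatibility in Kisin's theorem, combined with the identification $\mathbb{T}\otimes_{\Lambda_{(k_0;r),\mathcal{O}_K}}\mathcal{O}_K\cong V_{f_k}$ coming from Theorem \ref{theorem:construction_bigrepresentation_non-ordinary}. The main obstacle is making the formal-to-rigid translation in Kisin's theorem work explicitly; in particular, one has to justify the commutation of $\widehat{\otimes}_{\mathbb{Z}_p}B_{\mathrm{crys}}$ and the $\varphi$-eigenspace functor with specialization at arithmetic maximal ideals of $\Lambda_{(k_0;r),\mathcal{O}_K}$. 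This technical step is the one carried out in \cite{NO16}.
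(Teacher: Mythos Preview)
Your proposal is correct and matches the paper's approach: the paper does not give a self-contained proof but simply attributes the lemma to Kisin \cite[Corollary 5.6]{Kis03}, noting that the precise formal-scheme version needed here is carried out in \cite[Lemma 3.4]{NO16}. Your sketch---invoke Kisin's interpolation of the $\varphi$-eigenspace, translate from the affinoid to the formal setting, and check constant fibre rank one at arithmetic points---is exactly the content of those references, so you have recovered the intended argument with more detail than the paper itself provides.
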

The lemma is essentially due to Kisin (see \cite[Corollary 5.6]{Kis03}). However the paper \cite{Kis03} does not really prove exactly the same result as what 
we present in Lemma \ref{lemma:bigD_nonord}. For the proof of Lemma \ref{lemma:bigD_nonord}, we refer the reader to \cite[Lemma 3.4]{NO16}. 
\par 
After this preparation, we can now recall the main result of the paper \cite{NO16}, namely the existence of the Coleman maps for Coleman families. 
\begin{thm}[Nuccio-Ochiai]\label{theorem:interpolationdualexp}
Let $s\in \mathbb{Q}_{\geq 0}$ be the slope of the Coleman family $\mathbb{F}$ and $h$ an integer satisfying $h\geq s$.  
Assume that the residual representation $\overline{\rho} : G_{\mathbb{Q}} \longrightarrow \operatorname{GL}_2 (\overline{\mathbb{F}}_p)$ associated 
to the family is irreducible when restricted to $G_{\Q (\mu_p )}$. 
\par 
Then, we have a unique $\Lambda_{(k_0 ;r),\mathcal{O}_{{K}}}\widehat{\otimes}_{\mathbb{Z}_p} \Lambda_{\mathrm{cyc}}$-linear  
big exponental map 
\[
\mathrm{Col} : 
H^1 \big(\mathbb{Q}_{p} ,(\mathbb{T}\widehat{\otimes}_{\mathbb{Z}_p} \Lambda^\sharp_{\mathrm{cyc}}
)^\ast(1) \big) 
\longrightarrow 
\mathbb{D}^\ast (1) \widehat{\otimes}_{\mathbb{Z}_p} 
\HH_{h,\mathrm{cyc}}
\]
such that, for each arithmetic point $k \in \mathbb{Z} \cap B(k_0 ;r)$ 
and for each arithmetic character $\chi^j_{\mathrm{cyc}} \phi : G_{\mathrm{cyc}} {\longrightarrow } \overline{\mathbb{Q}}^\times_p$ 
with $j$ a positive integer, we have the following commutative diagram: 
\[
\begin{CD}
 H^1 \big(\mathbb{Q}_{p} ,
(\mathbb{T}\widehat{\otimes}_{\mathbb{Z}_p} \Lambda^\sharp_{\mathrm{cyc}}
)^\ast(1) \big)  @>\mathrm{Col}>> 
\mathbb{D}^\ast (1) \widehat{\otimes}_{\mathbb{Z}_p}
\HH_{h,\mathrm{cyc}}
 \\ 
@V{(k, \chi^{-j}_{\mathrm{cyc}} \phi^{-1})}VV @VV{(k, \chi^{-j}_{\mathrm{cyc}} \phi^{-1} )}V \\ 
H^1 (\mathbb{Q}_{p} ,(V_{f_k} \otimes \phi )^\ast \otimes \chi^{1-j}_{\mathrm{cyc}}  )
@>>{e_p (f_k , j,\phi )\times \mathrm{exp}^{\ast}}>  (D_k )^\ast (1) \otimes D_{\mathrm{dR}} (  \chi^{-j}_{\mathrm{cyc}} \phi^{-1} ) 
\end{CD}
\]
where $e_p (f_k , j,\phi )$ is as given in Theorem C. 
\end{thm}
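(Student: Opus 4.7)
The plan is to pin down $\mathrm{Col}$ at every arithmetic point $k \in \mathbb{Z}\cap B(k_0;r)$ via Perrin-Riou's classical big dual-exponential (after specializing to $V_{f_k}$) and then to glue these point-wise constructions into an analytic family over $B(k_0;r)$ by means of the distribution-theoretic characterization furnished by Propositions \ref{proposition:uniquenesspropertyHh} and \ref{proposition:glueingpropertyHh}. After fixing a basis of $\mathbb{D}^\ast (1)$ as in Lemma \ref{lemma:bigD_nonord}, we view all target objects as living in $(\Lambda_{(k_0;r),\mathcal{O}_{K}}\otimes \mathbb{Q}_p)\widehat{\otimes}_{\mathbb{Z}_p}\HH_{h,\mathrm{cyc}}$; the argument then splits into uniqueness and existence, with the slope bound $h\geq s$ playing the role of the uniform growth control that makes the gluing work.

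For uniqueness, suppose two maps $\mathrm{Col}$ and $\mathrm{Col}'$ satisfy the interpolation diagram, and fix a cohomology class $z$ in the source. The difference $\Delta(z)$ lies in $(\Lambda_{(k_0;r),\mathcal{O}_{K}}\otimes \mathbb{Q}_p)\widehat{\otimes}\HH_{h,\mathrm{cyc}}$ and, by hypothesis, its image under every arithmetic character $(k,\chi^{-j}_{\mathrm{cyc}}\phi^{-1})$ with $k>s+1$, $j\geq 1$, and $\phi$ of $p$-power conductor vanishes. Fixing $k$ and letting $(j,\phi)$ vary, Proposition \ref{proposition:uniquenesspropertyHh} forces the specialization $k(\Delta(z))\in \mathcal{O}_{K}[\tfrac{1}{p}]\widehat{\otimes}\HH_{h,\mathrm{cyc}}$ to vanish. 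Since the integers $k>s+1$ are Zariski-dense in $B(k_0;r)$, $\Delta(z)=0$.

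For existence, at each arithmetic point $k$ the representation $V_{f_k}$ is (potentially) crystalline and its specialized big dual-exponential map $\mathrm{Col}_k$ is produced by Perrin-Riou's construction in the crystalline case, or by the $(\varphi,\Gamma)$-module extension of it in the potentially crystalline case; by construction $\mathrm{Col}_k$ lands in $D_k^\ast(1)\widehat{\otimes}\HH_{h,\mathrm{cyc}}$ with $h\geq s$ because $\mathrm{val}_p(a_p(f_k))=s$ for every $k$. Given $z \in H^1(\mathbb{Q}_p,(\mathbb{T}\widehat{\otimes}\Lambda^\sharp_{\mathrm{cyc}})^\ast(1))$, the family $\{\mathrm{Col}_k(k(z))\}_k$ is a candidate for the specializations of $\mathrm{Col}(z)$; we reduce to a finite set of arithmetic points at a time by a scheme analogous to the proof of Theorem \ref{theorem:eulersystem_Hida2}, replacing the ideals $J\in \mathfrak{A}$ there by intersections of kernels of arithmetic specializations on $\Lambda_{(k_0;r),\mathcal{O}_{K}}$. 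The resulting finite-layer compatibilities, reformulated as congruences of polynomials mod $\omega_n^{[j]}$ with controlled $p^{nh}$-norms, are exactly the hypotheses of Proposition \ref{proposition:glueingpropertyHh} and thus produce a unique $\mathrm{Col}(z)\in \mathbb{D}^\ast(1)\widehat{\otimes}\HH_{h,\mathrm{cyc}}$. Running this construction linearly in $z$ gives the desired $\Lambda_{(k_0;r),\mathcal{O}_{K}}\widehat{\otimes}\Lambda_{\mathrm{cyc}}$-linear map, and the interpolation diagram holds by construction.

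The main obstacle is verifying the quantitative hypotheses of Proposition \ref{proposition:glueingpropertyHh} uniformly as $k$ varies: one must show that, for a given $z$, the polynomials $G_{n,j}$ encoding the point-wise $\mathrm{Col}_k(k(z))$ at various layers satisfy the norm bounds with a uniform constant and the prescribed congruences mod $\omega_n^{[j]}$. This is where the slope condition $h\geq s$ is essential—bounding the denominators $e_p(f_k,j,\phi)^{-1}$ that appear in Perrin-Riou's interpolation formula for $\mathrm{Col}_k$—and where the analytic variation of the crystalline data (Lemma \ref{lemma:bigD_nonord}, i.e. Kisin's result) is used to match the $D_k$'s across $k$. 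A secondary subtlety, absent in the ordinary situation of Theorem \ref{theorem:colemanmap2}, is that no rank-one sub-representation of $\mathbb{T}$ is available at $p$, so the reduction to a classical rank-one Coleman map used in Remark \ref{remark:Colemanmap_Hidafamily} fails, and one really has to build Perrin-Riou's map for a rank-two crystalline family from scratch in the Robba-module framework.
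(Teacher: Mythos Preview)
Your sketch shares the paper's general philosophy---build the map at arithmetic points and glue using the Amice--V\'elu distribution criteria---but the route differs from the paper's in two substantive respects.

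First, the paper does \emph{not} construct $\mathrm{Col}$ by gluing the point-wise dual exponentials $\mathrm{Col}_k$ directly. Instead, the main result of \cite{NO16} produces a big \emph{exponential} map
\[
\mathrm{EXP}\colon \mathbb{D}\,\widehat{\otimes}_{\mathbb{Z}_p}\Lambda_{\mathrm{cyc}}\longrightarrow H^1\big(\mathbb{Q}_p,\mathbb{T}\widehat{\otimes}_{\mathbb{Z}_p}\Lambda^\sharp_{\mathrm{cyc}}\big)\widehat{\otimes}_{\Lambda_{\mathrm{cyc}}}\HH_{h,\mathrm{cyc}}
\]
interpolating the Bloch--Kato exponentials (not their duals). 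One then extends scalars to $\HH_{\infty,\mathrm{cyc}}$, takes the $\HH_{\infty,\mathrm{cyc}}$-linear dual via Tate local duality, restricts to the integral cohomology, and finally observes from the denominator growth that the image already lies in $\mathbb{D}^\ast(1)\widehat{\otimes}\HH_{h,\mathrm{cyc}}$. Going through $\mathrm{EXP}$ rather than through $\mathrm{Col}_k$ is not merely cosmetic: Perrin-Riou's machinery is set up to interpolate $\exp$, and the duality step is where the passage to $\exp^\ast$ happens cleanly.

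Second, and more importantly, the paragraph where you flag the ``main obstacle'' is exactly where the paper's specific technical contribution lives, and your resolution does not match it. You suggest handling the rank-two non-ordinary family ``in the Robba-module framework'' via $(\varphi,\Gamma)$-modules. The paper explicitly distinguishes its method from the $(\varphi,\Gamma)$-module approaches (cf.\ the remark on \cite{hansen}, \cite{wan}). Instead, the uniformity of the Perrin-Riou constants across the family is obtained by working with the \emph{integral} structure $\hh_h\subset\HH_h$ introduced in \cite{NO16}: the constants implicit in \cite{396} are made explicit and shown to be bounded with respect to $\hh_h$ uniformly in $k$. Your sketch neither invokes $\hh_h$ nor explains how the congruence and norm hypotheses of Proposition~\ref{proposition:glueingpropertyHh} are actually met; saying ``the slope condition $h\geq s$ is essential'' identifies the right parameter but not the mechanism.
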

\begin{rem}
We give some remarks on the assumption of Theorem \ref{theorem:interpolationdualexp} on the irreducibility of the residual 
representation $\overline{\rho} : G_{\mathbb{Q}} \longrightarrow \operatorname{GL}_2 (\overline{\mathbb{F}}_p)$ 
restricted to $G_{\Q (\mu_p )}$. 
\begin{enumerate}
\item 
Thanks to the description of the mod $p$ modular Galois representation $\mathbb{G}_{\mathbb{Q}} \longrightarrow 
GL_2 (\overline{\mathbb{F}_p})$ of Serre Conjecture, $\overline{\rho}$ restricted to $G_{\Q (\mu_p )}$ is 
irreducible if and only if $\overline{\rho}$ restricted to $G_{\Q }$ is irreducible. 
\item 
This assumption implies that $H^1 \big(\mathbb{Q}_{p} ,
(\mathbb{T}\widehat{\otimes}_{\mathbb{Z}_p} \Lambda^\sharp_{\mathrm{cyc}}
)^\ast(1) \big) $ is free of rank two over each local component of $\Lambda_{(k_0 ;r),\mathcal{O}_{{K}}}
\widehat{\otimes}_{\mathbb{Z}_p} \Lambda_{\mathrm{cyc}}$. By this assumption, we also see that 
$H^1 \big(\mathbb{Q}_{p} ,
(\mathbb{T}\widehat{\otimes}_{\mathbb{Z}_p} \Lambda^\sharp_{\mathrm{cyc}}
)^\ast(1) \big) $ is exactly controlled under the specialization maps to all arithmetic points. 
These properties help us on the proof of the above theorem in \cite{NO16}. 
\end{enumerate}
\end{rem}
To be precise, the main Theorem of 
\cite{NO16} gives a $\Lambda_{(k_0 ;r),\mathcal{O}_{{K}}}
\widehat{\otimes}_{\mathbb{Z}_p} \Lambda_{\mathrm{cyc}}$-linear map 
\begin{equation*}\label{equation:Bigexponential}
\mathrm{EXP}: \mathbb{D}  \widehat{\otimes}_{\mathbb{Z}_p} \Lambda_{\mathrm{cyc}}
\longrightarrow
H^1 \big(\mathbb{Q}_{p} ,\mathbb{T}\widehat{\otimes}_{\mathbb{Z}_p} \Lambda^\sharp_{\mathrm{cyc}}
\big) 
\widehat{\otimes}_{\Lambda_{\mathrm{cyc}}} 
\HH_{h,\mathrm{cyc}}
\end{equation*}
which interpolates exponential maps (but not dual exponential maps as stated in Theorem \ref{theorem:interpolationdualexp}) 
for each arithmetic point $k \in \mathbb{Z} \cap B(k_0 ;r)$ 
and for each arithmetic character $\chi^j_{\mathrm{cyc}} \phi : G_{\mathrm{cyc}} {\longrightarrow } \overline{\mathbb{Q}}^\times_p$ 
with $j$ a positive integer. However, once we obtain 
an interpolation of exponential maps, it is straight forward to obtain 
an interpolation of dual exponential maps by taking its Kummer dual. 
In fact, we can extend the scalars to the ring $\HH_{\infty,\mathrm{cyc}}
= \displaystyle{{\bigcup}^\infty_{h=0} \HH_{h,\mathrm{cyc}}}$ and we obtain a $\Lambda_{(k_0 ;r),\mathcal{O}_{{K}}}
\widehat{\otimes}_{\mathbb{Z}_p} \HH_{\infty,\mathrm{cyc}}$-linear map  
\begin{equation}\label{equation:Bigexponential2}
\mathrm{EXP}\otimes_{} \HH_{\infty,\mathrm{cyc}}: \mathbb{D}  
\widehat{\otimes}_{\mathbb{Z}_p} 
\HH_{\infty,\mathrm{cyc}}
\longrightarrow
H^1 \big(\mathbb{Q}_{p} ,\mathbb{T}\widehat{\otimes}_{\mathbb{Z}_p} \Lambda^\sharp_{\mathrm{cyc}}
\big) 
\widehat{\otimes}_{\Lambda_{\mathrm{cyc}}} 
\HH_{\infty,\mathrm{cyc}}
\end{equation}
By Tate local duality of Galois cohomology theory, we have a canonical isomorphism 
$$
\mathrm{Hom}_{\Lambda_{\mathrm{cyc}}}
\bigl( H^1 \big(\mathbb{Q}_{p} ,\mathbb{T}\widehat{\otimes}_{\mathbb{Z}_p} \Lambda^\sharp_{\mathrm{cyc}}
\big) , \Lambda_{\mathrm{cyc}} \bigr)  
\cong H^1 \big(\mathbb{Q}_{p} ,(\mathbb{T}\widehat{\otimes}_{\mathbb{Z}_p} \Lambda^\sharp_{\mathrm{cyc}}
)^\ast(1) \big) 
$$
Hence, by taking the $\HH_{\infty,\mathrm{cyc}}$-linear dual of the map 
\eqref{equation:Bigexponential2}, we obtain a $\Lambda_{(k_0 ;r),\mathcal{O}_{{K}}}
\widehat{\otimes}_{\mathbb{Z}_p} \HH_{\infty,\mathrm{cyc}}$-linear map 
\begin{equation}\label{equation:Bigexponential3}
H^1 \big(\mathbb{Q}_{p} ,(\mathbb{T}\widehat{\otimes}_{\mathbb{Z}_p} \Lambda^\sharp_{\mathrm{cyc}}
)^\ast(1) \big) \widehat{\otimes}_{\Lambda_{\mathrm{cyc}}} 
\HH_{\infty,\mathrm{cyc}}
\longrightarrow 
\mathbb{D}^\ast (1) \widehat{\otimes}_{\mathbb{Z}_p} 
\HH_{\infty,\mathrm{cyc}}
\end{equation}
which have exactly the same interpolation property as Theorem 
\ref{theorem:interpolationdualexp}. 
The map $\mathrm{Col}$ is defined to be a $\Lambda_{(k_0 ;r),\mathcal{O}_{{K}}}
\widehat{\otimes}_{\mathbb{Z}_p} \Lambda_{\mathrm{cyc}}$-linear map 
obtained by restricting the map \eqref{equation:Bigexponential3} 
to $H^1 \big(\mathbb{Q}_{p} ,(\mathbb{T}\widehat{\otimes}_{\mathbb{Z}_p} \Lambda^\sharp_{\mathrm{cyc}}
)^\ast(1) \big) 
\subset 
H^1 \big(\mathbb{Q}_{p} ,(\mathbb{T}\widehat{\otimes}_{\mathbb{Z}_p} \Lambda^\sharp_{\mathrm{cyc}}
)^\ast(1) \big) \widehat{\otimes}_{\Lambda_{\mathrm{cyc}}} 
\HH_{\infty,\mathrm{cyc}}$.
By the growth of the denominators of the map $\mathrm{Col}$ thus defined, 
it is clear that the image of $\mathrm{Col}$ falls in 
$\mathbb{D}^\ast (1) \widehat{\otimes}_{\mathbb{Z}_p} 
\HH_{h,\mathrm{cyc}}
\subset 
\mathbb{D}^\ast (1) \widehat{\otimes}_{\mathbb{Z}_p} 
\HH_{\infty,\mathrm{cyc}}$. 
\par 
\begin{rem}
\begin{enumerate}
\item 
We note some important ingredients of the proof of Theorem \ref{theorem:interpolationdualexp}. 
\begin{enumerate}
\item 
Theorem \ref{theorem:colemanmap2} was proved by reducing to the construction of a Coleman map for a rank-one Galois deformation thanks to Theorem \ref{theorem:localfiltration}, which says that the Galois deformation associated to 
a Hida deformation is locally an extension of rank-one deformations 
(see Remark \ref{remark:Colemanmap_singlemodularform} and Remark \ref{remark:Colemanmap_Hidafamily}). 
Since the analogue of Theorem \ref{theorem:localfiltration} does not hold for Coleman families, the proof of Theorem \ref{theorem:interpolationdualexp} is rather close to original techniques of Perrin-Riou 
in \cite{396} and \cite{Pe01}, which rely on  
gluing argument of exponential maps, based on explicit calculation of 
the growth of denominators. 
\item 
As for the technical difference of \cite{NO16} from \cite{396} and \cite{Pe01}, \cite{NO16} makes use of  
the integral structure $\hh_h$ of the Banach module
$\HH_h$, which did not appear in \cite{396} and \cite{Pe01}.
In \cite{NO16}, we need to calculate some of constants  
which appeared in \cite{396} and make them explicit. 
Since we work in families, we need to assure that all these constants are uniformally bounded with respect to the integral structure $\hh_h$ 
in a Coleman family. 
%
%
%
%
%
%
%
%
%
%
%
%
%
%
\end{enumerate}
\item 
While we were finalizing the full detail of \cite{NO16} some years ago, 
David Hansen \cite{hansen} and Shangwen Wang \cite{wan} have announced some results which might partially overlap with our results on Coleman map 
for non-ordinary $p$-adic families of cuspforms. However, we could not follow their arguments in their preprints \cite{hansen} and \cite{wan} 
and we could not see if the construction of these papers makes sense. 
Also, their statements look different from ours and their techniques relying on $(\varphi ,\Gamma)$-modules 
seem completely different from ours. In a certain sense, we are rather going in a complementary direction. 
\end{enumerate}
\end{rem}

\section{Application to Two-variable Iwasawa Main Conjecture}
\label{section:last}
In this section, we apply Theorem \ref{theorem:interpolationdualexp} and other techniques explained in 
earlier sections to obtain a partial result on the two-variable Iwasawa Main Conjecture for Coleman families. 
\par 
In the ordinary case, 
the construction of Euler systems as in Theorem \ref{theorem:eulersystem_Hida} was quite automatic by taking inverse limits with respect to 
the $p$-power of the level of the modular curves $Y (Np^n)$ as explained in Remark \ref{remark:exixtence_of_Eulersytem_Hida} (1). 
In the non-ordinary case, we do not have such a construction of an Euler system over the family and 
the construction of an Euler system over a Coleman family is possible only under strong conditions by which 
the gluing of Euler systems works with help of Coleman map and $p$-adic $L$-function (see Theorem \ref{theorem:eulersystem_Coleman2}). 
\par 
Let us fix the setting of Coleman families as in \S \ref{section:motivation}. 
We have a certain open disc of radius $p^r$ centered at some weight 
$k_0$ and a Coleman family $\mathbb{F}= \sum^\infty_{n=1} A_n (\mathbb{F}) q^n \in \Lambda_{(k_0 ;r),\mathcal{O}_{{K}}}[[q]]$ over 
$\Lambda_{(k_0 ;r),\mathcal{O}_{{K}}}$. Let $s \in \mathbb{Q}_{\geq }0$ be the slope of the family. By Theorem \ref{theorem:construction_bigrepresentation_non-ordinary}, we associate 
a Galois representation $\mathbb{T} \cong (\Lambda_{(k_0 ;r),\mathcal{O}_{{K}}})^{\oplus 2}$ on which $G_{\mathbb{Q}}$ acts continuously. 
By Kato's result \cite{320} discussed in \S \ref{sec:Intro_motif}, 
we already know that Beilinson-Kato Euler systems exist in a point-wise manner 
at arithmetic points $k$ on $\Lambda_{(k_0 ;r),\mathcal{O}_{{K}}}$. 
We need to construct an Euler system over the whole 
$\Lambda_{(k_0 ;r),\mathcal{O}_{{K}}}$ which amounts to an analogue of Threorem \ref{theorem:eulersystem_Hida2} of ordinary 
case. In the non-ordinary case, the situation is different from the ordinary situation. 
We will show that we can glue Beilinson-Kato Euler systems over arithmetic points to obtain an Euler system over 
$\Lambda_{(k_0 ;r),\mathcal{O}_{{K}}}$ under some assumptions. The construction of an Euler system is carried out 
with help of the Coleman map for a Coleman families that was given by Theorem \ref{theorem:interpolationdualexp} and the existence of 
two-variable $p$-adic $L$-functions in $\Lambda_{(k_0 ;r),\mathcal{O}_{{K}}}  \widehat{\otimes}_{\mathbb{Z}_p} 
\HH_{h,\mathrm{cyc}}$ which will be given by Theorem \ref{theorem:2variablep-adicLnon-ordinary} where $h$ is an integer satisfying $h\geq s$. 
\\ 
\subsection{Two-variable $p$-adic $L$-function on a Coleman family}
The construction of the two-variable $p$-adic $L$-function in the non-ordinary case proceeds in the same manner as the ordinary case (Theorem B in \S \ref{section:motivation2})
except that the interpolated values of the $p$-adic $L$-function has denominators and we have to extend the coefficient to $\HH_{h,\mathrm{cyc}} \supset \Lambda_{\mathrm{cyc}}$. 
Following the construction of \cite{ki94} in the ordinary case 
(see Theorem B in \S \ref{section:motivation2}), we need to introduce 
the module of $\Lambda_{(k_0 ;r),\mathcal{O}_{{K}}}$-adic modular symbols 
``$\mathbb{MS}(\Lambda_{(k_0 ;r),\mathcal{O}_{{K}}})^\pm$'' 
which serve as an analogue of 
the module of $\mathbb{I}$-adic modular symbols $\mathbb{MS}(\mathbb{I})^\pm$ in \S \ref{section:motivation2}. 
\par 
We have the following theorem for the existence of two-variable non-ordinary $p$-adic $L$-function which is 
an non-ordinary analogue of \cite{ki94} as follows. 
\begin{thm}\label{theorem:2variablep-adicLnon-ordinary}
Let $\mathbb{F}$ be a Coleman family over $\Lambda_{(k_0;r)} $ with 
the slope $s\in \mathbb{Q}_{\geq 0}$. Let $h$ be an integer satisfying $h\geq s$. Let $\mathbb{T} \cong (\Lambda_{(k_0;r)})^{\oplus 2}$ be the Galois representation associated to $\mathbb{F}$. Assume that the residual representation $\overline{\rho} : G_{\mathbb{Q}} \longrightarrow \operatorname{GL}_2 (\overline{\mathbb{F}}_p)$ associated 
to the family is irreducible. Then 
\begin{enumerate}
\item 
There exists a 
$\Lambda_{(k_0 ;r),\mathcal{O}_{{K}}}$-module of $\Lambda_{(k_0 ;r),\mathcal{O}_{{K}}}$-adic modular symbols 
$\mathbb{MS}(\Lambda_{(k_0 ;r),\mathcal{O}_{{K}}})^\pm$ which is free of 
rank one over $\Lambda_{(k_0 ;r),\mathcal{O}_{{K}}}$ and interpolates 
the $f_k$-part of classical modular symbols with coefficients in $\mathcal{O}_K$ for arithmetic points $k \in \mathbb{Z}_{>s+1} \cap B(k_0;r)$. 
\item 
Let us fix a $\Lambda_{(k_0;r)} $-basis 
$\Xi^\pm$ of $\mathbb{MS}(\Lambda_{(k_0;r)} )^\pm$ respectively. 
Then, there exists an analytic $p$-adic $L$-function $L_p (\{ \Xi^\pm \}) 
\in \Lambda_{(k_0;r)}  \widehat{\otimes}_{\mathbb{Z}_p} \hh_{h,\mathrm{cyc}}$  
such that 
we have the following interpolation formula:
\begin{equation}\label{equation:interpolation_AV_V_def2}
\dfrac{( k ,  \chi^j_{\mathrm{cyc}} \phi )(L_p (\{ \Xi^\pm \} ))}
{C^{\mathrm{sgn}(j,\phi )}_{f_k ,p}} = (-1)^{j} (j-1)!  e_p (f_k , j ,\phi ) 
 \tau (\phi)  
\dfrac{L(f_k , \phi^{-1},j)}
{(2\pi \sqrt{-1})^{j}\Omega^{\mathrm{sgn}(j,\phi )}_{f_k ,\infty} } ,
\end{equation}
for any arithmetic point $k$ of $\Lambda_{(k_0;r)}$ larger than the slope of the family, for any integer $j$ satisfying $1 \leq j\leq k-1$ and for any Dirichlet 
character $\phi$ whose conductor $\mathrm{Cond} (\phi)$ is a power of $p$,
where $\tau (\phi )$ and $e_p (f, j, \phi )$ is the same as Theorem B of \S \ref{sec:Intro_motif}. 
Here, for each $k \in \mathbb{Z}_{>s+1} \cap B(k_0;r)$, the specialization of 
$\mathbb{MS}(\Lambda_{(k_0 ;r),\mathcal{O}_{{K}}})^\pm $ 
at $k$ is naturally identified with a lattice of 
$H^1_c (Y_1 (M )_{\mathbb{C}} ,
\mathcal{L}_{k-2} (\mathbb{Q}_{f_k} ))^{\pm}[f_k ] 
\otimes_{\mathbb{Q}_{f_k}} K$ and $C^{\pm}_{f_k ,p} \in K $
is a $p$-adic period which is defined to be an error term given by \footnote{Recall that the $p$-adic completion of the Hecke field 
$\mathbb{Q}_f$ is contained in the fraction field $K$ of $\mathcal{O}_K$. }: 
\begin{equation}\label{equation:definition_of_p-adic periodsnon-ord}
\Xi^\pm (k) = C^{\pm}_{f_k ,p} \cdot b^\pm _{f_k} \otimes 1 .
\end{equation}
Furthermore, if we choose a complex period $\Omega^{\mathrm{sgn}(j,\phi )}_{f_k ,\infty}$ to be $p$-optimal, 
the $p$-adic period 
$C^{\mathrm{sgn}(j,\phi )}_{f_k ,p}$ is a $p$-adic unit for any arithmetic point $k$ of $\Lambda_{(k_0;r)}$ larger than the slope of the family. 
\end{enumerate}
\end{thm}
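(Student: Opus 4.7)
The plan is to follow the strategy of Kitagawa \cite{ki94} adapted to the non-ordinary Coleman family setting, with the distribution-theoretic framework of Amice--V\'{e}lu replacing the bounded measure theory of the ordinary case. First I would construct the module $\mathbb{MS}(\Lambda_{(k_0;r),\mathcal{O}_{{K}}})^\pm$ via compactly supported cohomology of the open modular curve $Y_1(M)$ with coefficients in a sheaf of $\Lambda_{(k_0;r),\mathcal{O}_{{K}}}$-valued distributions on $\mathbb{Z}_p^2$ (or the set of primitive vectors) that interpolates the classical local systems $\mathcal{L}_{k-2}(\mathbb{Q}_{f_k})$ as $k$ varies in $B(k_0;r)$. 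This module carries natural Hecke and $U_p$ actions, and applying Coleman's slope decomposition I would cut out the slope-$h$ part and then localize at the maximal ideal of the Hecke algebra determined by $\mathbb{F}$, obtaining $\mathbb{MS}(\Lambda_{(k_0;r),\mathcal{O}_{{K}}})^\pm$ together with its $\pm$-decomposition under complex conjugation.

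For the freeness of rank one in part (1), I would establish a control theorem: for each arithmetic point $k \in \mathbb{Z}_{>s+1} \cap B(k_0;r)$, the specialization at $k$ of $\mathbb{MS}(\Lambda_{(k_0;r),\mathcal{O}_{{K}}})^\pm$ recovers a $\mathcal{O}_K$-lattice inside the one-dimensional $\mathbb{Q}_{f_k}$-space $H^1_c(Y_1(M)_{\mathbb{C}}, \mathcal{L}_{k-2}(\mathbb{Q}_{f_k}))^{\pm}[f_k]$. The irreducibility assumption on $\overline{\rho}$ guarantees that the $f_k$-isotypic projector has no ramification over the deformation ring, so no pseudo-null torsion can appear; combining this with Nakayama's lemma on the local domain $\Lambda_{(k_0;r),\mathcal{O}_{{K}}}$ gives freeness of rank one. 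This step relies on (or may invoke directly) Stevens's theory of overconvergent modular symbols and the slope decomposition of Coleman.

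Given a basis $\Xi^\pm$, the construction of $L_p(\{\Xi^\pm\})$ in part (2) would proceed via the Mellin transform of the $\Lambda_{(k_0;r),\mathcal{O}_{{K}}}$-adic modular symbol against characters of $G_{\mathrm{cyc}}$: for each $n \geq 1$ and each $j$ in a suitable range, one produces a polynomial $G_{n,j} \in \Lambda_{(k_0;r),\mathcal{O}_{{K}}}[X]$ whose evaluation at an arithmetic point $k$ and at the character $\chi^j_{\mathrm{cyc}}\phi$ (with $\mathrm{Cond}(\phi) \mid p^n$) reproduces the right-hand side of the interpolation formula \eqref{equation:interpolation_AV_V_def2} divided by $C^{\mathrm{sgn}(j,\phi)}_{f_k,p}$. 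The bound $\mathrm{val}_p(a_p(f_k)) = s \leq h$, uniform across the Coleman family by Theorem \ref{theorem:colemanfamily} (2), is precisely what is needed to ensure that the resulting distribution has logarithmic order at most $h$. I would then invoke the gluing criterion of Proposition \ref{proposition:glueingpropertyHh} applied fiberwise over $\Lambda_{(k_0;r),\mathcal{O}_{{K}}}$ to assemble the $G_{n,j}$ into a genuine element of $\Lambda_{(k_0;r)} \widehat{\otimes}_{\mathbb{Z}_p} \hh_{h,\mathrm{cyc}}$, and invoke Proposition \ref{proposition:uniquenesspropertyHh} to ensure uniqueness. The $p$-adic unit property of $C^{\mathrm{sgn}(j,\phi)}_{f_k,p}$ under $p$-optimal choice of the complex periods follows from the freeness of $\mathbb{MS}(\Lambda_{(k_0;r),\mathcal{O}_{{K}}})^\pm$ of rank one together with the interpretation of $\Xi^\pm(k)$ as an $\mathcal{O}_K$-basis of the integral specialization.

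The main obstacle will be verifying the growth condition (iii) of Proposition \ref{proposition:glueingpropertyHh} uniformly over the two-variable family $\Lambda_{(k_0;r),\mathcal{O}_{{K}}} \widehat{\otimes}_{\mathbb{Z}_p} \Lambda_{\mathrm{cyc}}$, rather than just at a single arithmetic point as in the classical work of Amice--V\'{e}lu and Visik. Here one needs to track the denominators appearing in the alternating sums of shifted polynomials and check that they remain bounded by a $p$-power depending only on the family's slope $s$, and not on the weight $k$ or the individual $p$-adic period $C^\pm_{f_k,p}$. This mirrors the analogous uniform-boundedness estimates underlying Theorem \ref{theorem:interpolationdualexp} and will presumably rely on the integral structure $\hh_h \subset \HH_h$ developed in \cite{NO16}, together with the Coleman-style compactness of the slope-$h$ eigenspace on overconvergent modular symbols.
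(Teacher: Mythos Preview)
Your proposal is correct in outline, but for part (1) you take a genuinely different route from the paper. You propose constructing $\mathbb{MS}(\Lambda_{(k_0;r),\mathcal{O}_K})^\pm$ via Stevens-style overconvergent modular symbols: cohomology with coefficients in a sheaf of $\Lambda$-valued distributions, followed by Coleman's slope decomposition and a control theorem. The paper instead bypasses all of this and simply \emph{defines} $\mathbb{MS}(\Lambda_{(k_0;r),\mathcal{O}_K})^\pm := \mathbb{T}^\pm$, the $\pm$-eigenspaces of the Galois representation $\mathbb{T}$ already furnished by Theorem \ref{theorem:construction_bigrepresentation_non-ordinary}. The justification is a short chain of isomorphisms at each arithmetic point $k$: the irreducibility of $\overline{\rho}$ forces $H^1_{\mathrm{par}} \cong H^1_c$ on the $[f_k]$-isotypic part, and the Betti--\'etale comparison identifies $H^1_{\mathrm{par}}$ with the \'etale realization, which is precisely the specialization of $\mathbb{T}$. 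Freeness of rank one is then immediate, since $\mathbb{T}$ is free of rank two and complex conjugation splits it. Your approach would work and is closer to what Stevens and Bella\"iche do, but it requires substantially more machinery; the paper's shortcut exploits that $\mathbb{T}$ is already in hand.

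For part (2) the two approaches converge: both use the Mellin transform and Proposition \ref{proposition:glueingpropertyHh}. The paper organizes the gluing via the projective system $\{J \in \mathfrak{A}\}$ of finite intersections of arithmetic kernels (as in \eqref{equation:A_nonord1}), constructing $L_p(\{\Xi^\pm\})_J \in (\Lambda_{(k_0;r),\mathcal{O}_K}/J) \otimes \hh_h$ at each stage and passing to the limit. Your worry about verifying condition (iii) of Proposition \ref{proposition:glueingpropertyHh} uniformly in two variables is therefore somewhat overstated: at each $J$ only finitely many classical weights are involved, so the classical Amice--V\'elu--Visik bound applies directly, and the limit over $J$ is a formal step using completeness of $\Lambda_{(k_0;r),\mathcal{O}_K}$.
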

\begin{rem}
\begin{enumerate}
\item 
The above theorem is a non-ordinary generalization of Theorem $B'$ which is due to Kitagawa \cite{ki94} and Greenberg-Stevens \cite{gr93}. 
\item 
Glenn Stevens announced such a result early 90's by developing method of families
of distributions-valued modular symbols over the weight space. 
Though it was never published, Jo\"{e}l Bella\"{i}che \cite{be12} publishes a result 
of two variable $p$-adic $L$-functions over eigencurves along Stevens method (see \cite[Theorem 3]{be12}). 
\item The essential idea of the construction of \cite[Theorem 3]{be12} by the method of modular symbols 
is similar to the method of this article (and also to that of Kitagawa and Greenberg-Stevens in the case of Hida family). 
However, there appear no $p$-adic periods $C^{\mathrm{sgn}(j,\phi )}_{f_k ,p}$ in \cite[Theorem 3]{be12}
and the interpolation property in the two-variable $p$-adic $L$-function of \cite[Theorem 3]{be12} at each cuspform $f_k$ 
which is a member in the $p$-adic family has 
an ambiguity of multiplication by a non-zero $p$-adic number $c_k$ which is not necessarily a $p$-adic unit.  
\item 
Panchishkin \cite{pa03} also published a result on a similar to Theorem \ref{theorem:2variablep-adicLnon-ordinary} which. 
His construction is based on another method called the Rankin-Selberg method. 
\end{enumerate}
\end{rem}
The following Remark is parallel to Remark \ref{remark:padicperiodMS} 
concerning the ordinary case and it is important that it assures that the interpolation property of Theorem \ref{theorem:2variablep-adicLnon-ordinary} is well-defined. 
\begin{rem}\label{remark:padicperiodMSnon-ord}
As was cautioned also in the ordinary case, 
$C^{\pm}_{f_k ,p} $ and 
$\Omega^{\pm}_{f_k ,\infty} $ 
depend on the choice of a $\mathbb{Q}_{f_k}$-basis $b^\pm_{f_k}$ on $H^1_c (Y_1 (M)_{\mathbb{C}} ,
\mathcal{L}_{k -2} (\mathbb{Q}_{f_k}))^{\pm}[f_k ]$ where $M$ is the level of $f_k$. 
However, the ``ratio'' 
is independent of the choice of $b^\pm_{f_{\kappa}}$. If we denote the 
$p$-adic period and the complex period obtained by another 
$\mathbb{Q}_{f_k}$-basis $(b^\pm_{f_k})'$ on $H^1_c (Y_1 (M)_{\mathbb{C}} , 
\mathcal{L}_{k -2} (\mathbb{Q}_{f_k}))^{\pm}[f_k ]$ by 
$(C^{\pm}_{f_k ,p} )'$ and 
$(\Omega^{\pm}_{f_k ,\infty} )'$, we have 
$$
\dfrac{C^{\pm}_{f_k ,p}}{(C^{\pm}_{f_k ,p})'} =
\dfrac{\Omega^{\pm}_{f_k ,\infty}}{(\Omega^{\pm}_{f_k ,\infty})'}. 
$$ 
\end{rem}
\begin{proof}[Proof of Theorem \ref{theorem:2variablep-adicLnon-ordinary}]
Recall that, when $k$ varies in $\mathbb{Z}_{>s+1} \cap B(k_0;r)$, the compactly supported cohomology group 
$H^1_c (Y_1 (M )_{\mathbb{C}} ,\mathcal{L}_{k-2} (\mathcal{O}_{K}))^{\pm}[f_k ]$ is naturally identified with the modules of modular symbols with coefficient in $\mathcal{O}_K$ (To understand the identification, 
see \cite[\S 3.2]{ki94}).
\par 
Since we assume that the residual representation $G_{\mathbb{Q}} \longrightarrow \mathrm{Aut}_{\Lambda_{(k_0 ;r),\mathcal{O}_{{K}}}/\mathfrak{M}}(\mathbb{T}/\mathfrak{M} 
\mathbb{T})$ associated to the Coleman family $\mathbb{F}$ is irreducible,  
we have a natural isomorphism  
\begin{equation}
H^1_\mathrm{par} (Y_1 (M )_{\mathbb{C}} ,\mathcal{L}_{k-2} (\mathcal{O}_{K}))^{\pm}[f_k ]
\longrightarrow 
H^1_c (Y_1 (M )_{\mathbb{C}} ,\mathcal{L}_{k-2} (\mathcal{O}_{K}))^{\pm}[f_k ]
\end{equation} 
from the parabolic cohomology group to the compactly supported cohomology group 
at any arithmetic point $k \in \mathbb{Z} \cap B(k_0;r)$ larger than $s+1$. On the other hand, by the comparison of the Betti cohomology and the \'{e}tale cohomology, we have an isomorphism 
\begin{equation}\label{equation:H^1pararithmetic}
H^1_\mathrm{par} (Y_1 (M )_{\mathbb{C}} ,\mathcal{L}_{k-2} (\mathcal{O}_{K}))^{\pm}[f_k ] \cong 
H^1_\mathrm{par, \text{\'{e}t}} (Y_1 (M )_{\overline{\mathbb{Q}}} ,\mathcal{L}_{k-2} (\mathcal{O}_{K}))^{\pm}[f_k ] .
\end{equation}
\par Recall that $H^1_\mathrm{par, \text{\'{e}t}} (Y_1 (M )_{\overline{\mathbb{Q}}} ,\mathcal{L}_{k-2} (\mathcal{O}_{K}))[f_k ]$ which appears in the right-hand side of \eqref{equation:H^1pararithmetic} is 
naturally equipped with continuous Galois action of $G_{\mathbb{Q}}$ by 
the functoriality of \'{e}tale cohomology theory and it is an 
integral lattice in the Galois representation associated to $f_k$ as 
constructed in \cite{del71}. Hence $\mathbb{T}^\pm$ interpolates 
$H^1_\mathrm{par, \text{\'{e}t}} (Y_1 (M )_{\overline{\mathbb{Q}}} ,\mathcal{L}_{k-2} (\mathcal{O}_{K}))[f_k ]$ when $k$ varies in the arithmetic points $k \in \mathbb{Z} \cap B(k_0;r)$ larger than $s+1$. 
Thanks to the above identification, we define the 
$\Lambda_{(k_0 ;r),\mathcal{O}_{{K}}}$-module of $\Lambda_{(k_0 ;r),\mathcal{O}_{{K}}}$-adic modular symbols 
$\mathbb{MS}(\Lambda_{(k_0 ;r),\mathcal{O}_{{K}}})^\pm$ 
to be 
$$
\mathbb{MS}(\Lambda_{(k_0 ;r),\mathcal{O}_{{K}}})^\pm = 
\mathbb{T}^\pm 
$$ 
for each of the signs $\pm$. 
\par 
Once the space of modular symbols are interpolated and we have 
the modules of $\Lambda_{(k_0 ;r),\mathcal{O}_{{K}}}$-adic modular symbols $\mathbb{MS}(\Lambda_{(k_0 ;r),\mathcal{O}_{{K}}})^\pm $, the construction of 
the two-variable $p$-adic $L$-function $L_p (\{ \Xi^\pm \} )$ as stated in Theorem \ref{theorem:2variablep-adicLnon-ordinary} is parallel to the construction of 
the two-variable $p$-adic $L$-function done by \cite{ki94} out of the existence of modules of $\Lambda$-adic modular symbols 
in the ordinary case, except that we need to extend the coefficient to 
$\HH_{h,\mathrm{cyc}}$ because of denominators. 
\par 
For each $k \in \mathbb{Z} \cap B(k_0;r)$, we denote by $I_k$ the principal prime ideal of $\Lambda_{(k_0 ;r),\mathcal{O}_{{K}}} $,   
which corresponds to the closed point $k$ of $\cap B(k_0;r)$. 
Let us define the following set 
\begin{equation}\label{equation:setS1}
\mathfrak{S}=\{ I_k  
\, \vert \, k \in \mathbb{Z}_{> s+1 } \cap B(k_0;r) 
\}. 
\end{equation}
We denote by $\mathfrak{A}$ a subset of the set of 
height one ideals of $\Lambda_{(k_0 ;r),\mathcal{O}_{{K}}}$ as follows:
\begin{equation}\label{equation:A_nonord1}
\mathfrak{A} = \left\{ \left. J = {\bigcap}_{I \in S} I 
\ 
\right\vert \ 
S \subset \mathfrak{S}, \sharp S <\infty
 \right\} .
\end{equation}
Note that $J\cap J' \in \mathfrak{A}$ for any $J,J' \in \mathfrak{A}$ and that 
the intersection of infinitely many 
distinct elements of $\mathfrak{A}$ is zero. 
\par 
For each $J \in \mathfrak{A}$, we denote by $\mathbb{MS}(\Lambda_{(k_0 ;r),\mathcal{O}_{{K}}})^\pm_J$ the space of modular symbols modulo 
$J$. We have $\mathbb{MS}(\Lambda_{(k_0 ;r),\mathcal{O}_{{K}}})^\pm \cong \varprojlim_J \mathbb{MS}(\Lambda_{(k_0 ;r),\mathcal{O}_{{K}}})^\pm_J $. 
For each $J$, by evaluating modular symbols at $\{ 0 \} -\{ \infty \}$, we have a construction of measures by the Mellin transformation 
$$ 
\mathbb{MS}(\Lambda_{(k_0 ;r),\mathcal{O}_{{K}}})^\pm_J \longrightarrow (\Lambda_{(k_0 ;r),\mathcal{O}_{{K}}}/J ) \otimes_{\mathbb{Z}_p} \hh_h 
$$ 
which makes a projective system. The choice of a $\Lambda_{(k_0 ;r),\mathcal{O}_{{K}}}$-basis $\{ \Xi^\pm \} $ of $\mathbb{MS}(\Lambda_{(k_0 ;r),\mathcal{O}_{{K}}})^\pm$ defines a $\Lambda_{(k_0 ;r),\mathcal{O}_{{K}}}/J$-basis $\{ \Xi^\pm_J \} $ of $\mathbb{MS}(\Lambda_{(k_0 ;r),\mathcal{O}_{{K}}})^\pm_J$, 
which provides a unique element 
\begin{equation}
L_p (\{ \Xi^\pm \} )_{J} \in 
(\Lambda_{(k_0 ;r),\mathcal{O}_{{K}}}/J ) \otimes_{\mathbb{Z}_p} \hh_h  
\end{equation}
such that we have the following interpolation formula:
\begin{equation*}\dfrac{( k ,  \chi^j_{\mathrm{cyc}} \phi )
(L_p (\{ \Xi^\pm \} )_{J})}
{C^{\mathrm{sgn}(j,\phi )}_{f_k ,p}} = (-1)^{j} (j-1)!  e_p (f_k , j ,\phi ) 
 \tau (\phi)   
\dfrac{L(f_k , \phi^{-1},j)}
{(2\pi \sqrt{-1})^{j}\Omega^{\mathrm{sgn}(j,\phi )}_{f_k ,\infty} } ,
\end{equation*}
for any arithmetic point $k \in  \mathbb{Z}_{> s+1 } \cap B(k_0;r)$ such that $J \subset I_k$, 
for any integer $j$ satisfying $l \leq j\leq l'$ and for any Dirichlet 
character $\phi$ whose conductor $\mathrm{Cond} (\phi)$ divides $p^n$. 
The set of elements $\{L_p (\{ \Xi^\pm \} )_{J}\}$ is a projective system with respect to $J \in \mathfrak{A}$. 
Thanks to Proposition \ref{proposition:glueingpropertyHh}, 
the projective limit $L_p (\{ \Xi^\pm \} ) 
=\varprojlim_{J} L_p (\{ \Xi^\pm \} )_{J}$ defines a unique element of $\Lambda_{(k_0;r)} \widehat{\otimes}_{\mathbb{Z}_p} \hh_{h,\mathrm{cyc}}$
satisfying the desired interpolation property \eqref{equation:interpolation_AV_V_def2} 
for any arithmetic point $k \in\mathfrak{S}$ , for any integer $j$ satisfying $1\leq j\leq k-1$ and for any Dirichlet 
character $\phi$ whose conductor $\mathrm{Cond} (\phi)$ is a power of 
$p$. This completes the proof. 
\end{proof}
\subsection{Construction of Beilinson-Kato Euler system on a Coleman family} 
As an application of Theorem \ref{theorem:2variablep-adicLnon-ordinary}, we 
have the following theorem on the existence of an Euler system associated to a family of Galois representation $\mathbb{T}$ for a given Colaman family $\mathbb{F}$.  
\begin{thm}\label{theorem:eulersystem_Coleman2}
Let us assume the setting of Theorem \ref{theorem:2variablep-adicLnon-ordinary}. Let us fix a $\Lambda_{(k_0 ;r),\mathcal{O}_{{K}}}$-basis $\Xi^\pm$ of $\mathbb{MS}(\Lambda_{(k_0 ;r),\mathcal{O}_{{K}}})^\pm$ respectively. 
Assume that the image of 
the residual representation $\overline{\rho} : G_{\mathbb{Q}} \longrightarrow \operatorname{GL}_2 (\overline{\mathbb{F}}_p)$ associated 
to the family is non-solvable and 
that $\overline{\rho}$ is irreducible when restricted to $G_{\Q }$. 
Then, the following statements hold: 
\begin{enumerate}
\item 
We have 
an Euler system $($depending on the choice of $\Xi^\pm )$:
$$
\bigl\{ \mathcal{Z} (r) \in H^1(\mathbb{Q} (\mu_r ) ,
(\mathbb{T}\widehat{\otimes}_{\mathbb{Z}_p} \Lambda^\sharp_{\mathrm{cyc}}
)^\ast (1)) \bigr\} _{r\in \mathcal{S} }
$$ 
which satisfies the axiom of Euler system given Definition \ref{definition:eulersystem_general} for $\mathcal{R}=
\Lambda_{(k_0 ;r),\mathcal{O}_{{K}}}$. 
\item 
The elements $\mathcal{Z} (r) 
\in H^1(\mathbb{Q} (\mu_r ) ,
(\mathbb{T}\widehat{\otimes}_{\mathbb{Z}_p} \Lambda^\sharp_{\mathrm{cyc}}
)^\ast (1))$ have the following relation to the special value: 
\begin{equation}\label{equation:specialvalue_at_bottom3}
\dfrac{\psi \left( \mathrm{exp^\ast} (\mathrm{loc}_p ( (\kappa , \chi^{-j}_{\mathrm{cyc}}\phi^{-1})
(\mathcal{Z} (r)))) \right)}{C^{\mathrm{sgn}(j,\phi \psi )}_{f_\kappa ,p}}
= \tau (\phi \psi ) 
\dfrac{L_{(pr)}(f_\kappa , \psi^{-1} \phi^{-1},j)}
{(2\pi \sqrt{-1})^{j}\Omega^{\mathrm{sgn}(j,\phi \psi )}_{f_\kappa ,\infty} } \cdot 
\overline{f}_\kappa \otimes \psi^{-1}\phi^{-1},
\end{equation}
for any $k \in \mathbb{Z}_{\geq s+1} \cap B(k_0;r)$, any integer $j$ satisfying $1\leq j\leq k-1$ and any Dirichlet 
character $\phi$ whose conductor $\mathrm{Cond} (\phi)$ is a power of $p$ and any character $\psi$ of $\mathrm{Gal} (\mathbb{Q}(\mu_r)/\mathbb{Q})$. 
Here $\Omega^{\mathrm{sgn}(j,\phi )}_{f_k ,\infty}$ is 
a complex period which satisfies the algebraicity which was shown in Theorem of Shimura presented of \S 
\ref{section:Selmer group and $p$-adic $L$-function} and 
$C^{\mathrm{sgn}(j,\phi )}_{f_k ,p}$ is a $p$-adic period defined in 
\S \ref{definition:padicperiodMS} \footnote{See Remark \ref{remark:padicperiodMS} for an explanation that the interpolation property is well-defined.}. 
  \par 
In particular, we have the equality
\begin{equation}\label{equation:modified_Euler2}
\mathrm{Col}(\mathrm{loc}_p (\mathcal{Z} (1)))= 
L_p (\{ \Xi^\pm \})
\end{equation}
in $\Lambda_{(k_0 ;r),\mathcal{O}_{{K}}} \widehat{\otimes}_{\mathbb{Z}_p} 
\HH_{h,\mathrm{cyc}}$.
\end{enumerate}
\end{thm}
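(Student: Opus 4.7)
The plan is to mimic the gluing construction carried out in the proof of Theorem~\ref{theorem:eulersystem_Hida2} for Hida families, now using the two-variable Coleman map of Theorem~\ref{theorem:interpolationdualexp} and the two-variable $p$-adic $L$-function of Theorem~\ref{theorem:2variablep-adicLnon-ordinary} in place of their ordinary analogues. First I would introduce, exactly as in \eqref{equation:setS1} and \eqref{equation:A_nonord1}, the set $\mathfrak{S} = \{\,I_k \mid k \in \mathbb{Z}_{> s+1}\cap B(k_0;r)\,\}$ of height-one primes of $\Lambda_{(k_0;r),\mathcal{O}_K}$ corresponding to arithmetic specializations of weight exceeding the slope, together with the directed set $\mathfrak{A}$ of finite intersections of elements of $\mathfrak{S}$. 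The key point is that $\bigcap_{J \in \mathfrak{A}} J = 0$, so Euler systems compatibly defined modulo each $J \in \mathfrak{A}$ will yield an Euler system over $\Lambda_{(k_0;r),\mathcal{O}_K}$ by passage to the inverse limit.

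Next, at each single $I_k \in \mathfrak{S}$, Kato's construction \cite{320} (applied to the cuspform $f_k$, which is classical of weight $k > s+1$) gives an Euler system $\{\mathcal{Z}_{I_k}(r)\}_{r \in \mathcal{S}}$ in the cyclotomic deformation of $\mathbb{T}/I_k \mathbb{T}$; after rescaling by the $p$-adic period $C^{\pm}_{f_k,p}$ (which is a $p$-adic unit by Theorem~\ref{theorem:2variablep-adicLnon-ordinary}(2) under our periods normalization), the interpolation formula \eqref{equation:specialvalue_at_bottom3} holds at the specialization $k$, and the image of the first layer under $\mathrm{Col} \circ \mathrm{loc}_p$ agrees with $L_p(\{\Xi^\pm\})\bmod I_k$. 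The next step is an inductive gluing over $\mathfrak{A}$: given $J, J' \in \mathfrak{A}$ with Euler systems $\{\mathcal{Z}_J(r)\}$ and $\{\mathcal{Z}_{J'}(r)\}$ already constructed, I would establish an analogue of Lemma~\ref{lem:glues} for the ring $\Lambda_{(k_0;r),\mathcal{O}_K}\widehat{\otimes}_{\mathbb{Z}_p}\Lambda_{\mathrm{cyc}}$. The hypothesis that the residual representation $\overline\rho$ is non-solvable ensures that $\overline\rho|_{G_{\mathbb{Q}(\mu_r)}}$ is irreducible for every $r$, hence $H^0(G_{\mathbb{Q}(\mu_r)}, ((\mathbb{T}\widehat\otimes \Lambda^\sharp_{\mathrm{cyc}})^\ast(1))_{J+J'}) = 0$, yielding the Mayer--Vietoris exact sequence needed to glue $\mathcal{Z}_J(r)$ and $\mathcal{Z}_{J'}(r)$ into a unique $\mathcal{Z}_{J \cap J'}(r)$ provided the two reductions modulo $J+J'$ agree.

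The compatibility modulo $J+J'$ is precisely where the two-variable $p$-adic $L$-function enters: both $\mathrm{Col}(\mathrm{loc}_p(\mathcal{Z}_J(1)))$ and $\mathrm{Col}(\mathrm{loc}_p(\mathcal{Z}_{J'}(1)))$ reduce to the same element $L_p(\{\Xi^\pm\})\bmod (J+J')$ in $(\Lambda_{(k_0;r),\mathcal{O}_K}/(J+J'))\widehat{\otimes}_{\mathbb{Z}_p} \mathcal{H}_{h,\mathrm{cyc}}$, and by the injectivity of the composition
\[
\langle d_{J+J'} \otimes 1, \,\cdot\, \rangle \circ \mathrm{Col} \circ \mathrm{loc}_p
\]
(modulo pseudo-null kernels, which do not appear at height-one ideals provided by $\mathfrak{A}$), the two classes must agree. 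Running this gluing for each $r \in \mathcal{S}$ simultaneously, using the level-$r$ variant $\mathrm{Col}^{(r)}$ of the Coleman map together with the $p$-adic $L$-function $L^{(r)}_p(\{\Xi^\pm\})$ from which the Euler factors at primes dividing $r$ have been removed, produces a coherent system $\{\mathcal{Z}_J(r)\}_{r,J}$. Taking the projective limit $\mathcal{Z}(r) = \varprojlim_{J \in \mathfrak{A}} \mathcal{Z}_J(r)$, one obtains the desired Euler system, and equation \eqref{equation:modified_Euler2} is tautological from the construction after passing to the limit.

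The main obstacle will be controlling the injectivity of $\mathrm{Col} \circ \mathrm{loc}_p$ carefully enough to make the gluing argument go through: in contrast with the ordinary case of Theorem~\ref{theorem:colemanmap2}(2), the non-ordinary Coleman map takes values in the distribution module $\mathbb{D}^\ast(1)\widehat{\otimes}_{\mathbb{Z}_p}\mathcal{H}_{h,\mathrm{cyc}}$, whose coefficients are unbounded, so standard characteristic-ideal arguments over $\Lambda_{(k_0;r),\mathcal{O}_K}\widehat{\otimes}\Lambda_{\mathrm{cyc}}$ must be replaced by the finer uniqueness statement of Proposition~\ref{proposition:uniquenesspropertyHh}. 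In particular, one has to verify that the kernel of $\overline{\mathrm{Col}}$ on the quotient by the $F^+$-subspace is, modulo each $J \in \mathfrak{A}$, small enough to force the two candidates $\mathcal{Z}_J(1) \bmod (J+J')$ and $\mathcal{Z}_{J'}(1) \bmod (J+J')$ to coincide. Once this injectivity/uniqueness is in place, the rest is formal: the Euler system norm relations descend from the pointwise Kato systems, and the special-value interpolation \eqref{equation:specialvalue_at_bottom3} at each arithmetic $k$ follows from Theorem~\ref{theorem:interpolationdualexp} combined with the interpolation property of Theorem~\ref{theorem:2variablep-adicLnon-ordinary}(2).
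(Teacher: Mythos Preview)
Your proposal follows essentially the same route as the paper: introduce $\mathfrak{S}$ and the directed set $\mathfrak{A}$ of finite intersections, start from Kato's pointwise Euler systems at each arithmetic $k$, establish a Mayer--Vietoris gluing lemma for $H^1$ via the non-solvability hypothesis (the paper states this as Lemma~\ref{lem:glues_nonord}), use the Coleman map together with the two-variable $p$-adic $L$-function to force compatibility at each gluing step, and pass to the inverse limit over $\mathfrak{A}$ using Chevalley's theorem.

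There is one genuine confusion in your final paragraph. You speak of ``the kernel of $\overline{\mathrm{Col}}$ on the quotient by the $F^+$-subspace'', but in the non-ordinary setting there is no $G_{\mathbb{Q}_p}$-stable filtration $F^+\mathbb{T}$ and hence no factorization $\mathrm{Col} = \overline{\mathrm{Col}}\circ \mathrm{P}$; this is precisely the structural difference between Coleman and Hida families. The paper handles injectivity by a separate lemma (Lemma~\ref{lemma:injectivityofcomposite}) asserting that the full composition
\[
\langle d_J\otimes 1,\,\cdot\,\rangle \circ \mathrm{Col}_J \circ \mathrm{loc}:\ H^1\bigl(\mathbb{Q}_{\Sigma_r}/\mathbb{Q}(\mu_r),\ ((\mathbb{T}\widehat\otimes\Lambda^\sharp_{\mathrm{cyc}})^\ast(1))_J\bigr)\ \longrightarrow\ (\Lambda_{(k_0;r),\mathcal{O}_K}/J)\otimes_{\mathbb{Z}_p}\mathcal{H}_{h,\mathrm{cyc}}\otimes_{\mathbb{Q}}\mathbb{Q}(\mu_r)
\]
is injective for every $J\in\mathfrak{A}$; the argument rests on the global $H^1$ being torsion-free of rank one over each local component of $\Lambda_{\mathrm{cyc}}$ (a consequence of the non-solvability hypothesis), not on Proposition~\ref{proposition:uniquenesspropertyHh}. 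That proposition is used earlier, in the construction of $L_p(\{\Xi^\pm\})$ itself (Theorem~\ref{theorem:2variablep-adicLnon-ordinary}), but plays no direct role in the Euler-system gluing. Once you replace your $F^+$/$\overline{\mathrm{Col}}$ discussion by this direct injectivity statement, the rest of your outline matches the paper's proof.
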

We prove Theorem \ref{theorem:eulersystem_Coleman2} by using Theorem \ref{theorem:interpolationdualexp}. 
\begin{proof}[Proof of Theorem \ref{theorem:eulersystem_Coleman2}]
The proof proceeds in the same manner as the proof of Theorem 
\ref{theorem:eulersystem_Hida2}, which is an analogue of 
Theorem \ref{theorem:eulersystem_Coleman2} in the ordinary case \footnote{In the ordinary case, Theorem 
\ref{theorem:eulersystem_Hida2} provided us with a $p$-optimization of an Euler system which already existed by Theorem \ref{theorem:eulersystem_Hida}. 
Note that there exists no analogue of Theorem \ref{theorem:eulersystem_Hida} in the non-ordinary case and the method of the proof of Theorem 
\ref{theorem:eulersystem_Hida2} plays a more essential role in the non-ordinary case.}. 
\par 
Similarly as in the proof of Theorem \ref{theorem:2variablep-adicLnon-ordinary}, we define the following set 
\begin{equation}\label{equation:setS3}
\mathfrak{S}=\{ I_k   
\, \vert \, k \in \mathbb{Z}_{> s} \cap B(k_0;r) 
\}. 
\end{equation}
We denote by $\mathfrak{A}$ the following subset of 
height one ideals of $\Lambda_{(k_0 ;r),\mathcal{O}_{{K}}} \widehat{\otimes}_{\mathbb{Z}_p}\Lambda_{\mathrm{cyc}}$: 
\begin{equation}\label{equation:A_nonord}
\mathfrak{A} = \left\{ \left. J = {\bigcap}_{I \in S} I 
\cdot \Lambda_{(k_0 ;r),\mathcal{O}_{{K}}} \widehat{\otimes}_{\mathbb{Z}_p}\Lambda_{\mathrm{cyc}}
\ 
\right\vert \ 
S \subset \mathfrak{S}, \sharp S <\infty
 \right\} .
\end{equation}
Note again that $J\cap J' \in \mathfrak{A}$ for any $J,J' \in \mathfrak{A}$ and that 
the intersection $\bigcap J$ for infinitely many 
$J \in \mathfrak{A}$ is zero. We remark that $\Lambda_{(k_0 ;r),\mathcal{O}_{{K}}} \widehat{\otimes}_{\mathbb{Z}_p}\Lambda_{\mathrm{cyc}}$ 
is a semi-local ring whose local components are all isomorphic to the ring of power series in two variables $\mathcal{O}_{{K}} [[X_1 ,X_2]]$. 
In particular, local components are regular local rings. 
\par 
Recall that we defined $\Sigma_r$ to be 
$\Sigma_r =\Sigma \cup \{ \text{primes $q$ dividing $r$}\}$ 
for any natural number $r$.  
We denote by $\mathbb{Q}_{\Sigma_r}$ the maximal unramified extension of $\mathbb{Q}$ unramified outside $\Sigma_r$. 
We have the following lemma which is the non-ordinary analogue of 
Lemma \ref{lem:glues}. 
\begin{lemma}\label{lem:glues_nonord}
Let us assume the setting of Theorem \ref{theorem:eulersystem_Coleman2}. Let $r$ be a natural number and $J, J' \in \mathfrak{A}$. 
Then, we have the exact sequence$:$ 
\begin{multline}\label{equation:glying_nonord}
0 \longrightarrow H^1 (((\mathbb{T}\widehat{\otimes}_{\mathbb{Z}_p} \Lambda^\sharp_{\mathrm{cyc}}
)^\ast(1))_{J\cap J'})
\longrightarrow H^1 (((\mathbb{T}\widehat{\otimes}_{\mathbb{Z}_p} \Lambda^\sharp_{\mathrm{cyc}}
)^\ast(1))_J)
\oplus H^1 (((\mathbb{T}\widehat{\otimes}_{\mathbb{Z}_p} \Lambda^\sharp_{\mathrm{cyc}}
)^\ast(1))_{J'}) 
\\ 
\longrightarrow H^1 (((\mathbb{T}\widehat{\otimes}_{\mathbb{Z}_p} \Lambda^\sharp_{\mathrm{cyc}}
)^\ast(1))_{J+J'}) . 
\end{multline}
Here we denote $(\mathbb{T}\widehat{\otimes}_{\mathbb{Z}_p} \Lambda^\sharp_{\mathrm{cyc}})^\ast (1)/J(\mathbb{T}\widehat{\otimes}_{\mathbb{Z}_p} \Lambda^\sharp_{\mathrm{cyc}} )^\ast(1)$ by $((\mathbb{T}\widehat{\otimes}_{\mathbb{Z}_p} \Lambda^\sharp_{\mathrm{cyc}})^\ast (1))_J$ 
and the cohomology group $H^1 (\mathbb{Q} (\mu_r)_{\Sigma_r}/\mathbb{Q} (\mu_r),((\mathbb{T}\widehat{\otimes}_{\mathbb{Z}_p} \Lambda^\sharp_{\mathrm{cyc}})^\ast (1))_J)$ by $H^1 (\mathbb{T}\widehat{\otimes}_{\mathbb{Z}_p} \Lambda^\sharp_{\mathrm{cyc}}(1)_J)$ 
for any ideal $J$ of $\Lambda_{(k_0 ;r),\mathcal{O}_{{K}}} \widehat{\otimes}_{\mathbb{Z}_p} \Lambda_{\mathrm{cyc}}$. 
\par 
The first map of \eqref{equation:glying_nonord} sends each element $x_{J\cap J'}$ to $(x_{J\cap J'} \text{ mod $J$})\oplus (x_{J\cap J'} \text{ mod $J'$})$. 
The second map \eqref{equation:glying_nonord} sends each element $x_{J} \oplus y_{J'}$ to 
$(x_{J} \text{ mod $J+J'$} )- (y_{J'} \text{ mod $J+J'$} )$.  
\end{lemma}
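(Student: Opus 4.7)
The plan is to follow the two-step strategy used in the proof of Lemma \ref{lem:glues}, checking that both steps remain valid over the coefficient ring $R := \Lambda_{(k_0;r),\mathcal{O}_{{K}}} \widehat{\otimes}_{\mathbb{Z}_p} \Lambda_{\mathrm{cyc}}$. Writing $M := (\mathbb{T}\widehat{\otimes}_{\mathbb{Z}_p} \Lambda^\sharp_{\mathrm{cyc}})^\ast(1)$, the first step is to establish the short exact sequence of $\mathrm{Gal}(\mathbb{Q}(\mu_r)_{\Sigma_r}/\mathbb{Q}(\mu_r))$-modules
\[
0 \longrightarrow M_{J \cap J'} \longrightarrow M_J \oplus M_{J'} \longrightarrow M_{J+J'} \longrightarrow 0,
\]
where the first arrow is $x \mapsto (x \bmod J, \, x \bmod J')$ and the second is $(x, y) \mapsto (x - y) \bmod (J + J')$. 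This sequence is obtained by tensoring the standard algebraic exact sequence $0 \to R/(J\cap J') \to R/J \oplus R/J' \to R/(J+J') \to 0$ (a routine verification; surjectivity on the right uses that any pair $(a,b)$ with $a\equiv b \bmod (J+J')$ lifts to a common class, and exactness in the middle is automatic) with $M$, which is flat over $R$: indeed $\mathbb{T}$ is free over $\Lambda_{(k_0;r),\mathcal{O}_{{K}}}$ by Theorem \ref{theorem:construction_bigrepresentation_non-ordinary} and $\Lambda^\sharp_{\mathrm{cyc}}$ is free of rank one over $\Lambda_{\mathrm{cyc}}$, so $M$ is free over $R$.

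The second step is to take the long exact sequence of Galois cohomology; the desired sequence \eqref{equation:glying_nonord} drops out once we know that $H^0(\mathrm{Gal}(\mathbb{Q}(\mu_r)_{\Sigma_r}/\mathbb{Q}(\mu_r)), M_{J+J'}) = 0$. For this, I would first argue that the restriction of the residual representation $\overline{\rho}$ to $G_{\mathbb{Q}(\mu_r)}$ remains irreducible: since $\mathrm{Gal}(\mathbb{Q}(\mu_r)/\mathbb{Q})$ is abelian, if $\overline{\rho}(G_{\mathbb{Q}(\mu_r)})$ were reducible, hence solvable, then $\overline{\rho}(G_{\mathbb{Q}})$ would be an extension of a solvable group by a solvable group, contradicting the hypothesis of non-solvability. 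Then, suppose for contradiction that $M_{J+J'}$ contains a nonzero $G_{\mathbb{Q}(\mu_r)}$-fixed class $\bar{x}$. By Nakayama's lemma applied to the finitely generated $R$-module $R\bar{x}$, the reduction of $\bar{x}$ modulo the maximal ideal $\mathfrak{m}$ of a suitable local component of $R$ is a nonzero $G_{\mathbb{Q}(\mu_r)}$-fixed vector in the residual representation $M/\mathfrak{m}M$, which is a one-dimensional character twist of $\overline{\rho}^{\ast}$ and therefore still irreducible upon restriction to $G_{\mathbb{Q}(\mu_r)}$; this yields the desired contradiction.

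The whole argument is essentially a word-for-word transposition of Lemma \ref{lem:glues}. The only substantive new checks are: that the freeness of $\mathbb{T}$ needed to produce the short exact sequence of Galois modules is available in the Coleman-family setting (immediate from Theorem \ref{theorem:construction_bigrepresentation_non-ordinary}, in contrast to the Hida setting where freeness required the auxiliary condition (F)); and that non-solvability of $\overline{\rho}$ propagates to $G_{\mathbb{Q}(\mu_r)}$ through the abelian quotient $\mathrm{Gal}(\mathbb{Q}(\mu_r)/\mathbb{Q})$. I do not anticipate any serious obstacle in this lemma itself; the genuinely nontrivial work lies in the inductive gluing of Euler-system classes across the arithmetic points of $\mathrm{Spec}(R)$ that uses this lemma as a building block in the proof of Theorem \ref{theorem:eulersystem_Coleman2}.
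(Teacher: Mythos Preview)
Your proposal is correct and follows essentially the same approach as the paper: the paper does not give a separate proof of Lemma~\ref{lem:glues_nonord} but explicitly presents it as the non-ordinary analogue of Lemma~\ref{lem:glues}, whose proof consists of exactly your two steps---the short exact sequence \eqref{equation:glying2} of Galois modules followed by the vanishing of $H^0$ via non-solvability of the residual image. Your added details (freeness of $\mathbb{T}$ from Theorem~\ref{theorem:construction_bigrepresentation_non-ordinary}, the Nakayama reduction to the residual representation) are fine elaborations of points the paper leaves implicit.
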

Let us denote by $\mathrm{Col}_J$ the Coleman map for 
$((\mathbb{T}\widehat{\otimes}_{\mathbb{Z}_p} \Lambda^\sharp_{\mathrm{cyc}}
)^\ast(1))_J$, which is the cyclotomic deformation of a usual $p$-adic Galois representation $\mathbb{T}/J\mathbb{T}$. 
The map $\mathrm{Col}_J$ is obtained by taking the $\mathcal{H}_{\infty ,\mathrm{cyc}}$-linear dual of Perrin-Riou's big exponential map obtained in 
\cite{396}. 
By observing the denominators appearing in the interpolation, we easily see 
the image of $\mathrm{Col}$ falls in 
the submodule $\big( \mathbb{D}^\ast (1)/J \mathbb{D}^\ast (1)\big)  
{\otimes}_{\mathbb{Z}_p} \HH_{h,\mathrm{cyc}}
\subset 
\big( \mathbb{D}^\ast (1)/J \mathbb{D}^\ast (1)\big)  
{\otimes}_{\mathbb{Z}_p} \HH_{\infty,\mathrm{cyc}}$. 
We consider the following composed morphism$:$ 
\begin{multline}
H^1 (\mathbb{Q}_{\Sigma}/\mathbb{Q} ,((\mathbb{T}\widehat{\otimes}_{\mathbb{Z}_p} \Lambda^\sharp_{\mathrm{cyc}})^\ast(1))_J) 
\xrightarrow{\mathrm{loc}} 
{H^1 (\mathbb{Q}_p ,((\mathbb{T}\widehat{\otimes}_{\mathbb{Z}_p} \Lambda^\sharp_{\mathrm{cyc}}
)^\ast(1))_J)} 
\\ 
\xrightarrow{\mathrm{Col}_J} 
\big( \mathbb{D}^\ast (1)/J \mathbb{D}^\ast (1)\big)
{\otimes}_{\mathbb{Z}_p} \HH_{h,\mathrm{cyc}}
\xrightarrow{\langle d_J \otimes 1 ,\ \rangle } 
(\Lambda_{(k_0 ;r),\mathcal{O}_{{K}}} /J )  
{\otimes}_{\mathbb{Z}_p} \HH_{h,\mathrm{cyc}}
\end{multline}
where $d_J$ is a basis of the free $\Lambda_{(k_0 ;r),\mathcal{O}_{{K}}}/J$-module $\mathbb{D}/J\mathbb{D}$ of rank one which coincides 
with $\overline{f}_k$ for any $k \in \mathbb{Z}_{\geq s+1} \cap B(k_0;r)$ where $\mathbb{D}$ is the free 
$\Lambda_{(k_0 ;r),\mathcal{O}_{{K}}}$-module of rank one constructed 
in Lemma \ref{lemma:bigD_nonord}. 
Moreover, the symbol $\langle \ ,\ \rangle $ represents the paring:
$$
\big( \mathbb{D}/J \mathbb{D} \big) 
{\otimes}_{\mathbb{Z}_p} \HH_{\infty,\mathrm{cyc}}
\times \big( \mathbb{D}^\ast (1)/J \mathbb{D}^\ast (1)\big) 
{\otimes}_{\mathbb{Z}_p} \HH_{\infty,\mathrm{cyc}}
 \longrightarrow (\Lambda_{(k_0 ;r),\mathcal{O}_{{K}}} /J )  
{\otimes}_{\mathbb{Z}_p} \HH_{\infty,\mathrm{cyc}}
$$  
and we note that the image of the composition map 
$\langle d_J \otimes 1 ,\ \rangle \circ \mathrm{Col}$ falls in 
the submodule $(\Lambda_{(k_0 ;r),\mathcal{O}_{{K}}} /J )  
{\otimes}_{\mathbb{Z}_p} \HH_{h,\mathrm{cyc}}
\subset 
(\Lambda_{(k_0 ;r),\mathcal{O}_{{K}}} /J )  
{\otimes}_{\mathbb{Z}_p} \HH_{\infty,\mathrm{cyc}}$. 
The following lemma will be used in the following arguments. 
\begin{lemma}\label{lemma:injectivityofcomposite}
Let us assume the setting of Theorem \ref{theorem:eulersystem_Coleman2}. 
Then the map 
$$
\langle d_J \otimes 1 ,\ \rangle \circ \mathrm{Col}_J \circ \mathrm{loc}: 
H^1 (\mathbb{Q}_{\Sigma}/\mathbb{Q},((\mathbb{T}\widehat{\otimes}_{\mathbb{Z}_p} \Lambda^\sharp_{\mathrm{cyc}})^\ast(1))_J) 
\longrightarrow (\Lambda_{(k_0 ;r),\mathcal{O}_{{K}}} /J )  
{\otimes}_{\mathbb{Z}_p} \HH_{h,\mathrm{cyc}}
$$ 
is injective for every $J \in \mathfrak{A}$. Similarly, for any natural number $r$, the composed morphism 
\begin{multline*}
H^1 (\mathbb{Q}_{\Sigma_r}/\mathbb{Q}(\mu_r) ,((\mathbb{T}\widehat{\otimes}_{\mathbb{Z}_p} \Lambda^\sharp_{\mathrm{cyc}})^\ast(1))_J)
\xrightarrow{\mathrm{loc}} 
{H^1 (\mathbb{Q}_p \otimes_{\mathbb{Q}} \mathbb{Q} (\mu_r ) ,((\mathbb{T}\widehat{\otimes}_{\mathbb{Z}_p} \Lambda^\sharp_{\mathrm{cyc}}
)^\ast(1))_J)} 
\\ 
\xrightarrow{\mathrm{Col}^{(r)}_J} 
\big( \mathbb{D}^\ast (1)/J \mathbb{D}^\ast (1)\big)
{\otimes}_{\mathbb{Z}_p} \HH_{h,\mathrm{cyc}} \otimes_{\mathbb{Q}} \mathbb{Q} (\mu_r )
\xrightarrow{\langle d_J \otimes 1 ,\ \rangle } 
(\Lambda_{(k_0 ;r),\mathcal{O}_{{K}}} /J )  
{\otimes}_{\mathbb{Z}_p} \HH_{h,\mathrm{cyc}} \otimes_{\mathbb{Q}} \mathbb{Q} (\mu_r )
\end{multline*}
is injective for every $J \in \mathfrak{A}$ where $\mathrm{Col}^{(r)}_J$ is a Coleman map over $\mathbb{Q}_p \otimes_{\mathbb{Q}}\mathbb{Q} (\mu_r)$ 
which is similar as $\mathrm{Col}_J$ above. 
\end{lemma}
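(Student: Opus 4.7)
The plan is to prove injectivity by induction on the cardinality of the finite set $S \subset \mathfrak{S}$ defining $J = \bigcap_{I \in S} I \cdot \Lambda_{(k_0;r),\mathcal{O}_{K}} \widehat{\otimes}_{\mathbb{Z}_p} \Lambda_{\mathrm{cyc}}$. Once the first statement is proved, the second statement (over $\mathbb{Q}(\mu_r)$) follows by exactly the same argument, replacing $\mathrm{Col}_J$ by its semi-local analogue $\mathrm{Col}^{(r)}_J$ over $\mathbb{Q}_p \otimes_{\mathbb{Q}} \mathbb{Q}(\mu_r)$.

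For the base case $|S|=1$, we have $J = I_k$ for a single arithmetic point $k \in \mathbb{Z}_{>s+1} \cap B(k_0; r)$. The quotient $((\mathbb{T}\widehat{\otimes}_{\mathbb{Z}_p}\Lambda^\sharp_{\mathrm{cyc}})^\ast(1))_{I_k}$ is canonically identified with the cyclotomic deformation of $V_{f_k}^\ast(1)$, and the composed map becomes
\[
H^1(\mathbb{Q}_\Sigma / \mathbb{Q}, V_{f_k}^\ast(1) \otimes_{K} \Lambda^\sharp_{\mathrm{cyc}, K}) \longrightarrow H^1(\mathbb{Q}_p, V_{f_k}^\ast(1) \otimes_{K} \Lambda^\sharp_{\mathrm{cyc},K}) \longrightarrow K \otimes_{\mathbb{Z}_p} \HH_{h,\mathrm{cyc}},
\]
which is the composition of the Iwasawa-theoretic localization map for a single non-CM cuspform $f_k$ with Perrin-Riou's big dual exponential paired against the class $\overline{f}_k$. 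Kato's theorem (the injectivity of $\overline{\mathrm{loc}}_p$ in sequence \eqref{equation:4termsequence1}) guarantees that any non-zero element of the source has non-zero localization at $p$, and Theorem \ref{theorem:interpolationdualexp} together with Rohrlich's non-vanishing of $L(f_k, \phi^{-1}, j)$ for some $(\phi, j)$ imply that the Coleman map paired with $\overline{f}_k$ has non-trivial image on the Beilinson--Kato element. Since both source and target are torsion-free modules of rank one over the respective local components of $\Lambda_{\mathrm{cyc}, K}$, non-vanishing of the composite on one element implies injectivity on the whole.

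For the inductive step, decompose $J = J_1 \cap J_2$ with $J_1 = I_k$ for some $k \in S$ and $J_2 = \bigcap_{I \in S \setminus \{I_k\}} I \cdot \Lambda_{(k_0;r),\mathcal{O}_{K}} \widehat{\otimes}_{\mathbb{Z}_p} \Lambda_{\mathrm{cyc}}$. Suppose $z \in H^1(((\mathbb{T}\widehat{\otimes}_{\mathbb{Z}_p}\Lambda^\sharp_{\mathrm{cyc}})^\ast(1))_J)$ lies in the kernel. The Coleman map $\mathrm{Col}_J$ is compatible with the specializations $\Lambda_{(k_0;r),\mathcal{O}_{K}} \widehat{\otimes}_{\mathbb{Z}_p}\Lambda_{\mathrm{cyc}} / J \twoheadrightarrow \Lambda_{(k_0;r),\mathcal{O}_{K}} \widehat{\otimes}_{\mathbb{Z}_p}\Lambda_{\mathrm{cyc}} / J_i$ (this compatibility follows from the uniqueness clause in Theorem \ref{theorem:interpolationdualexp} applied on both sides), and the sections $d_J$ reduce to $d_{J_i}$ by construction. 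Consequently the reductions $z \bmod J_1$ and $z \bmod J_2$ lie in the kernels of the composed maps at levels $J_1$ and $J_2$; by the base case and the inductive hypothesis they both vanish, and then the injectivity of the first arrow in Lemma \ref{lem:glues_nonord} forces $z = 0$.

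The main obstacle will be the base case, specifically ensuring that the injectivity of Kato's localization map and the non-triviality of the Perrin-Riou--Nuccio--Ochiai Coleman map at each single classical non-CM, non-ordinary $f_k$ of weight $k > s+1$ together imply injectivity of their composite after pairing with $\overline{f}_k$. Implicit here is that for the cyclotomic deformation of $V_{f_k}^\ast(1)$ the global Iwasawa cohomology is free of rank one over $\Lambda_{\mathrm{cyc},K}$ (so there is no torsion in which a non-zero element of the kernel could hide), a fact which itself requires the irreducibility of $\overline{\rho}|_{G_{\mathbb{Q}}}$ together with the torsion assertion of Theorem A applied to $f_k$; a secondary technical point will be rigorously checking the compatibility of $\mathrm{Col}_J$ with specialization, which is routine but must be tracked carefully because the Coleman map is characterized only via its interpolation property on a Zariski-dense set of arithmetic characters.
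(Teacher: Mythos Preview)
Your argument is correct. The paper does not give a detailed proof of this lemma; it only remarks that the result follows from the fact that $H^1 (\mathbb{Q}_{\Sigma_r }/\mathbb{Q}(\mu_r) ,((\mathbb{T}\widehat{\otimes}_{\mathbb{Z}_p} \Lambda^\sharp_{\mathrm{cyc}})^\ast(1))_J)$ is not torsion over any local component of $\Lambda_{\mathrm{cyc}}$, this non-torsion-ness being a consequence of the non-solvability of the image of $\overline{\rho}$. Your base case unpacks exactly what lies behind that remark: non-solvability forces irreducibility of $\overline{\rho}$ over every $G_{\mathbb{Q}(\mu_r)}$, hence torsion-freeness of the global $H^1$; Kato's torsion-ness of $H^2$ then pins the rank of $H^1$ at one; and Rohrlich plus Kato's explicit reciprocity law show the composite is non-zero on the Beilinson--Kato class, so a rank-one torsion-free source mapping non-trivially into a torsion-free target must inject.

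Your inductive step via Lemma~\ref{lem:glues_nonord} is clean and correct. There is a slightly quicker route you could mention as an alternative: the target $(\Lambda_{(k_0;r),\mathcal{O}_K}/J)\otimes_{\mathbb{Z}_p}\HH_{h,\mathrm{cyc}}$ is already a $\mathbb{Q}_p$-vector space (because $\HH_h$ is), and the source is $\mathbb{Z}_p$-torsion-free by the same irreducibility argument. Hence injectivity is equivalent to injectivity after $\otimes_{\mathbb{Z}_p}\mathbb{Q}_p$, and over $K$ the ideals $I_k\otimes K$ become pairwise comaximal in the PID $K[[Y]]$, so CRT splits everything as a product over $k\in S$ and reduces directly to your base case without induction. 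Both routes are fine; yours has the virtue of using Lemma~\ref{lem:glues_nonord} consistently with how it is used elsewhere in the proof of Theorem~\ref{theorem:eulersystem_Coleman2}.

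One small point of phrasing: when you say the target is ``rank one over $\Lambda_{\mathrm{cyc},K}$'', this is not literally true (it is a large $K$-vector space), but what you actually use---and what is true---is that it is $\Lambda_{\mathrm{cyc}}$-torsion-free. That suffices for the rank-one source argument to go through.
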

Though we do not go into the proof of the lemma, we remark that the lemma follows from the fact that 
$H^1 (\mathbb{Q}_{\Sigma_r }/\mathbb{Q}(\mu_r) ,((\mathbb{T}\widehat{\otimes}_{\mathbb{Z}_p} \Lambda^\sharp_{\mathrm{cyc}})^\ast(1))_J) $
is not torsion over any local component of $\Lambda_{\mathrm{cyc}}$ thanks to the assumption of Theorem \ref{theorem:eulersystem_Coleman2} 
saying that the image of the residual representation is non-solvable. 
\par 
Now we take $I, I'$ in the set $\mathfrak{S}$ given in \eqref{equation:setS3}. Let us set 
$J= I \cdot \Lambda_{(k_0 ;r),\mathcal{O}_{{K}}} \widehat{\otimes}_{\mathbb{Z}_p} \Lambda_{\mathrm{cyc}}$ and $J'= I' \cdot \Lambda_{(k_0 ;r),\mathcal{O}_{{K}}}\widehat{\otimes}_{\mathbb{Z}_p} \Lambda_{\mathrm{cyc}}$. 
\par 
Recall that we denote by $\mathcal{S}$ the set of all square-free natural numbers 
which are prime to $\Sigma$. 
As explained before Theorem \ref{theorem:eulersystem_Hida2}, 
the result of Kato for the cyclotomic deformation of a cuspform assures the existence of the Euler system: 
\begin{align*}
& \bigl\{ \mathcal{Z}_J (r) \in H^1(\mathbb{Q} (\mu_r ) ,
((\mathbb{T}\widehat{\otimes}_{\mathbb{Z}_p} \Lambda^\sharp_{\mathrm{cyc}}
)^\ast(1))_J ) \bigr\} _{r\in \mathcal{S} } 
\\ 
& \Bigl( \mathrm{resp}.\,  \bigl\{ \mathcal{Z}_{J'} (r) \in H^1(\mathbb{Q} (\mu_r ) ,
((\mathbb{T}\widehat{\otimes}_{\mathbb{Z}_p} \Lambda^\sharp_{\mathrm{cyc}}
)^\ast(1))_{J'}) \bigr\} _{r\in \mathcal{S} }
\Bigr)
\end{align*} 
such that we have 
\begin{align*}
& 
\mathrm{Col}^{(r)}(\mathrm{loc}_p (\mathcal{Z}_J  (r)))= 
L^{(r)}_p (\{ \Xi^\pm \}) \text{ mod $J$ in $(\Lambda_{(k_0 ;r),\mathcal{O}_{{K}}}/J) {\otimes}_{\mathbb{Z}_p} 
\hh_{h,\mathrm{cyc}}$}
\\ 
& \Bigl( \mathrm{resp}.\, 
\mathrm{Col}^{(r)}(\mathrm{loc}_p (\mathcal{Z}_{J'} (r)))= 
L^{(r)}_p (\{ \Xi^\pm \}) \text{ mod $J'$ in $(\Lambda_{(k_0 ;r),\mathcal{O}_{{K}}}/J') {\otimes}_{\mathbb{Z}_p} \hh_{h,\mathrm{cyc}}$}\Bigr)
\end{align*} 
where $L^{(r)}_p (\{ \Xi^\pm \})\in \Lambda_{(k_0 ;r),\mathcal{O}_{{K}}} 
{\otimes}_{\mathbb{Z}_p} \hh_{h,\mathrm{cyc}}$ is the $p$-adic $L$-function obtained by removing Euler factors 
at primes dividing $r$ from $L_p (\{ \Xi^\pm \})$. Note that, since $L^{(r)}_p (\{ \Xi^\pm \})$ is an element of $\Lambda_{(k_0 ;r),\mathcal{O}_{{K}}} {\otimes}_{\mathbb{Z}_p} 
\hh_{h,\mathrm{cyc}}$, $L^{(r)}_p (\{ \Xi^\pm \}) \text{ mod $J$}$ and $L^{(r)}_p (\{ \Xi^\pm \}) \text{ mod $J'$}$  
can be glued together to have a unique element $L^{(r)}_p (\{ \Xi^\pm \}) \text{ mod $J\cap J'$}$ of 
$(\mathbb{I}/(J \cap J')) {\otimes}_{\mathbb{Z}_p} \hh_{h,\mathrm{cyc}}$. 
\par 
By applying 
Lemma \ref{lemma:injectivityofcomposite} to the ideal $J \cap J'  \in \mathfrak{A}$, 
we have a unique element $\mathcal{Z}_{J \cap J'} (r) \in H^1(\mathbb{Q}(\mu_r) ,
((\mathbb{T}\widehat{\otimes}_{\mathbb{Z}_p} \Lambda^\sharp_{\mathrm{cyc}}
)^\ast(1))_{J\cap J'} )$ such that $(\langle d_J \otimes 1 ,\ \rangle \circ \mathrm{Col}^{(r)}_J \circ \mathrm{loc}
) (\mathcal{Z}_{J \cap J'} (r))$ is equal to $L^{(r)}_p (\{ \Xi^\pm \}) \text{ mod $J\cap J'$}$. 
%
%
%
%
%
%
%
%
Thus, we obtain a unique Euler system 
$$
\bigl\{ \mathcal{Z}_{J \cap J'} (r) \in H^1(\mathbb{Q} (\mu_r ) ,
((\mathbb{T}\widehat{\otimes}_{\mathbb{Z}_p} \Lambda^\sharp_{\mathrm{cyc}}
)^\ast(1))_{J\cap J'} ) \bigr\} _{r\in \mathcal{S} } 
$$
which glues Euler systems $\bigl\{ \mathcal{Z}_{J'} (r) \bigr\} _{r\in \mathcal{S} }$ and 
$\bigl\{ \mathcal{Z}_{J'} (r)\bigr\}_{r\in \mathcal{S} } $. 
By Lemma \ref{lem:glues_nonord} and by repeating the induction on 
the numbers of the ideals of the form $I \cdot 
\Lambda_{(k_0 ;r),\mathcal{O}_{{K}}}\widehat{\otimes}_{\mathbb{Z}_p} \Lambda_{\mathrm{cyc}}$ with $I \in \mathfrak{S}$ 
containing a given ideal $J \in \mathfrak{A}$, we construct for every $J \in \mathfrak{A}$ a unique Euler system 
$
\bigl\{ \mathcal{Z}_J (r) \in H^1(\mathbb{Q} (\mu_r ) ,
((\mathbb{T}\widehat{\otimes}_{\mathbb{Z}_p} \Lambda^\sharp_{\mathrm{cyc}}
)^\ast(1))_J ) \bigr\} _{r\in \mathcal{S} } 
$ 
which provides an Euler system 
for each cuspform corresponding to the representation mod $I \cdot \Lambda_{(k_0 ;r),\mathcal{O}_{{K}}}\widehat{\otimes}_{\mathbb{Z}_p} \Lambda_{\mathrm{cyc}}$ with $I \in \mathfrak{S}$ such 
that $J \subset I \cdot \Lambda_{(k_0 ;r),\mathcal{O}_{{K}}}\widehat{\otimes}_{\mathbb{Z}_p} \Lambda_{\mathrm{cyc}}$. 
\par 
The Euler systems 
$\bigl\{ \mathcal{Z}_J (r)  \bigr\} _{r\in \mathcal{S} }$ form an inverse system for $J \subset \mathfrak{A}$. 
By Chevalley's theorem stated in the proof of Theorem $\ref{theorem:eulersystem_Hida2}$, we have 
$$
H^1(\mathbb{Q} (\mu_r ) ,
((\mathbb{T}\widehat{\otimes}_{\mathbb{Z}_p} \Lambda^\sharp_{\mathrm{cyc}}
)^\ast(1)) )
\cong 
\varprojlim_{J \in \mathfrak{A}} H^1(\mathbb{Q} (\mu_r ) ,
((\mathbb{T}\widehat{\otimes}_{\mathbb{Z}_p} \Lambda^\sharp_{\mathrm{cyc}}
)^\ast(1))_J ). 
$$
Hence, we obtain the desired Euler system 
$
\bigl\{ \mathcal{Z}(r) \in H^1(\mathbb{Q} (\mu_r ) ,
((\mathbb{T}\widehat{\otimes}_{\mathbb{Z}_p} \Lambda^\sharp_{\mathrm{cyc}}
)^\ast(1)) ) \bigr\} _{r\in \mathcal{S} } 
$   
by putting $\mathcal{Z}(r)= \varprojlim_{J \in \mathfrak{A}} \mathcal{Z}_J(r)$ and this completes the proof. 
\end{proof}

\subsection{Two-variable Iwasawa Main Conjecture on a Coleman family}
Thanks to Theorem \ref{theorem:eulersystem_Coleman2}, we can now state 
two-variable Iwasawa Main Conjecture.  
\\ 
\begin{conj}[Two-variable Iwasawa Main Conjecture for a Coleman family] \label{conjecture:IMCNonor_alakato}
Let $\mathbb{F}$ be a Coleman family over $\Lambda_{(k_0;r)} $ with 
the slope $s\in \mathbb{Q}_{\geq 0}$. Let $h$ be an integer satisfying $h\geq s$. Let $\mathbb{T} \cong (\Lambda_{(k_0;r)})^{\oplus 2}$ be the Galois representation associated to $\mathbb{F}$. Let us fix an $\Lambda_{(k_0 ;r),\mathcal{O}_{{K}}}$-basis 
$\Xi^\pm$ of $\mathbb{MS}(\Lambda_{(k_0 ;r),\mathcal{O}_{{K}}})^\pm$ respectively. 
Assume that the residual representation $\overline{\rho} : G_{\mathbb{Q}} \longrightarrow \operatorname{GL}_2 (\overline{\mathbb{F}}_p)$ associated 
to the family is irreducible when restricted to $G_{\Q }$. 
\par 
Then, the following statements hold: 
\begin{enumerate}
\item 
The finitely generated $\Lambda_{(k_0 ;r),\mathcal{O}_{{K}}} \widehat{\otimes}_{\mathbb{Z}_p} \Lambda_{\mathrm{cyc}}$-module 
$H^2(\mathbb{Q}_{\Sigma}/\mathbb{Q} ,
(\mathbb{T}\widehat{\otimes}_{\mathbb{Z}_p} \Lambda^\sharp_{\mathrm{cyc}}
)^\ast (1)) $
is torsion. 
\item 
We have the the following equality of principal ideals in $\Lambda_{(k_0 ;r),\mathcal{O}_{{K}}} \widehat{\otimes}_{\mathbb{Z}_p} \Lambda_{\mathrm{cyc}}$:  
\begin{multline}\label{equation:IMC_alaKato_nonord1}
\mathrm{char}_{\Lambda_{(k_0 ;r),\mathcal{O}_{{K}}} \widehat{\otimes}_{\mathbb{Z}_p} \Lambda_{\mathrm{cyc}}}
\left( 
H^1(\mathbb{Q}_{\Sigma}/\mathbb{Q} ,
(\mathbb{T}\widehat{\otimes}_{\mathbb{Z}_p} \Lambda^\sharp_{\mathrm{cyc}}
)^\ast (1)) 
 \Bigl/ \Lambda_{(k_0 ;r),\mathcal{O}_{{K}}} \widehat{\otimes}_{\mathbb{Z}_p} \Lambda_{\mathrm{cyc}}  \cdot \mathcal{Z}(1)  
 \right) 
 \\ 
= \mathrm{char}_{\Lambda_{(k_0 ;r),\mathcal{O}_{{K}}} \widehat{\otimes}_{\mathbb{Z}_p} \Lambda_{\mathrm{cyc}}} 
\left( H^2(\mathbb{Q}_{\Sigma}/\mathbb{Q} ,
(\mathbb{T}\widehat{\otimes}_{\mathbb{Z}_p} \Lambda^\sharp_{\mathrm{cyc}}
)^\ast (1))  \right) 
\end{multline}
where $\mathcal{Z} (1)$ is the first layer of the Euler system obtained 
by Theorem \ref{theorem:eulersystem_Coleman2} and $\mathbb{Q}_{\Sigma}$ 
is the maximal extension of $\mathbb{Q}$ unramified outside the 
finite set of places $\Sigma$ of $\mathbb{Q}$ which consists of ramified 
places of the Galois representation $\mathbb{T}$ and $\{ \infty\}$. 
\end{enumerate}
\end{conj}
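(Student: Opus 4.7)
The plan is to combine the Euler system bound of Theorem \ref{theorem:general_Euler_system_bound} with the interpolation property of the Coleman map of Theorem \ref{theorem:interpolationdualexp}, strictly paralleling the Hida-family argument given in the proof of \eqref{equation:IMC_for_elliptic_cuspform_modulo_mu_bis} in Theorem C$'$, but now with $\mathcal{R} = \Lambda_{(k_0;r),\mathcal{O}_{K}}$ in the role of $\mathbb{I}$. The first step is to verify that hypotheses (i)--(v) of Theorem \ref{theorem:general_Euler_system_bound} hold in the present setting. Hypothesis (i) is our irreducibility assumption on $\overline{\rho}$ restricted to $G_{\mathbb{Q}}$; (iii), the decomposition of $\mathbb{T}$ under complex conjugation into rank-one eigenspaces, is a standard feature of modular Galois representations which propagates to the family since complex conjugation is a fixed involution; (iv), the presence of a non-trivial unipotent in the image of $\rho$, follows from the non-solvability of the residual image together with the fact that a Coleman family with non-solvable residual image cannot contain CM members; and (v) holds because $\Lambda_{(k_0;r),\mathcal{O}_K} \cong \mathcal{O}_K[[T]]$ and each local component of $\Lambda_{\mathrm{cyc}}$ is a one-variable power-series ring over the ring of integers of a finite extension of $\mathbb{Q}_p$, so that every local component of $\Lambda_{(k_0 ;r),\mathcal{O}_{K}} \widehat{\otimes}_{\mathbb{Z}_p} \Lambda_{\mathrm{cyc}}$ is a two-variable formal power-series ring $\mathcal{O}[[T,X]]$ as required.

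The crucial input is hypothesis (ii), the non-triviality of the first layer $\mathcal{Z}(1)$ on each local component. Here I would leverage the identity $\mathrm{Col}(\mathrm{loc}_p(\mathcal{Z}(1))) = L_p(\{\Xi^\pm\})$ from Theorem \ref{theorem:eulersystem_Coleman2} together with the non-vanishing of the two-variable $p$-adic $L$-function $L_p(\{\Xi^\pm\})$ in $\Lambda_{(k_0;r),\mathcal{O}_K} \widehat{\otimes}_{\mathbb{Z}_p} \hh_{h,\mathrm{cyc}}$. Non-vanishing is deduced at any single arithmetic specialisation via Rohrlich's non-vanishing theorem applied to the cuspform $f_k$ and the interpolation formula \eqref{equation:interpolation_AV_V_def2}, which forces the full two-variable object to be non-zero on every local component. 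Combined with the Poitou-Tate four-term sequence organised exactly as in \eqref{equation:4termsequence4}, Theorem \ref{theorem:general_Euler_system_bound} simultaneously yields part (1) of the conjecture (torsion-ness of the $H^2$ over $\Lambda_{(k_0;r),\mathcal{O}_K} \widehat{\otimes}_{\mathbb{Z}_p} \Lambda_{\mathrm{cyc}}$) and the inclusion $\subset$ in \eqref{equation:IMC_alaKato_nonord1}.

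The reverse inclusion $\supset$ in \eqref{equation:IMC_alaKato_nonord1} is the main obstacle, and it is one that I do not expect to overcome with the tools developed in this paper. In the ordinary case the opposite containment is supplied by Skinner--Urban through Eisenstein congruences for $U(2,2)$, and no analogous non-ordinary machinery is presently available for Coleman families. A conceivable route would be to try to descend from a (still hypothetical) two-variable non-ordinary equality by specialising at arithmetic points and bootstrapping from the one-variable equality of Theorem C at each such point; but the non-ordinary Selmer group is not $\Lambda_{\mathrm{cyc}}$-torsion under specialisation, so one would have to re-package everything in terms of the $H^{1}$/$H^{2}$ formulation throughout and produce a source of congruences giving lower bounds on $H^{2}$. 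Consequently, my realistic proposal establishes part (1) of the conjecture together with only the $\subset$ half of part (2) --- the non-ordinary analogue of Kato's contribution --- and leaves the opposite containment as the central remaining obstacle.
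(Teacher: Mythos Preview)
The statement you are asked to prove is labelled a \emph{Conjecture} in the paper and is not proved there; the paper's actual result toward it is Theorem~\ref{theorem:IMC_nonord }, which establishes only part (1) and the inclusion $\subset$ of part (2), under an \emph{additional} hypothesis {\bf (SL)} that the image of $\rho_{\mathbb{F}}$ contains $SL_2(\Lambda_{(k_0;r),\mathcal{O}_K})$. Your proposal correctly identifies that only the $\subset$ half is within reach and that the opposite inclusion would require input of Skinner--Urban type not currently available in the non-ordinary setting. In that respect your overall assessment matches the paper's.

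The one genuine gap is in your verification of hypothesis (iv) of Theorem~\ref{theorem:general_Euler_system_bound}. That hypothesis requires an element $\sigma_1 \in G_{\mathbb{Q}(\mu_{p^\infty})}$ whose image is unipotent with off-diagonal entry a \emph{unit in the full deformation ring} $\mathcal{R} = \Lambda_{(k_0;r),\mathcal{O}_K}$, not merely in the residue field. Non-solvability of the residual image gives a non-trivial unipotent in $GL_2(\mathbb{F})$, but lifting this to a unipotent with unit entry over the two-dimensional ring $\mathcal{R}$ is not automatic. This is precisely why the paper imposes {\bf (SL)}: containment of $SL_2(\Lambda_{(k_0;r),\mathcal{O}_K})$ in the image makes (iv) immediate. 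Your claim that (iv) ``follows from the non-solvability of the residual image together with the fact that a Coleman family with non-solvable residual image cannot contain CM members'' does not bridge this gap, and without {\bf (SL)} (or an equivalent big-image result such as those of Conti--Iovita--Tilouine cited in the paper) the Euler-system bound cannot be invoked as stated.
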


Our main result is as follows: 

\begin{thm}\label{theorem:IMC_nonord }
Let us assume the setting of Conjecture \ref{conjecture:IMCNonor_alakato}.
We also assume the following condition: 
\begin{enumerate}
\item[{\bf (SL)}]The image of Galois representation $G_{\mathbb{Q}} 
\longrightarrow \mathrm{Aut}_{ \Lambda_{(k_0 ;r),\mathcal{O}_{{K}}} } (\mathbb{T}) \cong GL_2 ( \Lambda_{(k_0 ;r),\mathcal{O}_{{K}}} )$ contains 
$SL_2 ( \Lambda_{(k_0 ;r),\mathcal{O}_{{K}}} )$. 
\end{enumerate} 
\par 
Then, $ H^2(\mathbb{Q}_{\Sigma}/\mathbb{Q} ,
(\mathbb{T}\widehat{\otimes}_{\mathbb{Z}_p} \Lambda^\sharp_{\mathrm{cyc}}
)^\ast (1)) $ is a torsion $\Lambda_{(k_0 ;r),\mathcal{O}_{{K}}} \widehat{\otimes}_{\mathbb{Z}_p} \Lambda_{\mathrm{cyc}}$-module and we have:  
\begin{multline}\label{equation:IMC_alaKato_nonord2}
\mathrm{char}_{\Lambda_{(k_0 ;r),\mathcal{O}_{{K}}} \widehat{\otimes}_{\mathbb{Z}_p} \Lambda_{\mathrm{cyc}}}
\left( 
H^1(\mathbb{Q}_{\Sigma}/\mathbb{Q} ,
(\mathbb{T}\widehat{\otimes}_{\mathbb{Z}_p} \Lambda^\sharp_{\mathrm{cyc}}
)^\ast (1)) 
 \Bigl/ \Lambda_{(k_0 ;r),\mathcal{O}_{{K}}} \widehat{\otimes}_{\mathbb{Z}_p} \Lambda_{\mathrm{cyc}}  \cdot \mathcal{Z}(1)  
 \right) 
 \\ 
 \subset 
 \mathrm{char}_{\Lambda_{(k_0 ;r),\mathcal{O}_{{K}}} \widehat{\otimes}_{\mathbb{Z}_p} \Lambda_{\mathrm{cyc}}} 
\left( H^2(\mathbb{Q}_{\Sigma}/\mathbb{Q} ,
(\mathbb{T}\widehat{\otimes}_{\mathbb{Z}_p} \Lambda^\sharp_{\mathrm{cyc}}
)^\ast (1))  \right) . 
\end{multline}
\end{thm}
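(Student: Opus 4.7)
The plan is to reduce the statement to a direct application of Theorem \ref{theorem:general_Euler_system_bound} with $\mathcal{R} = \Lambda_{(k_0 ;r),\mathcal{O}_{{K}}}$, applied to the Euler system $\{\mathcal{Z}(r)\}_{r\in \mathcal{S}}$ produced by Theorem \ref{theorem:eulersystem_Coleman2}. Note that $\Lambda_{(k_0 ;r),\mathcal{O}_{{K}}} \cong \mathcal{O}_{{K}}[[T]]$ is a complete regular local ring in one variable, so each local component of $\Lambda_{(k_0 ;r),\mathcal{O}_{{K}}}\widehat{\otimes}_{\mathbb{Z}_p}\Lambda_{\mathrm{cyc}}$ is isomorphic to $\mathcal{O}_{{K}}[[T,X]]$; this gives hypothesis (v) of Theorem \ref{theorem:general_Euler_system_bound} for free. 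The rest of the proof reduces to verifying the remaining hypotheses (i)--(iv) and then invoking the theorem.

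Hypotheses (i), (iii), (iv) are all consequences of the assumption (SL). Reducing the image $SL_2(\Lambda_{(k_0 ;r),\mathcal{O}_{{K}}})$ modulo the maximal ideal yields absolute irreducibility of the residual representation, giving (i). For (iii), the complex conjugation $c \in G_{\mathbb{Q}}$ acts on $\mathbb{T}$ with trace zero, as is standard for odd Galois representations attached to modular forms and inherited through the family; since $\rho(c)$ has eigenvalues $\pm 1$ and (SL) gives a large image, the eigenspaces $\mathbb{T}^{\pm}$ are free of rank one over $\Lambda_{(k_0 ;r),\mathcal{O}_{{K}}}$. For (iv), a unipotent element $\sigma_1$ is supplied by lifting a nontrivial unipotent in $SL_2(\Lambda_{(k_0 ;r),\mathcal{O}_{{K}}})$ to $G_{\mathbb{Q}(\mu_{p^\infty})}$ (the image of $G_{\mathbb{Q}(\mu_{p^\infty})}$ still contains an open subgroup of $SL_2(\Lambda_{(k_0 ;r),\mathcal{O}_{{K}}})$ because the cyclotomic quotient $G_{\mathbb{Q}}/G_{\mathbb{Q}(\mu_{p^\infty})}$ is abelian while $SL_2$ is perfect), and $\sigma_2$ is a preimage of $-I$. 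The nontriviality hypothesis (ii) is the essential input: by the identity $\mathrm{Col}(\mathrm{loc}_p (\mathcal{Z}(1))) = L_p(\{\Xi^\pm\})$ from Theorem \ref{theorem:eulersystem_Coleman2}, nontorsionness of $\mathcal{Z}(1)$ at each local component of $\Lambda_{(k_0 ;r),\mathcal{O}_{{K}}}\widehat{\otimes}_{\mathbb{Z}_p}\Lambda^{\omega^i}_{\mathrm{cyc}}$ follows from the nonvanishing of the corresponding component of $L_p(\{\Xi^\pm\})$, which in turn follows from the interpolation formula \eqref{equation:interpolation_AV_V_def2} and Rohrlich's nonvanishing theorem applied to specializations at arithmetic points $k \in \mathbb{Z}_{>s+1}\cap B(k_0 ;r)$.

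Once all five hypotheses are verified, Theorem \ref{theorem:general_Euler_system_bound} immediately yields both conclusions: the torsionness of $H^2(\mathbb{Q}_\Sigma/\mathbb{Q},(\mathbb{T}\widehat{\otimes}_{\mathbb{Z}_p}\Lambda^\sharp_{\mathrm{cyc}})^\ast(1))$ and the desired inclusion of characteristic ideals \eqref{equation:IMC_alaKato_nonord2}. The main subtlety I foresee lies in step (ii), namely verifying nontorsionness at \emph{every} local component rather than only generically. For this one must check that if $a\in \Lambda_{(k_0 ;r),\mathcal{O}_{{K}}}\widehat{\otimes}_{\mathbb{Z}_p}\Lambda^{\omega^i}_{\mathrm{cyc}}$ annihilated the component of $\mathcal{Z}(1)$, then its image would annihilate $L_p(\{\Xi^\pm\})$ inside $\Lambda_{(k_0 ;r),\mathcal{O}_{{K}}}\widehat{\otimes}_{\mathbb{Z}_p}\mathcal{H}_{h,\mathrm{cyc}}$; the injectivity of the inclusion $\Lambda^{\omega^i}_{\mathrm{cyc}} \hookrightarrow \mathcal{H}^{+,\omega^i}_{h,\mathrm{cyc}}$ on each component, combined with the nonvanishing of the specialization of $L_p$, rules this out. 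A secondary subtlety is the precise verification of (iv), where care is needed to produce a unipotent $\sigma_1$ inside the subgroup $G_{\mathbb{Q}(\mu_{p^\infty})}$; but the perfectness of $SL_2$ together with the abelianness of the cyclotomic extension makes this routine under (SL).
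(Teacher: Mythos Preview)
Your proposal is correct and follows exactly the same approach as the paper: apply Theorem \ref{theorem:general_Euler_system_bound} with $\mathcal{R}=\Lambda_{(k_0;r),\mathcal{O}_K}$ to the Euler system of Theorem \ref{theorem:eulersystem_Coleman2}, and verify hypotheses (i)--(v) using (SL) together with the ambient assumptions. The paper's own proof is in fact considerably terser---it simply asserts that (SL) and the setting of Conjecture \ref{conjecture:IMCNonor_alakato} make all of (i)--(v) hold---so your expanded verification of each condition (particularly the nontorsion argument for (ii) via $\mathrm{Col}(\mathrm{loc}_p(\mathcal{Z}(1)))=L_p(\{\Xi^\pm\})$ and Rohrlich) fills in details the paper leaves to the reader.
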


\begin{rem}
We note that the condition \rm{(SL)} above is studied by Conti--Iovita--Tilouine \cite{cit16}. 
For example, they prove that the Lie algebra of Galois representation is full under certain regularity conditions 
(see Thm 6.2 and Cor. 7.2 of \cite{cit16}).  
\end{rem}

\begin{proof}
We apply a general machinary Theorem \ref{theorem:general_Euler_system_bound} to the Euler system obtained in Theorem \ref{theorem:eulersystem_Coleman2}. 
By our setting and the hypothesis \rm{(SL)} of the theorem, all the conditions 
(i) to (v) of Theorem \ref{theorem:general_Euler_system_bound} is checked to be true. Thus the desired statement follows immediately from Theorem \ref{theorem:general_Euler_system_bound}. 
\end{proof}

\ 
\\ 
{\bf Acknowledgements} 
\\ 
The author is grateful to Kazuma Shimomoto who read carefully 
the manuscript and pointed out a lot of mistakes, typos and 
gave useful opinions to improve the readability of the article. 
He is thankful to Filippo Nuccio for giving some comments 
on \S \ref{sec:Intro_results} and \S \ref{section:last}. 
Finally, he is grateful to the anonymous referees who read carefully 
the article and gave the author some useful comments to improve the readability of 
the article. 
\if0

Pottharst 

Wang Hansen

\textcolor{red}{integral structure / ordinary との比較 本当にない}

Let us consider the map of Galois cohomology 
$$
\underset{n}{\varprojlim} H^1 \big(\mathbb{Q} (\mu_{p^n}), 
\T^\ast (1) \big) 
\longrightarrow 
H^1 \big(\mathbb{Q}_{p} (\mu_{p^n}), 
\T^\ast (1)  \big) = 
(L_p (\{ \Xi^\pm \}))

$$

We consider the composite 
$
 \OO_{\X}
\widehat{\otimes}_{\mathbb{Z}_p}
\Lambda (G_{\mathrm{cyc}})$-linear map 
\begin{multline}
\underset{n}{\varprojlim} H^1 \big(\mathbb{Q} (\mu_{p^n}), 
\T^\ast (1) \big) 
\hookrightarrow 
H^1 \big(\mathbb{Q}_{p} (\mu_{p^n}), 
\T^\ast (1)  \big) 
\\ 
\longrightarrow \mathcal{D}^\ast (1) \widehat{\otimes}_{\Lambda (G_{\mathrm{cyc}})} \mathcal{H}_h (G_{\mathrm{cyc}})
\cong  \OO_{\X}
\widehat{\otimes}_{\mathbb{Z}_p}
\mathcal{H}_h (G_{\mathrm{cyc}})
\end{multline}
Now we suppose the following hypothesis 
\begin{enumerate}
\item[(An)] There exists a $p$-adic $L$-functions $L_p (\mathcal{T}) 
\in \OO_{\X}
\widehat{\otimes}_{\mathbb{Z}_p}
\mathcal{H}_h (G_{\mathrm{cyc}})$  
\end{enumerate}
such that 

\fi



\end{document}